\theoremstyle{plain}
\newtheorem{theorem}{Theorem}[section]
\newtheorem{lemma}[theorem]{Lemma}
\newtheorem{corollary}[theorem]{Corollary}
\newtheorem{proposition}[theorem]{Proposition}
\newtheorem{conjecture}[theorem]{Conjecture}
\theoremstyle{definition}
\newtheorem{example}[theorem]{Example}
\theoremstyle{remark}
\newtheorem{remark}[theorem]{Remark}
\newcommand\Aut{\operatorname{Aut}}
\newcommand\GL{\operatorname{GL}}
\newcommand\Hom{\operatorname{Hom}}
\newcommand\End{\operatorname{End}}
\newcommand\id{\operatorname{id}}
\newcommand\pr{\operatorname{pr}}
\newcommand\gr{\mathbf{gr}}
\newcommand\op{\mathrm{op}}
\newcommand\ab{\mathrm{ab}}
\newcommand\sgn{\mathrm{sgn}}
\newcommand\N{\mathbb{N}}
\newcommand\K{\Bbbk}
\newcommand\R{\mathbb{R}}
\newcommand\A{\mathbf{A}}
\newcommand\catC{\mathcal{C}}
\newcommand\catP{\mathcal{P}}
\newcommand\Lie{\mathcal{L}ie}
\newcommand\gpS{\mathfrak{S}}
\newcommand\im{\operatorname{im}}
\newcommand\Ob{\operatorname{Ob}}
\newcommand\calC{\mathcal{C}}
\newcommand\calO{\mathcal{O}}
\newcommand\KMod{\K\textbf{-}\mathrm{Mod}}
\newcommand{\adj}[4]{
\begin{tikzcd}[ampersand replacement=\&]
#1\arrow[r, shift left=1ex, "#3"{name=top}] \& #2\arrow[l, shift left=.5ex, "#4"{name=bottom}]
\arrow[phantom, from=top, to=bottom, , "\scriptscriptstyle\boldsymbol{\top}" rotate=180]
\end{tikzcd}
}
\title[Modules over the category of Jacobi diagrams in handlebodies]{Modules over the category of \\
Jacobi diagrams in handlebodies}
\author{Mai Katada}
\address{Graduate School of Mathematical Sciences, University of Tokyo, Tokyo 153-8914, Japan}
\email{mkatada@ms.u-tokyo.ac.jp}
\date{September 14, 2025}
\keywords{Functor categories, Polynomial functors, Adjoint functors, Free groups, Jacobi diagrams in handlebodies, Casimir Lie algebras, Casimir Hopf algebras}
\subjclass[2020]
{18A25, %functor categories
 18A40, %Adjoint functors (universal constructions, reflective subcategories, Kan extensions, etc.)
 % 18G15, %Ext and Tor, generalizations, K¨unneth formula (category-theoretic aspects)
 57K16, %Finite-type and quantum invariants, topological quantum field theories (TQFT)
 % 18M70, algebraic operads, cooperads, and Koszul duality
 18M85, %Polycategories/dioperads, properads, PROPs, cyclic operads, modular operads
 % 13D03,% (Co)homology of commutative rings and algebras (e.g., Hochschild, Andr´e-Quillen, cyclic, dihedral, etc.)
 % 17B01,% Identities, free Lie (super)algebras
 20F28. %Automorphism groups of groups [See also 20E36]
 % 20J06. %Cohomology of groups
}
\begin{document}

\begin{abstract}
The linear category $\A$ of Jacobi diagrams in handlebodies was introduced by Habiro and Massuyeau.
We study the category of modules over the category $\A$.
We generalize the adjunction given by Powell to an adjunction between the category of $\A$-modules and the category of modules over the linear PROP for Casimir Lie algebras by using the category $\A^{L}$ of extended Jacobi diagrams in handlebodies.
Then we study subquotient $\A$-modules of $\A(0,-)$.
\end{abstract}
\maketitle
\setcounter{tocdepth}{1}
\tableofcontents

\newcommand\catLie{\mathcal{C}at_{\mathcal{L}ie}}
\newcommand\catLieMod{\catLie\textbf{-}\mathrm{Mod}}
\newcommand\kgrop{\K\mathbf{gr}^{\operatorname{op}}}
\newcommand\kgropMod{\kgrop\textbf{-}\mathrm{Mod}}
\newcommand\Ass{\mathcal{A}ss^{\mathrm{u}}}
\newcommand\catAss{\mathcal{C}at_{\mathcal{A}ss^{\mathrm{u}}}}

\newcommand\catAssC{\mathcal{C}at_{\mathcal{A}ss^{\mathrm{u,C}}}}
\newcommand\catLieC{\mathcal{C}at_{\mathcal{L}ie^{\mathrm{C}}}}
\newcommand\catOC{\mathcal{C}at_{\calO^{\mathrm{C}}}}

\newcommand\catLieCMod{\catLieC\textbf{-}\mathrm{Mod}}
\newcommand\AMod{\A\textbf{-}\mathrm{Mod}}
\newcommand\ub{\mathrm{uB}}

\newcommand\centre[1]{\begin{array}{c} #1 \end{array}}
\newcommand\centre{\input{[}}1]{\centre{\input{#1}}}

\section{Introduction}
Finitely generated free groups and the automorphism groups of them are fundamental and important objects in mathematics, especially in algebraic topology.
Functors from the category $\gr$ of finitely generated free groups (or the opposite $\gr^{\op}$ of it) to the category of abelian groups often arise, and there is a substantial body of literature on the functor category on $\gr$ or $\gr^{\op}$ \cite{Djament--Vespa, Hartl--Pirashvili--Vespa, Powell--Vespa, Powellanalytic, Powellouter, PowellPassiMalcev, Kim--Vespa, Arone}. 
We will focus on the functors to the category of vector spaces over a field $\K$ of characteristic $0$.

The notion of polynomial functors was introduced by Eilenberg and MacLane \cite{Eilenberg--MacLane} and generalized by Hartl, Pirashvili and Vespa \cite{Hartl--Pirashvili--Vespa}.
The notion of outer functors, introduced by Powell and Vespa \cite{Powell--Vespa}, also plays an essential role in the functor category on $\gr$ and $\gr^{\op}$. 
The full subcategory of polynomial functors (or analytic functors, which are colimits of polynomial functors) has been studied.
In particular, Powell \cite{Powellanalytic} proved that the full subcategory of analytic functors on $\gr^{\op}$ is equivalent to the category of modules over the $\K$-linear PROP $\catLie$ for Lie algebras.
In some sense, modules over $\catLie$ are easier to handle or understand than analytic functors on $\gr^{\op}$.

Habiro and Massuyeau \cite{Habiro--Massuyeau} introduced the category $\A$ of Jacobi diagrams in handlebodies in order to extend the Kontsevich invariant to a functor from the category of bottom tangles in handlebodies. 
The category $\A$ is characterized as a $\K$-linear PROP freely generated by a Casimir Hopf algebra, and has a grading defined by the number of copies of the Casimir $2$-tensor. 
Since the category $\gr^{\op}$ is characterized as a PROP freely generated by a cocommutative Hopf algebra, the $\K$-linearization $\kgrop$ of the category $\gr^{\op}$ is identified with the degree $0$ part of $\A$. 
The $\K$-linear PROP $\catLieC$ for Casimir Lie algebras was introduced by Hinich and Vaintrob \cite{Hinich--Vaintrob} in their study of the relations between Casimir Lie algebras and the algebra of chord diagrams.
One of the motivations behind this project was to clarify the relations between the category of $\A$-modules and the category of $\catLieC$-modules.

The aim of this paper is to study the category $\AMod$ of $\A$-modules.
We generalize the adjunction given by Powell to an adjunction between $\catLieC$-modules and $\A$-modules. Here, we use an $(\A,\catLieC)$-bimodule induced by the hom-spaces of the category $\A^{L}$ of extended Jacobi diagrams in handlebodies, which was introduced in \cite{Katada2}.
Then we study the subquotient $\A$-modules of $\A(0,-)$. By using the indecomposable decomposition of the $\kgrop$-module $\A_d(0,-)$ obtained in \cite{Katada1, Katada2}, we prove that some subquotient $\A$-modules of $\A(0,-)$ are indecomposable.

\subsection{An adjunction between the categories $\catLieCMod$ and $\AMod$}
Let $\KMod$ denote the category of $\K$-vector spaces and $\K$-linear maps for a field $\K$ of characteristic $0$.
For a $\K$-linear category $\catC$, which is a category enriched over $\KMod$, let $\catC\textbf{-}\mathrm{Mod}$ denote the $\K$-linear category of $\K$-linear functors from $\catC$ to $\KMod$.

Powell \cite[Theorem 9.19]{Powellanalytic} constructed a $(\kgrop, \catLie)$-bimodule ${}_{\Delta}\catAss$, which yields
the hom-tensor adjunction
\begin{gather*}
 {}_{\Delta}\catAss\otimes_{\catLie} - : \adj{\catLieMod}{\kgropMod}{}{} : \Hom_{\kgropMod}({}_{\Delta}\catAss,-).
\end{gather*}
Moreover, he proved that the adjunction restricts to an equivalence of categories between $\catLieMod$ and a full subcategory $\kgropMod^{\omega}$ of $\kgropMod$ whose objects correspond to \emph{analytic functors} on $\gr^{\op}$.

We will give an alternative interpretation of the adjunction given by Powell by using the $\K$-linear symmetric monoidal category $\A^{L}$ of extended Jacobi diagrams. 
The set of objects of the category $\A^{L}$ is the free monoid generated by $H$ and $L$, and $\A^{L}$ includes $\A$ (resp. $\catLieC$) as a full subcategory whose objects are generated by $H$ (resp. $L$).
By restricting to the degree $0$ part, the category $\A^{L}_0$ includes $\kgrop\cong \A_0$ and $\catLie\cong (\catLieC)_0$ as full subcategories.
(See Section \ref{sectionAL} for details.)

\begin{proposition}[see Proposition \ref{adjunctionpowellAL0}]
    We have an isomorphism of $(\kgrop, \catLie)$-bimodules
    \begin{gather*}
         {}_{\Delta}\catAss \cong \A^{L}_0(L^{\otimes -},H^{\otimes -}).
    \end{gather*}
\end{proposition}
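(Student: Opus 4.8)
The plan is to identify both sides as $(\kgrop, \catLie)$-bimodules by unwinding their definitions via the universal (PROP) properties involved. First I would recall Powell's explicit description of the bimodule ${}_{\Delta}\catAss$: it is built from the hom-spaces of the linear PROP $\catAss$ for unital associative algebras, with the left $\kgrop$-action coming through the diagonal-type map $\Delta$ (reflecting that $\gr^{\op}$ is the PROP for cocommutative Hopf algebras, whose underlying coalgebra structure lands in $\catAss$ after forgetting) and the right $\catLie$-action coming from the PROP map $\catLie \to \catAss$ realizing a Lie algebra inside its universal enveloping algebra. Concretely ${}_{\Delta}\catAss(m,n) = \catAss(n,m)$ as a vector space (or the appropriate variance), so the content is in the bimodule structure, not the underlying spaces.

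Next I would give the parallel description of the right-hand side. By the construction of $\A^L$ in \cite{Katada2}, $\A^L$ is a $\K$-linear symmetric monoidal category whose degree $0$ part $\A^L_0$ is the linear PROP generated by a cocommutative Hopf algebra object $H$ together with a Lie algebra object $L$ and a compatibility datum (the "$L$-decoration" making $L$ act on $H$, or rather making $H$ the enveloping-type object associated to $L$). The hom-space $\A^L_0(L^{\otimes s}, H^{\otimes t})$ therefore has a left action of $\A_0 = \kgrop$ (post-composition with morphisms $H^{\otimes t} \to H^{\otimes t'}$) and a right action of $(\catLieC)_0 = \catLie$ (pre-composition with morphisms $L^{\otimes s'} \to L^{\otimes s}$), so it is manifestly a $(\kgrop, \catLie)$-bimodule. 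The heart of the matter is to show that, in degree $0$, the mixed hom-spaces of $\A^L$ are spanned exactly by the diagrams corresponding to the generators and relations of $\catAss$, i.e. that a morphism $L^{\otimes s} \to H^{\otimes t}$ of degree $0$ decomposes uniquely as: apply the canonical map $L \to H$ (the primitive inclusion) to each $L$-input, then apply an arbitrary morphism $H^{\otimes s} \to H^{\otimes t}$ in $\kgrop$. One then observes that $\kgrop$-morphisms out of $H^{\otimes s}$ are governed, after using the Hopf structure, by words in the tensor factors, which is precisely the combinatorial content of $\catAss$; this is where the identification ${}_{\Delta}\catAss(s,t) \cong \A^L_0(L^{\otimes s}, H^{\otimes t})$ comes from, with the $\Delta$-twist in Powell's bimodule matching the comultiplication of $H$ used to route the $L$-inputs.

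I would then check naturality in $s$ and $t$, i.e. that the isomorphism intertwines the two bimodule structures. The left $\kgrop$-action matches on both sides by construction (it is post-composition in $\A^L_0$, and on Powell's side it is the stated $\Delta$-action, which by the previous paragraph is exactly post-composition with the Hopf-algebra morphisms). For the right $\catLie$-action one must verify that pre-composing with a morphism $L^{\otimes s'} \to L^{\otimes s}$ in $\catLie \subseteq \A^L_0$ corresponds to Powell's right $\catLie$-action via the PBW-type embedding $\catLie \to \catAss$; this follows because the inclusion $L \hookrightarrow H$ in $\A^L$ is by definition compatible with the bracket (it realizes the Casimir Lie algebra inside the Casimir Hopf algebra), so morphisms among the $L$-objects act on $H$-objects exactly through their image in $\kgrop$, which is Powell's map. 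The main obstacle I expect is the bookkeeping in this last identification: one must be careful that the degree-$0$ restriction really does collapse the extended-Jacobi-diagram hom-spaces to the "forest/word" description of $\catAss$ with no leftover relations or extra generators, and that the symmetric-group and grouplike-element symmetries on both sides are matched; this is essentially a diagrammatic calculus check, presumably already prepared by the description of $\A^L$ in \cite{Katada2} and the identification $\A_0 \cong \kgrop$, so the proof should reduce to assembling those pieces and verifying compatibility of the two actions.
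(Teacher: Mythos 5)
Your overall strategy (identify the degree~$0$ mixed hom-spaces with $\catAss$ and then match the two actions) is the paper's strategy, but the step you present as the heart of the argument is stated incorrectly, and the way you propose to carry it out would not work. You claim that a degree~$0$ morphism $L^{\otimes s}\to H^{\otimes t}$ factors \emph{uniquely} as an arbitrary morphism of $\kgrop(s,t)$ precomposed with $i^{\otimes s}$. Existence of such a factorization is true, but uniqueness is false: the map $-\circ i^{\otimes s}:\kgrop(s,t)\to \A^{L}_0(L^{\otimes s},H^{\otimes t})$ is epic but not monic (see Remark \ref{kgropinAL}), because of the relations ($\A^{L}$-\ref{a2}), ($\A^{L}$-\ref{a3}), ($\A^{L}$-\ref{Si}), e.g.\ $\varepsilon i=0$. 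Likewise, the follow-up claim that ``$\kgrop$-morphisms out of $H^{\otimes s}$ are governed by words in the tensor factors, which is precisely the combinatorial content of $\catAss$'' is not correct at that level: $\gr^{\op}(s,t)$ is indexed by tuples of group words (with inverses, counits, diagonals), and $\kgrop(s,t)$ is much larger than $\catAss(s,t)$; the collapse happens only after composing with $i^{\otimes s}$, and it has to be computed, not just observed. If your two claims were both true, you would obtain $\A^{L}_0(L^{\otimes s},H^{\otimes -})\cong\kgrop(s,-)$, which is false for $s\ge 1$. The actual mathematical content — the part you defer to ``bookkeeping'' — is a linear-independence statement: the paper proves (Lemma \ref{AL0}, via uniqueness of the STU-reduction to chord diagrams) that the chord diagrams $C(m,n)$ form a basis of $\A^{L}_0(L^{\otimes m},H^{\otimes n})$, identifies this basis with $\{\mu^{[p_1,\cdots,p_n]}P_{\sigma}i^{\otimes m}\}$ (Lemma \ref{bijectionwithCmn}), and matches it against the basis $\{\mu_A^{[p_1,\cdots,p_n]}P_{\sigma}\}$ of $\catAss(m,n)$ (Lemma \ref{factorcatAss}), so that the restriction of $-\circ i^{\otimes m}$ to the copy of $\catAss(m,-)$ inside $\kgrop(m,-)$ is an isomorphism of $\catAss$-modules (Lemma \ref{catAssi}). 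Without some argument of this kind your plan has a genuine gap precisely at its central step.

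A second, smaller point: the right $\catLie$-actions do not match ``exactly'' as you assert. In $\A^{L}$ one has $i\,[\cdot,\cdot]=-\mu(i\otimes i)+\mu P_{H,H}(i\otimes i)$ (relation ($\A^{L}$-\ref{a1})), whereas $\iota:\catLie\to\catAss$ sends $[,]$ to $\mu_A-\mu_A P_{(12)}$, so precomposition with a bracket introduces a sign relative to Powell's action. This is why the isomorphism of bimodules in the paper is given by $(-1)^m$ on $\catAss(m,n)$ rather than by the identity; with your on-the-nose identification the two right actions would not be intertwined under the paper's conventions.
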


Moreover, by generalizing the above interpretation, we obtain an adjunction between $\catLieCMod$ and $\AMod$.

\begin{theorem}[see Theorem \ref{adjunctionA}]\label{adjunctionAintro}
We have an $(\A,\catLieC)$-bimodule $\A^{L}(L^{\otimes -},H^{\otimes -})$, which induces an adjunction
\begin{gather*}
    \A^{L}(L^{\otimes -},H^{\otimes -})\otimes_{\catLieC} - : \adj{\catLieCMod}{\AMod}{}{} : \Hom_{\AMod}(\A^{L}(L^{\otimes -},H^{\otimes -}),-).
\end{gather*}
\end{theorem}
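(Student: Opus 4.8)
The plan is to recognize the statement as an instance of the standard hom--tensor adjunction attached to a bimodule over $\K$-linear categories, so that the only real work is to exhibit the $(\A,\catLieC)$-bimodule structure on $\A^{L}(L^{\otimes -},H^{\otimes -})$. By the construction of $\A^{L}$ recalled in Section \ref{sectionAL}, the categories $\A$ and $\catLieC$ are full subcategories of $\A^{L}$, supported on the objects generated by $H$ and by $L$ respectively. Hence $\A^{L}(L^{\otimes n},H^{\otimes m})$ is defined for all $m,n\ge 0$, and composition in $\A^{L}$ provides a left action of $\A$ by post-composition with morphisms $H^{\otimes m}\to H^{\otimes m'}$ and a right action of $\catLieC$ by pre-composition with morphisms $L^{\otimes n'}\to L^{\otimes n}$. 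Because composition in $\A^{L}$ is $\K$-bilinear and associative, these actions commute, so $B:=\A^{L}(L^{\otimes -},H^{\otimes -})$ is a $\K$-linear functor $\A\otimes\catLieC^{\op}\to\KMod$; this is nothing more than the bifunctoriality of $\Hom_{\A^{L}}$.

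It then remains to apply the general adjunction. For essentially small $\K$-linear categories $\catC,\catD$ and a $(\catC,\catD)$-bimodule $B$, one defines, for $N\in\catD\textbf{-}\mathrm{Mod}$ and $M\in\catC\textbf{-}\mathrm{Mod}$,
\[
(B\otimes_{\catD}N)(c)=\int^{d\in\catD}B(c,d)\otimes_{\K}N(d),\qquad \Hom_{\catC\textbf{-}\mathrm{Mod}}(B,M)(d)=\Hom_{\catC\textbf{-}\mathrm{Mod}}(B(-,d),M),
\]
the coend existing since $\KMod$ is cocomplete. The adjunction $B\otimes_{\catD}-\dashv\Hom_{\catC\textbf{-}\mathrm{Mod}}(B,-)$ follows from the (co)end calculus: rewriting the coend as a colimit, moving $\Hom_{\catC\textbf{-}\mathrm{Mod}}(-,M)$ across it into an end, using the tensoring isomorphism $\Hom_{\catC\textbf{-}\mathrm{Mod}}(B(-,d)\otimes_{\K}V,M)\cong\Hom_{\KMod}(V,\Hom_{\catC\textbf{-}\mathrm{Mod}}(B(-,d),M))$ for $V\in\KMod$, and invoking the end description of natural transformations, one obtains a natural isomorphism
\[
\Hom_{\catC\textbf{-}\mathrm{Mod}}(B\otimes_{\catD}N,M)\cong\Hom_{\catD\textbf{-}\mathrm{Mod}}(N,\Hom_{\catC\textbf{-}\mathrm{Mod}}(B,M)).
\]
Taking $\catC=\A$, $\catD=\catLieC$, and $B=\A^{L}(L^{\otimes -},H^{\otimes -})$ gives the theorem; there are no size issues, because the objects of $\A$ and of $\catLieC$ are indexed by $\N$ and $\KMod$ is both complete and cocomplete.

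I do not anticipate a genuine obstacle: once the bimodule is identified, the adjunction is purely formal, and the substantive content --- the degree-$0$ case of which is the preceding proposition, recovering Powell's bimodule ${}_{\Delta}\catAss$ and hence his adjunction --- lies in the description of the bimodule via hom-spaces of $\A^{L}$. The only points that need attention are bookkeeping: verifying that the variances (left $\A$-action on the $H$-side, right $\catLieC$-action on the $L$-side, contravariance of $B$ in the $\catLieC$-variable) are consistent with the asserted direction of the adjunction, and, if one prefers a self-contained argument to a citation, writing out the coend--Fubini computation above in full.
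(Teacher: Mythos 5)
Your proposal matches the paper's own treatment: the paper defines the $(\A,\catLieC)$-bimodule structure on $\A^{L}(L^{\otimes -},H^{\otimes -})$ exactly as you do, via composition in $\A^{L}$ using that $\A$ and $\catLieC$ sit inside $\A^{L}$ as the full subcategories on objects generated by $H$ and by $L$, and then simply invokes the general hom--tensor adjunction, just as you do (you merely spell out the coend--end calculation that the paper leaves implicit). So the proposal is correct and takes essentially the same approach.
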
 

We define a full subcategory $\AMod^{\omega}$ of $\AMod$ in such a way that an object of $\AMod^{\omega}$ is an $\A$-module $M$ which satisfies $U(M)\in \kgropMod^{\omega}$, where $U:\AMod\to \kgropMod$ denotes the forgetful functor.

\begin{theorem}[see Theorem \ref{adjunctionAomega}]\label{adjunctionAomegaintro}
 We have $\A^{L}(L^{\otimes m},H^{\otimes -})\in \Ob(\AMod^{\omega})$ for each $m\ge 0$,
 and the adjunction in Theorem \ref{adjunctionAintro} restricts to an adjunction 
    \begin{gather*}
    \A^{L}(L^{\otimes -},H^{\otimes -})\otimes_{\catLieC} - : \adj{\catLieCMod}{\AMod^{\omega}}{}{} : \Hom_{\AMod^{\omega}}(\A^{L}(L^{\otimes -},H^{\otimes -}),-).
    \end{gather*}    
\end{theorem}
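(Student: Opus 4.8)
Write $F := \A^{L}(L^{\otimes -},H^{\otimes -})\otimes_{\catLieC} -$ and $G := \Hom_{\AMod}(\A^{L}(L^{\otimes -},H^{\otimes -}),-)$ for the adjoint pair of Theorem \ref{adjunctionAintro}. The plan is to show that $F$ factors through the full subcategory $\AMod^{\omega}\subseteq\AMod$; the restricted adjunction then follows formally. Being a left adjoint, $F$ preserves all colimits, and by the co-Yoneda lemma every $\catLieC$-module is a colimit of the representables $\catLieC(L^{\otimes m},-)$ with $m\ge 0$, while
\begin{gather*}
 F\bigl(\catLieC(L^{\otimes m},-)\bigr)\;\cong\;\A^{L}(L^{\otimes m},H^{\otimes -}).
\end{gather*}
Thus the first assertion of the theorem is precisely the statement that $F$ sends each representable into $\AMod^{\omega}$. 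Moreover the forgetful functor $U:\AMod\to\kgropMod$ is restriction along the inclusion $\kgrop\cong\A_0\hookrightarrow\A$, so it preserves all colimits; and the analytic functors on $\gr^{\op}$ are closed under colimits in $\kgropMod$ (a colimit of colimits of polynomial functors is again one), so $\AMod^{\omega}$ is closed under colimits in $\AMod$. Hence, once the first assertion is proved, $U\circ F$ preserves colimits and sends representables into $\kgropMod^{\omega}$, so $F(\catLieCMod)\subseteq\AMod^{\omega}$. For $M\in\catLieCMod$ and $N\in\AMod^{\omega}$ the fullness of $\AMod^{\omega}$ then gives $\Hom_{\AMod^{\omega}}(F(M),N)=\Hom_{\AMod}(F(M),N)\cong\Hom_{\catLieCMod}(M,G(N))$, which is the asserted adjunction; its right adjoint is the restriction of $G$ along $\AMod^{\omega}\hookrightarrow\AMod$, which on objects of $\AMod^{\omega}$ coincides with $\Hom_{\AMod^{\omega}}(\A^{L}(L^{\otimes -},H^{\otimes -}),-)$.

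It therefore remains to prove that for each $m\ge 0$ the underlying $\kgrop$-module of $\A^{L}(L^{\otimes m},H^{\otimes -})$ is analytic. I would use the grading $\A^{L}=\bigoplus_{d\ge 0}\A^{L}_d$ by the number of Casimir $2$-tensors, which induces a grading
\begin{gather*}
 \A^{L}(L^{\otimes m},H^{\otimes -})=\bigoplus_{d\ge 0}\A^{L}_d(L^{\otimes m},H^{\otimes -})
\end{gather*}
of $\kgrop$-modules whose partial sums $\bigoplus_{d\le D}$ exhaust it. Since an analytic functor is exactly a colimit of polynomial functors, it suffices to show that each graded piece $\A^{L}_d(L^{\otimes m},H^{\otimes -})$ is a polynomial $\kgrop$-module. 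For this I would describe $\A^{L}_d(L^{\otimes m},H^{\otimes n})$ explicitly: decomposing an extended Jacobi diagram from $L^{\otimes m}$ to $H^{\otimes n}$ along the $L$-legs, using the Casimir Lie algebra structure carried by $L$, should express it as a finite combination built from morphisms of $\A$ whose internal degree is bounded in terms of $d$ and $m$, thereby relating $\A^{L}_d(L^{\otimes m},H^{\otimes -})$ to the $\kgrop$-modules $\A_{d'}(0,H^{\otimes -})$ with $d'$ so bounded. By \cite{Katada1,Katada2} each $\A_{d'}(0,H^{\otimes -})$ is a polynomial $\kgrop$-module (indeed with an explicit indecomposable decomposition), and hence so is $\A^{L}_d(L^{\otimes m},H^{\otimes -})$.

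The main obstacle is exactly this last structural step: producing the decomposition of $\A^{L}_d(L^{\otimes m},H^{\otimes n})$ that isolates the $H$-variable and yields a \emph{uniform} polynomial degree bound for each fixed pair $(d,m)$, rather than deducing analyticity of the total module by a softer argument. One has to track carefully how attaching legs to the $L$-inputs interacts with the degree grading and to check that, for fixed $d$ and $m$, only finitely many internal diagrams of bounded degree can contribute. The remaining ingredients — the co-Yoneda identification of $F$ on representables, the stability of analytic functors and of $\AMod^{\omega}$ under colimits, the fact that $U$ is a colimit-preserving restriction functor, and the purely formal restriction of the adjunction — are routine.
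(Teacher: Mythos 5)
There is a genuine gap, and you identify it yourself: the entire content of the theorem is the claim that $U(\A^{L}(L^{\otimes m},H^{\otimes -}))$ is analytic, and your proposal does not prove it — it only sketches a diagrammatic decomposition (``should express it as\dots'', ``I would describe\dots'') and explicitly flags the required uniform degree bound as ``the main obstacle''. The formal scaffolding you set up (co-Yoneda reduction to representables, closure of analytic functors under colimits, fullness of $\AMod^{\omega}$ giving the restricted adjunction) is fine but is not where the work lies, so as it stands the proposal is an incomplete proof. Moreover the specific route you sketch — relating $\A^{L}_d(L^{\otimes m},H^{\otimes -})$ to the modules $\A_{d'}(0,H^{\otimes -})$ of \cite{Katada1,Katada2} — is not the natural comparison: there is no evident $\kgrop$-equivariant map in the $H$-variable between these objects, and the surjection one does have, $\A_d(m,-)\twoheadrightarrow \A^{L}_d(L^{\otimes m},H^{\otimes -})$ given by precomposition with $i^{\otimes m}$, starts from a module that is \emph{not} analytic for $m\ge 1$, so it cannot by itself deliver polynomiality without the very analysis you postpone.

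The paper closes this gap with no new diagram analysis, and for all $\catLieC$-modules $K$ at once rather than only representables: by the bimodule factorization $\A^{L}(L^{\otimes -},H^{\otimes -})\cong \A^{L}_0(L^{\otimes -},H^{\otimes -})\otimes_{\catLie}\catLieC$ of \eqref{ALandAL0} (which rests on Lemma \ref{lemmaALandAL0d}), Lemma \ref{tensorandU} gives $U(\A^{L}\otimes_{\catLieC}K)\cong \A^{L}_0\otimes_{\catLie}U(K)$, and this lies in $\kgropMod^{\omega}$ because it is in the essential image of Powell's left adjoint (Proposition \ref{adjunctionPowell}). Taking $K=\catLieC$ (or $K=\catLieC(m,-)$) yields $\A^{L}(L^{\otimes m},H^{\otimes -})\in\Ob(\AMod^{\omega})$, and the restriction of the adjunction is then immediate from fullness, exactly as in your formal step. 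If you want to salvage your more explicit approach, the missing bound is in fact available in the paper's Lemma \ref{lemmaALandAL0d}: $\A^{L}_d(L^{\otimes m},H^{\otimes -})\cong \A^{L}_0(L^{\otimes -},H^{\otimes -})\otimes_{\catLie}\catLieC(m,-)_d$, and since $\catLieC(m,n)_d=0$ for $n>m+2d$, this graded piece is polynomial of degree at most $m+2d$; but some such structural input must actually be proved or cited, not left as a plan.
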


\begin{remark}
Recently, Minkyu Kim has independently obtained an adjunction, equivalent to Theorem \ref{adjunctionAintro}, by using a general method involving left ideals.
Moreover, he has proved that the restriction of the adjunction, equivalent to that in Theorem \ref{adjunctionAomegaintro}, induces an equivalence of categories.
(See Remark \ref{remarkKim}.)
\end{remark}

For a $\K$-linear PROP $\catP$, the Day convolution of modules over $\catP$ induces a $\K$-linear symmetric monoidal structure on the category of $\catP$-modules.

\begin{proposition}[see Propositions \ref{symmetricmonoidalAL0} and \ref{symmetricmonoidalAL}]
The left adjoint functors
\begin{gather*}
    \A^{L}_0(L^{\otimes-},H^{\otimes -})\otimes_{\catLie}-: \catLieMod\to\kgropMod
\end{gather*}
and 
\begin{gather*}
    \A^{L}(L^{\otimes-},H^{\otimes -})\otimes_{\catLieC}-: \catLieCMod\to\AMod
\end{gather*}
are symmetric monoidal with respect to the symmetric monoidal structures induced by the Day convolution.    
\end{proposition}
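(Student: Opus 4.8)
The plan is to verify that each of the two left adjoint functors is symmetric monoidal by checking compatibility with the Day convolution unit and with the monoidal product, then noting that the two cases are formally parallel so it suffices to treat, say, the $\catLieC$-to-$\A$ case in detail and remark that the degree-zero case is obtained by restriction.

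First I would recall the structure of the Day convolution on $\catP\textbf{-}\mathrm{Mod}$ for a $\K$-linear PROP $\catP$: since $\catP$ is a PROP it carries a symmetric monoidal structure $(\oplus,0)$ on objects (concatenation of the generating objects), and the Day convolution of two $\catP$-modules $M,N$ is $(M\boxtimes N)(x)=\int^{y,z}\catP(y\oplus z,x)\otimes M(y)\otimes N(z)$, with monoidal unit the representable $\catP(0,-)$. For $\catLieC$ the generating object is $L$ and the unit is $\catLieC(0,-)=(\catLieC)(L^{\otimes 0},-)$; for $\A$ the generating object is $H$ and the unit is $\A(0,-)=\A(H^{\otimes 0},-)$. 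The key observation is that the bimodule $\A^{L}(L^{\otimes -},H^{\otimes -})$ is built from hom-spaces of the symmetric monoidal category $\A^{L}$, in which $H\oplus L$-type concatenations behave compatibly: there is a natural identification $\A^{L}(L^{\otimes(m+m')},H^{\otimes n})\cong\bigoplus_{n'+n''=n}\big(\text{coend over how the }n\text{ strands split}\big)$ coming from the monoidal structure of $\A^{L}$ together with the Hopf-algebra/Casimir-Hopf-algebra generation of the objects $H^{\otimes n}$ and $L^{\otimes m}$. Concretely, I would use that in a PROP freely generated by a (Casimir) Hopf algebra the object $H^{\otimes n}$ is the $n$-fold monoidal power of the single generator, so hom-spaces out of or into it decompose along the monoidal structure in the standard PROP way.

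The main steps are then: (i) produce a natural isomorphism
\[
 \big(\A^{L}(L^{\otimes-},H^{\otimes-})\otimes_{\catLieC}M\big)\boxtimes_{\A}\big(\A^{L}(L^{\otimes-},H^{\otimes-})\otimes_{\catLieC}N\big)\;\xrightarrow{\ \sim\ }\;\A^{L}(L^{\otimes-},H^{\otimes-})\otimes_{\catLieC}(M\boxtimes_{\catLieC}N),
\]
by unwinding both sides as iterated coends: the left side is a coend over splittings of the $H$-strands and then a tensor over $\catLieC$ in each factor, and using the monoidal decomposition of $\A^{L}(L^{\otimes m},H^{\otimes n})$ and Fubini for coends one reorganizes it into a single coend over splittings of the $L$-strands, which is exactly the right side; the combinatorial heart is that the "splitting" structure on $\A^{L}$-hom-spaces is compatible on the $H$-side and the $L$-side simultaneously, which is precisely the statement that $\A^{L}$ is symmetric monoidal and that $\A$, $\catLieC$ sit inside it as monoidal subcategories. (ii) Produce a natural isomorphism of units: $\A^{L}(L^{\otimes-},H^{\otimes-})\otimes_{\catLieC}\catLieC(0,-)\cong\A^{L}(L^{\otimes 0},H^{\otimes-})=\A^{L}(0,H^{\otimes-})$, and then identify $\A^{L}(0,H^{\otimes-})$ with $\A(0,-)$, the unit of the Day convolution on $\A\textbf{-}\mathrm{Mod}$ — this last identification holds because an extended Jacobi diagram with no $L$-colored boundary is just a Jacobi diagram in a handlebody, i.e. $\A^{L}(0,H^{\otimes n})=\A(0,H^{\otimes n})$ by the very definition of $\A^{L}$ as an extension of $\A$. (iii) Check the coherence hexagon and unit triangles for the resulting monoidal functor structure; these follow formally once the isomorphisms in (i) and (ii) are exhibited as built from the associativity and symmetry constraints of $\A^{L}$, because all the maps in sight are induced by the monoidal structure of a single symmetric monoidal category $\A^{L}$ and coend manipulations, which are coherent by Fubini. (iv) For the degree-zero statement, observe that $\A^{L}_0$ is a symmetric monoidal subcategory of $\A^{L}$ containing $\kgrop\cong\A_0$ and $\catLie\cong(\catLieC)_0$ as monoidal subcategories, and that taking degree-zero parts is compatible with the Day convolution (the unit $\A(0,-)$ is concentrated in degree $0$, and the monoidal product of degree-$0$ modules is computed inside $\A_0$); hence restricting the argument for $\A^{L}$ verbatim gives the claim for $\A^{L}_0$.

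The step I expect to be the main obstacle is (i), and within it the careful bookkeeping of the coend identifications: one must check that the canonical comparison map — assembled from the monoidal structure maps $\A^{L}(L^{\otimes m},H^{\otimes n})\otimes\A^{L}(L^{\otimes m'},H^{\otimes n'})\to\A^{L}(L^{\otimes(m+m')},H^{\otimes(n+n')})$ — descends to the relevant coends and is an isomorphism, which amounts to showing that every extended Jacobi diagram $L^{\otimes(m+m')}\to H^{\otimes n}$ can be cut, uniquely up to the coend relations, into a piece landing in the first $H$-strands and a piece landing in the rest, with the $L$-inputs distributed accordingly. This is where the specific structure of $\A^{L}$ — that its objects $H^{\otimes n}$, $L^{\otimes m}$ are generated as monoidal powers of single generators endowed with (co)product structure — does the real work; I would phrase it as a lemma about PROPs generated by a Hopf-type object, namely that for such a PROP $\catP$ with generator $g$, the functor $\catP(g^{\otimes-},-)\colon\catP\to\catP\textbf{-}\mathrm{Mod}$-valued assignment is monoidal, and then apply it with $\catP=\A^{L}$ restricted appropriately. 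Everything else — Fubini for coends, the universal property of $\otimes_{\catLieC}$, and the coherence diagrams — is formal.
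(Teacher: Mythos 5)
Your proposal matches the paper's proof in essence: both unwind the two Day convolutions as iterated coends, apply Fubini, and reduce everything to the key isomorphism given by composition in $\A^{L}$, namely $\int^{p,p'}\A^{L}(H^{\otimes p+p'},H^{\otimes -})\otimes\A^{L}(L^{\otimes m},H^{\otimes p})\otimes\A^{L}(L^{\otimes m'},H^{\otimes p'})\cong\A^{L}(L^{\otimes m+m'},H^{\otimes -})$, which the paper verifies via the chord-diagram basis of Lemma \ref{AL0} — exactly the ``cutting'' step you single out as the main obstacle. The only (immaterial) difference is that the paper writes out the degree-$0$ case in detail and notes the $\catLieC$-to-$\A$ case is analogous, whereas you propose the reverse order.
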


We have a graded algebra $A=U(\A(0,-))$ in the symmetric monoidal category $\kgropMod$ with the Day convolution tensor product.
In a similar way, we have a graded algebra $C=U(\catLieC(0,-))$ in the symmetric monoidal category $\catLieMod$ with the Day convolution tensor product, where $U:\catLieCMod\to \catLieMod$ is the forgetful functor.
The graded algebra $A$ corresponds to the graded algebra $C$ via the category equivalence between $\kgropMod^{\omega}$ and $\catLieMod$.

\begin{theorem}[see Theorem \ref{presentationofC} and Corollary \ref{quadraticA}]
    The graded algebra $C$ in $\catLieMod$ is quadratic, and the graded algebra $A$ in $\kgropMod$ is quadratic.
\end{theorem}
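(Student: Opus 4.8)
The plan is to prove the presentation of $C$ first and then to deduce quadraticity of $A$ by transport along the equivalence $\kgropMod^{\omega}\simeq\catLieMod$. Writing $F:=\A^{L}_0(L^{\otimes-},H^{\otimes -})\otimes_{\catLie}-\colon\catLieMod\to\kgropMod$ for the left adjoint of the preceding proposition, which is symmetric monoidal and restricts to an equivalence $\catLieMod\simeq\kgropMod^{\omega}$ carrying $C$ to $A$: since $F$ is cocontinuous and strong monoidal it takes the free commutative algebra $\Sym(C_1)$ to $\Sym(A_1)$ and, applied to the cokernel presentation $\Sym(C_1)\otimes R\otimes\Sym(C_1)\to\Sym(C_1)\to C\to 0$ of $C$ by an ideal generated in degree $2$, yields the corresponding presentation of $A$ with the degree-$2$ submodule given by the image of $F(R)$ in $\Sym^{2}(A_1)$. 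Hence $A$ is quadratic once $C$ is, and the whole content is the presentation of $C$.

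For $C$ I would use the presentation of the $\K$-linear PROP $\catLieC$ as $\catLie$ freely extended by a symmetric invariant Casimir $2$-tensor $c\in\catLieC(0,2)$, graded by the number of copies of $c$ (recalled from \cite{Hinich--Vaintrob,Katada2}). Three points follow. First, $C_1\cong\catLie(2,-)_{\gpS_2}$: the functor corepresented by the object $2$, with the transposition of the two legs of $c$ divided out; there are no relations in degree $1$ since the Lie PROP has $\catLie(0,1)=0$, so the single input of the invariance relation cannot be capped in degree $\le 1$. Second, $C$ is generated in degree $1$: pushing all copies of $c$ to the bottom of a composite (legitimate as $c$ has empty source) writes any degree-$d$ morphism $0\to n$ as a $\catLie$-morphism $2d\to n$ precomposed with $c^{\otimes d}$, and a co-Yoneda computation of the Day convolution identifies $\Sym(C_1)_d(n)\cong\catLie(2d,n)_{\gpS_2\wr\gpS_d}$ with $\pi\colon\Sym(C_1)\twoheadrightarrow C$ the evident quotient. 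Third, by the presentation of $\catLieC$ the only relations on $\Sym(C_1)$ beyond the free commutative ones come from invariance of $c$, and because its input must be capped using at least one further copy of $c$, its minimal incarnation as a relation in $C$ lies in degree $2$; let $R\subseteq\Sym^{2}(C_1)$ be the $\catLie$-submodule these span.

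It remains to prove $\ker(\pi)=(R)$, equivalently $\Sym(C_1)/(R)\xrightarrow{\sim}C$. I see two routes. The hands-on route: $\ker(\pi_d)$ is spanned by differences $D-D'$ where $D'$ results from a diagram $D$ of degree $d\ge 2$ by one application of invariance at a marked copy of $c$; picking another copy of $c$ in $D$ together with the edge-path joining it to the marked one, one rewrites $D-D'$ as a telescoping sum of invariance instances that slide the marked chord toward the second one, the terminal term being visibly of the form (degree-$2$ incarnation of invariance)$\,\cdot\,$(lower-degree diagram) for the Day multiplication and the $\catLie$-action, hence in $(R)$. The efficient route: use the explicit basis of $\catLieC(0,-)_d$ supplied by the indecomposable decomposition of the $\kgrop$-module $\A_d(0,-)$ from \cite{Katada1,Katada2} to determine the graded dimensions of $C$, compute independently those of the quadratic algebra $\Sym(C_1)/(R)$, and conclude by a dimension count that the surjection $\Sym(C_1)/(R)\twoheadrightarrow C$ is an isomorphism. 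I expect this final step --- the passage from ``all relations come from invariance'' to ``all relations are generated in degree $2$'' --- to be the main obstacle: one must show that sliding a chord through the Lie structure and past other chords never produces a genuinely higher relation, which requires either a confluence statement for the rewriting above or the exact Hilbert series of $C$ obtained from \cite{Katada2}.
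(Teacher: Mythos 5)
Your overall strategy is the same as the paper's: present $C$ by the degree-one generator $c$ and degree-two relations (symmetry of the Casimir element and its invariance), then transport the presentation to $A$ through the symmetric monoidal equivalence of Propositions \ref{adjunctionPowell} and \ref{symmetricmonoidalAL0}; the paper works with the tensor algebra $C_1^{\boxtimes}$ and an explicit commutativity relation $r_s$ rather than with $\Sym(C_1)$, which is an immaterial difference, and your deduction of the quadraticity of $A$ from that of $C$ matches Corollary \ref{quadraticA}. The genuine gap is exactly the step you flag and leave open: showing that $\ker(\pi)$ is generated in degree $2$. Neither of your two routes closes it. The rewriting route needs precisely the missing confluence-type statement: an instance of the invariance relation \eqref{Casimirrelation} inside a degree-$d$ diagram involves a bracket whose second input is an arbitrary subtree, not a leg of another copy of $c$, so it is not visibly of the form (quadratic relation) multiplied, via the Day product and the $\catLie$-action, by a lower-degree element, and your ``telescoping'' sketch does not explain why the sliding terminates in such a form. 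The Hilbert-series route is not available as stated: \cite{Katada1,Katada2} give the composition factors of $A_d$, hence of $C_d$, but computing the graded dimensions of $\Sym(C_1)/(R)$ \emph{independently} requires controlling the ideal generated by $R$ in all degrees, which is the very point at issue, so the comparison would be circular.

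The paper closes this gap without any confluence argument, in two steps. First, Lemma \ref{CatLieC0-} presents $\catLieC(0,n)_d$ by tree-generators $(T_1\otimes\cdots\otimes T_n)P_{\sigma}c^{\otimes d}$ modulo equivariance, AS, IHX, the symmetry of $c$, and the two \emph{local} families \eqref{[,]cij} and \eqref{[,]ci}, in which the bracket meets legs of the Casimir elements directly; the reduction from the general invariance relation to these local forms is by induction on the size of the attached subtree, with the vanishing $\catLieC(0,1)=0$ of \eqref{CatLieC01} handling the base cases. This lemma is the substitute for your confluence statement. Second, instead of rewriting inside the kernel, the proof of Theorem \ref{presentationofC} constructs a $\K$-linear map $\phi_d^{-}\colon C_d(n)\to (C_1^{\boxtimes}/R)_d(n)$ that is inverse to the induced surjection $\overline{\phi}$; its well-definedness only requires checking that the finitely many local relations of Lemma \ref{CatLieC0-} hold in the quadratic quotient, which is immediate because \eqref{[,]cij} and \eqref{[,]ci} are obtained from $r_c$ (together with $r_s$) by the $\catLie$-action and Day multiplication by the remaining copies of $c$. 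Supplying an analogue of this lemma-plus-section argument is what your outline is missing; the rest of your proposal is consistent with the paper.
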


\subsection{Subquotient $\A$-modules of $\A(0,-)$}
In \cite{Katada1, Katada2}, the author studied the $\kgrop$-module $A_d=\A_d(0,-)$. One of the main results of \cite{Katada1, Katada2} is an indecomposable decomposition of $A_d$.
More precisely, for $d\ge 2$, we have an indecomposable decomposition of $\kgrop$-modules
$$A_d=A_d P\oplus A_d Q,$$
where $A_d P$ and $A_d Q$ are the $\kgrop$-submodules of $A_d$ that are generated by the symmetric element $P_d$ and the anti-symmetric element $Q_d$, respectively.
Moreover, $A_d P$ is a simple $\kgrop$-module which is isomorphic to $S^{2d}\circ\mathfrak{a}^{\#}$.
Here, $S^{\lambda}$ denotes the \emph{Schur functor} corresponding to a partition $\lambda$, and $\mathfrak{a}^{\#}=\Hom(-^{\ab},\K)$ denotes the dual of the abelianization functor. (See Section \ref{sectionAd} for details.)

For $d\ge 0$, the degree $\ge d$ part forms the $\A$-submodule $\A_{\ge d}(0,-)$ of $\A(0,-)$.
For $d\ge 0,\; d'\ge d+2$, the quotient $\A$-module $\A_{\ge d}(0,-)/\A_{\ge d'}(0,-)$ does not factor through $\kgrop$.
By using the $\kgrop$-module structure of $A_d$, we obtain the following.

\begin{theorem}[see Theorems \ref{indecomposable} and \ref{indecomposableA0}]\label{indecomposableA0intro}
For $d\ge 0,\; d'\ge d+2$, the $\A$-module $\A_{\ge d}(0,-)/\A_{\ge d'}(0,-)$ is indecomposable. Moreover, the $\A$-module $\A(0,-)$ is indecomposable.
\end{theorem}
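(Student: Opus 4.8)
We treat the two claims separately. \emph{The $\A$-module $\A(0,-)$ is indecomposable.} Since $\A(0,-)$ corepresents evaluation at the object $0$, the $\KMod$-enriched Yoneda lemma gives a $\K$-algebra isomorphism $\End_{\AMod}(\A(0,-))\cong\A(0,0)$ (up to replacing a ring by its opposite, which is irrelevant here). The graded algebra $\A(0,0)=\bigoplus_{d\ge0}\A_d(0,0)$ is concentrated in nonnegative degrees with $\A_0(0,0)=\kgrop(0,0)=\K$, hence is connected graded; and a connected graded algebra has no idempotent other than $0$ and $1$: if $x=\sum_{k\ge0}x_k$ and $x^2=x$ then $x_0\in\{0,1\}$, and after replacing $x$ by $1-x$ we may assume $x_0=0$, whereupon if $x\ne0$ the degree-$k$ component of $x^2=x$, for the least $k$ with $x_k\ne0$, reads $x_k=\sum_{i+j=k}x_ix_j=0$ (each summand has a factor of degree $<k$), a contradiction. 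Hence $\A(0,-)$ is indecomposable.

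\emph{Reduction for $M:=\A_{\ge d}(0,-)/\A_{\ge d'}(0,-)$ with $d'\ge d+2$.} The ring $\End_{\AMod}(M)$ is again connected graded — a homogeneous endomorphism has nonnegative degree because all morphisms of $\A$ do, and the positive-degree part is nilpotent since the grading of $M$ has length $d'-d$ — so, by the same lowest-degree argument as above (an idempotent whose degree-$0$ component is $0$ or $1$ is $0$ or $1$), $M$ is indecomposable as soon as it has no nontrivial \emph{graded} idempotent endomorphism. Write $N_k:=\A_{\ge k}(0,-)/\A_{\ge d'}(0,-)$, so that $M=N_d\supseteq N_{d+1}\supseteq\cdots\supseteq N_{d'}=0$ is a filtration by $\A$-submodules with $N_k/N_{k+1}\cong M_k=A_k$ (on which $\A$ acts through $\kgrop$). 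The key preliminary is the structural lemma that $\A_{\ge k}(0,-)$ is generated, as an $\A$-module, by its lowest-degree component $\A_k(0,-)$ for every $k$ — concretely, one strips a single copy of the Casimir $2$-tensor off any Jacobi diagram of degree $>k$ and exhibits it as a degree-$1$ morphism of $\A$ applied to a diagram of one lower degree. In particular the degree-$1$ ``add a Casimir'' transition maps $\tau\colon A_k\to A_{k+1}$ coming from the $\A$-action are jointly surjective. Now a graded idempotent endomorphism $e$ of $M$ is the same as a family of idempotents $\overline{e}_k\in\End_{\kgropMod}(A_k)$ with $\overline{e}_{k+1}\circ\tau=\tau\circ\overline{e}_k$ for every such $\tau$; by joint surjectivity $\overline{e}_{k+1}$ is determined by $\overline{e}_k$, hence the whole family by $\overline{e}_d$. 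Thus $e\mapsto\overline{e}_d$ is a ring embedding $\End^{\mathrm{gr}}_{\AMod}(M)\hookrightarrow\End_{\kgropMod}(A_d)$, and it remains to show its image contains no nontrivial idempotent.

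\emph{Using the $\kgrop$-module structure of $A_d$.} Here I would invoke \cite{Katada1,Katada2}: for $d\ge2$, $A_d=A_d P\oplus A_d Q$ with $A_d P\cong S^{2d}\circ\mathfrak{a}^{\#}$ simple and $\Hom_{\kgropMod}(A_d P,A_d Q)=\Hom_{\kgropMod}(A_d Q,A_d P)=0$, together with the finer indecomposable decomposition of $A_d Q$ (the cases $d\in\{0,1\}$, where $A_0=\K$ and $A_1$ are indecomposable, are treated directly). If a nonzero proper idempotent $\overline{e}_d$ of $\End_{\kgropMod}(A_d)$ lay in the image, the induced family $(\overline{e}_k)$ would yield, for each $k$, a $\kgrop$-module splitting $A_k=\overline{e}_k(A_k)\oplus(1-\overline{e}_k)(A_k)$ with respect to which every transition $\tau$ would be block-diagonal. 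I would rule this out by showing the ``add a Casimir'' operators genuinely mix the summands of consecutive layers — for instance some $\tau$ sends part of $A_d P$ into $A_{d+1} Q$, and some $\tau$ sends part of $A_d Q$ onto the simple $A_{d+1} P$ — so that no nontrivial decomposition of $A_d$ propagates up the filtration. Hence $\overline{e}_d\in\{0,\id\}$ and $M$ is indecomposable. It is convenient to organize the argument as an induction on $d'-d$, using the short exact sequences $0\to A_{d'-1}\to M\to\A_{\ge d}(0,-)/\A_{\ge d'-1}(0,-)\to0$, so that at each stage only the compatibility of one new layer with its predecessor is examined.

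The main obstacle is exactly this mixing statement: making explicit how the degree-$1$ transition operators act on the $P$- and $Q$-summands of two consecutive layers, equivalently showing that the extensions $0\to A_{k+1}\to N_k/N_{k+2}\to A_k\to0$ are non-split in the strong sense that the transitions respect no nontrivial decomposition of the layers. This is where the concrete information of \cite{Katada1,Katada2} — the symmetric and anti-symmetric generators $P_d,Q_d$, the $\kgrop$-module structure of $A_d$, and the identification $A_d P\cong S^{2d}\circ\mathfrak{a}^{\#}$ — must be used essentially, and it is presumably the technical heart of Theorems \ref{indecomposable} and \ref{indecomposableA0}. A secondary point requiring care is the structural lemma that $\A_{\ge k}(0,-)$ is generated in degree $k$; transparent though it is from the diagram calculus, it is best given a clean proof, for instance by identifying the grading of $\A$ with the filtration by powers of the span of its positive-degree morphisms, using that $\A$ is generated as a $\K$-linear PROP by $\kgrop$ together with the Casimir morphism.
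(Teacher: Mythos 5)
Your treatment of $\A(0,-)$ itself is correct, and it is in fact the paper's own first observation before Theorem \ref{indecomposableA0}: by the enriched Yoneda lemma $\End_{\AMod}(\A(0,-))\cong\A(0,0)\cong\K$ (your connected-graded argument is more than is needed, since $\A_d(0,0)=0$ for $d\ge 1$ outright). Your reduction for $M=\A_{\ge d}(0,-)/\A_{\ge d'}(0,-)$ is also salvageable: the generation-in-lowest-degree lemma is true (by Lemma \ref{decompositionofA} every element of $\A_e(0,n)$ is a combination of $f\circ\tilde{c}^{\otimes e}$ with $f\in\A_0(2e,n)$, so $\A_{\ge k}(0,-)$ is generated by $\tilde{c}^{\otimes k}$), though note that "homogeneous endomorphisms have nonnegative degree because all morphisms of $\A$ do" is not by itself a justification — the negative-degree components vanish because $M$ is generated in its lowest degree, not because $\A$ has no negative-degree morphisms.

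The genuine gap is that the heart of Theorem \ref{indecomposable} is deferred rather than proved: having reduced to a graded idempotent determined by an idempotent $\overline{e}_d\in\End_{\kgropMod}(A_d)$, you still must rule out the two nontrivial candidates (the projections onto $A_dP$ and $A_dQ$ supplied by Theorem \ref{katadamainthm}), and you explicitly leave this "mixing" statement as the main obstacle. Moreover, half of the mixing you propose is false: since $\A Q$ is an $\A$-submodule of $\A(0,-)$ whose degree-$k$ piece is $A_kQ$ (so that $T(A_kQ)\cong \A Q_{\ge k}/\A Q_{\ge k+1}$), every degree-one transition sends $A_dQ$ into $A_{d+1}Q$ and never hits $A_{d+1}P$; no $\tau$ "sends part of $A_dQ$ onto the simple $A_{d+1}P$". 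What actually carries the paper's proof of Theorem \ref{indecomposable} is the opposite direction together with nonvanishing statements: adding Casimir $2$-tensors to a nonzero class in $A_tP$ produces nonzero components in both $A_{d'-1}P$ and $A_{d'-1}Q$; adding them to a nonzero class in $A_sQ$ gives a nonzero element of $A_{d'-1}Q$; and $S^{2t}\circ\mathfrak{a}^{\#}$ is not a composition factor of $A_tQ$ (Lemma \ref{decompositionAdQ}), which allows a degree-zero morphism to kill the lowest $P$-component of a mixed element. The paper then argues at the level of elements, from the top layer: assuming $M=F\oplus G$, it places $T(A_{d'-1}Q)$ inside one summand and pushes an arbitrary nonzero element of the other summand up into it, obtaining a contradiction; indecomposability of $\A(0,-)$ (and of $\A Q$) is then deduced in Theorem \ref{indecomposableA0} from these quotient statements via $G\subset\bigcap_{d'}\A_{\ge d'}(0,-)=0$. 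Your bottom-up idempotent scheme could be made to work, but only after these same concrete facts about the interaction of the Casimir action with the $P/Q$ decomposition are established or carefully extracted from \cite{Katada1,Katada2}; as written, that essential step is missing, so the proposal does not yet prove the first assertion of the statement.
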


Let $\A Q$ denote the $\A$-submodule of $\A(0,-)$ that is generated by $Q_2$, which contains all $A_d Q$ for $d\ge 2$.
For $d\ge 2$, the degree $\ge d$ part forms the $\A$-submodule $\A Q_{\ge d}$ of $\A Q$.
For the $\A$-module $\A Q$, we also have the indecomposability.

\begin{theorem}[see Proposition \ref{indecomposableQ} and Theorem \ref{indecomposableA0}]\label{indecomposableAQintro}
For $d\ge 2,\; d'\ge d+1$, the $\A$-module $\A Q_{\ge d}/\A Q_{\ge d'}$ is indecomposable. Moreover, the $\A$-module $\A Q$ is indecomposable.   
\end{theorem}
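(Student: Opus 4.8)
The plan is to show that, for each of the $\A$-modules $M$ appearing in the statement, the algebra $\End_\A(M)$ has no idempotents other than $0$ and $1$; since each such $M$ is nonzero, this is precisely the assertion that $M$ is indecomposable. In both cases $M$ — namely $\A Q$, or $\A Q_{\ge d}/\A Q_{\ge d'}$ — is a graded $\A$-module, bounded below, whose lowest nonzero graded piece $M_{d_0}$ (with $d_0 = 2$, resp.\ $d_0 = d$) is $A_{d_0}Q$: for $\A Q$ this holds by the definition of $A_2Q$, and for the quotients it uses that $(\A Q)_e = A_eQ$ for every $e\ge 2$, i.e.\ $\A Q = \bigoplus_{e\ge 2} A_eQ$.

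First I would verify that $M$ is generated, as an $\A$-module, in degree $d_0$. For $\A Q$ this is the definition ($\A Q$ is generated by $Q_2$). For $\A Q_{\ge d}/\A Q_{\ge d'}$, being a quotient of $\A Q$, it is enough to check $(\A Q)_{e+1} = \A_1\cdot(\A Q)_e$ for all $e\ge 2$. This follows from the presentation of $\A$ as the $\K$-linear PROP freely generated by a Casimir Hopf algebra \cite{Habiro--Massuyeau}: every generator has degree $0$ except the Casimir $2$-tensor, which has degree $1$, so in any expression of a degree-$k$ morphism ($k\ge 1$) as a composite of generators one may single out the copy of the Casimir introduced last — only degree-$0$ generators occur after it — and read off a factorization through a single degree-$1$ morphism, giving $\A_k = \A_1\cdot\A_{k-1}$; applied to the degree-$(e-1)$ morphisms acting on $Q_2$ this yields $(\A Q)_{e+1} = \A_1\cdot(\A Q)_e$.

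Next, since $\A$ is non-negatively graded with $\A_0 = \kgrop$ and $M$ is a graded $\A$-module, the algebra $\End_\A(M)$ acquires a grading (each homogeneous component of an $\A$-linear endomorphism is again $\A$-linear), concentrated in non-negative degrees: a homogeneous endomorphism of negative degree sends $M_{d_0}$ into a zero graded piece, hence vanishes on a generating set of $M$ and so is zero. Comparing homogeneous components of $\epsilon^2 = \epsilon$ then shows that an idempotent $\epsilon$ with vanishing degree-$0$ part is zero; applying this to $\epsilon$ and to $1-\epsilon$, it remains to prove that $\End_\A(M)_0$ has no nontrivial idempotents. Restriction to $M_{d_0}$ defines a unital algebra homomorphism $\End_\A(M)_0 \to \End_{\kgrop}(M_{d_0})$, which is injective because $M$ is generated in degree $d_0$. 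Finally, $M_{d_0} = A_{d_0}Q$ is indecomposable as a $\kgrop$-module, being a summand of the indecomposable decomposition $A_{d_0} = A_{d_0}P \oplus A_{d_0}Q$ from \cite{Katada1, Katada2}; hence $\End_{\kgrop}(M_{d_0})$ has no nontrivial idempotents, and neither does its unital subalgebra $\End_\A(M)_0$. This gives both indecomposability statements.

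The step I expect to be the main obstacle is the input $(\A Q)_d = A_dQ$ for all $d\ge 2$ (equivalently $\A Q = \bigoplus_{d\ge 2}A_dQ$): that the degree-$1$, i.e.\ Casimir, action of $\A$ on $\A(0,-)$ preserves the anti-symmetric part. Since $A_dP\cong S^{2d}\circ\mathfrak{a}^{\#}$ is simple, the only alternative to $(\A Q)_d = A_dQ$ is $(\A Q)_d = A_d$, and excluding it requires the explicit $\kgrop$-module structure of $A_d$ together with a description of how the Casimir acts on it; everything else above is formal.
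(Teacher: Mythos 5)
Your proposal is correct, but it takes a genuinely different route from the paper. The paper proves Proposition \ref{indecomposableQ} by contradiction: given a decomposition $F\oplus G$ of $\A Q_{\ge d}/\A Q_{\ge d'}$, it pushes any nonzero element of $G$ up to the top graded piece by multiplying with powers of the Casimir element $\tilde{c}$, and uses the indecomposability of the top quotient $T(A_{d'-1}Q)$ (Theorem \ref{katadamainthm}) to get a contradiction; the indecomposability of $\A Q$ itself is then deduced in Theorem \ref{indecomposableA0} from that of all the finite quotients via $G\subset \bigcap_{d'}\A Q_{\ge d'}=0$. You instead argue from the bottom: a bounded-below graded $\A$-module generated in its lowest degree, whose lowest piece is an indecomposable $\kgrop$-module, has no nontrivial idempotent endomorphisms, because idempotents are detected by their degree-zero components, and these embed into $\End_{\kgrop}$ of the lowest piece. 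Your formal steps check out: homogeneous components of an $\A$-linear map are $\A$-linear, negative components vanish by generation in lowest degree, the degreewise induction on $\epsilon^2=\epsilon$ is valid even for the unbounded grading of $\A Q$, and the factorization $\A_k=\A_1\circ\A_{k-1}$ does follow from the presentation of $\A$ as you describe. What each approach buys: yours treats $\A Q$ directly (no limit argument over the quotients) and never needs the nontriviality of the stabilization maps $x\mapsto \tilde{c}^{\otimes k}\otimes x$ on $A_tQ$ that the paper's element-pushing argument uses, whereas the paper works from the top graded piece and does not need the generation-in-lowest-degree statement. The one nonformal input you flag --- $(\A Q)_e=A_eQ$ for all $e\ge 2$, which you need only to identify the bottom piece of $\A Q_{\ge d}/\A Q_{\ge d'}$ when $d>2$ --- is not a gap specific to your argument: the paper asserts exactly this identification (in the form $T(A_eQ)\cong \A Q_{\ge e}/\A Q_{\ge e+1}$, stated just before Proposition \ref{indecomposableQ}) and its own proof uses the resulting $\K$-linear isomorphism $(\A Q_{\ge d}/\A Q_{\ge d'})(n)\cong A_dQ(n)\oplus\cdots\oplus A_{d'-1}Q(n)$, so it is a shared input; note also that for $\A Q$ itself your argument needs only $(\A Q)_2=A_2Q$, which is immediate from the definitions, so there your proof is self-contained where the paper's is not.
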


For each $m\ge 0$, we have the $\A$-module $\A^{L}(L^{\otimes m}, H^{\otimes -})$.
For $m=0$, we have $\A^{L}(L^{\otimes 0},H^{\otimes -})\cong \A(0,-)$ as $\A$-modules.
For $m=1$, we obtain a direct sum decomposition of $A^{L}_d=\A^{L}_d(L,H^{\otimes -})$ as $\kgrop$-modules, which is conjectured to be an indecomposable decomposition.

\begin{proposition}[see Proposition \ref{decompositionALd}]
    Let $d\ge 1$. We have a direct sum decomposition of $\kgrop$-modules
    \begin{gather*}
        A^{L}_d=A^{L}_d P\oplus A^{L}_d Q,
    \end{gather*}
    where $A^{L}_d P$ and $A^{L}_d Q$ are the $\kgrop$-submodules of $A^{L}_d$ that are generated by the symmetric element $P^{L}_d$ and by the two anti-symmetric elements $Q'_d$ and $Q''_d$, respectively.
    Moreover, $A^{L}_d P$ is a simple $\kgrop$-module which is isomorphic to $S^{2d+1}\circ \mathfrak{a}^{\#}$.
\end{proposition}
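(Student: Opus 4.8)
The plan is to adapt the argument of \cite{Katada1,Katada2} for the $m=0$ case (the decomposition $A_d=A_dP\oplus A_dQ$ of $A_d=\A_d(0,-)$) to the presence of the single $L$-input. First I would extract from the description of $\A^{L}$ in \cite{Katada2} an explicit diagrammatic model for the $\kgrop$-module $A^{L}_d=\A^{L}_d(L,H^{\otimes -})$, and check (extending the corresponding statement for $A_d$) that it is a polynomial functor on $\gr^{\op}$ of degree $2d+1$. I would then look at the filtration by polynomial degree. The top graded piece, in degree $2d+1$, is a permutation-type $\kgrop$-module: it is spanned by the \emph{leading} extended Jacobi diagrams, on which $\gpS_{2d+1}$ acts by permuting the $2d+1$ relevant strands, the stabiliser of a leading diagram containing the evident copy of $\gpS_{2}\wr\gpS_{d}$ coming from the $d$ copies of the Casimir $2$-tensor. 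Decomposing this module into $\gpS_{2d+1}$-irreducibles starts from the even-partition decomposition $\bigoplus_{\lambda\vdash d}S^{2\lambda}$ of the perfect-matching module and applies the branching rule, yielding $\bigoplus_{\lambda\vdash d}\bigoplus_{\mu}S^{\mu}$ with $\mu$ running over the partitions obtained from $2\lambda$ by adding one box; the one-row partition $(2d+1)$ occurs exactly once, coming only from $\lambda=(d)$.

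Next I would set $P^{L}_d$ to be the image of a leading diagram under the full symmetriser on the $2d+1$ strands, and choose $Q'_d,Q''_d$ to be two antisymmetrisations of leading diagrams — one formed "over the $d$ struts", mirroring $Q_d$ in the $m=0$ case, and one "mixing the $L$-leg with a strut end" — arranged so that their classes in the top graded piece jointly generate, as a $\gpS_{2d+1}$-module, the complement $\bigoplus_{\mu\neq(2d+1)}S^{\mu}$ (with the appropriate multiplicities). With this in hand one proves $A^{L}_d=A^{L}_dP+A^{L}_dQ$ by a descending induction on polynomial degree: the top graded piece is generated over $\gpS_{2d+1}$ by the classes of $P^{L}_d$, $Q'_d$, $Q''_d$, and every strictly lower graded piece is generated from the top one via the generating morphisms of $\gr^{\op}$ (strand mergings and comultiplications), exactly as in \cite{Katada1,Katada2}. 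If a PBW-type splitting of $\A^{L}(H,H^{\otimes -})=\A(1,-)$ exhibiting $A^{L}_d$ as a direct summand is available from \cite{Katada2}, one could instead import the generation and the decomposition directly from the known structure of $\A_d(1,-)$.

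To upgrade this to a genuine direct sum, the crucial step is to build a $\kgrop$-module retraction $\pi\colon A^{L}_d\twoheadrightarrow S^{2d+1}\circ\mathfrak{a}^{\#}$ onto the one-row part. I would assemble $\pi$ from a "total contraction" on extended Jacobi diagrams that vanishes on any diagram with a trivalent vertex or a nontrivial strand-symmetry and records only the symmetric combination of the leg labels, landing in $\Sym^{2d+1}$ of the relevant space; its naturality for $\gr^{\op}$ has to be verified on generators. Granting $\pi$, we get $A^{L}_d=A^{L}_dP\oplus\ker\pi$ with $\pi|_{A^{L}_dP}$ an isomorphism onto $S^{2d+1}\circ\mathfrak{a}^{\#}$ and $A^{L}_dQ\subseteq\ker\pi$; together with $A^{L}_d=A^{L}_dP+A^{L}_dQ$ this forces $A^{L}_dQ=\ker\pi$ and makes the decomposition direct. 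In particular $A^{L}_dP$ is cyclic on $P^{L}_d$, hence a quotient of $S^{2d+1}\circ\mathfrak{a}^{\#}$, and $\pi$ makes it also a submodule, so $A^{L}_dP\cong S^{2d+1}\circ\mathfrak{a}^{\#}$; this is a simple $\kgrop$-module by the same fact about the functors $S^{\lambda}\circ\mathfrak{a}^{\#}$ used in \cite{Katada1,Katada2}.

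I expect the main obstacle to be the construction and the $\gr^{\op}$-naturality of the retraction $\pi$ (equivalently, proving that the decomposition is direct): one has to show that the one-row summand splits off the whole of $A^{L}_d$ rather than merely occurring as a subquotient, which requires controlling the full $\kgrop$-module structure of $A^{L}_d$, including its "descending" morphisms. A secondary difficulty is making the two antisymmetric elements $Q'_d,Q''_d$ explicit enough to check that, together with $P^{L}_d$, they generate $A^{L}_d$; the cases $d=1,2$ can be handled by hand and should provide a useful check on the general argument.
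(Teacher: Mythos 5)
Your overall strategy is the same one the paper (following \cite[Theorem 8.2]{Katada1}) uses -- split off the one-row part by a $\gr^{\op}$-natural symmetrization and show the antisymmetric elements account for the complement -- but the two steps you yourself flag as the ``main'' and ``secondary'' obstacles are exactly the content of the proof, and your proposal does not supply either of them. First, the retraction $\pi$: the paper does not need a new ``total contraction''; it defines an endomorphism $e$ of $A^{L}_d$ directly by $e_n(f\circ(i\otimes\tilde{c}^{\otimes d}))=\frac{1}{(2d+1)!}\,f\circ P^{L}_d$ for $f\in\A_0(2d+1,n)$, using the factorization of Lemma \ref{factorizationofAL}. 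The point is not $\gr^{\op}$-naturality on generators (that is automatic from the formula, since $e$ is given by precomposition) but \emph{well-definedness}: the factorization $f\circ(i\otimes\tilde{c}^{\otimes d})$ is not unique, the ambiguity being the $4$T-relation, and one must check that the full symmetrizer kills it. Your sketch of $\pi$ as a map defined on diagrams that ``vanishes on any diagram with a trivalent vertex'' runs into exactly this: such a prescription must be checked against STU/$4$T, and that check is the crux you leave open. Once $e$ is well defined it is visibly idempotent with $\im e=A^{L}_dP$, so the splitting is immediate; no separate simplicity-based argument is needed.

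Second, the generation statement. Your descending induction from the top graded piece does not close: knowing that the classes of $P^{L}_d,Q'_d,Q''_d$ generate $A^{L}_d/A^{L}_{d,\ge 1}$ only gives $i\otimes\tilde{c}^{\otimes d}\in A^{L}_dP+A^{L}_dQ+A^{L}_{d,\ge 1}$, and elements of $A^{L}_{d,\ge 1}$ (diagrams with trivalent vertices) are not automatically in $A^{L}_dP+A^{L}_dQ$; pushing the induction down the filtration would require new explicit diagrammatic identities at each stage, which is not ``exactly as in \cite{Katada1,Katada2}''. The paper instead proves $\ker e=\im(1-e)\subseteq A^{L}_dQ$ by showing that every difference $(i\otimes\tilde{c}^{\otimes d})-P_{\sigma}(i\otimes\tilde{c}^{\otimes d})$ equals $xQ'_d+yQ''_d$ for explicit $x,y\in\K\gpS_{2d+1}$, by induction on the length of $\sigma$ reduced to adjacent transpositions; the only nontrivial case, $\sigma=(2j-1,2j)$ with $2\le j\le d$, needs a specific permutation $y$, and this is precisely where $Q''_d$ is indispensable. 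So the proposal correctly identifies the shape of the argument and where the difficulty lies, but the decisive verifications (well-definedness of the projection against $4$T, and the explicit expression of the $(1-P_\sigma)$-differences in terms of $Q'_d,Q''_d$) are missing, and the branching-rule computation you substitute for them does not by itself yield the proposition.
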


We make the following conjecture, which is an analogue of the case of $\A(0,-)$.

\begin{conjecture}[see Conjectures \ref{conjectureindecomposableAL} and \ref{conjectureindecomposableALQ}]
    For $d\ge 2$, the $\kgrop$-module $A^{L}_d Q$ is indecomposable.
    Moreover, the $\A$-module $\A^{L}(L,H^{\otimes -})$ is indecomposable.
\end{conjecture}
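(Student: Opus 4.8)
The plan is to establish the two assertions in order, with essentially all of the difficulty concentrated in the first, purely $\kgrop$-theoretic statement --- that $A^{L}_dQ$ is indecomposable for $d\ge 2$. Once this is known, the indecomposability of the $\A$-module $\A^{L}(L,H^{\otimes-})$ follows by a formal argument, which can be carried out in two ways; I sketch both below.

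For the $\kgrop$-module statement I would mirror the strategy used in \cite{Katada1,Katada2} to prove that $A_dQ$ is indecomposable. The starting point is a presentation of $A^{L}_d$ as a quotient of an explicit polynomial functor obtained by applying the ``internal bracketing'' on the single $L$-colored leg of an extended Jacobi diagram; then, after splitting off the simple summand $A^{L}_dP\cong S^{2d+1}\circ\mathfrak{a}^{\#}$ of Proposition \ref{decompositionALd}, one must control the submodule lattice of $A^{L}_dQ$ --- list its composition factors and the $\kgrop$-action among them, and show that $A^{L}_dQ$ has a simple head (equivalently, a simple socle). The genuinely new feature, compared with $A_dQ$, is that $A^{L}_dQ$ is generated by \emph{two} anti-symmetric elements $Q'_d$ and $Q''_d$, so a priori it is far more likely to split; the crux is therefore to show that any $\kgrop$-submodule containing a ``mixed'' combination of $Q'_d$ and $Q''_d$ already contains both, so that $\End_{\kgrop}(A^{L}_dQ)$ has no nontrivial idempotents. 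I would organize this through the equivalence $\kgropMod^{\omega}\simeq\catLieMod$ of \cite{Powellanalytic}, computing the endomorphism ring of the finitely presented $\catLie$-module corresponding to $A^{L}_dQ$ with the help of the quadratic presentations of the algebras $A$ and $C$ from Theorem \ref{presentationofC} and Corollary \ref{quadraticA}.

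For the $\A$-module statement the cleanest route is to observe, by the co-Yoneda lemma, that $\A^{L}(L,H^{\otimes-})\cong\A^{L}(L^{\otimes-},H^{\otimes-})\otimes_{\catLieC}\catLieC(L,-)$ is the image of the representable $\catLieC$-module $\catLieC(L,-)$ under the left adjoint of Theorem \ref{adjunctionAintro}. Since this module lies in $\AMod^{\omega}$ by Theorem \ref{adjunctionAomegaintro} and $\kgropMod^{\omega}$ is closed under retracts, a nontrivial decomposition of $\A^{L}(L,H^{\otimes-})$ in $\AMod$ would already be one inside $\AMod^{\omega}$; granting the equivalence $\catLieCMod\simeq\AMod^{\omega}$ recorded in Remark \ref{remarkKim}, indecomposability of $\A^{L}(L,H^{\otimes-})$ then reduces to that of $\catLieC(L,-)$, which holds because $\End_{\catLieCMod}(\catLieC(L,-))\cong\catLieC(L,L)^{\op}$ is a connected graded $\K$-algebra with degree-$0$ part $\catLie(1,1)=\K$, hence local. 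A self-contained alternative, not invoking Remark \ref{remarkKim}, is to run the endomorphism-ring argument of Theorems \ref{indecomposableA0intro} and \ref{indecomposableAQintro} directly: an idempotent $e\in\End_{\A}(\A^{L}(L,H^{\otimes-}))$ acts on each graded piece $A^{L}_d=A^{L}_dP\oplus A^{L}_dQ$ by a pair of scalars --- by Schur's lemma on the simple summand, and by the (just established) indecomposability of $A^{L}_dQ$ on the other --- and one verifies that the degree-raising operators supplied by the $\A$-action are nonzero on both the $P$- and the $Q$-part and intertwine them, which forces the scalars to be independent of $d$ and of the summand; hence $e\in\{0,\id\}$.

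The main obstacle is the combinatorial input behind the $\kgrop$-module statement: one needs a presentation of the hom-spaces $\A^{L}(L,H^{\otimes n})$ --- extended Jacobi diagrams with a single $L$-colored input --- explicit enough to pin down the $\GL$-isotypic strata of $A^{L}_dQ$ and the maps between consecutive ``internal'' strata, which is exactly the delicate enumeration carried out in \cite{Katada1,Katada2} for $A_d$ and which does not transfer verbatim, precisely because of the two generators $Q'_d,Q''_d$. A secondary difficulty, on the $\A$-module side, is the handful of low-degree base cases where one must separately verify that the degree-raising maps do not annihilate the $Q$-part; this is the analogue of the distinction between the $d'\ge d+1$ threshold of Theorem \ref{indecomposableAQintro} and the $d'\ge d+2$ threshold of Theorem \ref{indecomposableA0intro}, and it should be checked on the corresponding subquotients of $\A^{L}(L,H^{\otimes-})$ before deducing the statement of Conjectures \ref{conjectureindecomposableAL} and \ref{conjectureindecomposableALQ} in full.
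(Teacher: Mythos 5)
This statement is not proved in the paper at all: it is recorded there as a conjecture (Conjectures \ref{conjectureindecomposableALQ} and \ref{conjectureindecomposableAL}), and the paper establishes only the partial results feeding into it, namely the splitting $A^{L}_d=A^{L}_dP\oplus A^{L}_dQ$ with $A^{L}_dP\cong S^{2d+1}\circ\mathfrak{a}^{\#}$ simple (Proposition \ref{decompositionALd}) and the indecomposability of $A^{L}_1Q$ via a unique composition series. Your text is likewise a plan rather than a proof: the core claim, that $A^{L}_dQ$ is indecomposable for $d\ge 2$, is exactly the step you defer (``the main obstacle is the combinatorial input\dots''), and nothing in the proposal supplies the submodule-lattice analysis, the control of composition factors occurring with multiplicity (already $A_3Q$ contains $S^{2^2}\circ\mathfrak{a}^{\#}$ twice and hence has infinitely many $\kgrop$-submodules, and the analogous phenomenon must be confronted for $A^{L}_dQ$), or the argument that a mixed combination of $Q'_d$ and $Q''_d$ generates both. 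Invoking the quadratic presentations of $C$ and $A$ (Theorem \ref{presentationofC}, Corollary \ref{quadraticA}) does not by itself compute $\End_{\kgropMod}(A^{L}_dQ)$ or rule out nontrivial idempotents, so the first half of the statement remains unproved.

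For the second half, your reduction via $\A^{L}(L,H^{\otimes-})\cong\A^{L}\otimes_{\catLieC}\catLieC(L,-)$ together with $\End_{\catLieCMod}(\catLieC(L,-))\cong\catLieC(L,L)^{\op}$, a connected graded algebra with degree-zero part $\K$ and hence without nontrivial idempotents, is a genuinely good observation: granted an equivalence $\catLieCMod\simeq\AMod^{\omega}$, it would settle the indecomposability of the $\A$-module $\A^{L}(L,H^{\otimes-})$ independently of the first half. But that equivalence is only announced in Remark \ref{remarkKim} (unpublished work of Kim); the paper itself proves only the adjunctions of Theorems \ref{adjunctionA} and \ref{adjunctionAomega}, and an adjunction alone does not transport endomorphism rings, so this route is conditional on input outside the paper. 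Your ``self-contained alternative'' is circular in the present state: it presupposes the (still conjectural) indecomposability of $A^{L}_dQ$, and it asserts without verification that the degree-raising operators are nonzero on both the $P$- and $Q$-summands and intertwine them --- precisely the kind of explicit computation that occupies the proofs of Theorems \ref{indecomposable}, \ref{indecomposableQ} and \ref{indecomposableA0} for $\A(0,-)$ and $\A Q$, and which does not transfer verbatim because of the two generators $Q'_d$, $Q''_d$. In sum, neither half of the conjecture is established; at best you have reduced the $\A$-module half to Kim's equivalence, while the $\kgrop$-module half is left exactly where the paper leaves it.
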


\subsection{Outline}
We organize the rest of the paper as follows.
From Section \ref{sectionPowell} to Section \ref{sectionsymmonstr}, we study the adjunction between the categories $\catLieCMod$ and $\AMod$.
More precisely, in Section \ref{sectionPowell}, we recall some known facts to state the result of Powell, and in Section \ref{sectionAL}, we recall the definition of the category $\A^{L}$ and study the hom-space of it, and in Section \ref{sectionadjunction}, we prove Theorems \ref{adjunctionAintro} and \ref{adjunctionAomegaintro}, and then in Section \ref{sectionsymmonstr}, we study the adjunctions with respect to the symmetric monoidal structure induced by the Day convolution.
In Section \ref{sectionAmod}, we study the subquotient $\A$-modules of $\A(0,-)$ and prove Theorems \ref{indecomposableA0intro} and \ref{indecomposableAQintro}.
Lastly, in Section \ref{sectionhandlebodygp}, we provide a perspective on modules over the handlebody groups.

\subsection*{Acknowledgements}
The author would like to thank Dr. Minkyu Kim for discussions on the adjunction between $\AMod$ and $\catLieCMod$, and Professor Kazuo Habiro for suggestions regarding the algebra $A$ in $\AMod$, which inspired the writing of Section \ref{sectionsymmonstr}.
She would like to thank Professor Geoffrey Powell for pointing out errors in the initial proof of the indecomposability of the $\A$-module $\A(0,-)$, and for his valuable comments throughout drafts of the present paper.
She would also like to thank Professor Christine Vespa for helpful comments.
This work was supported by JSPS KAKENHI Grant Number 24K16916.

\section{Functors on the opposite category of the category of free groups}\label{sectionPowell}
Here we recall some known facts about the opposite $\gr^{\op}$ of the category $\gr$ of free groups, and describe the adjunction between the functor category on $\gr^{\op}$ and the category of modules over the $\K$-linear PROP $\catLie$ for Lie algebras, which is given by Powell \cite{Powellanalytic}.

\subsection{The category $\gr^{\op}$}

For $n\ge 0$, let $F_n$ be the group freely generated by $\{x_1,\cdots,x_n\}$.
Let $\gr$ denote the category of finitely generated free groups and group homomorphisms, and $\gr^{\op}$ the opposite category of $\gr$.

It is well known that the category $\gr^{\op}$ is a PROP (i.e., a symmetric monoidal category with non-negative integers as objects) freely generated by a cocommutative Hopf algebra $(H=1,\mu,\eta,\Delta,\varepsilon,S)$.
See \cite{Pirashvili} for details, and \cite{Habiro} for a combinatorial description.
Here, the morphism $\mu:2\to 1$ is the group homomorphism 
$$F_1\to F_2,\quad x\mapsto x_1x_2,$$ 
the morphism $\eta:0\to 1$ is the group homomorphism 
$$F_1\to F_0,\quad x\mapsto 1,$$ 
the morphism $\Delta:1\to 2$ is the group homomorphism 
$$F_2\to F_1,\quad x_1\mapsto x,\quad x_2\mapsto x,$$ 
the morphism $\varepsilon:1\to 0$ is the group homomorphism 
$$F_0\to F_1,\quad 1\mapsto 1\in F_1,$$ 
and the morphism $S:1\to 1$ is the group homomorphism 
$$F_1\to F_1,\quad x\mapsto x^{-1}.$$

Define $\mu^{[i]}:i\to 1$ inductively by 
$\mu^{[0]}=\eta,\; \mu^{[1]}=\id_1,\; \mu^{[i]}=\mu (\mu^{[i-1]}\otimes \id_1)$ for $i\ge 2$.
Let 
\begin{gather}\label{mumulti}
\mu^{[p_1,\cdots,p_n]}=\mu^{[p_1]}\otimes\cdots\otimes\mu^{[p_n]}: p_1+\cdots +p_n\to n.
\end{gather}
Similarly, we define $\Delta^{[i]}:1\to i$ by $\Delta^{[0]}=\varepsilon, \;\Delta^{[1]}=\id_1,\; \Delta^{[i]}=(\Delta^{[i-1]}\otimes \id_1)\Delta$ for $i\ge 2$, and let
\begin{gather*}
\Delta^{[q_1,\cdots,q_m]}=\Delta^{[q_1]}\otimes \cdots\otimes\Delta^{[q_m]} :m\to q_1+\cdots+q_m.
\end{gather*}
Define a group homomorphism 
\begin{gather}\label{permutation}
    P:\gpS_m\to \gr^{\op}(m,m),\quad \sigma\mapsto P_{\sigma}
\end{gather}
by $P_{(i,i+1)}=\id_{i-1}\otimes P_{1,1}\otimes \id_{m-i-1}$ for $1\le i\le m-1$, where $P_{1,1}\in \gr^{\op}(2,2)$ is the symmetry such that $P_{1,1}(x_1)=x_2,\; P_{1,1}(x_2)=x_1$.
Then we have the following factorization of morphisms of the category $\gr^{\op}$.

\begin{lemma}[{\cite[Lemma 2, Theorem 8]{Habiro}}]\label{lemmamorofkgrop}
     Any element of $\gr^{\op}(m,n)$ can be decomposed into the following form
     $$\mu^{[p_1,\cdots,p_n]}P_{\sigma}(S^{e_1}\otimes\cdots\otimes S^{e_s})\Delta^{[q_1,\cdots,q_m]},$$
     where $s,q_1,\cdots,q_m,p_1,\cdots,p_n\ge 0$, $s=q_1+\cdots+ q_m=p_1+\cdots+p_n$, $\sigma\in \gpS_{s}$, $e_1,\cdots,e_s\in \{0,1\}$.
\end{lemma}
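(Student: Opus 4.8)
The plan is to prove the statement directly from the identification $\gr^{\op}(m,n)=\Hom_{\Grp}(F_n,F_m)$, rather than from the Hopf-algebraic presentation. Fix $f\in\gr^{\op}(m,n)$, that is, a group homomorphism $f\colon F_n\to F_m$; it is determined by the images $f(x_j)=w_j\in F_m$ for $j=1,\cdots,n$. For each $j$ choose an expression $w_j=x_{a_{j,1}}^{\eta_{j,1}}\cdots x_{a_{j,p_j}}^{\eta_{j,p_j}}$ as a (not necessarily reduced) word, with $a_{j,l}\in\{1,\cdots,m\}$, $\eta_{j,l}\in\{\pm1\}$, allowing $p_j=0$ when $w_j=1$. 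Put $s=\sum_{j=1}^n p_j$ and index the letters of $w_1,\cdots,w_n$ linearly by $k=1,\cdots,s$, reading the pairs $(j,l)$ in lexicographic order; write $a(k)\in\{1,\cdots,m\}$ and $\eta(k)\in\{\pm1\}$ for the base index and the sign of the $k$-th letter.

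Next I would read off the group homomorphisms underlying the building blocks, taking care of the contravariance of $\gr^{\op}$ (a composite in $\gr^{\op}$ corresponds to the reverse composite of group homomorphisms). The homomorphism underlying $\mu^{[p_1,\cdots,p_n]}\colon s\to n$ is $F_n\to F_s$ sending $x_j$ to the product of the $j$-th consecutive block of $p_j$ generators of $F_s$; the homomorphism underlying $\Delta^{[q_1,\cdots,q_m]}\colon m\to s$ is $F_s\to F_m$ sending the $i$-th consecutive block of $q_i$ generators of $F_s$ all to $x_i$; the one underlying $S^{e_1}\otimes\cdots\otimes S^{e_s}$ sends $x_t\mapsto x_t^{(-1)^{e_t}}$; and $P_\sigma$ permutes the generators of $F_s$, realizing every permutation as $\sigma$ ranges over $\gpS_s$ (here one uses that \eqref{permutation} is a well-defined group homomorphism, so $\sigma\mapsto(P_\sigma\text{ acting on }F_s)$ is a bijection onto the permutations of the generators). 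Composing these, the homomorphism underlying $\mu^{[p_1,\cdots,p_n]}P_\sigma(S^{e_1}\otimes\cdots\otimes S^{e_s})\Delta^{[q_1,\cdots,q_m]}$ sends $x_j$ to the word whose $l$-th letter is $x_{b(\pi(k))}^{(-1)^{e_{\pi(k)}}}$, where $k$ runs over the linear indices of the $j$-th block, $b\colon\{1,\cdots,s\}\to\{1,\cdots,m\}$ is the non-decreasing ``block function'' determined by $(q_1,\cdots,q_m)$, and $\pi\in\gpS_s$ is the permutation of generators effected by $P_\sigma$.

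Then I would make the choices that force this composite to equal $f$. Set $q_i:=|\{k : a(k)=i\}|$, so that $\sum_i q_i=s$ and the fibres of the block function $b$ have the same sizes as those of $a$; choose $\pi\in\gpS_s$ with $b\circ\pi=a$, which is possible precisely because the fibre sizes agree; let $\sigma\in\gpS_s$ be the element inducing $\pi$; and finally define $e_t\in\{0,1\}$ by $(-1)^{e_{\pi(k)}}=\eta(k)$, i.e.\ $e_t=0$ if $\eta(\pi^{-1}(t))=1$ and $e_t=1$ otherwise. With these choices the homomorphism underlying $\mu^{[p_1,\cdots,p_n]}P_\sigma(S^{e_1}\otimes\cdots\otimes S^{e_s})\Delta^{[q_1,\cdots,q_m]}$ sends each $x_j$ to $w_j$, hence agrees with $f$ on the free generators of $F_n$, and therefore equals $f$. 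Since $f$ was arbitrary, every element of $\gr^{\op}(m,n)$ admits such a decomposition.

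The argument is essentially bookkeeping, and I do not expect a genuine obstacle; the two points needing care are the contravariance (so that $\Delta^{[q_1,\cdots,q_m]}$, applied first in $\gr^{\op}$, acts last on $F_n$, and dually for $\mu^{[p_1,\cdots,p_n]}$) and the placement of the sign factors $S^{e_t}$, which act on the $\Delta$-side slots and so must be reindexed through $\pi$ when one matches them to the signs $\eta(k)$. It is also worth noting that the decomposition is far from unique, as it depends on the chosen word expressions for the $w_j$ and on the choice of $\pi$; only existence is claimed, in agreement with Habiro's combinatorial description.
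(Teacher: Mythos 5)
Your argument is correct: working directly with $\gr^{\op}(m,n)=\Hom_{\Grp}(F_n,F_m)$, reading off the underlying homomorphisms of $\mu^{[\vec p]}$, $P_\sigma$, $S^{e_1}\otimes\cdots\otimes S^{e_s}$, $\Delta^{[\vec q]}$ (with the contravariance handled correctly), and then choosing $q_i$ as the number of occurrences of $x_i^{\pm 1}$ among the chosen words, $\pi$ matching the block function to the letter function, and the $e_t$ reindexed through $\pi$, does produce a morphism sending each $x_j$ to $w_j$, hence equal to $f$; and existence is all the lemma claims. Note, however, that the paper does not prove this statement at all — it quotes it from Habiro, whose argument works on the other side of the identification: one starts from the presentation of $\gr^{\op}$ as the PROP freely generated by a cocommutative Hopf algebra and shows, by rewriting with the Hopf algebra axioms, that every word in the generators $\mu,\eta,\Delta,\varepsilon,S$ and the symmetry can be brought into the stated normal form. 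So your route is genuinely different: it is more elementary and self-contained given the explicit group homomorphisms listed in Section 2, but it uses the concrete model $\Hom_{\Grp}(F_n,F_m)$ rather than the universal property, whereas Habiro's rewriting argument applies verbatim inside the free Hopf PROP and is what one needs if one wants the factorization as a statement about the presentation itself (e.g.\ to transport it to $\A$, as in Lemma \ref{decompositionofA}). Your closing remark about non-uniqueness is consistent with the paper's comment following the lemma.
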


Note that the above factorization is not unique since we have
\begin{gather*}
    \mu(\id_1\otimes S)\Delta=\eta\varepsilon.
\end{gather*}

\subsection{The $\K$-linear PROPs for unital associative algebras and Lie algebras}

Let $\catAss$ denote the $\K$-linear PROP associated to the operad $\Ass$ encoding unital associative algebras.
That is, the category $\catAss$ is the $\K$-linear PROP freely generated by a unital associative algebra $(A=1,\mu_A,\eta_A)$, where the hom-space is 
\begin{gather*}
    \catAss(m,n)=\bigoplus_{f: \{1,\dots,m\}\to \{1,\dots,n\}} \bigotimes_{i=1}^{n} \Ass(|f^{-1}(i)|).
\end{gather*}
Define $\mu_A^{[p_1,\cdots,p_n]}$ and a group homomorphism $P:\gpS_m\to \catAss(m,m),\; \sigma\mapsto P_{\sigma}$ in a way similar to \eqref{mumulti} and \eqref{permutation}, respectively.
Then any element of $\catAss(m,n)$ has the following factorization.

\begin{lemma}\label{factorcatAss}
  Any element of $\catAss(m,n)$ can be written uniquely as a linear combination of morphisms
  $$\mu_A^{[p_1,\cdots,p_n]}P_{\sigma}$$
  with $p_1+\cdots+ p_n=m$, $\sigma\in \gpS_{m}$.
  That is, the set 
  \begin{gather*}
      \{\mu_A^{[p_1,\cdots,p_n]}P_{\sigma}\mid p_1+\cdots+ p_n=m,\;\sigma\in \gpS_{m}\}
  \end{gather*}
  is a basis for $\catAss(m,n)$.
\end{lemma}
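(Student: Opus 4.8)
The plan is to derive the statement directly from the explicit formula for $\catAss(m,n)$ as a biproduct over set maps, combined with the standard fact that $\Ass(k)$ has the permutations of $k$ letters as a $\K$-basis. First I would recall that, since $\Ass$ encodes unital associative algebras, the arity-$k$ component $\Ass(k)$ is the regular representation of $\gpS_k$, so $\{(\text{order on the $k$ inputs})\}$, i.e.\ the set of bijections $\{1,\dots,k\}\to\{1,\dots,k\}$, forms a $\K$-basis; concretely $\Ass(k)$ is spanned by the iterated products $\mu_A^{[k]}$ precomposed with a permutation of the $k$ slots. Plugging this into
\begin{gather*}
    \catAss(m,n)=\bigoplus_{f:\{1,\dots,m\}\to\{1,\dots,n\}}\ \bigotimes_{i=1}^{n}\Ass(|f^{-1}(i)|)
\end{gather*}
shows that a basis of $\catAss(m,n)$ is indexed by pairs consisting of a map $f$ together with, for each $i$, a linear order on the fiber $f^{-1}(i)$.

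The next step is to match this combinatorial index set bijectively with the set $\{(p_1,\dots,p_n;\sigma)\mid p_1+\dots+p_n=m,\ \sigma\in\gpS_m\}$ and to check that under this bijection the basis element attached to $(f,\text{orders})$ equals the morphism $\mu_A^{[p_1,\dots,p_n]}P_\sigma$. Given $(f,\text{orders})$, set $p_i=|f^{-1}(i)|$; then $\sigma\in\gpS_m$ is the permutation that first sorts the elements of $\{1,\dots,m\}$ according to which fiber they lie in (fiber $1$ first, then fiber $2$, etc.) and within each block according to the chosen linear order, so that $P_\sigma$ routes the $m$ inputs into $n$ consecutive groups of sizes $p_1,\dots,p_n$ in the prescribed internal order, and $\mu_A^{[p_1,\dots,p_n]}=\mu_A^{[p_1]}\otimes\cdots\otimes\mu_A^{[p_n]}$ multiplies each group. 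Conversely any $(p_1,\dots,p_n;\sigma)$ determines $f$ (by the block structure of sizes $p_i$) and the internal orders (by the restriction of $\sigma$ to each block), and one checks the two assignments are mutually inverse. This gives both spanning and linear independence of $\{\mu_A^{[p_1,\dots,p_n]}P_\sigma\}$ simultaneously, hence the claimed uniqueness.

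The one genuine subtlety — and the step I expect to require the most care — is well-definedness of the correspondence: different pairs $(p_1,\dots,p_n;\sigma)$ must give genuinely different morphisms, i.e.\ there are no hidden relations in $\catAss$ collapsing two such normal forms, unlike the situation in $\gr^{\op}$ where $\mu(\id_1\otimes S)\Delta=\eta\varepsilon$ destroys uniqueness. Here this is exactly the content of freeness of $\catAss$ as a $\K$-linear PROP on a unital associative algebra, together with the fact that the only relations among the $\mu_A$'s are associativity and unitality, which are precisely what is already quotiented out in passing to the operad $\Ass$ and hence reflected faithfully in the biproduct formula above; so in fact no relation survives to identify distinct normal forms, and linear independence is automatic once the bijection of index sets is established. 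I would phrase the argument so that all the work is done by the biproduct decomposition of $\catAss(m,n)$ and the basis of $\Ass(k)$, reducing the lemma to the elementary bijection described in the previous paragraph.
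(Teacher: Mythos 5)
The paper states Lemma \ref{factorcatAss} without proof, treating it as a standard consequence of the definition of $\catAss$, so there is no written argument to compare against; your proposal supplies exactly the argument one would expect. It is correct: $\Ass(k)\cong\K\gpS_k$ gives a basis of $\catAss(m,n)$ indexed by pairs (a map $f\colon\{1,\dots,m\}\to\{1,\dots,n\}$, a linear order on each fiber), and your bijection of this index set with $\{(p_1,\dots,p_n;\sigma)\mid \sum p_i=m,\ \sigma\in\gpS_m\}$, under which the basis element of the summand for $(f,\text{orders})$ is precisely $\mu_A^{[p_1,\dots,p_n]}P_\sigma$, yields spanning and independence simultaneously; the only routine point left implicit is that composing $P_\sigma$ with $\mu_A^{[p_1,\dots,p_n]}$ via the PROP composition really lands on that basis vector with coefficient $1$, which is immediate from the composition rule of the PROP associated to an operad. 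One small remark: your final paragraph's appeal to freeness of $\catAss$ is redundant, since the paper takes the biproduct formula as the \emph{definition} of the hom-spaces, so there are no further relations to worry about and linear independence is automatic once the index-set bijection is established, as you yourself note.
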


Let $\catLie$ denote the $\K$-linear PROP associated to the Lie operad $\Lie$. That is, $\catLie$ is the $\K$-linear PROP freely generated by a Lie algebra $(L=1,[,]:L^{\otimes 2}\to L)$, where the hom-space is
\begin{gather*}
      \catLie(m,n)=\bigoplus_{f: \{1,\dots,m\}\twoheadrightarrow \{1,\dots,n\}} \bigotimes_{i=1}^{n} \Lie(|f^{-1}(i)|).
\end{gather*}
Define a group homomorphism $P:\gpS_m\to \catLie(m,m),\; \sigma\mapsto P_{\sigma}$ in a way similar to \eqref{permutation}.
By using the representation of an element of the operad $\Lie$ as a \emph{rooted trivalent tree}, the hom-space $\catLie(m,n)$ is explicitly described as follows. Here, a rooted trivalent tree means a trivalent tree with a root and labeled leaves, where each trivalent vertex has a specified cyclic order of adjacent edges.

\begin{lemma}\label{decompositionofcatLie}
The hom-space $\catLie(m,n)$ is the $\K$-vector space spanned by the set
\begin{gather*}
    \left\{(T_1\otimes \cdots\otimes T_n)P_{\sigma}\,\middle\vert\,
    \substack{T_i\in \Lie(m_i): \text{a rooted trivalent tree } (1\le i\le n), \\
    m_1+\cdots+m_n=m,\;\sigma\in \gpS_m}\right\}
\end{gather*}
modulo the equivariance relation
\begin{gather}\label{equivariancerel}
    (T_1 \rho_1\otimes \cdots\otimes T_n \rho_n)P_{\sigma}=(T_1\otimes \cdots\otimes T_n)P_{(\rho_1\otimes \cdots\otimes \rho_n) \sigma}
\end{gather}
for $\rho_i\in \gpS_{m_i}\; (1\le i\le n)$,
and the AS and IHX relations described below:
\begin{gather*}
    \scalebox{0.8}{$\centre{%% Creator: Inkscape 1.4.2 (ebf0e940, 2025-05-08), www.inkscape.org
%% PDF/EPS/PS + LaTeX output extension by Johan Engelen, 2010
%% Accompanies image file '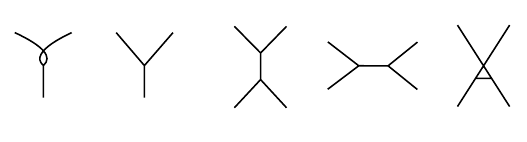' (pdf, eps, ps)
%%
%% To include the image in your LaTeX document, write
%%   \input{<filename>.pdf_tex}
%%  instead of
%%   \includegraphics{<filename>.pdf}
%% To scale the image, write
%%   \def\svgwidth{<desired width>}
%%   \input{<filename>.pdf_tex}
%%  instead of
%%   \includegraphics[width=<desired width>]{<filename>.pdf}
%%
%% Images with a different path to the parent latex file can
%% be accessed with the `import' package (which may need to be
%% installed) using
%%   \usepackage{import}
%% in the preamble, and then including the image with
%%   \import{<path to file>}{<filename>.pdf_tex}
%% Alternatively, one can specify
%%   \graphicspath{{<path to file>/}}
%% 
%% For more information, please see info/svg-inkscape on CTAN:
%%   http://tug.ctan.org/tex-archive/info/svg-inkscape
%%
\begingroup%
  \makeatletter%
  \providecommand\color[2][]{%
    \errmessage{(Inkscape) Color is used for the text in Inkscape, but the package 'color.sty' is not loaded}%
    \renewcommand\color[2][]{}%
  }%
  \providecommand\transparent[1]{%
    \errmessage{(Inkscape) Transparency is used (non-zero) for the text in Inkscape, but the package 'transparent.sty' is not loaded}%
    \renewcommand\transparent[1]{}%
  }%
  \providecommand\rotatebox[2]{#2}%
  \newcommand*\fsize{\dimexpr\f@size pt\relax}%
  \newcommand*\lineheight[1]{\fontsize{\fsize}{#1\fsize}\selectfont}%
  \ifx\svgwidth\undefined%
    \setlength{\unitlength}{252.56692481bp}%
    \ifx\svgscale\undefined%
      \relax%
    \else%
      \setlength{\unitlength}{\unitlength * \real{\svgscale}}%
    \fi%
  \else%
    \setlength{\unitlength}{\svgwidth}%
  \fi%
  \global\let\svgwidth\undefined%
  \global\let\svgscale\undefined%
  \makeatother%
  \begin{picture}(1,0.26936027)%
    \lineheight{1}%
    \setlength\tabcolsep{0pt}%
    \put(0,0){\includegraphics[width=\unitlength,page=1]{ASIHXsolid.pdf}}%
    \put(0.55246772,0.13075562){\color[rgb]{0,0,0}\makebox(0,0)[lt]{\lineheight{1.25}\smash{\begin{tabular}[t]{l}$=$\end{tabular}}}}%
    \put(0.79650751,0.13430338){\color[rgb]{0,0,0}\makebox(0,0)[lt]{\lineheight{1.25}\smash{\begin{tabular}[t]{l}$\;-$\end{tabular}}}}%
    \put(0.12998457,0.13141701){\color[rgb]{0,0,0}\makebox(0,0)[lt]{\lineheight{1.25}\smash{\begin{tabular}[t]{l}$= \;-$\end{tabular}}}}%
    \put(0.33310404,0.08298741){\color[rgb]{0,0,0}\makebox(0,0)[lt]{\lineheight{1.25}\smash{\begin{tabular}[t]{l}$,$\\\end{tabular}}}}%
  \end{picture}%
\endgroup%
}$}.
\end{gather*}
\end{lemma}

The canonical morphism $\Lie\to \Ass$ of operads, which maps $[,]\in \catLie(2,1)$ to $\mu_A-\mu_A P_{(12)}\in \catAss(2,1)$, induces a morphism $\iota: \catLie\to \catAss$ of $\K$-linear PROPs.
Then for each $m\ge 0$, we have a right $\catLie$-module
\begin{gather*}
 \catAss(\iota(-),m):\catLie^{\op}\to \KMod.
\end{gather*}
Here, by a right $\catLie$-module, we mean a $\K$-linear functor from the opposite $\catLie^{\op}$ of the category $\catLie$ to the category $\KMod$.

\subsection{Functors on the category $\gr^{\op}$}

For a $\K$-linear PROP $\calC$, by a left $\calC$-module, we mean a $\K$-linear functor from $\calC$ to $\KMod$.
In what follows, all modules are left modules unless otherwise mentioned.
Let $\calC\textbf{-}\mathrm{Mod}$ denote the category of $\calC$-modules.

Let $\K\gr^{\op}$ denote the $\K$-linearization of the category $\gr^{\op}$.
Note that $\K$-linearization induces an isomorphism
\begin{gather*}
    \operatorname{Funct}(\gr^{\op},\KMod)\xrightarrow{\cong} \kgropMod,
\end{gather*}
where $\operatorname{Funct}(\gr^{\op},\KMod)$ denotes the functor category on $\gr^{\op}$.

Powell \cite{Powellanalytic} defined a $\kgrop$-module structure on the family of hom-spaces $\{\catAss(l,m)\}_m$ of the $\K$-linear category $\catAss$ for each $l\ge 0$, which is denoted by
\begin{gather*}
 {}_{\Delta}\catAss(l,-):\kgrop\to \KMod.
\end{gather*}
In what follows, we will describe the image of each of the generating morphisms of $\kgrop$ which appeared in Lemma \ref{lemmamorofkgrop}.
For $p_1,\cdots,p_n\ge 0, \; s=p_1+\cdots+p_n$, $\sigma\in \gpS_s$, we have
\begin{gather*}
{}_{\Delta}\catAss(l,\mu^{[p_1,\cdots,p_n]})=\mu_A^{[p_1,\cdots,p_n]}\circ -: \catAss(l,s)\to \catAss(l,n),\\
{}_{\Delta}\catAss(l,P_{\sigma})=P_{\sigma}\circ -: \catAss(l,s)\to \catAss(l,s).
\end{gather*}
For $e_1,\cdots,e_s\in\{0,1\}$, the morphism
\begin{gather*}
   {}_{\Delta}\catAss(l,S^{e_1}\otimes\cdots\otimes S^{e_s}): {}_{\Delta}\catAss(l,s)\to {}_{\Delta}\catAss(l,s)
\end{gather*}
is defined for $\mu_A^{[p_1,\cdots,p_s]}$ with $p_1,\cdots,p_s\ge 0,\; p_1+\cdots+p_s=l$ by
\begin{gather*}
   {}_{\Delta}\catAss(l,S^{e_1}\otimes\cdots\otimes S^{e_s})(\mu_A^{[p_1,\cdots,p_s]})=(S^{e_1}\cdot \mu_A^{[p_1]})\otimes\cdots\otimes (S^{e_s}\cdot \mu_A^{[p_s]}),
\end{gather*}
where 
\begin{gather*}
    S\cdot \mu_A^{[p]}=(-1)^p \mu_A^{[p]} P_{\text{reflection}},
\end{gather*}
where ``reflection" means the permutation 
$\begin{pmatrix}
    1&2&\cdots&p\\
    p&p-1&\cdots&1
 \end{pmatrix}\in \gpS_p$.
In a similar way, for $q_1,\cdots,q_m\ge 0,\; s=q_1+\cdots+q_m$, the morphism
\begin{gather*}
    {}_{\Delta}\catAss(l,\Delta^{[q_1,\cdots,q_m]})= {}_{\Delta}\catAss(l,m)\to {}_{\Delta}\catAss(l,s)
\end{gather*}
is defined for $\mu_A^{[p_1,\cdots,p_m]}$ with $p_1,\cdots,p_m\ge 0,\; p_1+\cdots+p_m=l$ by
\begin{gather*}
    {}_{\Delta}\catAss(l,\Delta^{[q_1,\cdots,q_m]})(\mu_A^{[p_1,\cdots,p_m]})= (\Delta^{[q_1]}\cdot\mu_A^{[p_1]}) \otimes\cdots\otimes(\Delta^{[q_m]}\cdot\mu_A^{[p_m]}),
\end{gather*}
where
\begin{gather*}
    \Delta^{[0]}\cdot\mu_A^{[p]}=\varepsilon\cdot\mu_A^{[p]}=
\begin{cases}
0 & (p\ge 1),\\
\id_0 & (p=0)
\end{cases}
\end{gather*}
and for $q\ge 1$,
\newcommand{\Sh}{\operatorname{Sh}}
\begin{gather*}
\Delta^{[q]}\cdot\mu_A^{[p]}
=\sum_{\substack{i_1+\cdots+i_q=p,\\i_1,\cdots,i_q\ge 0}}\sum_{\sigma\in \Sh(i_1,\cdots,i_q)} \mu_A^{[i_1,\cdots,i_q]}P_{\sigma^{-1}},
\end{gather*}
where, $\Sh(i_1,\cdots,i_q)\subset \gpS_p$ denotes the set of $(i_1,\cdots,i_q)$-shuffles.
For example, we have 
\begin{gather*}
\begin{split}
    {}_{\Delta}\catAss(0,\Delta)(\eta_A)&=\eta_A\otimes \eta_A,\\
    {}_{\Delta}\catAss(1,\Delta)(\id_1)&=\id_1\otimes \eta_A+\eta_A\otimes \id_1,\\
    {}_{\Delta}\catAss(2,\Delta)(\mu_A)&=\mu_A\otimes \eta_A+\id_2+P_{(12)}+\eta_A\otimes\mu_A.
\end{split}
\end{gather*}

Now we have a $(\kgrop,\catLie)$-bimodule ${}_{\Delta}\catAss={}_{\Delta}\catAss(-,-)$.
Here, by a $(\kgrop,\catLie)$-bimodule, we mean a $\K$-linear functor $\catLie^{\op}\otimes\kgrop\to \KMod$.
Powell \cite{Powellanalytic} proved that the usual hom-tensor adjunction yields an equivalence of categories.

\begin{proposition}[{\cite[Theorem 9.19]{Powellanalytic}}]\label{adjunctionPowell}
We have the hom-tensor adjunction
\begin{gather*}
 {}_{\Delta}\catAss\otimes_{\catLie} - : \adj{\catLieMod}{\kgropMod}{}{} : \Hom_{\kgropMod}({}_{\Delta}\catAss,-).
\end{gather*}
Moreover, the adjunction restricts to an equivalence of categories
\begin{gather*}
    \catLieMod \simeq \kgropMod^{\omega}.
\end{gather*}
\end{proposition}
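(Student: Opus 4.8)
The plan is to establish the adjunction abstractly and then identify the essential image of the left adjoint. For the adjunction itself, I would observe that ${}_{\Delta}\catAss$ is a $(\kgrop,\catLie)$-bimodule, i.e.\ a $\K$-linear functor $\catLie^{\op}\otimes\kgrop\to\KMod$, and that for any such bimodule ${}_{A}B$ over $\K$-linear categories one has the standard hom-tensor adjunction $({}_{A}B\otimes_{\catLie}-)\dashv \Hom_{\kgropMod}({}_{A}B,-)$ between $\catLieMod$ and $\kgropMod$: the left adjoint sends a $\catLie$-module $M$ to the coend $\int^{c\in\catLie} {}_{\Delta}\catAss(c,-)\otimes_{\K} M(c)$, and the right adjoint sends a $\kgrop$-module $N$ to $m\mapsto \Hom_{\kgropMod}({}_{\Delta}\catAss(m,-),N)$, with the unit and counit given by the usual coend/hom formulas. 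This part is formal once one has checked that the displayed formulas for the generators of $\kgrop$ acting on $\{\catAss(l,m)\}_m$ really do assemble into a well-defined functor $\kgrop\to\KMod$ (compatibility with the Hopf-algebra relations, in particular $\mu(\id_1\otimes S)\Delta=\eta\varepsilon$) and are natural in $l\in\catLie^{\op}$; I would cite \cite{Powellanalytic} for the verification that the shuffle formulas are consistent, and note that naturality in $\catLie$ follows because the $\catLie$-action is by post-composition of operadic operations on the rightmost slot, which commutes with the $\kgrop$-action by comultiplication.

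For the equivalence statement, the key structural input is the factorization of morphisms in $\kgrop$ from Lemma \ref{lemmamorofkgrop} together with the basis of $\catAss(m,n)$ from Lemma \ref{factorcatAss}. I would first compute the left adjoint on the representable $\catLie$-modules $\catLie(l,-)$: by the coend (Yoneda) reduction, ${}_{\Delta}\catAss\otimes_{\catLie}\catLie(l,-)\cong {}_{\Delta}\catAss(l,-)$, so the representables map to the family of $\kgrop$-modules ${}_{\Delta}\catAss(l,-)$. One then checks these are analytic — this is essentially by construction, since ${}_{\Delta}\catAss(l,m)=\catAss(l,m)$ is finite-dimensional and the $\mu_A^{[p_1,\dots,p_n]}P_\sigma$-basis exhibits a polynomial-degree filtration in $m$, so each ${}_{\Delta}\catAss(l,-)$ lies in $\kgropMod^\omega$. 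Since $\kgropMod^\omega$ is closed under colimits in $\kgropMod$ and every $\catLie$-module is a colimit of representables, the left adjoint lands in $\kgropMod^\omega$. The harder direction is essentiality and full faithfulness: I would show the counit $\varepsilon_N: {}_{\Delta}\catAss\otimes_{\catLie}\Hom_{\kgropMod}({}_{\Delta}\catAss,N)\to N$ is an isomorphism for every analytic $N$, and the unit $\eta_M: M\to \Hom_{\kgropMod}({}_{\Delta}\catAss,{}_{\Delta}\catAss\otimes_{\catLie}M)$ is an isomorphism for every $M$.

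For the unit, it suffices (both sides being right exact / cocontinuous in $M$) to check it on the representables $M=\catLie(l,-)$, where it becomes the assertion that the natural map $\catLie(l,-)\to\Hom_{\kgropMod}({}_{\Delta}\catAss(-,-),{}_{\Delta}\catAss(l,-))$ is an isomorphism; concretely this is a computation of $\kgrop$-module endomorphisms/homomorphisms between the ${}_{\Delta}\catAss(l,-)$, which I would carry out using the explicit generator formulas and the observation that a $\kgrop$-module map out of ${}_{\Delta}\catAss(l,-)$ is determined by its value on $\eta_A^{\otimes l}\in\catAss(l,0)$ (using the $\Delta^{[q]}\cdot$ formulas to propagate), matching $\catLie(l,-)$ on the nose via the operad map $\iota:\Lie\to\Ass$ and the primitive-element description. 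For the counit on analytic $N$: by compatibility with filtered colimits one reduces to polynomial $N$ of degree $\le d$, and then to the polynomial functors generated in each degree; here I would invoke the standard identification (in characteristic $0$) of the degree-$d$ part of a polynomial functor on $\gr^{\op}$ with a $\gpS_d$-representation (its $d$-th cross-effect evaluated at $1^{\times d}$), and match this against the degree-$d$ piece of the $\catLie$-side via the classical fact that $\catLie(d,1)\subset\catAss(d,1)$ is exactly the multilinear part of the free Lie algebra inside the free associative algebra (Ree's theorem / the Poincaré–Birkhoff–Witt decomposition of $\catAss(d,1)$ as an induced module over the Lie piece). The main obstacle is precisely this last bookkeeping: organizing the polynomial-degree filtration on both sides and verifying that ${}_{\Delta}\catAss$ intertwines the "cross-effect" filtration on $\kgrop$-modules with the operadic arity grading on $\catLie$-modules, so that the adjunction becomes degreewise an isomorphism. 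Everything else is formal nonsense about coends or a direct unwinding of the explicit formulas recalled above; I would lean on \cite{Powellanalytic} for the combinatorial heart and present the rest as the assembly of these pieces.
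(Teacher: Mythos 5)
The paper does not actually prove this proposition: it is recalled verbatim from Powell, with the proof deferred entirely to \cite[Theorem 9.19]{Powellanalytic}, so there is no in-paper argument to compare yours against. Your first paragraph (the formal hom-tensor adjunction attached to the bimodule ${}_{\Delta}\catAss$, together with the verification that the displayed formulas define a $\kgrop$-module structure) is unobjectionable and matches the paper's treatment in spirit, since you too end up citing Powell for the consistency of the shuffle formulas.

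The part of your proposal that goes beyond citation --- the claim that the adjunction restricts to an equivalence $\catLieMod\simeq\kgropMod^{\omega}$ --- has genuine gaps. For the unit you assert that it suffices to check it on representables because ``both sides are right exact / cocontinuous in $M$''; but the functor $M\mapsto \Hom_{\kgropMod}({}_{\Delta}\catAss,{}_{\Delta}\catAss\otimes_{\catLie}M)$ involves a Hom out of the infinite family ${}_{\Delta}\catAss(l,-)$ and does not preserve colimits, or even cokernels, for formal reasons; one needs a compactness or projectivity statement for the objects ${}_{\Delta}\catAss(l,-)$ inside $\kgropMod^{\omega}$, and that input is itself part of what Powell proves. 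Similarly, for the counit on analytic $N$ you propose to match the degree-$d$ graded pieces of the polynomial filtration with the arity-$d$ part on the $\catLie$ side via cross-effects and the Lie-inside-associative (PBW/Ree) description; matching associated graded pieces does not by itself show that the counit is an isomorphism --- one needs compatibility of the counit with the filtrations and an induction over polynomial degree, which you only gesture at. These two steps are precisely the nontrivial content of \cite[Theorem 9.19]{Powellanalytic}; as written, your proposal either leaves them open or silently defers them to that reference, which is effectively what the paper does by simply citing the result.
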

Here, $\kgropMod^{\omega}$ denotes the full subcategory of $\kgropMod$ whose objects correspond to \emph{analytic functors} on $\gr^{\op}$, where a functor is analytic if it is the colimit of its \emph{polynomial subfunctors}.
See \cite{Powellanalytic} for details.

\section{The category $\A^{L}$ of extended Jacobi diagrams in handlebodies}\label{sectionAL}
In this section, we recall the definition and some important properties of the $\K$-linear category $\A$ of Jacobi diagrams in handlebodies which was introduced by Habiro and Massuyeau in \cite{Habiro--Massuyeau}.
We also recall the $\K$-linear PROP $\catLieC$ for Casimir Lie algebras introduced by Hinich and Vaintrob in \cite{Hinich--Vaintrob}.
Then we recall the $\K$-linear category $\A^{L}$ of extended Jacobi diagrams in handlebodies introduced in \cite{Katada2} and study its hom-spaces.

\subsection{The category $\A$ of Jacobi diagrams in handlebodies}

Habiro and Massuyeau introduced the category $\A$ of Jacobi diagrams in handlebodies in \cite{Habiro--Massuyeau} in order to extend the Kontsevich invariant for bottom tangles to a functor.
The objects of $\A$ are non-negative integers.
The hom-space $\A(m,n)$ is the $\K$-vector space spanned by ``$(m,n)$-Jacobi diagrams in handlebodies", which are explained below.

For $n\ge 0$, let 
$X_n=\scalebox{0.7}{$\centre{%% Creator: Inkscape 1.4.2 (ebf0e940, 2025-05-08), www.inkscape.org
%% PDF/EPS/PS + LaTeX output extension by Johan Engelen, 2010
%% Accompanies image file '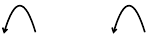' (pdf, eps, ps)
%%
%% To include the image in your LaTeX document, write
%%   \input{<filename>.pdf_tex}
%%  instead of
%%   \includegraphics{<filename>.pdf}
%% To scale the image, write
%%   \def\svgwidth{<desired width>}
%%   \input{<filename>.pdf_tex}
%%  instead of
%%   \includegraphics[width=<desired width>]{<filename>.pdf}
%%
%% Images with a different path to the parent latex file can
%% be accessed with the `import' package (which may need to be
%% installed) using
%%   \usepackage{import}
%% in the preamble, and then including the image with
%%   \import{<path to file>}{<filename>.pdf_tex}
%% Alternatively, one can specify
%%   \graphicspath{{<path to file>/}}
%% 
%% For more information, please see info/svg-inkscape on CTAN:
%%   http://tug.ctan.org/tex-archive/info/svg-inkscape
%%
\begingroup%
  \makeatletter%
  \providecommand\color[2][]{%
    \errmessage{(Inkscape) Color is used for the text in Inkscape, but the package 'color.sty' is not loaded}%
    \renewcommand\color[2][]{}%
  }%
  \providecommand\transparent[1]{%
    \errmessage{(Inkscape) Transparency is used (non-zero) for the text in Inkscape, but the package 'transparent.sty' is not loaded}%
    \renewcommand\transparent[1]{}%
  }%
  \providecommand\rotatebox[2]{#2}%
  \newcommand*\fsize{\dimexpr\f@size pt\relax}%
  \newcommand*\lineheight[1]{\fontsize{\fsize}{#1\fsize}\selectfont}%
  \ifx\svgwidth\undefined%
    \setlength{\unitlength}{70.86614173bp}%
    \ifx\svgscale\undefined%
      \relax%
    \else%
      \setlength{\unitlength}{\unitlength * \real{\svgscale}}%
    \fi%
  \else%
    \setlength{\unitlength}{\svgwidth}%
  \fi%
  \global\let\svgwidth\undefined%
  \global\let\svgscale\undefined%
  \makeatother%
  \begin{picture}(1,0.24)%
    \lineheight{1}%
    \setlength\tabcolsep{0pt}%
    \put(0,0){\includegraphics[width=\unitlength,page=1]{Xn.pdf}}%
    \put(0.56079311,0.08352336){\makebox(0,0)[lt]{\lineheight{1.45000005}\smash{\begin{tabular}[t]{l}$\cdots$\end{tabular}}}}%
    \put(0,0){\includegraphics[width=\unitlength,page=2]{Xn.pdf}}%
  \end{picture}%
\endgroup%
}$}$
be the oriented $1$-manifold consisting of $n$ component oriented arcs.
A \emph{Jacobi diagram} $D$ on $X_n$ is a uni-trivalent graph such that each trivalent vertex is oriented (i.e., each trivalent vertex has a fixed cyclic order of the three edges around it), univalent vertices are attached to $X_n$ (i.e., the set of univalent vertices is embedded into the interior of $X_n$), and each connected component has at least one univalent vertex.
Two Jacobi diagrams $D$ and $D'$ on $X_n$ are identified if there is a homeomorphism between them whose restriction to $X_n$ is isotopic to the identity map and which respects the orientations on trivalent vertices.
The STU relation described below is imposed on the $\K$-vector space spanned by Jacobi diagrams on $X_n$:
\begin{gather*}
    \scalebox{0.7}{$\centre{%% Creator: Inkscape 1.4.2 (ebf0e940, 2025-05-08), www.inkscape.org
%% PDF/EPS/PS + LaTeX output extension by Johan Engelen, 2010
%% Accompanies image file '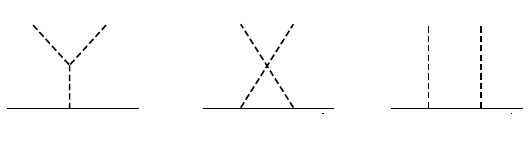' (pdf, eps, ps)
%%
%% To include the image in your LaTeX document, write
%%   \input{<filename>.pdf_tex}
%%  instead of
%%   \includegraphics{<filename>.pdf}
%% To scale the image, write
%%   \def\svgwidth{<desired width>}
%%   \input{<filename>.pdf_tex}
%%  instead of
%%   \includegraphics[width=<desired width>]{<filename>.pdf}
%%
%% Images with a different path to the parent latex file can
%% be accessed with the `import' package (which may need to be
%% installed) using
%%   \usepackage{import}
%% in the preamble, and then including the image with
%%   \import{<path to file>}{<filename>.pdf_tex}
%% Alternatively, one can specify
%%   \graphicspath{{<path to file>/}}
%% 
%% For more information, please see info/svg-inkscape on CTAN:
%%   http://tug.ctan.org/tex-archive/info/svg-inkscape
%%
\begingroup%
  \makeatletter%
  \providecommand\color[2][]{%
    \errmessage{(Inkscape) Color is used for the text in Inkscape, but the package 'color.sty' is not loaded}%
    \renewcommand\color[2][]{}%
  }%
  \providecommand\transparent[1]{%
    \errmessage{(Inkscape) Transparency is used (non-zero) for the text in Inkscape, but the package 'transparent.sty' is not loaded}%
    \renewcommand\transparent[1]{}%
  }%
  \providecommand\rotatebox[2]{#2}%
  \newcommand*\fsize{\dimexpr\f@size pt\relax}%
  \newcommand*\lineheight[1]{\fontsize{\fsize}{#1\fsize}\selectfont}%
  \ifx\svgwidth\undefined%
    \setlength{\unitlength}{252.56692481bp}%
    \ifx\svgscale\undefined%
      \relax%
    \else%
      \setlength{\unitlength}{\unitlength * \real{\svgscale}}%
    \fi%
  \else%
    \setlength{\unitlength}{\svgwidth}%
  \fi%
  \global\let\svgwidth\undefined%
  \global\let\svgscale\undefined%
  \makeatother%
  \begin{picture}(1,0.26936027)%
    \lineheight{1}%
    \setlength\tabcolsep{0pt}%
    \put(0,0){\includegraphics[width=\unitlength,page=1]{STU.pdf}}%
    \put(0.27561785,0.12612326){\color[rgb]{0,0,0}\makebox(0,0)[lt]{\lineheight{1.25}\smash{\begin{tabular}[t]{l}$=$\end{tabular}}}}%
    \put(0.64879981,0.12628834){\color[rgb]{0,0,0}\makebox(0,0)[lt]{\lineheight{1.25}\smash{\begin{tabular}[t]{l}$-$\end{tabular}}}}%
    \put(0,0){\includegraphics[width=\unitlength,page=2]{STU.pdf}}%
  \end{picture}%
\endgroup%
}$}
\end{gather*}
The STU relation implies the AS and IHX relations (see \cite[Theorem 6]{Bar-Natan}).
The \emph{degree} of a Jacobi diagram is defined to be half the number of vertices. Note that the STU, AS and IHX relations respect the degree of Jacobi diagrams.

For $m\ge 0$, let $U_m$ be the handlebody of genus $m$ that is obtained from the cube $I^3$, where $I=[-1,1]\subset \R$, by attaching $m$ handles on the top square $I^2\times \{1\}$ of the cube along the upper line $I\times\{0\}\times\{1\}$.
An $(m,n)$-Jacobi diagram is a Jacobi diagram on $X_n$ mapped into the handlebody $U_m$ in such a way that the endpoints of $X_n$ are arranged on the bottom line $I\times \{0\}\times \{-1\}$ and that the $i$-th arc component of $X_n$ goes from the $2i$-th point to the $(2i-1)$-st point, where we count the endpoints from left to right.
See \cite{Habiro--Massuyeau} for details.
The following is an example of a $(2,3)$-Jacobi diagram:
$$\scalebox{1.0}{$\centre{%% Creator: Inkscape 1.4.2 (ebf0e940, 2025-05-08), www.inkscape.org
%% PDF/EPS/PS + LaTeX output extension by Johan Engelen, 2010
%% Accompanies image file '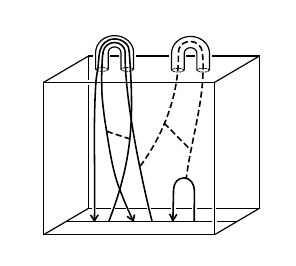' (pdf, eps, ps)
%%
%% To include the image in your LaTeX document, write
%%   \input{<filename>.pdf_tex}
%%  instead of
%%   \includegraphics{<filename>.pdf}
%% To scale the image, write
%%   \def\svgwidth{<desired width>}
%%   \input{<filename>.pdf_tex}
%%  instead of
%%   \includegraphics[width=<desired width>]{<filename>.pdf}
%%
%% Images with a different path to the parent latex file can
%% be accessed with the `import' package (which may need to be
%% installed) using
%%   \usepackage{import}
%% in the preamble, and then including the image with
%%   \import{<path to file>}{<filename>.pdf_tex}
%% Alternatively, one can specify
%%   \graphicspath{{<path to file>/}}
%% 
%% For more information, please see info/svg-inkscape on CTAN:
%%   http://tug.ctan.org/tex-archive/info/svg-inkscape
%%
\begingroup%
  \makeatletter%
  \providecommand\color[2][]{%
    \errmessage{(Inkscape) Color is used for the text in Inkscape, but the package 'color.sty' is not loaded}%
    \renewcommand\color[2][]{}%
  }%
  \providecommand\transparent[1]{%
    \errmessage{(Inkscape) Transparency is used (non-zero) for the text in Inkscape, but the package 'transparent.sty' is not loaded}%
    \renewcommand\transparent[1]{}%
  }%
  \providecommand\rotatebox[2]{#2}%
  \newcommand*\fsize{\dimexpr\f@size pt\relax}%
  \newcommand*\lineheight[1]{\fontsize{\fsize}{#1\fsize}\selectfont}%
  \ifx\svgwidth\undefined%
    \setlength{\unitlength}{141.73228346bp}%
    \ifx\svgscale\undefined%
      \relax%
    \else%
      \setlength{\unitlength}{\unitlength * \real{\svgscale}}%
    \fi%
  \else%
    \setlength{\unitlength}{\svgwidth}%
  \fi%
  \global\let\svgwidth\undefined%
  \global\let\svgscale\undefined%
  \makeatother%
  \begin{picture}(1,0.9)%
    \lineheight{1}%
    \setlength\tabcolsep{0pt}%
    \put(0,0){\includegraphics[width=\unitlength,page=1]{32JD.pdf}}%
  \end{picture}%
\endgroup%
}$}$$
Two $(m,n)$-Jacobi diagrams are identified if they are homotopic in $U_m$ relative to the endpoints of $X_n$.
Then the hom-space $\A(m,n)$ is defined by the $\K$-vector space spanned by $(m,n)$-Jacobi diagrams modulo the STU relation.
The composition of morphisms in $\A$ is somewhat complicated. To put it simply, for an $(m,n)$-Jacobi diagram $D$ and $(n,p)$-Jacobi diagram $D'$, the composition $D'\circ D$ is obtained by stacking a suitable cabling of $D$ on the top square of $D'$. 

Each hom-space $\A(m,n)$ is graded by the degree of Jacobi diagrams
and we have
\begin{gather*}
    \A(m,n)\cong \bigoplus_{d\ge 0}\A_d(m,n),
\end{gather*}
where $\A_d(m,n)$ denotes the degree $d$ part of $\A(m,n)$.
Moreover, the category $\A$ has a structure of an $\N$-graded $\K$-linear symmetric strict monoidal category.
The tensor product on objects is addition, the monoidal unit is $0$, and the tensor product on morphisms is juxtaposition followed by horizontal rescaling and relabeling of indices.
The category $\A$ has a \emph{Casimir Hopf algebra} $(H=1,\mu,\eta,\Delta,\varepsilon,S,\tilde{c})$, where
\begin{gather*}
\mu=\scalebox{0.8}{$\centre{%% Creator: Inkscape 1.4.2 (ebf0e940, 2025-05-08), www.inkscape.org
%% PDF/EPS/PS + LaTeX output extension by Johan Engelen, 2010
%% Accompanies image file '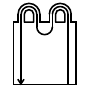' (pdf, eps, ps)
%%
%% To include the image in your LaTeX document, write
%%   \input{<filename>.pdf_tex}
%%  instead of
%%   \includegraphics{<filename>.pdf}
%% To scale the image, write
%%   \def\svgwidth{<desired width>}
%%   \input{<filename>.pdf_tex}
%%  instead of
%%   \includegraphics[width=<desired width>]{<filename>.pdf}
%%
%% Images with a different path to the parent latex file can
%% be accessed with the `import' package (which may need to be
%% installed) using
%%   \usepackage{import}
%% in the preamble, and then including the image with
%%   \import{<path to file>}{<filename>.pdf_tex}
%% Alternatively, one can specify
%%   \graphicspath{{<path to file>/}}
%% 
%% For more information, please see info/svg-inkscape on CTAN:
%%   http://tug.ctan.org/tex-archive/info/svg-inkscape
%%
\begingroup%
  \makeatletter%
  \providecommand\color[2][]{%
    \errmessage{(Inkscape) Color is used for the text in Inkscape, but the package 'color.sty' is not loaded}%
    \renewcommand\color[2][]{}%
  }%
  \providecommand\transparent[1]{%
    \errmessage{(Inkscape) Transparency is used (non-zero) for the text in Inkscape, but the package 'transparent.sty' is not loaded}%
    \renewcommand\transparent[1]{}%
  }%
  \providecommand\rotatebox[2]{#2}%
  \newcommand*\fsize{\dimexpr\f@size pt\relax}%
  \newcommand*\lineheight[1]{\fontsize{\fsize}{#1\fsize}\selectfont}%
  \ifx\svgwidth\undefined%
    \setlength{\unitlength}{42.51968504bp}%
    \ifx\svgscale\undefined%
      \relax%
    \else%
      \setlength{\unitlength}{\unitlength * \real{\svgscale}}%
    \fi%
  \else%
    \setlength{\unitlength}{\svgwidth}%
  \fi%
  \global\let\svgwidth\undefined%
  \global\let\svgscale\undefined%
  \makeatother%
  \begin{picture}(1,1)%
    \lineheight{1}%
    \setlength\tabcolsep{0pt}%
    \put(0,0){\includegraphics[width=\unitlength,page=1]{mu.pdf}}%
  \end{picture}%
\endgroup%
}$},\; \eta=\scalebox{0.8}{$\centre{%% Creator: Inkscape 1.4.2 (ebf0e940, 2025-05-08), www.inkscape.org
%% PDF/EPS/PS + LaTeX output extension by Johan Engelen, 2010
%% Accompanies image file '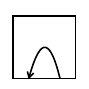' (pdf, eps, ps)
%%
%% To include the image in your LaTeX document, write
%%   \input{<filename>.pdf_tex}
%%  instead of
%%   \includegraphics{<filename>.pdf}
%% To scale the image, write
%%   \def\svgwidth{<desired width>}
%%   \input{<filename>.pdf_tex}
%%  instead of
%%   \includegraphics[width=<desired width>]{<filename>.pdf}
%%
%% Images with a different path to the parent latex file can
%% be accessed with the `import' package (which may need to be
%% installed) using
%%   \usepackage{import}
%% in the preamble, and then including the image with
%%   \import{<path to file>}{<filename>.pdf_tex}
%% Alternatively, one can specify
%%   \graphicspath{{<path to file>/}}
%% 
%% For more information, please see info/svg-inkscape on CTAN:
%%   http://tug.ctan.org/tex-archive/info/svg-inkscape
%%
\begingroup%
  \makeatletter%
  \providecommand\color[2][]{%
    \errmessage{(Inkscape) Color is used for the text in Inkscape, but the package 'color.sty' is not loaded}%
    \renewcommand\color[2][]{}%
  }%
  \providecommand\transparent[1]{%
    \errmessage{(Inkscape) Transparency is used (non-zero) for the text in Inkscape, but the package 'transparent.sty' is not loaded}%
    \renewcommand\transparent[1]{}%
  }%
  \providecommand\rotatebox[2]{#2}%
  \newcommand*\fsize{\dimexpr\f@size pt\relax}%
  \newcommand*\lineheight[1]{\fontsize{\fsize}{#1\fsize}\selectfont}%
  \ifx\svgwidth\undefined%
    \setlength{\unitlength}{42.51968504bp}%
    \ifx\svgscale\undefined%
      \relax%
    \else%
      \setlength{\unitlength}{\unitlength * \real{\svgscale}}%
    \fi%
  \else%
    \setlength{\unitlength}{\svgwidth}%
  \fi%
  \global\let\svgwidth\undefined%
  \global\let\svgscale\undefined%
  \makeatother%
  \begin{picture}(1,1)%
    \lineheight{1}%
    \setlength\tabcolsep{0pt}%
    \put(0,0){\includegraphics[width=\unitlength,page=1]{eta.pdf}}%
  \end{picture}%
\endgroup%
}$},\; \Delta=\scalebox{0.8}{$\centre{%% Creator: Inkscape 1.4.2 (ebf0e940, 2025-05-08), www.inkscape.org
%% PDF/EPS/PS + LaTeX output extension by Johan Engelen, 2010
%% Accompanies image file '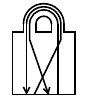' (pdf, eps, ps)
%%
%% To include the image in your LaTeX document, write
%%   \input{<filename>.pdf_tex}
%%  instead of
%%   \includegraphics{<filename>.pdf}
%% To scale the image, write
%%   \def\svgwidth{<desired width>}
%%   \input{<filename>.pdf_tex}
%%  instead of
%%   \includegraphics[width=<desired width>]{<filename>.pdf}
%%
%% Images with a different path to the parent latex file can
%% be accessed with the `import' package (which may need to be
%% installed) using
%%   \usepackage{import}
%% in the preamble, and then including the image with
%%   \import{<path to file>}{<filename>.pdf_tex}
%% Alternatively, one can specify
%%   \graphicspath{{<path to file>/}}
%% 
%% For more information, please see info/svg-inkscape on CTAN:
%%   http://tug.ctan.org/tex-archive/info/svg-inkscape
%%
\begingroup%
  \makeatletter%
  \providecommand\color[2][]{%
    \errmessage{(Inkscape) Color is used for the text in Inkscape, but the package 'color.sty' is not loaded}%
    \renewcommand\color[2][]{}%
  }%
  \providecommand\transparent[1]{%
    \errmessage{(Inkscape) Transparency is used (non-zero) for the text in Inkscape, but the package 'transparent.sty' is not loaded}%
    \renewcommand\transparent[1]{}%
  }%
  \providecommand\rotatebox[2]{#2}%
  \newcommand*\fsize{\dimexpr\f@size pt\relax}%
  \newcommand*\lineheight[1]{\fontsize{\fsize}{#1\fsize}\selectfont}%
  \ifx\svgwidth\undefined%
    \setlength{\unitlength}{42.51968504bp}%
    \ifx\svgscale\undefined%
      \relax%
    \else%
      \setlength{\unitlength}{\unitlength * \real{\svgscale}}%
    \fi%
  \else%
    \setlength{\unitlength}{\svgwidth}%
  \fi%
  \global\let\svgwidth\undefined%
  \global\let\svgscale\undefined%
  \makeatother%
  \begin{picture}(1,1.13333333)%
    \lineheight{1}%
    \setlength\tabcolsep{0pt}%
    \put(0,0){\includegraphics[width=\unitlength,page=1]{Delta.pdf}}%
  \end{picture}%
\endgroup%
}$},\;
\varepsilon=\scalebox{0.8}{$\centre{%% Creator: Inkscape 1.4.2 (ebf0e940, 2025-05-08), www.inkscape.org
%% PDF/EPS/PS + LaTeX output extension by Johan Engelen, 2010
%% Accompanies image file '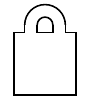' (pdf, eps, ps)
%%
%% To include the image in your LaTeX document, write
%%   \input{<filename>.pdf_tex}
%%  instead of
%%   \includegraphics{<filename>.pdf}
%% To scale the image, write
%%   \def\svgwidth{<desired width>}
%%   \input{<filename>.pdf_tex}
%%  instead of
%%   \includegraphics[width=<desired width>]{<filename>.pdf}
%%
%% Images with a different path to the parent latex file can
%% be accessed with the `import' package (which may need to be
%% installed) using
%%   \usepackage{import}
%% in the preamble, and then including the image with
%%   \import{<path to file>}{<filename>.pdf_tex}
%% Alternatively, one can specify
%%   \graphicspath{{<path to file>/}}
%% 
%% For more information, please see info/svg-inkscape on CTAN:
%%   http://tug.ctan.org/tex-archive/info/svg-inkscape
%%
\begingroup%
  \makeatletter%
  \providecommand\color[2][]{%
    \errmessage{(Inkscape) Color is used for the text in Inkscape, but the package 'color.sty' is not loaded}%
    \renewcommand\color[2][]{}%
  }%
  \providecommand\transparent[1]{%
    \errmessage{(Inkscape) Transparency is used (non-zero) for the text in Inkscape, but the package 'transparent.sty' is not loaded}%
    \renewcommand\transparent[1]{}%
  }%
  \providecommand\rotatebox[2]{#2}%
  \newcommand*\fsize{\dimexpr\f@size pt\relax}%
  \newcommand*\lineheight[1]{\fontsize{\fsize}{#1\fsize}\selectfont}%
  \ifx\svgwidth\undefined%
    \setlength{\unitlength}{42.51968504bp}%
    \ifx\svgscale\undefined%
      \relax%
    \else%
      \setlength{\unitlength}{\unitlength * \real{\svgscale}}%
    \fi%
  \else%
    \setlength{\unitlength}{\svgwidth}%
  \fi%
  \global\let\svgwidth\undefined%
  \global\let\svgscale\undefined%
  \makeatother%
  \begin{picture}(1,1.13333333)%
    \lineheight{1}%
    \setlength\tabcolsep{0pt}%
    \put(0,0){\includegraphics[width=\unitlength,page=1]{varepsilon.pdf}}%
  \end{picture}%
\endgroup%
}$},\\ 
S=\scalebox{0.8}{$\centre{%% Creator: Inkscape 1.4.2 (ebf0e940, 2025-05-08), www.inkscape.org
%% PDF/EPS/PS + LaTeX output extension by Johan Engelen, 2010
%% Accompanies image file '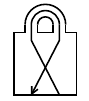' (pdf, eps, ps)
%%
%% To include the image in your LaTeX document, write
%%   \input{<filename>.pdf_tex}
%%  instead of
%%   \includegraphics{<filename>.pdf}
%% To scale the image, write
%%   \def\svgwidth{<desired width>}
%%   \input{<filename>.pdf_tex}
%%  instead of
%%   \includegraphics[width=<desired width>]{<filename>.pdf}
%%
%% Images with a different path to the parent latex file can
%% be accessed with the `import' package (which may need to be
%% installed) using
%%   \usepackage{import}
%% in the preamble, and then including the image with
%%   \import{<path to file>}{<filename>.pdf_tex}
%% Alternatively, one can specify
%%   \graphicspath{{<path to file>/}}
%% 
%% For more information, please see info/svg-inkscape on CTAN:
%%   http://tug.ctan.org/tex-archive/info/svg-inkscape
%%
\begingroup%
  \makeatletter%
  \providecommand\color[2][]{%
    \errmessage{(Inkscape) Color is used for the text in Inkscape, but the package 'color.sty' is not loaded}%
    \renewcommand\color[2][]{}%
  }%
  \providecommand\transparent[1]{%
    \errmessage{(Inkscape) Transparency is used (non-zero) for the text in Inkscape, but the package 'transparent.sty' is not loaded}%
    \renewcommand\transparent[1]{}%
  }%
  \providecommand\rotatebox[2]{#2}%
  \newcommand*\fsize{\dimexpr\f@size pt\relax}%
  \newcommand*\lineheight[1]{\fontsize{\fsize}{#1\fsize}\selectfont}%
  \ifx\svgwidth\undefined%
    \setlength{\unitlength}{42.51968504bp}%
    \ifx\svgscale\undefined%
      \relax%
    \else%
      \setlength{\unitlength}{\unitlength * \real{\svgscale}}%
    \fi%
  \else%
    \setlength{\unitlength}{\svgwidth}%
  \fi%
  \global\let\svgwidth\undefined%
  \global\let\svgscale\undefined%
  \makeatother%
  \begin{picture}(1,1.13333333)%
    \lineheight{1}%
    \setlength\tabcolsep{0pt}%
    \put(0,0){\includegraphics[width=\unitlength,page=1]{S.pdf}}%
  \end{picture}%
\endgroup%
}$},\;
\tilde{c}=\scalebox{0.8}{$\centre{%% Creator: Inkscape 1.4.2 (ebf0e940, 2025-05-08), www.inkscape.org
%% PDF/EPS/PS + LaTeX output extension by Johan Engelen, 2010
%% Accompanies image file '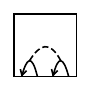' (pdf, eps, ps)
%%
%% To include the image in your LaTeX document, write
%%   \input{<filename>.pdf_tex}
%%  instead of
%%   \includegraphics{<filename>.pdf}
%% To scale the image, write
%%   \def\svgwidth{<desired width>}
%%   \input{<filename>.pdf_tex}
%%  instead of
%%   \includegraphics[width=<desired width>]{<filename>.pdf}
%%
%% Images with a different path to the parent latex file can
%% be accessed with the `import' package (which may need to be
%% installed) using
%%   \usepackage{import}
%% in the preamble, and then including the image with
%%   \import{<path to file>}{<filename>.pdf_tex}
%% Alternatively, one can specify
%%   \graphicspath{{<path to file>/}}
%% 
%% For more information, please see info/svg-inkscape on CTAN:
%%   http://tug.ctan.org/tex-archive/info/svg-inkscape
%%
\begingroup%
  \makeatletter%
  \providecommand\color[2][]{%
    \errmessage{(Inkscape) Color is used for the text in Inkscape, but the package 'color.sty' is not loaded}%
    \renewcommand\color[2][]{}%
  }%
  \providecommand\transparent[1]{%
    \errmessage{(Inkscape) Transparency is used (non-zero) for the text in Inkscape, but the package 'transparent.sty' is not loaded}%
    \renewcommand\transparent[1]{}%
  }%
  \providecommand\rotatebox[2]{#2}%
  \newcommand*\fsize{\dimexpr\f@size pt\relax}%
  \newcommand*\lineheight[1]{\fontsize{\fsize}{#1\fsize}\selectfont}%
  \ifx\svgwidth\undefined%
    \setlength{\unitlength}{42.51968504bp}%
    \ifx\svgscale\undefined%
      \relax%
    \else%
      \setlength{\unitlength}{\unitlength * \real{\svgscale}}%
    \fi%
  \else%
    \setlength{\unitlength}{\svgwidth}%
  \fi%
  \global\let\svgwidth\undefined%
  \global\let\svgscale\undefined%
  \makeatother%
  \begin{picture}(1,1)%
    \lineheight{1}%
    \setlength\tabcolsep{0pt}%
    \put(0,0){\includegraphics[width=\unitlength,page=1]{ctilde.pdf}}%
  \end{picture}%
\endgroup%
}$}.
\end{gather*}
Here, a Casimir Hopf algebra $(H,\tilde{c})$ in a $\K$-linear symmetric monoidal category is a cocommutative Hopf algebra $H$ equipped with a Casimir $2$-tensor $\tilde{c}$, which is a morphism $\tilde{c}:I\to H^{\otimes 2}$ satisfying 
\begin{gather*}
    (\Delta\otimes \id_H)\tilde{c}=(\id_H\otimes \eta\otimes \id_H)\tilde{c}+\eta\otimes \tilde{c}, 
\end{gather*}
\begin{gather*}
    P_{H,H}\tilde{c}=\tilde{c}, 
\end{gather*}
where $P_{H,H}$ denotes the symmetry,
and
\begin{gather*}
    (ad\otimes ad)(\id_H\otimes P_{H,H}\otimes \id_H)(\Delta\otimes \tilde{c})=\tilde{c}\varepsilon,
\end{gather*}
where 
\begin{gather*}
    ad=\mu^{[3]}(\id_{H^{\otimes 2}}\otimes S)(\id_H\otimes P_{H,H})(\Delta\otimes \id_H).
\end{gather*}

Then the category $\A$ is characterized by the Casimir Hopf algebra in the following sense.

\begin{lemma}[{\cite[Theorem 5.11]{Habiro--Massuyeau}}]\label{presentationofA}
    The $\K$-linear PROP $\A$ is freely generated by the \emph{Casimir Hopf algebra} $(H=1,\mu,\eta,\Delta,\varepsilon,S,\tilde{c})$.
\end{lemma}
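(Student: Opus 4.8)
The statement is Habiro--Massuyeau's Theorem~5.11, so in the body of the paper it is simply quoted; what follows is the plan I would use to prove it from scratch. Let $\prA$ denote the free $\K$-linear PROP on a Casimir Hopf algebra, i.e.\ the free $\K$-linear PROP on generators $\mu\colon2\to1$, $\eta\colon0\to1$, $\Delta\colon1\to2$, $\varepsilon\colon1\to0$, $S\colon1\to1$, $\tilde c\colon0\to2$ modulo the ideal of relations defining a Casimir Hopf algebra (the cocommutative Hopf algebra axioms together with the three displayed Casimir-$2$-tensor axioms); give $\prA$ the grading in which $\tilde c$ has degree $1$ and the other generators degree $0$. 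The first task is to check that the explicit Jacobi diagrams $\mu,\eta,\Delta,\varepsilon,S,\tilde c$ of $\A$ satisfy all these axioms; this is a finite diagrammatic verification using the STU relation (equivalently AS and IHX), and it produces a degree-preserving $\K$-linear symmetric monoidal functor $\Phi\colon\prA\to\A$ which is the identity on objects. It then remains to show that $\Phi$ is full and faithful.

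For \emph{fullness} I would argue by induction on the degree $d$ that every $(m,n)$-Jacobi diagram lies in the image of $\Phi$. The base case $d=0$ is the known identification $\A_0\cong\kgrop$ together with the fact, recalled in the excerpt, that $\gr^{\op}$ is the PROP freely generated by a cocommutative Hopf algebra. For $d\ge1$, given a degree-$d$ Jacobi diagram $D$ in $U_m$, one uses the STU relation to trade a single internal trivalent vertex of $D$ for a difference of diagrams with fewer internal trivalent vertices; iterating, $D$ is rewritten modulo STU as a linear combination of chord diagrams in $U_m$ (every component a single chord), which, after homotoping into standard position, are manifestly composites of copies of $\tilde c$ and of the degree-$0$ generators. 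The inductive hypothesis on the lower-degree terms that appear then closes the argument.

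The \emph{faithfulness} of $\Phi$ is the main obstacle: one must show that $\prA$ carries no relations beyond those forced by the Casimir Hopf algebra axioms. The plan is to produce, in each degree $d$, an explicit spanning set of $\prA_d(m,n)$ and to verify that it maps to a linearly independent family in $\A_d(m,n)$. Using the cocommutative Hopf algebra relations and the ``centrality''-type Casimir axioms, every element of $\prA_d(m,n)$ can be brought to a normal form generalizing Habiro's factorization (Lemma~\ref{lemmamorofkgrop}): first coproducts $\Delta^{[\cdots]}$ applied to the $m$ inputs together with $d$ copies of $\tilde c$ producing $2d$ new strands, then a permutation together with a choice of antipodes $S^{e_i}$, then products $\mu^{[\cdots]}$ collecting all strands into the $n$ outputs. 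One then compares the number of such normal forms with $\dim_\K\A_d(m,n)$, the latter computed from the combinatorics of Jacobi diagrams modulo STU via a Poincaré--Birkhoff--Witt / symmetrization argument (the space of Jacobi diagrams on $X_n$ being, degree by degree, a polynomial-algebra-type object built on connected diagrams). Exhibiting a basis of $\A_d(m,n)$ indexed by these normal forms yields faithfulness. The delicate points are the reduction to normal form—the three Casimir axioms must be used precisely to move copies of $\tilde c$ past the Hopf-algebra operations and to remove the resulting redundancies—and the bookkeeping in the dimension count; an alternative route is to deduce faithfulness from the existence of a sufficiently rich faithful $\K$-linear symmetric monoidal functor out of $\A$, but this essentially repackages the same combinatorial content.
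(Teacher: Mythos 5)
The paper gives no argument for this statement: it is quoted verbatim from Habiro--Massuyeau \cite[Theorem 5.11]{Habiro--Massuyeau}, so there is no in-paper proof to compare with, and your proposal has to be judged as a stand-alone plan. Its first half is fine: constructing the canonical symmetric monoidal functor from the free $\K$-linear PROP on a Casimir Hopf algebra to $\A$ and proving fullness by STU-reducing Jacobi diagrams to chord diagrams is exactly the content of Lemma \ref{decompositionofA} (Habiro--Massuyeau's Lemma 5.16); the only slip is that STU preserves the degree, so the induction should be on the number of trivalent vertices rather than on ``lower-degree terms''.

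The faithfulness step, however, has a genuine gap. Your plan is to put every element of the free PROP into the normal form $\mu^{[p_1,\cdots,p_n]}P_{\sigma}(S^{e_1}\otimes\cdots\otimes S^{e_s}\otimes\id)(\Delta^{[q_1,\cdots,q_m]}\otimes\tilde{c}^{\otimes d})$ and then ``compare the number of such normal forms with $\dim_\K \A_d(m,n)$'', exhibiting a basis of $\A_d(m,n)$ indexed by them. These normal forms are not linearly independent, neither in the free PROP nor in $\A$: already in degree $0$ the factorization is non-unique because $\mu(\id_1\otimes S)\Delta=\eta\varepsilon$ (as noted right after Lemma \ref{lemmamorofkgrop}), and in positive degree there are further $4$T-type redundancies of the kind recorded in the remark following Lemma \ref{factorizationofAL}. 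Moreover the spaces involved are infinite-dimensional as soon as $m\ge 1$ (e.g.\ $\A_0(m,n)\cong\kgrop(m,n)$ is the linearization of $F_m^{\times n}$), so a count of normal forms against a dimension cannot close the argument; what is needed is a basis-level comparison, e.g.\ the identification of $\A(m,n)$ with Jacobi diagrams on $X_n$ with beads in the free group $F_m$ (where homotopy classes of the embedded diagram are encoded by group elements), together with a reduced normal form in the free PROP that bijects with such a basis. That identification and the precise handling of the redundancies is the real content of Habiro--Massuyeau's proof, and it is exactly the part your sketch leaves open.
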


The degree $0$ part $\A_0$ of the category $\A$ forms a subcategory of $\A$.
Since the PROP $\gr^{\op}$ is freely generated by a cocommutative Hopf algebra, there exists a unique symmetric monoidal functor from $\gr^{\op}$ to $\A$ such that the cocommutative Hopf algebra of $\gr^{\op}$ is mapped to the cocommutative Hopf algebra of $\A$. 
This functor induces an isomorphism $\kgrop\xrightarrow{\cong} \A_0$ of $\K$-linear PROPs.
We will identify the morphisms of cocommutative Hopf algebras in $\kgrop$ and those in $\A$ via this functor.

Since the degree of the generating morphisms $\mu,\eta,\Delta,\varepsilon,S$ in $\A$ is $0$, and since the degree of $\tilde{c}$ is $1$, the degree of a morphism of $\A$ is equal to the degree by the number of copies of the Casimir $2$-tensor $\tilde{c}$.
A morphism of $\A_d(m,n)$ can be factorized as follows. 

\begin{lemma}[{\cite[Lemma 5.16]{Habiro--Massuyeau}}]\label{decompositionofA}
    Any element of $\A_d(m,n)$ is a linear combination of morphisms of the form
    \begin{gather*}
        \mu^{[p_1,\cdots,p_n]}P_{\sigma}(S^{e_1}\otimes\cdots\otimes S^{e_s}\otimes \id_{2d})(\Delta^{[q_1,\cdots,q_m]}\otimes \tilde{c}^{\otimes d}),
    \end{gather*}
    where $s,q_1,\cdots, q_m,p_1,\cdots,p_n\ge 0$, 
    $s=p_1+\cdots+p_n-2d=q_1+\cdots+q_m$, $\sigma\in \gpS_{s+2d}, e_1,\cdots,e_s\in \{0,1\}$.
\end{lemma}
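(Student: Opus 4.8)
The plan is to bootstrap from the presentation of $\A$ as a PROP freely generated by the Casimir Hopf algebra (Lemma \ref{presentationofA}), exactly as the analogous factorization in $\gr^{\op}$ (Lemma \ref{lemmamorofkgrop}) follows from the fact that $\gr^{\op}$ is freely generated by a cocommutative Hopf algebra. First I would observe that, by freeness, every morphism of $\A$ is a linear combination of composites and tensor products of the generators $\mu, \eta, \Delta, \varepsilon, S, \tilde{c}$ and symmetries $P_{\sigma}$. Since $\deg \tilde c = 1$ and all other generators have degree $0$, a degree-$d$ morphism contains exactly $d$ copies of $\tilde c$ (counted with multiplicity), so it suffices to treat a single monomial word $w$ in the generators with $d$ occurrences of $\tilde c$ and show it can be rewritten in the stated normal form. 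The strategy is a rewriting/normal-form argument: push all copies of $\tilde c$ and all coproducts $\Delta$ to the bottom, all products $\mu$ to the top, collect all antipodes $S$ in the middle, and absorb everything else into a single permutation $P_\sigma$.

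The key steps, in order, are: (1) Use the cocommutative-Hopf-algebra relations (which hold in $\A$ because they hold for the Casimir Hopf algebra $(H,\mu,\eta,\Delta,\varepsilon,S,\tilde c)$) to move the degree-$0$ part into the Habiro normal form of Lemma \ref{lemmamorofkgrop}; this is the part that is literally inherited from $\A_0 \cong \kgrop$. (2) Handle the copies of $\tilde c$: the Casimir-$2$-tensor axioms — in particular $(\Delta\otimes\id_H)\tilde c = (\id_H\otimes\eta\otimes\id_H)\tilde c + \eta\otimes\tilde c$ and $P_{H,H}\tilde c = \tilde c$, together with the $ad$-invariance axiom — let me commute a $\tilde c$ past coproducts, counits, antipodes and the adjoint action at the cost of extra terms that are again sums of diagrams of the same degree, so I can slide each $\tilde c$ all the way to the bottom of the word, lined up as $\tilde c^{\otimes d}$ to the right of $\Delta^{[q_1,\dots,q_m]}$. (3) Once all $\Delta$'s and all $\tilde c$'s sit at the bottom and all $\mu$'s at the top, everything in between is built from $S$'s and symmetries, and the naturality of the symmetry lets me collect the $S$'s as $S^{e_1}\otimes\cdots\otimes S^{e_s}\otimes\id_{2d}$ (the $2d$ "legs" coming from the $d$ Casimir tensors carry no antipode) preceded by a single permutation $P_\sigma \in \gpS_{s+2d}$. (4) Finally, count strands: between the coproduct/$\tilde c$ layer and the product layer there are $s + 2d$ strands, where $s = q_1+\cdots+q_m = p_1+\cdots+p_n - 2d$, giving the asserted constraints.

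I expect step (2) — sliding the Casimir tensors down past the Hopf-algebra structure while controlling the error terms — to be the main obstacle, since unlike the purely Hopf-algebraic rewriting of step (1), it genuinely uses the three Casimir axioms and produces a proliferation of lower-complexity terms that must be re-normalized by induction. A clean way to organize this is an induction on the number of generators in $w$ lying "below" a given $\tilde c$, using the Casimir axioms as the rewriting rules that strictly decrease this measure (each application either moves a $\tilde c$ past one generator or replaces the word by words with fewer generators between the $\tilde c$ and the bottom); the STU/AS/IHX relations are only needed to know these identities hold in $\A$, which is guaranteed by Lemma \ref{presentationofA}. As with Lemma \ref{lemmamorofkgrop}, the resulting factorization will not be unique — the relation $\mu(\id_1\otimes S)\Delta = \eta\varepsilon$ already breaks uniqueness in degree $0$ — but the lemma only claims existence of such a linear combination, so no uniqueness bookkeeping is required.
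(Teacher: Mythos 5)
Your proposal is correct in outline, but note that the paper itself contains no proof of this lemma to compare against: it is imported verbatim from Habiro--Massuyeau \cite[Lemma 5.16]{Habiro--Massuyeau}, where it is established from the topological model of $\A$ (reduce an $(m,n)$-Jacobi diagram to chord diagrams via STU, then encode the winding of the skeleton through the handles by $\Delta$'s, $S$'s and $\mu$'s, leaving the $d$ chords as $\tilde c^{\otimes d}$ at the bottom). Your route is the algebraic one: derive the normal form from the presentation of $\A$ as the free $\K$-linear PROP on a Casimir Hopf algebra (Lemma \ref{presentationofA}), mimicking Lemma \ref{lemmamorofkgrop}. That is a legitimate alternative and is not circular within this paper's logic, since the presentation is quoted independently. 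One substantive correction to your plan, which actually makes it easier: step (2) does not require the Casimir axioms to slide the $\tilde c$'s to the bottom. Since $\tilde c$ has source the monoidal unit, bifunctoriality (the interchange law) alone rewrites any monomial word as $w'\circ(\id_{H^{\otimes m}}\otimes\tilde c^{\otimes d})$ with $w'$ a degree-$0$ word in $\A_0(m+2d,n)\cong\kgrop(m+2d,n)$, to which Lemma \ref{lemmamorofkgrop} applies. The relations are only needed afterwards, to clear the $\Delta^{[q_j]}$'s, $\varepsilon$'s and $S$'s that then sit on the legs of the $\tilde c$'s: the primitivity-type axiom $(\Delta\otimes\id_H)\tilde c=(\id_H\otimes\eta\otimes\id_H)\tilde c+\eta\otimes\tilde c$ together with its consequences $(\varepsilon\otimes\id_H)\tilde c=0$ and $(S\otimes\id_H)\tilde c=-\tilde c$ (and the symmetry $P_{H,H}\tilde c=\tilde c$ for the second leg) does all of this, producing $\eta$'s that are absorbed into the $\mu^{[p_i]}$'s and signs on the $\tilde c$'s; the $ad$-invariance axiom is never used, and the constraint $e_i\in\{0,1\}$ is already part of the degree-$0$ normal form. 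With that reorganization your induction over "generators below a given $\tilde c$" becomes unnecessary, and the strand count $s=q_1+\cdots+q_m=p_1+\cdots+p_n-2d$ comes out exactly as you state.
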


\subsection{The $\K$-linear PROP $\catLieC$ for Casimir Lie algebras}
Here, we recall the $\K$-linear PROP for Casimir Lie algebras introduced by Hinich and Vaintrob in \cite{Hinich--Vaintrob}.
A \emph{Casimir Lie algebra} $(L,c)$ in a symmetric monoidal category is a Lie algebra $L$ equipped with a symmetric invariant $2$-tensor, which is called the \emph{Casimir element}, which is a morphism $c:I\to L^{\otimes 2}$ satisfying
\begin{gather}\label{Casimirrelation}
    P_{L,L}c=c, \quad ([,]\otimes \id_L)(\id_L\otimes c)=(\id_L\otimes [,])(c\otimes \id_L).
\end{gather}

Consider the operad $\Lie$ as a cyclic operad.
Let $\tau=(012\cdots n)\in \gpS_{1+n}$.
The symmetric group $\gpS_{1+n}$ is generated by the cyclic permutation $\tau$ and the subgroup $\gpS_{n}\subset \gpS_{1+n}$ that stabilizes $0$.
The right action of $\tau$ on an element of $\Lie$ is given by changing the first input into the output and the output into the last input.
For example, we have 
\begin{gather*}
     \scalebox{0.5}{$\centre{%% Creator: Inkscape 1.4.2 (ebf0e940, 2025-05-08), www.inkscape.org
%% PDF/EPS/PS + LaTeX output extension by Johan Engelen, 2010
%% Accompanies image file '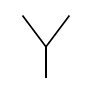' (pdf, eps, ps)
%%
%% To include the image in your LaTeX document, write
%%   \input{<filename>.pdf_tex}
%%  instead of
%%   \includegraphics{<filename>.pdf}
%% To scale the image, write
%%   \def\svgwidth{<desired width>}
%%   \input{<filename>.pdf_tex}
%%  instead of
%%   \includegraphics[width=<desired width>]{<filename>.pdf}
%%
%% Images with a different path to the parent latex file can
%% be accessed with the `import' package (which may need to be
%% installed) using
%%   \usepackage{import}
%% in the preamble, and then including the image with
%%   \import{<path to file>}{<filename>.pdf_tex}
%% Alternatively, one can specify
%%   \graphicspath{{<path to file>/}}
%% 
%% For more information, please see info/svg-inkscape on CTAN:
%%   http://tug.ctan.org/tex-archive/info/svg-inkscape
%%
\begingroup%
  \makeatletter%
  \providecommand\color[2][]{%
    \errmessage{(Inkscape) Color is used for the text in Inkscape, but the package 'color.sty' is not loaded}%
    \renewcommand\color[2][]{}%
  }%
  \providecommand\transparent[1]{%
    \errmessage{(Inkscape) Transparency is used (non-zero) for the text in Inkscape, but the package 'transparent.sty' is not loaded}%
    \renewcommand\transparent[1]{}%
  }%
  \providecommand\rotatebox[2]{#2}%
  \newcommand*\fsize{\dimexpr\f@size pt\relax}%
  \newcommand*\lineheight[1]{\fontsize{\fsize}{#1\fsize}\selectfont}%
  \ifx\svgwidth\undefined%
    \setlength{\unitlength}{42.51968504bp}%
    \ifx\svgscale\undefined%
      \relax%
    \else%
      \setlength{\unitlength}{\unitlength * \real{\svgscale}}%
    \fi%
  \else%
    \setlength{\unitlength}{\svgwidth}%
  \fi%
  \global\let\svgwidth\undefined%
  \global\let\svgscale\undefined%
  \makeatother%
  \begin{picture}(1,1)%
    \lineheight{1}%
    \setlength\tabcolsep{0pt}%
    \put(0,0){\includegraphics[width=\unitlength,page=1]{Lie2.pdf}}%
  \end{picture}%
\endgroup%
}$}\cdot \tau=\scalebox{0.5}{$\centre{}$},\quad 
     \scalebox{0.5}{$\centre{%% Creator: Inkscape 1.4.2 (ebf0e940, 2025-05-08), www.inkscape.org
%% PDF/EPS/PS + LaTeX output extension by Johan Engelen, 2010
%% Accompanies image file '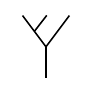' (pdf, eps, ps)
%%
%% To include the image in your LaTeX document, write
%%   \input{<filename>.pdf_tex}
%%  instead of
%%   \includegraphics{<filename>.pdf}
%% To scale the image, write
%%   \def\svgwidth{<desired width>}
%%   \input{<filename>.pdf_tex}
%%  instead of
%%   \includegraphics[width=<desired width>]{<filename>.pdf}
%%
%% Images with a different path to the parent latex file can
%% be accessed with the `import' package (which may need to be
%% installed) using
%%   \usepackage{import}
%% in the preamble, and then including the image with
%%   \import{<path to file>}{<filename>.pdf_tex}
%% Alternatively, one can specify
%%   \graphicspath{{<path to file>/}}
%% 
%% For more information, please see info/svg-inkscape on CTAN:
%%   http://tug.ctan.org/tex-archive/info/svg-inkscape
%%
\begingroup%
  \makeatletter%
  \providecommand\color[2][]{%
    \errmessage{(Inkscape) Color is used for the text in Inkscape, but the package 'color.sty' is not loaded}%
    \renewcommand\color[2][]{}%
  }%
  \providecommand\transparent[1]{%
    \errmessage{(Inkscape) Transparency is used (non-zero) for the text in Inkscape, but the package 'transparent.sty' is not loaded}%
    \renewcommand\transparent[1]{}%
  }%
  \providecommand\rotatebox[2]{#2}%
  \newcommand*\fsize{\dimexpr\f@size pt\relax}%
  \newcommand*\lineheight[1]{\fontsize{\fsize}{#1\fsize}\selectfont}%
  \ifx\svgwidth\undefined%
    \setlength{\unitlength}{42.51968504bp}%
    \ifx\svgscale\undefined%
      \relax%
    \else%
      \setlength{\unitlength}{\unitlength * \real{\svgscale}}%
    \fi%
  \else%
    \setlength{\unitlength}{\svgwidth}%
  \fi%
  \global\let\svgwidth\undefined%
  \global\let\svgscale\undefined%
  \makeatother%
  \begin{picture}(1,1)%
    \lineheight{1}%
    \setlength\tabcolsep{0pt}%
    \put(0,0){\includegraphics[width=\unitlength,page=1]{Lie31.pdf}}%
  \end{picture}%
\endgroup%
}$}\cdot \tau =\scalebox{0.5}{$\centre{%% Creator: Inkscape 1.4.2 (ebf0e940, 2025-05-08), www.inkscape.org
%% PDF/EPS/PS + LaTeX output extension by Johan Engelen, 2010
%% Accompanies image file '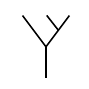' (pdf, eps, ps)
%%
%% To include the image in your LaTeX document, write
%%   \input{<filename>.pdf_tex}
%%  instead of
%%   \includegraphics{<filename>.pdf}
%% To scale the image, write
%%   \def\svgwidth{<desired width>}
%%   \input{<filename>.pdf_tex}
%%  instead of
%%   \includegraphics[width=<desired width>]{<filename>.pdf}
%%
%% Images with a different path to the parent latex file can
%% be accessed with the `import' package (which may need to be
%% installed) using
%%   \usepackage{import}
%% in the preamble, and then including the image with
%%   \import{<path to file>}{<filename>.pdf_tex}
%% Alternatively, one can specify
%%   \graphicspath{{<path to file>/}}
%% 
%% For more information, please see info/svg-inkscape on CTAN:
%%   http://tug.ctan.org/tex-archive/info/svg-inkscape
%%
\begingroup%
  \makeatletter%
  \providecommand\color[2][]{%
    \errmessage{(Inkscape) Color is used for the text in Inkscape, but the package 'color.sty' is not loaded}%
    \renewcommand\color[2][]{}%
  }%
  \providecommand\transparent[1]{%
    \errmessage{(Inkscape) Transparency is used (non-zero) for the text in Inkscape, but the package 'transparent.sty' is not loaded}%
    \renewcommand\transparent[1]{}%
  }%
  \providecommand\rotatebox[2]{#2}%
  \newcommand*\fsize{\dimexpr\f@size pt\relax}%
  \newcommand*\lineheight[1]{\fontsize{\fsize}{#1\fsize}\selectfont}%
  \ifx\svgwidth\undefined%
    \setlength{\unitlength}{42.51968504bp}%
    \ifx\svgscale\undefined%
      \relax%
    \else%
      \setlength{\unitlength}{\unitlength * \real{\svgscale}}%
    \fi%
  \else%
    \setlength{\unitlength}{\svgwidth}%
  \fi%
  \global\let\svgwidth\undefined%
  \global\let\svgscale\undefined%
  \makeatother%
  \begin{picture}(1,1)%
    \lineheight{1}%
    \setlength\tabcolsep{0pt}%
    \put(0,0){\includegraphics[width=\unitlength,page=1]{Lie32.pdf}}%
  \end{picture}%
\endgroup%
}$}, \quad \scalebox{0.5}{$\centre{%% Creator: Inkscape 1.4.2 (ebf0e940, 2025-05-08), www.inkscape.org
%% PDF/EPS/PS + LaTeX output extension by Johan Engelen, 2010
%% Accompanies image file '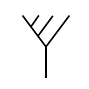' (pdf, eps, ps)
%%
%% To include the image in your LaTeX document, write
%%   \input{<filename>.pdf_tex}
%%  instead of
%%   \includegraphics{<filename>.pdf}
%% To scale the image, write
%%   \def\svgwidth{<desired width>}
%%   \input{<filename>.pdf_tex}
%%  instead of
%%   \includegraphics[width=<desired width>]{<filename>.pdf}
%%
%% Images with a different path to the parent latex file can
%% be accessed with the `import' package (which may need to be
%% installed) using
%%   \usepackage{import}
%% in the preamble, and then including the image with
%%   \import{<path to file>}{<filename>.pdf_tex}
%% Alternatively, one can specify
%%   \graphicspath{{<path to file>/}}
%% 
%% For more information, please see info/svg-inkscape on CTAN:
%%   http://tug.ctan.org/tex-archive/info/svg-inkscape
%%
\begingroup%
  \makeatletter%
  \providecommand\color[2][]{%
    \errmessage{(Inkscape) Color is used for the text in Inkscape, but the package 'color.sty' is not loaded}%
    \renewcommand\color[2][]{}%
  }%
  \providecommand\transparent[1]{%
    \errmessage{(Inkscape) Transparency is used (non-zero) for the text in Inkscape, but the package 'transparent.sty' is not loaded}%
    \renewcommand\transparent[1]{}%
  }%
  \providecommand\rotatebox[2]{#2}%
  \newcommand*\fsize{\dimexpr\f@size pt\relax}%
  \newcommand*\lineheight[1]{\fontsize{\fsize}{#1\fsize}\selectfont}%
  \ifx\svgwidth\undefined%
    \setlength{\unitlength}{42.51968504bp}%
    \ifx\svgscale\undefined%
      \relax%
    \else%
      \setlength{\unitlength}{\unitlength * \real{\svgscale}}%
    \fi%
  \else%
    \setlength{\unitlength}{\svgwidth}%
  \fi%
  \global\let\svgwidth\undefined%
  \global\let\svgscale\undefined%
  \makeatother%
  \begin{picture}(1,1)%
    \lineheight{1}%
    \setlength\tabcolsep{0pt}%
    \put(0,0){\includegraphics[width=\unitlength,page=1]{Lie41.pdf}}%
  \end{picture}%
\endgroup%
}$}\cdot \tau =\scalebox{0.5}{$\centre{%% Creator: Inkscape 1.4.2 (ebf0e940, 2025-05-08), www.inkscape.org
%% PDF/EPS/PS + LaTeX output extension by Johan Engelen, 2010
%% Accompanies image file '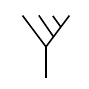' (pdf, eps, ps)
%%
%% To include the image in your LaTeX document, write
%%   \input{<filename>.pdf_tex}
%%  instead of
%%   \includegraphics{<filename>.pdf}
%% To scale the image, write
%%   \def\svgwidth{<desired width>}
%%   \input{<filename>.pdf_tex}
%%  instead of
%%   \includegraphics[width=<desired width>]{<filename>.pdf}
%%
%% Images with a different path to the parent latex file can
%% be accessed with the `import' package (which may need to be
%% installed) using
%%   \usepackage{import}
%% in the preamble, and then including the image with
%%   \import{<path to file>}{<filename>.pdf_tex}
%% Alternatively, one can specify
%%   \graphicspath{{<path to file>/}}
%% 
%% For more information, please see info/svg-inkscape on CTAN:
%%   http://tug.ctan.org/tex-archive/info/svg-inkscape
%%
\begingroup%
  \makeatletter%
  \providecommand\color[2][]{%
    \errmessage{(Inkscape) Color is used for the text in Inkscape, but the package 'color.sty' is not loaded}%
    \renewcommand\color[2][]{}%
  }%
  \providecommand\transparent[1]{%
    \errmessage{(Inkscape) Transparency is used (non-zero) for the text in Inkscape, but the package 'transparent.sty' is not loaded}%
    \renewcommand\transparent[1]{}%
  }%
  \providecommand\rotatebox[2]{#2}%
  \newcommand*\fsize{\dimexpr\f@size pt\relax}%
  \newcommand*\lineheight[1]{\fontsize{\fsize}{#1\fsize}\selectfont}%
  \ifx\svgwidth\undefined%
    \setlength{\unitlength}{42.51968504bp}%
    \ifx\svgscale\undefined%
      \relax%
    \else%
      \setlength{\unitlength}{\unitlength * \real{\svgscale}}%
    \fi%
  \else%
    \setlength{\unitlength}{\svgwidth}%
  \fi%
  \global\let\svgwidth\undefined%
  \global\let\svgscale\undefined%
  \makeatother%
  \begin{picture}(1,1)%
    \lineheight{1}%
    \setlength\tabcolsep{0pt}%
    \put(0,0){\includegraphics[width=\unitlength,page=1]{Lie42.pdf}}%
  \end{picture}%
\endgroup%
}$}.
\end{gather*}

Let $\catLieC$ denote the $\K$-linear PROP for Casimir Lie algebras, which is a $\K$-linear PROP generated by the $\K$-linear PROP $\catLie$ and a symmetric element $c\in \catLieC(0,2)$ satisfying 
\begin{gather*}
    (\id_1\otimes f)(c\otimes \id_{n-1})=(f\tau \otimes \id_1)(\id_{n-1}\otimes c),
\end{gather*}
for $f\in \Lie(n)$ (see \cite[Section 3.1.6]{Hinich--Vaintrob} for details).
In other words, $\catLieC$ is the $\K$-linear symmetric monoidal category whose objects are generated by $L$ and whose morphisms are generated by $[,]:L^{\otimes 2}\to L$ and $c: I\to L^{\otimes 2}$ with relations generated by the AS, IHX-relations and the relations \eqref{Casimirrelation}.

The category $\catLieC$ has a grading given by the number of copies of the Casimir element $c$, that is, we have
\begin{gather*}
    \catLieC(m,n)=\bigoplus_{d\ge 0}\catLieC(m,n)_d,
\end{gather*}
where $\catLieC(m,n)_d$ is spanned by morphisms with $d$ copies of $c$.
Then the degree $0$ part $\catLieC(m,n)_0$ forms a subcategory $(\catLieC)_0$, which is isomorphic to $\catLie$.

It is easy to see that any element of $\catLieC(m,n)_d$ can be decomposed into a linear combination of morphisms of the form 
\begin{gather*}
    f(\id_{L^{\otimes m}}\otimes c^{\otimes d}),
\end{gather*}
where $f\in \catLie(m+2d,n)$. This fact can be described by using the \emph{upward Brauer category} $\ub$ \cite{Sam--Snowden, Kupers--Randal-Williams}, which is a $\K$-linear category whose objects are non-negative integers and whose hom-space $\ub(m,n)$ is spanned by partitionings of $\{1^+,\cdots,m^+, 1^-,\cdots,n^-\}$ into $(m+n)/2$ unordered pairs in such a way that the partitioning includes no pairs of two positive elements.
The assignment of the Casimir element $c$ to each unordered pair of two negative elements induces an inclusion 
$\ub\hookrightarrow \catLieC,$
which induces a $\K$-linear full functor
\begin{gather}\label{catLieuB}
    \catLie\otimes_{\mathbb{S}}\ub\twoheadrightarrow \catLieC,
\end{gather}
where $\mathbb{S}$ is the $\K$-linear category whose objects are non-negative integers and whose hom-space is $\mathbb{S}(m,n)=\K\gpS_m$ if $m=n$ and $\mathbb{S}(m,n)=0$ otherwise, and where the structure of a $\K$-linear category on $\catLie\otimes_{\mathbb{S}}\ub$ is the canonical one induced by the structures of $\catLie$ and $\ub$.

\begin{remark}
The hom-space $\catLieC(0,n)$ can be identified with the space of \emph{open Jacobi diagrams} whose univalent vertices are colored by $\{1,\cdots,n\}$.
See \cite[Section 13]{PowellPassiMalcev} for the construction of $\catLieC$ by using open Jacobi diagrams. The surjective morphism \eqref{catLieuB} appears in \cite[Example 13.15]{PowellPassiMalcev}.
It is well known that open Jacobi diagrams with one univalent vertex vanish by the AS and IHX relations, which implies that we have \begin{gather}\label{CatLieC01}
    \catLieC(0,1)=0.
\end{gather}
\end{remark}

By extending the description of the hom-space $\catLie(m,n)$ in Lemma \ref{decompositionofcatLie}, we will explicitly describe the $\K$-vector space $\catLieC(0,n)_d$.
Before stating the following lemma, we will introduce some notation.
For $\sigma\in \gpS_{2d}$ and an $n$-tuple $(T_1,\cdots,T_n)$ of rooted trivalent trees $T_i\in \Lie(m_i)$ with $1\le i\le n, \;m_1+\cdots+m_n=2d$,
which satisfy
\begin{gather}
     \label{star}
     \tag{$\ast$}
     \begin{split}
    &\sigma(1)=m_1+\cdots+m_{i-1}+1,\quad \sigma(3)=m_1+\cdots+m_{i-1}+2,\\ 
    &\sigma(2)= m_1+\cdots+m_{j-1}+1,\quad T_i=t([,]\otimes \id_{m_i-2})
     \end{split}
\end{gather}
for some $1\le i<j\le n$ and for some rooted trivalent tree $t\in \Lie(m_i-1)$,
set 
\begin{gather*}
    \overline{T_i}=t,\quad
    \widetilde{T_j}=([,]\otimes \id_{m_j-1})T_j,\quad
    \widetilde{\sigma}=\sigma'\sigma,
\end{gather*}
where $\sigma'\in \gpS_{2d}$ is the order-preserving permutation except for 
\begin{gather*}
    \sigma'(\sigma(3))=\sigma(2).
\end{gather*}
For $\sigma\in \gpS_{2d}$ and an $n$-tuple $(T_1,\cdots,T_n)$ which satisfy
\begin{gather}
     \label{starstar}
     \tag{$\ast\ast$}
     \begin{split}
    &\sigma(1)=m_1+\cdots+m_{i-1}+1,\quad \sigma(3)=m_1+\cdots+m_{i-1}+2,\\ 
    &\sigma(2)= m_1+\cdots+m_{i-1}+3,\quad T_i=t([,]\otimes \id_{m_i-2})
     \end{split}
\end{gather}
for some $1\le i\le n$, we set 
\begin{gather*}
  \widetilde{(\overline{T_i})}=(\id_1\otimes [,]\otimes \id_{m_i-3})t
\end{gather*}
and $\widetilde{\sigma}$ in the same way as above.
\begin{example}
    For $\sigma=(1256)\in \gpS_{6}$ and a triple $(T_1=\id_1, T_2=\scalebox{0.5}{$\centre{}$}, T_3=\scalebox{0.4}{$\centre{}$})$ of rooted trivalent trees, we have $i=2,j=3$, $\overline{T_2}=\scalebox{0.4}{$\centre{}$}$, $\widetilde{T_3}=\scalebox{0.4}{$\centre{}$}$, $\widetilde{\sigma}=(124356)$.
\end{example}
\begin{example}
     For $\sigma=(1245)\in \gpS_{6}$ and a triple $(T_1=\id_1, T_2=\scalebox{0.5}{$\centre{}$}, T_3=\scalebox{0.4}{$\centre{}$})$ of rooted trivalent trees, we have $i=j=2$, 
     $\widetilde{(\overline{T_2})}=\scalebox{0.4}{$\centre{}$}$, 
     $\widetilde{\sigma}=(12345)$.
\end{example}

\begin{lemma}\label{CatLieC0-}
The degree $d$ part $\catLieC(0,n)_d$ is the $\K$-vector space spanned by the set
\begin{gather}\label{generatorofcatLieC0-}
    \left\{(T_1\otimes \cdots\otimes T_n)P_{\sigma} c^{\otimes d} \,\middle\vert\,
    \substack{T_i\in \Lie(m_i): \text{a rooted trivalent tree } (1\le i\le n), \\
    m_1+\cdots+m_n=2d,\;\sigma\in \gpS_m}\right\}
\end{gather}
modulo the equivariance relation \eqref{equivariancerel}, the AS and IHX relations, the relation between the Casimir element $c$ and the symmetry
\begin{gather}\label{symmetryc}
    (T_1\otimes\cdots\otimes T_n)P_{\sigma \rho}c^{\otimes d}=(T_1\otimes\cdots\otimes T_n)P_{\sigma} c^{\otimes d}
\end{gather}
for $\rho\in \gpS_2 \wr \gpS_n$,
and the relations between the Casimir element $c$ and the Lie bracket
\begin{gather}\label{[,]cij}
    (T_1\otimes\cdots\otimes T_n)P_{\sigma} c^{\otimes d}=-
    (T_1\otimes\cdots \otimes \overline{T_i} \otimes\cdots \otimes \widetilde{T_j}\otimes\cdots \otimes T_n)P_{\widetilde{\sigma}} c^{\otimes d}
\end{gather}
for $(T_1\otimes \cdots\otimes T_n)P_{\sigma} c^{\otimes d}$ satisfying the condition \eqref{star} and 
\begin{gather}\label{[,]ci}
    (T_1\otimes\cdots\otimes T_n)P_{\sigma} c^{\otimes d}=-
    (T_1\otimes\cdots \otimes \widetilde{(\overline{T_i})}\otimes\cdots \otimes T_n)P_{\widetilde{\sigma}} c^{\otimes d}
\end{gather}
for $(T_1\otimes \cdots\otimes T_n)P_{\sigma} c^{\otimes d}$ satisfying the condition \eqref{starstar}.
\end{lemma}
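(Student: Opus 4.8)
The plan is to upgrade the description of $\catLie(m,n)$ from Lemma \ref{decompositionofcatLie} to the degree-$d$ part of $\catLieC(0,-)$ by tracking what the extra generator $c$ contributes. First I would use the surjection \eqref{catLieuB} together with the observation (made just before the lemma) that every element of $\catLieC(0,n)_d$ is a linear combination of morphisms $f(\id_{L^{\otimes 0}}\otimes c^{\otimes d})=f\, c^{\otimes d}$ with $f\in \catLie(2d,n)$. Feeding the spanning set of $\catLie(2d,n)$ from Lemma \ref{decompositionofcatLie}, namely morphisms $(T_1\otimes\cdots\otimes T_n)P_\sigma$ with $T_i\in\Lie(m_i)$ rooted trivalent trees, $\sum m_i=2d$ and $\sigma\in\gpS_{2d}$, immediately yields that the set \eqref{generatorofcatLieC0-} spans $\catLieC(0,n)_d$. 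So the content is entirely in identifying the relations.

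Next I would analyze the kernel of $\catLie(2d,n)\otimes \K\gpS_{2d}\to\catLieC(0,n)_d$, $(f,\rho)\mapsto f P_\rho c^{\otimes d}$. The equivariance relation \eqref{equivariancerel} and the AS, IHX relations are inherited directly from $\catLie$. The genuinely new relations are those involving $c$: the defining relations of $\catLieC$ are $P_{L,L}c=c$ and the ``sliding'' relation $(\id_1\otimes f)(c\otimes\id_{n-1})=(f\tau\otimes\id_1)(\id_{n-1}\otimes c)$ for $f\in\Lie(n)$, expressing cyclic invariance of the Casimir pairing. Relation \eqref{symmetryc} for $\rho\in\gpS_2\wr\gpS_n$ is exactly the combination of $P_{L,L}c=c$ (swapping the two legs of a single $c$, giving the $\gpS_2^{n}$ part) and permuting the $d$ copies of $c$ among themselves (the $\gpS_n$ part), both of which are harmless post-composition symmetries. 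Relations \eqref{[,]cij} and \eqref{[,]ci} are the concrete form of the sliding relation: when one leg of a $c$ (say $\sigma(1)$) feeds into a bracket vertex at the bottom of the tree $T_i$, that bracket can be ``pulled across'' the $c$ onto the other leg $\sigma(2)$, which lands either inside another tree $T_j$ (case \eqref{star}, handled by $\overline{T_i},\widetilde{T_j},\widetilde\sigma$) or back inside the same tree $T_i$ (case \eqref{starstar}, handled by $\widetilde{(\overline{T_i})}$), and the antisymmetry of the bracket produces the sign. The combinatorial bookkeeping — verifying that $\widetilde\sigma$ as defined (the permutation $\sigma'$ moving $\sigma(3)$ to position $\sigma(2)$, composed with $\sigma$) correctly records the reindexing after the bracket slides — is exactly the routine calculation I would not grind through, but the two worked examples preceding the lemma are designed to make this transparent.

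The main point to prove carefully is \emph{completeness}: that \eqref{equivariancerel}, AS, IHX, \eqref{symmetryc}, \eqref{[,]cij}, \eqref{[,]ci} generate \emph{all} relations, i.e. the quotient of the span of \eqref{generatorofcatLieC0-} by these relations maps isomorphically to $\catLieC(0,n)_d$. For this I would argue that these are precisely the relations one reads off from the presentation of $\catLieC$ by generators $[,]$, $c$ and relations AS, IHX, \eqref{Casimirrelation}: since \eqref{catLieuB} realizes $\catLieC$ as a quotient of $\catLie\otimes_{\mathbb{S}}\ub$, and a spanning element $(T_1\otimes\cdots\otimes T_n)P_\sigma c^{\otimes d}$ already has all its brackets organized into the trees $T_i$ (no brackets applied after the $c$'s, since there are no output legs from a bracket below a $c$ that haven't been absorbed), the only instances of the relation \eqref{Casimirrelation} that act nontrivially on such normal forms are exactly the ones where a bracket sits immediately above a $c$-leg — and these are catalogued by the conditions \eqref{star} and \eqref{starstar}. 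Concretely, I would set up the quotient vector space $V$ defined by the stated generators and relations, construct a well-defined linear map $V\to\catLieC(0,n)_d$ (easy: the relations hold in $\catLieC$), construct the inverse by sending a morphism of $\catLieC(0,n)_d$ to its class after rewriting via \eqref{catLieuB} and Lemma \ref{decompositionofcatLie}, and check this is well-defined by verifying it kills the relations AS, IHX, \eqref{Casimirrelation} of $\catLieC$ — the first two being immediate and \eqref{Casimirrelation} reducing to \eqref{symmetryc} (from $P_{L,L}c=c$) and \eqref{[,]cij}/\eqref{[,]ci} (from the sliding relation) by case analysis on where the bracket in \eqref{Casimirrelation} attaches. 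The obstacle, and the place where care is needed, is checking that the rewriting is confluent — that different ways of applying \eqref{[,]cij}/\eqref{[,]ci} to bring a morphism to the normal form \eqref{generatorofcatLieC0-} give the same answer modulo the listed relations — but this follows because each such move strictly decreases the number of bracket-vertices sitting directly on a $c$-leg, so the rewriting terminates, and local confluence is checked on the (finitely many types of) overlapping redexes using the equivariance and AS/IHX relations.
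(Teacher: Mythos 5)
Your overall skeleton matches the paper's proof: both use the surjection \eqref{catLieuB} together with Lemma \ref{decompositionofcatLie} to get the spanning set \eqref{generatorofcatLieC0-}, and both reduce the problem to showing that the relations induced by \eqref{Casimirrelation} are generated, modulo equivariance, AS, IHX and \eqref{symmetryc}, by the listed relations. However, there is a genuine gap at exactly the step that carries the content of the lemma. You assert that ``the only instances of the relation \eqref{Casimirrelation} that act nontrivially on such normal forms are exactly the ones where a bracket sits immediately above a $c$-leg --- and these are catalogued by the conditions \eqref{star} and \eqref{starstar}.'' That catalogue is strictly smaller than what the Casimir relation actually induces: conditions \eqref{star} and \eqref{starstar} require the bracket adjacent to the $c$-leg to have its \emph{other} input also be a bare $c$-leg (the leg in position $\sigma(3)$), whereas a general instance of the sliding relation, once written in the normal form, has that other input equal to the output of an arbitrary rooted subtree $t'$ whose leaves are Casimir legs. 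Reducing these general instances (with $T_i=t(([,](\id_1\otimes t'))\otimes\id)$, $t'\in\Lie(m_i')$) to the $t'=\id_1$ cases \eqref{[,]cij} and \eqref{[,]ci} is precisely the non-trivial part of the paper's argument; it is done by induction on $m_i'$ and uses the vanishing \eqref{CatLieC01}, $\catLieC(0,1)=0$, in an essential way. Your proposal never addresses this reduction, so as written it only proves that the listed relations hold, not that they generate all relations.

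A secondary problem is your termination measure in the confluence discussion: applying \eqref{[,]cij} or \eqref{[,]ci} does not decrease the number of bracket vertices sitting directly on a $c$-leg --- it transports a bracket from one Casimir leg to the other leg of the same Casimir (where it again sits directly on a $c$-leg, paired with the leg formerly in position $\sigma(3)$), so the count is unchanged and your rewriting argument does not terminate for the stated reason. Moreover these relations are identities \emph{among} elements of the spanning set rather than rewriting rules toward it, so the normalization you need (pushing all $c$'s to the source and organizing brackets into trees) is a separate, easy step, while the completeness question is not a confluence question at all but the generation question above. To repair the proof you should replace the confluence discussion by the missing reduction: show by induction on the size of $t'$, using IHX at the bracket adjacent to the $c$-leg together with \eqref{equivariancerel}, \eqref{symmetryc}, AS and \eqref{CatLieC01}, that the sliding relation for an arbitrary subtree $t'$ is a consequence of \eqref{[,]cij} and \eqref{[,]ci}.
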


\begin{proof}
The $\K$-vector space $\ub(0,2d)$ has a basis $\{\bar{\sigma}c_d\mid \bar\sigma\in \gpS_{2d}/(\gpS_2\wr\gpS_d)\}$, where $c_d=\{\{1^-,2^-\},\cdots,\{(2d-1)^-,2d^-\}\}$.
Therefore, by Lemma \ref{decompositionofcatLie}, the $\K$-vector space 
$$(\catLie\otimes_{\mathbb{S}}\ub)(0,n)_d=
\catLie(2d,n)\otimes_{\K[\gpS_{2d}]}\ub(0,2d)$$
is spanned by the set 
\begin{gather*}
    \left\{(T_1\otimes \cdots\otimes T_n)P_{\sigma} \otimes c_d \,\middle\vert\,
    \substack{T_i\in \Lie(m_i): \text{a rooted trivalent tree } (1\le i\le n), \\
    m_1+\cdots+m_n=2d,\;\sigma\in \gpS_m}\right\}
\end{gather*}
with relations generated by the equivariance relation \eqref{equivariancerel}, the AS and IHX relations, and the relation 
\begin{gather*}
    (T_1\otimes \cdots\otimes T_n)P_{\sigma \rho}\otimes c_d=(T_1\otimes \cdots\otimes T_n)P_{\sigma} \otimes c_d
\end{gather*}
for $\rho\in \gpS_2\wr\gpS_d$.
It follows from the surjective morphism \eqref{catLieuB} that $\catLieC(0,n)_d$ is spanned by the set \eqref{generatorofcatLieC0-}
with relations generated by 
the equivariance relation \eqref{equivariancerel}, the AS and IHX relations, the relation \eqref{symmetryc}, and the relations between the Casimir element $c$ and the Lie bracket generated by \eqref{Casimirrelation}.
Therefore, it suffices to prove that the relation \eqref{Casimirrelation} reduces to the relations \eqref{[,]cij} and \eqref{[,]ci} by the other relations.

By the relations \eqref{equivariancerel} and \eqref{symmetryc}, and the AS and IHX relations, the relation \eqref{Casimirrelation} reduces to the relation for $(T_1\otimes\cdots\otimes T_n)P_{\sigma} c^{\otimes d}$ satisfying the condition 
\begin{gather*}
\begin{split}
&\sigma(1)=m_1+\cdots+m_{i-1}+1,\quad \sigma(3)=m_1+\cdots+m_{i-1}+2,\\ 
&\sigma(2)=m_1+\cdots+m_{j-1}+1,\quad
T_i=t (([,](\id_1\otimes t'))\otimes \id_{m_i-m'_i-1})    
\end{split}
\end{gather*}
for some $1\le i<j\le n$, $t\in \Lie(m_i-m'_i)$ and $t'\in \Lie(m'_i)$, $m'_i\le m_i-1$, where the case of $t'=\id_1$ yields the relation \eqref{[,]cij}, and the relation for $(T_1\otimes\cdots\otimes T_n)P_{\sigma} c^{\otimes d}$ satisfying the condition
\begin{gather*}
\begin{split}
&\sigma(1)=m_1+\cdots+m_{i-1}+1,\quad \sigma(3)=m_1+\cdots+m_{i-1}+2,\\ 
&\sigma(2)=m_1+\cdots+m_{i-1}+m'_i+2 ,\quad
T_i=t (([,](\id_1\otimes t'))\otimes \id_{m_i-m'_i-1})
\end{split}
\end{gather*}
for some $1\le i\le n$, $t\in \Lie(m_i-m'_i)$ and $t'\in \Lie(m'_i)$, $m'_i\le m_i-1$, where the case of $t'=\id_1$ yields the relation \eqref{[,]ci}.
Since we have \eqref{CatLieC01}, these relations reduce to the relations \eqref{[,]cij} and \eqref{[,]ci} by induction on $m'_i$.
This completes the proof.
\end{proof}

\subsection{The category $\A^{L}$ of extended Jacobi diagrams in handlebodies}
The author introduced the $\K$-linear category $\A^{L}$ of extended Jacobi diagrams in handlebodies in \cite{Katada2}, which includes both $\A$ and $\catLieC$ as full subcategories.
We briefly recall the definition of $\A^{L}$.
We refer the readers to \cite[Sections 4.2, 4.3, Appendix A]{Katada2} for details.

The set of objects of the category $\A^{L}$ is the free monoid generated by two objects $H$ and $L$, where $H$ is regarded as a Hopf algebra and $L$ is regarded as a Lie algebra.

Recall that the hom-space $\A(m,n)$ of the category $\A$ is spanned by $(m,n)$-Jacobi diagrams, where Jacobi diagrams are attached to the oriented $1$-manifold $X_n$, that is, the set of univalent vertices is embedded into the interior of $X_n$.
The morphisms of $\A^{L}$ extend the morphisms of $\A$ in such a way that univalent vertices of a Jacobi diagram are allowed to be attached to the upper line $I\times \{0\}\times \{1\}$ in the top square and the bottom line $I\times \{0\}\times \{-1\}$ in the bottom square of the handlebody, and that each connected component of a Jacobi diagram has at least one univalent vertex which is attached to the bottom line or the oriented $1$-manifold.
This means that we do not allow connected components of Jacobi diagrams to be separated from the bottom line and the oriented $1$-manifold.
For $w,w'\in \Ob(\A^{L})$, the hom-space $\A^{L}(w,w')$ is the $\K$-vector space spanned by ``$(w,w')$-diagrams" modulo the STU, AS and IHX relations.
Here, a $(w,w')$-diagram is a Jacobi diagram partly attached to $X_{m'}$ mapped in $U_m$ and partly attached to the upper line or the bottom line of $U_m$ so that the object in $\A^{L}$ corresponding to the upper line (resp. the bottom line) is $w$ (resp. $w'$) when we count from left to right, where
$m=\sum_{i=1}^{r}m_i, \; m'=\sum_{i=1}^{s}m'_i$ for $w=H^{\otimes m_1}\otimes L^{\otimes n_1}\otimes \cdots\otimes H^{\otimes m_r}\otimes L^{\otimes n_r},w'= H^{\otimes m'_1}\otimes L^{\otimes n'_1}\otimes \cdots\otimes H^{\otimes m'_s}\otimes L^{\otimes n'_s}\in \Ob(\A^{L})$.
The following is an example of an $(H^{\otimes 2}\otimes L\otimes H \otimes L\otimes H,L\otimes H\otimes L^{\otimes 2}\otimes H)$-diagram:
\begin{gather*}
    \scalebox{1.0}{$\centre{%% Creator: Inkscape 1.4.2 (ebf0e940, 2025-05-08), www.inkscape.org
%% PDF/EPS/PS + LaTeX output extension by Johan Engelen, 2010
%% Accompanies image file '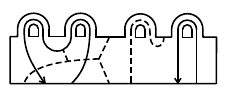' (pdf, eps, ps)
%%
%% To include the image in your LaTeX document, write
%%   \input{<filename>.pdf_tex}
%%  instead of
%%   \includegraphics{<filename>.pdf}
%% To scale the image, write
%%   \def\svgwidth{<desired width>}
%%   \input{<filename>.pdf_tex}
%%  instead of
%%   \includegraphics[width=<desired width>]{<filename>.pdf}
%%
%% Images with a different path to the parent latex file can
%% be accessed with the `import' package (which may need to be
%% installed) using
%%   \usepackage{import}
%% in the preamble, and then including the image with
%%   \import{<path to file>}{<filename>.pdf_tex}
%% Alternatively, one can specify
%%   \graphicspath{{<path to file>/}}
%% 
%% For more information, please see info/svg-inkscape on CTAN:
%%   http://tug.ctan.org/tex-archive/info/svg-inkscape
%%
\begingroup%
  \makeatletter%
  \providecommand\color[2][]{%
    \errmessage{(Inkscape) Color is used for the text in Inkscape, but the package 'color.sty' is not loaded}%
    \renewcommand\color[2][]{}%
  }%
  \providecommand\transparent[1]{%
    \errmessage{(Inkscape) Transparency is used (non-zero) for the text in Inkscape, but the package 'transparent.sty' is not loaded}%
    \renewcommand\transparent[1]{}%
  }%
  \providecommand\rotatebox[2]{#2}%
  \newcommand*\fsize{\dimexpr\f@size pt\relax}%
  \newcommand*\lineheight[1]{\fontsize{\fsize}{#1\fsize}\selectfont}%
  \ifx\svgwidth\undefined%
    \setlength{\unitlength}{107.71653543bp}%
    \ifx\svgscale\undefined%
      \relax%
    \else%
      \setlength{\unitlength}{\unitlength * \real{\svgscale}}%
    \fi%
  \else%
    \setlength{\unitlength}{\svgwidth}%
  \fi%
  \global\let\svgwidth\undefined%
  \global\let\svgscale\undefined%
  \makeatother%
  \begin{picture}(1,0.44736842)%
    \lineheight{1}%
    \setlength\tabcolsep{0pt}%
    \put(0,0){\includegraphics[width=\unitlength,page=1]{wdiagram.pdf}}%
  \end{picture}%
\endgroup%
}$}
\end{gather*}

The composition of morphisms in the category $\A^{L}$ is defined in a way similar to that in the category $\A$, that is, the composition $D'\circ D$ of two morphisms $D'$ and $D$ is defined by stacking a suitable cabling of $D$ on the top square of $D'$.

The category $\A^{L}$ includes the categories $\A$ and $\catLieC$ as full subcategories, that is, $\A$ is the full subcategory of $\A^{L}$ whose set of objects is the monoid generated by $H$, and $\catLieC$ is the full subcategory of $\A^{L}$ whose set of objects is the monoid generated by $L$. 

The degree of a $(w,w')$-diagram is defined by 
$$\frac{1}{2}\#\{\text{vertices}\} -\#\{\text{univalent vertices attached to the upper line}\},$$
which induces an $\N$-grading on the category $\A^{L}$.
For $\omega,\omega'\in \Ob(\A^{L})$, let $\A^{L}_d(\omega,\omega')$ denote the subspace of $\A^{L}(\omega,\omega')$ spanned by morphisms of degree $d$. 
We have 
\begin{gather}\label{AandCatLieCasfullsubcategoriesofAL}
    \A^{L}_d(H^{\otimes m},H^{\otimes n})=\A_d(m,n), \quad\A^{L}_d(L^{\otimes m},L^{\otimes n})=\catLieC(m,n)_d
\end{gather}
for $d,m,n\ge 0$.

We can naturally generalize the $\N$-graded $\K$-linear symmetric monoidal structure of the category $\A$ to the category $\A^{L}$.
The category $\A^{L}$ has a cocommutative Hopf algebra, which is induced by the Hopf algebra $(H,\mu,\eta,\Delta,\varepsilon,S)$ in $\A$, and a Lie algebra $(L,[,]:L^{\otimes 2}\to L)$ with a symmetric invariant $2$-tensor $c: I\to L^{\otimes 2}$,
and two morphisms $i:L\to H$ and $ad_{L}:H\otimes L\to L$ described as follows:
$$[,]=\scalebox{0.8}{$\centre{%% Creator: Inkscape 1.4.2 (ebf0e940, 2025-05-08), www.inkscape.org
%% PDF/EPS/PS + LaTeX output extension by Johan Engelen, 2010
%% Accompanies image file '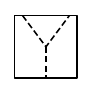' (pdf, eps, ps)
%%
%% To include the image in your LaTeX document, write
%%   \input{<filename>.pdf_tex}
%%  instead of
%%   \includegraphics{<filename>.pdf}
%% To scale the image, write
%%   \def\svgwidth{<desired width>}
%%   \input{<filename>.pdf_tex}
%%  instead of
%%   \includegraphics[width=<desired width>]{<filename>.pdf}
%%
%% Images with a different path to the parent latex file can
%% be accessed with the `import' package (which may need to be
%% installed) using
%%   \usepackage{import}
%% in the preamble, and then including the image with
%%   \import{<path to file>}{<filename>.pdf_tex}
%% Alternatively, one can specify
%%   \graphicspath{{<path to file>/}}
%% 
%% For more information, please see info/svg-inkscape on CTAN:
%%   http://tug.ctan.org/tex-archive/info/svg-inkscape
%%
\begingroup%
  \makeatletter%
  \providecommand\color[2][]{%
    \errmessage{(Inkscape) Color is used for the text in Inkscape, but the package 'color.sty' is not loaded}%
    \renewcommand\color[2][]{}%
  }%
  \providecommand\transparent[1]{%
    \errmessage{(Inkscape) Transparency is used (non-zero) for the text in Inkscape, but the package 'transparent.sty' is not loaded}%
    \renewcommand\transparent[1]{}%
  }%
  \providecommand\rotatebox[2]{#2}%
  \newcommand*\fsize{\dimexpr\f@size pt\relax}%
  \newcommand*\lineheight[1]{\fontsize{\fsize}{#1\fsize}\selectfont}%
  \ifx\svgwidth\undefined%
    \setlength{\unitlength}{42.51968504bp}%
    \ifx\svgscale\undefined%
      \relax%
    \else%
      \setlength{\unitlength}{\unitlength * \real{\svgscale}}%
    \fi%
  \else%
    \setlength{\unitlength}{\svgwidth}%
  \fi%
  \global\let\svgwidth\undefined%
  \global\let\svgscale\undefined%
  \makeatother%
  \begin{picture}(1,1)%
    \lineheight{1}%
    \setlength\tabcolsep{0pt}%
    \put(0,0){\includegraphics[width=\unitlength,page=1]{Liebracket.pdf}}%
  \end{picture}%
\endgroup%
}$},\; c=\scalebox{0.8}{$\centre{%% Creator: Inkscape 1.4.2 (ebf0e940, 2025-05-08), www.inkscape.org
%% PDF/EPS/PS + LaTeX output extension by Johan Engelen, 2010
%% Accompanies image file '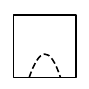' (pdf, eps, ps)
%%
%% To include the image in your LaTeX document, write
%%   \input{<filename>.pdf_tex}
%%  instead of
%%   \includegraphics{<filename>.pdf}
%% To scale the image, write
%%   \def\svgwidth{<desired width>}
%%   \input{<filename>.pdf_tex}
%%  instead of
%%   \includegraphics[width=<desired width>]{<filename>.pdf}
%%
%% Images with a different path to the parent latex file can
%% be accessed with the `import' package (which may need to be
%% installed) using
%%   \usepackage{import}
%% in the preamble, and then including the image with
%%   \import{<path to file>}{<filename>.pdf_tex}
%% Alternatively, one can specify
%%   \graphicspath{{<path to file>/}}
%% 
%% For more information, please see info/svg-inkscape on CTAN:
%%   http://tug.ctan.org/tex-archive/info/svg-inkscape
%%
\begingroup%
  \makeatletter%
  \providecommand\color[2][]{%
    \errmessage{(Inkscape) Color is used for the text in Inkscape, but the package 'color.sty' is not loaded}%
    \renewcommand\color[2][]{}%
  }%
  \providecommand\transparent[1]{%
    \errmessage{(Inkscape) Transparency is used (non-zero) for the text in Inkscape, but the package 'transparent.sty' is not loaded}%
    \renewcommand\transparent[1]{}%
  }%
  \providecommand\rotatebox[2]{#2}%
  \newcommand*\fsize{\dimexpr\f@size pt\relax}%
  \newcommand*\lineheight[1]{\fontsize{\fsize}{#1\fsize}\selectfont}%
  \ifx\svgwidth\undefined%
    \setlength{\unitlength}{42.51968504bp}%
    \ifx\svgscale\undefined%
      \relax%
    \else%
      \setlength{\unitlength}{\unitlength * \real{\svgscale}}%
    \fi%
  \else%
    \setlength{\unitlength}{\svgwidth}%
  \fi%
  \global\let\svgwidth\undefined%
  \global\let\svgscale\undefined%
  \makeatother%
  \begin{picture}(1,1)%
    \lineheight{1}%
    \setlength\tabcolsep{0pt}%
    \put(0,0){\includegraphics[width=\unitlength,page=1]{c.pdf}}%
  \end{picture}%
\endgroup%
}$},\;
i=\scalebox{0.8}{$\centre{%% Creator: Inkscape 1.4.2 (ebf0e940, 2025-05-08), www.inkscape.org
%% PDF/EPS/PS + LaTeX output extension by Johan Engelen, 2010
%% Accompanies image file '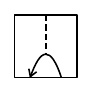' (pdf, eps, ps)
%%
%% To include the image in your LaTeX document, write
%%   \input{<filename>.pdf_tex}
%%  instead of
%%   \includegraphics{<filename>.pdf}
%% To scale the image, write
%%   \def\svgwidth{<desired width>}
%%   \input{<filename>.pdf_tex}
%%  instead of
%%   \includegraphics[width=<desired width>]{<filename>.pdf}
%%
%% Images with a different path to the parent latex file can
%% be accessed with the `import' package (which may need to be
%% installed) using
%%   \usepackage{import}
%% in the preamble, and then including the image with
%%   \import{<path to file>}{<filename>.pdf_tex}
%% Alternatively, one can specify
%%   \graphicspath{{<path to file>/}}
%% 
%% For more information, please see info/svg-inkscape on CTAN:
%%   http://tug.ctan.org/tex-archive/info/svg-inkscape
%%
\begingroup%
  \makeatletter%
  \providecommand\color[2][]{%
    \errmessage{(Inkscape) Color is used for the text in Inkscape, but the package 'color.sty' is not loaded}%
    \renewcommand\color[2][]{}%
  }%
  \providecommand\transparent[1]{%
    \errmessage{(Inkscape) Transparency is used (non-zero) for the text in Inkscape, but the package 'transparent.sty' is not loaded}%
    \renewcommand\transparent[1]{}%
  }%
  \providecommand\rotatebox[2]{#2}%
  \newcommand*\fsize{\dimexpr\f@size pt\relax}%
  \newcommand*\lineheight[1]{\fontsize{\fsize}{#1\fsize}\selectfont}%
  \ifx\svgwidth\undefined%
    \setlength{\unitlength}{42.51968504bp}%
    \ifx\svgscale\undefined%
      \relax%
    \else%
      \setlength{\unitlength}{\unitlength * \real{\svgscale}}%
    \fi%
  \else%
    \setlength{\unitlength}{\svgwidth}%
  \fi%
  \global\let\svgwidth\undefined%
  \global\let\svgscale\undefined%
  \makeatother%
  \begin{picture}(1,1)%
    \lineheight{1}%
    \setlength\tabcolsep{0pt}%
    \put(0,0){\includegraphics[width=\unitlength,page=1]{i.pdf}}%
  \end{picture}%
\endgroup%
}$},\;
ad_{L}=\scalebox{0.8}{$\centre{%% Creator: Inkscape 1.4.2 (ebf0e940, 2025-05-08), www.inkscape.org
%% PDF/EPS/PS + LaTeX output extension by Johan Engelen, 2010
%% Accompanies image file '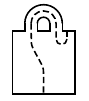' (pdf, eps, ps)
%%
%% To include the image in your LaTeX document, write
%%   \input{<filename>.pdf_tex}
%%  instead of
%%   \includegraphics{<filename>.pdf}
%% To scale the image, write
%%   \def\svgwidth{<desired width>}
%%   \input{<filename>.pdf_tex}
%%  instead of
%%   \includegraphics[width=<desired width>]{<filename>.pdf}
%%
%% Images with a different path to the parent latex file can
%% be accessed with the `import' package (which may need to be
%% installed) using
%%   \usepackage{import}
%% in the preamble, and then including the image with
%%   \import{<path to file>}{<filename>.pdf_tex}
%% Alternatively, one can specify
%%   \graphicspath{{<path to file>/}}
%% 
%% For more information, please see info/svg-inkscape on CTAN:
%%   http://tug.ctan.org/tex-archive/info/svg-inkscape
%%
\begingroup%
  \makeatletter%
  \providecommand\color[2][]{%
    \errmessage{(Inkscape) Color is used for the text in Inkscape, but the package 'color.sty' is not loaded}%
    \renewcommand\color[2][]{}%
  }%
  \providecommand\transparent[1]{%
    \errmessage{(Inkscape) Transparency is used (non-zero) for the text in Inkscape, but the package 'transparent.sty' is not loaded}%
    \renewcommand\transparent[1]{}%
  }%
  \providecommand\rotatebox[2]{#2}%
  \newcommand*\fsize{\dimexpr\f@size pt\relax}%
  \newcommand*\lineheight[1]{\fontsize{\fsize}{#1\fsize}\selectfont}%
  \ifx\svgwidth\undefined%
    \setlength{\unitlength}{42.51968504bp}%
    \ifx\svgscale\undefined%
      \relax%
    \else%
      \setlength{\unitlength}{\unitlength * \real{\svgscale}}%
    \fi%
  \else%
    \setlength{\unitlength}{\svgwidth}%
  \fi%
  \global\let\svgwidth\undefined%
  \global\let\svgscale\undefined%
  \makeatother%
  \begin{picture}(1,1.13333333)%
    \lineheight{1}%
    \setlength\tabcolsep{0pt}%
    \put(0,0){\includegraphics[width=\unitlength,page=1]{adL.pdf}}%
  \end{picture}%
\endgroup%
}$}.$$ 
The morphism $c$ has degree $1$ and the other morphisms have degree $0$.
In \cite[Proposition A.7]{Katada2}, the author proved that the $\K$-linear symmetric monoidal category $\A^{L}$ is generated by the above morphisms. Moreover, we have the following relations involving the morphism $i$:
  \begin{enumerate}[($\A^{L}$-1)]
    \item  $i\; [\cdot,\cdot]=-\mu(i\otimes i)+\mu P_{H,H}(i\otimes i),$
     \label{a1}
    \item  $\Delta i=i\otimes\eta+\eta\otimes i,$
     \label{a2}
    \item  $\epsilon i=0,$
     \label{a3}
    \item $S i=-i.$
     \label{Si}
  \end{enumerate}
  
The Casimir Hopf algebra in $\A^{L}$ that characterizes the $\K$-linear PROP $\A$ is $(H,\mu,\eta,\Delta,\varepsilon,S,\tilde{c}=i^{\otimes2}c)$, and the Casimir Lie algebra that characterizes the $\K$-linear PROP $\catLieC$ is $(L,[,],c)$.

\subsection{The degree $0$ part $\A^{L}_0$}
The degree $0$ part $\A^{L}_0$ of the category $\A^{L}$ forms a wide subcategory of $\A^{L}$.
Since $\kgrop$ is regarded as the degree $0$ part $\A_0$ of the category $\A$, which is a full subcategory of $\A^{L}$, the category $\kgrop$ is a full subcategory of $\A^{L}_0$ whose objects are generated by $H$.
Similarly, $\catLie$ is a full subcategory of $\A^{L}_0$ whose objects are generated by $L$.

Here, we explicitly describe the hom-space $\A^{L}_0(L^{\otimes m},H^{\otimes n})$.
Let $C(m,n)$ denote the set of Jacobi diagrams (in a cube) with $m$ chords such that one end of each chord is attached to the upper line and the other end of it is attached to the oriented $1$-manifold $X_n$.
For example, the set $C(2,2)$ consists of the following six elements
\begin{gather*}
\scalebox{1.0}{$\centre{%% Creator: Inkscape 1.4.2 (ebf0e940, 2025-05-08), www.inkscape.org
%% PDF/EPS/PS + LaTeX output extension by Johan Engelen, 2010
%% Accompanies image file '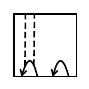' (pdf, eps, ps)
%%
%% To include the image in your LaTeX document, write
%%   \input{<filename>.pdf_tex}
%%  instead of
%%   \includegraphics{<filename>.pdf}
%% To scale the image, write
%%   \def\svgwidth{<desired width>}
%%   \input{<filename>.pdf_tex}
%%  instead of
%%   \includegraphics[width=<desired width>]{<filename>.pdf}
%%
%% Images with a different path to the parent latex file can
%% be accessed with the `import' package (which may need to be
%% installed) using
%%   \usepackage{import}
%% in the preamble, and then including the image with
%%   \import{<path to file>}{<filename>.pdf_tex}
%% Alternatively, one can specify
%%   \graphicspath{{<path to file>/}}
%% 
%% For more information, please see info/svg-inkscape on CTAN:
%%   http://tug.ctan.org/tex-archive/info/svg-inkscape
%%
\begingroup%
  \makeatletter%
  \providecommand\color[2][]{%
    \errmessage{(Inkscape) Color is used for the text in Inkscape, but the package 'color.sty' is not loaded}%
    \renewcommand\color[2][]{}%
  }%
  \providecommand\transparent[1]{%
    \errmessage{(Inkscape) Transparency is used (non-zero) for the text in Inkscape, but the package 'transparent.sty' is not loaded}%
    \renewcommand\transparent[1]{}%
  }%
  \providecommand\rotatebox[2]{#2}%
  \newcommand*\fsize{\dimexpr\f@size pt\relax}%
  \newcommand*\lineheight[1]{\fontsize{\fsize}{#1\fsize}\selectfont}%
  \ifx\svgwidth\undefined%
    \setlength{\unitlength}{42.51968504bp}%
    \ifx\svgscale\undefined%
      \relax%
    \else%
      \setlength{\unitlength}{\unitlength * \real{\svgscale}}%
    \fi%
  \else%
    \setlength{\unitlength}{\svgwidth}%
  \fi%
  \global\let\svgwidth\undefined%
  \global\let\svgscale\undefined%
  \makeatother%
  \begin{picture}(1,1)%
    \lineheight{1}%
    \setlength\tabcolsep{0pt}%
    \put(0,0){\includegraphics[width=\unitlength,page=1]{C221.pdf}}%
  \end{picture}%
\endgroup%
}$},
\scalebox{1.0}{$\centre{%% Creator: Inkscape 1.4.2 (ebf0e940, 2025-05-08), www.inkscape.org
%% PDF/EPS/PS + LaTeX output extension by Johan Engelen, 2010
%% Accompanies image file '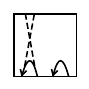' (pdf, eps, ps)
%%
%% To include the image in your LaTeX document, write
%%   \input{<filename>.pdf_tex}
%%  instead of
%%   \includegraphics{<filename>.pdf}
%% To scale the image, write
%%   \def\svgwidth{<desired width>}
%%   \input{<filename>.pdf_tex}
%%  instead of
%%   \includegraphics[width=<desired width>]{<filename>.pdf}
%%
%% Images with a different path to the parent latex file can
%% be accessed with the `import' package (which may need to be
%% installed) using
%%   \usepackage{import}
%% in the preamble, and then including the image with
%%   \import{<path to file>}{<filename>.pdf_tex}
%% Alternatively, one can specify
%%   \graphicspath{{<path to file>/}}
%% 
%% For more information, please see info/svg-inkscape on CTAN:
%%   http://tug.ctan.org/tex-archive/info/svg-inkscape
%%
\begingroup%
  \makeatletter%
  \providecommand\color[2][]{%
    \errmessage{(Inkscape) Color is used for the text in Inkscape, but the package 'color.sty' is not loaded}%
    \renewcommand\color[2][]{}%
  }%
  \providecommand\transparent[1]{%
    \errmessage{(Inkscape) Transparency is used (non-zero) for the text in Inkscape, but the package 'transparent.sty' is not loaded}%
    \renewcommand\transparent[1]{}%
  }%
  \providecommand\rotatebox[2]{#2}%
  \newcommand*\fsize{\dimexpr\f@size pt\relax}%
  \newcommand*\lineheight[1]{\fontsize{\fsize}{#1\fsize}\selectfont}%
  \ifx\svgwidth\undefined%
    \setlength{\unitlength}{42.51968504bp}%
    \ifx\svgscale\undefined%
      \relax%
    \else%
      \setlength{\unitlength}{\unitlength * \real{\svgscale}}%
    \fi%
  \else%
    \setlength{\unitlength}{\svgwidth}%
  \fi%
  \global\let\svgwidth\undefined%
  \global\let\svgscale\undefined%
  \makeatother%
  \begin{picture}(1,1)%
    \lineheight{1}%
    \setlength\tabcolsep{0pt}%
    \put(0,0){\includegraphics[width=\unitlength,page=1]{C222.pdf}}%
  \end{picture}%
\endgroup%
}$},
\scalebox{1.0}{$\centre{%% Creator: Inkscape 1.4.2 (ebf0e940, 2025-05-08), www.inkscape.org
%% PDF/EPS/PS + LaTeX output extension by Johan Engelen, 2010
%% Accompanies image file '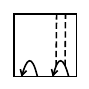' (pdf, eps, ps)
%%
%% To include the image in your LaTeX document, write
%%   \input{<filename>.pdf_tex}
%%  instead of
%%   \includegraphics{<filename>.pdf}
%% To scale the image, write
%%   \def\svgwidth{<desired width>}
%%   \input{<filename>.pdf_tex}
%%  instead of
%%   \includegraphics[width=<desired width>]{<filename>.pdf}
%%
%% Images with a different path to the parent latex file can
%% be accessed with the `import' package (which may need to be
%% installed) using
%%   \usepackage{import}
%% in the preamble, and then including the image with
%%   \import{<path to file>}{<filename>.pdf_tex}
%% Alternatively, one can specify
%%   \graphicspath{{<path to file>/}}
%% 
%% For more information, please see info/svg-inkscape on CTAN:
%%   http://tug.ctan.org/tex-archive/info/svg-inkscape
%%
\begingroup%
  \makeatletter%
  \providecommand\color[2][]{%
    \errmessage{(Inkscape) Color is used for the text in Inkscape, but the package 'color.sty' is not loaded}%
    \renewcommand\color[2][]{}%
  }%
  \providecommand\transparent[1]{%
    \errmessage{(Inkscape) Transparency is used (non-zero) for the text in Inkscape, but the package 'transparent.sty' is not loaded}%
    \renewcommand\transparent[1]{}%
  }%
  \providecommand\rotatebox[2]{#2}%
  \newcommand*\fsize{\dimexpr\f@size pt\relax}%
  \newcommand*\lineheight[1]{\fontsize{\fsize}{#1\fsize}\selectfont}%
  \ifx\svgwidth\undefined%
    \setlength{\unitlength}{42.51968504bp}%
    \ifx\svgscale\undefined%
      \relax%
    \else%
      \setlength{\unitlength}{\unitlength * \real{\svgscale}}%
    \fi%
  \else%
    \setlength{\unitlength}{\svgwidth}%
  \fi%
  \global\let\svgwidth\undefined%
  \global\let\svgscale\undefined%
  \makeatother%
  \begin{picture}(1,1)%
    \lineheight{1}%
    \setlength\tabcolsep{0pt}%
    \put(0,0){\includegraphics[width=\unitlength,page=1]{C224.pdf}}%
  \end{picture}%
\endgroup%
}$},
\scalebox{1.0}{$\centre{%% Creator: Inkscape 1.4.2 (ebf0e940, 2025-05-08), www.inkscape.org
%% PDF/EPS/PS + LaTeX output extension by Johan Engelen, 2010
%% Accompanies image file '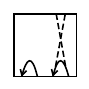' (pdf, eps, ps)
%%
%% To include the image in your LaTeX document, write
%%   \input{<filename>.pdf_tex}
%%  instead of
%%   \includegraphics{<filename>.pdf}
%% To scale the image, write
%%   \def\svgwidth{<desired width>}
%%   \input{<filename>.pdf_tex}
%%  instead of
%%   \includegraphics[width=<desired width>]{<filename>.pdf}
%%
%% Images with a different path to the parent latex file can
%% be accessed with the `import' package (which may need to be
%% installed) using
%%   \usepackage{import}
%% in the preamble, and then including the image with
%%   \import{<path to file>}{<filename>.pdf_tex}
%% Alternatively, one can specify
%%   \graphicspath{{<path to file>/}}
%% 
%% For more information, please see info/svg-inkscape on CTAN:
%%   http://tug.ctan.org/tex-archive/info/svg-inkscape
%%
\begingroup%
  \makeatletter%
  \providecommand\color[2][]{%
    \errmessage{(Inkscape) Color is used for the text in Inkscape, but the package 'color.sty' is not loaded}%
    \renewcommand\color[2][]{}%
  }%
  \providecommand\transparent[1]{%
    \errmessage{(Inkscape) Transparency is used (non-zero) for the text in Inkscape, but the package 'transparent.sty' is not loaded}%
    \renewcommand\transparent[1]{}%
  }%
  \providecommand\rotatebox[2]{#2}%
  \newcommand*\fsize{\dimexpr\f@size pt\relax}%
  \newcommand*\lineheight[1]{\fontsize{\fsize}{#1\fsize}\selectfont}%
  \ifx\svgwidth\undefined%
    \setlength{\unitlength}{42.51968504bp}%
    \ifx\svgscale\undefined%
      \relax%
    \else%
      \setlength{\unitlength}{\unitlength * \real{\svgscale}}%
    \fi%
  \else%
    \setlength{\unitlength}{\svgwidth}%
  \fi%
  \global\let\svgwidth\undefined%
  \global\let\svgscale\undefined%
  \makeatother%
  \begin{picture}(1,1)%
    \lineheight{1}%
    \setlength\tabcolsep{0pt}%
    \put(0,0){\includegraphics[width=\unitlength,page=1]{C223.pdf}}%
  \end{picture}%
\endgroup%
}$},
\scalebox{1.0}{$\centre{%% Creator: Inkscape 1.4.2 (ebf0e940, 2025-05-08), www.inkscape.org
%% PDF/EPS/PS + LaTeX output extension by Johan Engelen, 2010
%% Accompanies image file '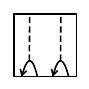' (pdf, eps, ps)
%%
%% To include the image in your LaTeX document, write
%%   \input{<filename>.pdf_tex}
%%  instead of
%%   \includegraphics{<filename>.pdf}
%% To scale the image, write
%%   \def\svgwidth{<desired width>}
%%   \input{<filename>.pdf_tex}
%%  instead of
%%   \includegraphics[width=<desired width>]{<filename>.pdf}
%%
%% Images with a different path to the parent latex file can
%% be accessed with the `import' package (which may need to be
%% installed) using
%%   \usepackage{import}
%% in the preamble, and then including the image with
%%   \import{<path to file>}{<filename>.pdf_tex}
%% Alternatively, one can specify
%%   \graphicspath{{<path to file>/}}
%% 
%% For more information, please see info/svg-inkscape on CTAN:
%%   http://tug.ctan.org/tex-archive/info/svg-inkscape
%%
\begingroup%
  \makeatletter%
  \providecommand\color[2][]{%
    \errmessage{(Inkscape) Color is used for the text in Inkscape, but the package 'color.sty' is not loaded}%
    \renewcommand\color[2][]{}%
  }%
  \providecommand\transparent[1]{%
    \errmessage{(Inkscape) Transparency is used (non-zero) for the text in Inkscape, but the package 'transparent.sty' is not loaded}%
    \renewcommand\transparent[1]{}%
  }%
  \providecommand\rotatebox[2]{#2}%
  \newcommand*\fsize{\dimexpr\f@size pt\relax}%
  \newcommand*\lineheight[1]{\fontsize{\fsize}{#1\fsize}\selectfont}%
  \ifx\svgwidth\undefined%
    \setlength{\unitlength}{42.51968504bp}%
    \ifx\svgscale\undefined%
      \relax%
    \else%
      \setlength{\unitlength}{\unitlength * \real{\svgscale}}%
    \fi%
  \else%
    \setlength{\unitlength}{\svgwidth}%
  \fi%
  \global\let\svgwidth\undefined%
  \global\let\svgscale\undefined%
  \makeatother%
  \begin{picture}(1,1)%
    \lineheight{1}%
    \setlength\tabcolsep{0pt}%
    \put(0,0){\includegraphics[width=\unitlength,page=1]{C225.pdf}}%
  \end{picture}%
\endgroup%
}$},
\scalebox{1.0}{$\centre{%% Creator: Inkscape 1.4.2 (ebf0e940, 2025-05-08), www.inkscape.org
%% PDF/EPS/PS + LaTeX output extension by Johan Engelen, 2010
%% Accompanies image file '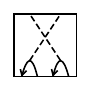' (pdf, eps, ps)
%%
%% To include the image in your LaTeX document, write
%%   \input{<filename>.pdf_tex}
%%  instead of
%%   \includegraphics{<filename>.pdf}
%% To scale the image, write
%%   \def\svgwidth{<desired width>}
%%   \input{<filename>.pdf_tex}
%%  instead of
%%   \includegraphics[width=<desired width>]{<filename>.pdf}
%%
%% Images with a different path to the parent latex file can
%% be accessed with the `import' package (which may need to be
%% installed) using
%%   \usepackage{import}
%% in the preamble, and then including the image with
%%   \import{<path to file>}{<filename>.pdf_tex}
%% Alternatively, one can specify
%%   \graphicspath{{<path to file>/}}
%% 
%% For more information, please see info/svg-inkscape on CTAN:
%%   http://tug.ctan.org/tex-archive/info/svg-inkscape
%%
\begingroup%
  \makeatletter%
  \providecommand\color[2][]{%
    \errmessage{(Inkscape) Color is used for the text in Inkscape, but the package 'color.sty' is not loaded}%
    \renewcommand\color[2][]{}%
  }%
  \providecommand\transparent[1]{%
    \errmessage{(Inkscape) Transparency is used (non-zero) for the text in Inkscape, but the package 'transparent.sty' is not loaded}%
    \renewcommand\transparent[1]{}%
  }%
  \providecommand\rotatebox[2]{#2}%
  \newcommand*\fsize{\dimexpr\f@size pt\relax}%
  \newcommand*\lineheight[1]{\fontsize{\fsize}{#1\fsize}\selectfont}%
  \ifx\svgwidth\undefined%
    \setlength{\unitlength}{42.51968504bp}%
    \ifx\svgscale\undefined%
      \relax%
    \else%
      \setlength{\unitlength}{\unitlength * \real{\svgscale}}%
    \fi%
  \else%
    \setlength{\unitlength}{\svgwidth}%
  \fi%
  \global\let\svgwidth\undefined%
  \global\let\svgscale\undefined%
  \makeatother%
  \begin{picture}(1,1)%
    \lineheight{1}%
    \setlength\tabcolsep{0pt}%
    \put(0,0){\includegraphics[width=\unitlength,page=1]{C226.pdf}}%
  \end{picture}%
\endgroup%
}$}.
\end{gather*}

\begin{lemma}\label{AL0}
   The set $C(m,n)$ is a basis for the hom-space $\A^{L}_0(L^{\otimes m},H^{\otimes n})$.
\end{lemma}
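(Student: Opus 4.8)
The plan is to show that $C(m,n)$ both spans $\A^{L}_0(L^{\otimes m},H^{\otimes n})$ and is linearly independent, the former by a normal-form argument using the STU relation and the degree-$0$ constraint, the latter by comparison with a known basis via the functor $i\colon L\to H$. First I would unpack what a degree-$0$ $(L^{\otimes m},H^{\otimes n})$-diagram looks like. By the degree formula, degree $0$ forces $\frac12\#\{\text{vertices}\}=\#\{\text{univalent vertices on the upper line}\}$; since each of the $m$ copies of $L$ on the upper line contributes exactly one univalent vertex there, the diagram has at most $m$ trivalent vertices worth of "budget," and in fact a short count shows a degree-$0$ diagram with source $L^{\otimes m}$ has no trivalent vertices at all — it is a disjoint union of $m$ chords (intervals), each running from one of the $m$ upper-line points to either $X_n$ or (a priori) the bottom line. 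But the bottom object is $H^{\otimes n}$, which carries no univalent vertices of its own on the bottom line in the sense relevant here; more precisely, since $\A^{L}_0(L^{\otimes m},H^{\otimes n})=\A^{L}_0(L^{\otimes m},H^{\otimes n})$ lands in a hom-space whose target is generated by $H$, the bottom ends of chords must land on $X_n$. Hence every degree-$0$ diagram is, up to the STU relation, a union of $m$ chords from the upper line to $X_n$, which is precisely an element of $C(m,n)$ — so $C(m,n)$ spans.

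Next I would address why the STU relation does not produce any further identifications among elements of $C(m,n)$ and why these elements are linearly independent. The cleanest route is the inclusion of categories and the structure maps: the morphism $i\colon L\to H$ in $\A^{L}$ (of degree $0$) induces, by precomposition, a $\K$-linear map $\A^{L}_0(H^{\otimes m},H^{\otimes n})\to \A^{L}_0(L^{\otimes m},H^{\otimes n})$, i.e. $\A_0(m,n)=\kgrop(m,n)\to \A^{L}_0(L^{\otimes m},H^{\otimes n})$. Composing in the other direction, I would use a retraction-type argument or directly identify $\A^{L}_0(L^{\otimes m},H^{\otimes n})$ with a sum of copies of $\kgrop(\ast,n)$ graded by how the chords attach. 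Concretely, a chord attached to the upper line at position $k$ and to $X_n$ is, on the $X_n$ side, just a univalent vertex on $X_n$; STU has no trivalent vertices to act on in degree $0$, so the only relations available are the homotopy identifications built into the definition of Jacobi diagrams (sliding univalent vertices along $X_n$), and these are already quotiented out in forming $C(m,n)$. Thus the elements of $C(m,n)$ are distinct, and linear independence follows because a nontrivial linear relation would, under the forgetful/evaluation maps to ordinary Jacobi-diagram spaces (or to $\kgrop$ after capping the $L$-inputs), descend to a nontrivial relation among linearly independent basis diagrams there.

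I would organize the write-up as two lem-level claims: (1) spanning, via the degree count that kills all trivalent vertices; (2) independence, via an explicit isomorphism $\A^{L}_0(L^{\otimes m},H^{\otimes n})\cong \bigoplus \kgrop(m,n)$-type decomposition or via a pairing with a dual basis. I expect the main obstacle to be step (2): carefully ruling out that STU (or the homotopy relations in $U_m$) could create linear dependencies among the six-element-type sets like $C(2,2)$, especially handling the interaction of chords whose upper endpoints are permuted versus whose $X_n$-endpoints slide past each other. The key technical point is that in degree $0$ there are genuinely no internal vertices, so STU is vacuous and only the ambient isotopy/homotopy relations survive; making this rigorous may require invoking the combinatorial description of $\A$ (Lemma \ref{decompositionofA} with $d=0$) and the relations (\ref{a1})–(\ref{Si}) to confirm that precomposition with $i^{\otimes m}$ on the degree-$0$ part of $\A^{L}$ has image exactly spanned by $C(m,n)$ with no collapsing. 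Once that is in hand, counting (as in the displayed $C(2,2)$ example) matches the expected rank and the proof concludes.
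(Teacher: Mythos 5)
There is a genuine gap, and it sits at the heart of your argument: the claim that degree $0$ forces a diagram with source $L^{\otimes m}$ to have no trivalent vertices is false. The degree of a $(w,w')$-diagram is $\tfrac12\#\{\text{vertices}\}-\#\{\text{univalent vertices on the upper line}\}$, so for source $L^{\otimes m}$ and target $H^{\otimes n}$ degree $0$ only says $\#\{\text{vertices}\}=2m$; an Euler-characteristic count then shows each connected component is a rooted trivalent tree with all leaves on the upper line and its root on $X_n$, and such trees may well have internal trivalent vertices. The simplest example is $i\circ[\,,\,]\in\A^{L}_0(L^{\otimes 2},H)$, a degree-$0$ diagram with one trivalent vertex; relation ($\A^{L}$-\ref{a1}) is exactly its STU-expansion into the two chord diagrams $-\mu(i\otimes i)+\mu P_{H,H}(i\otimes i)$. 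Consequently your assertion that ``STU is vacuous in degree $0$'' is wrong in both places where you use it: spanning is \emph{not} immediate (it requires using STU to rewrite every rooted trivalent tree as a linear combination of chord diagrams, which is what the paper's proof does), and linear independence is not a matter of only homotopy identifications surviving — the real content is that the STU-expansion of trees into chord diagrams is consistent, i.e.\ imposes no further relations among the elements of $C(m,n)$, which is the uniqueness statement the paper's proof addresses.

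Your fallback route for independence also does not work as stated. The degree-$0$ comparison map available in the paper goes in the direction $\kgrop(m,n)\cong\A^{L}_0(H^{\otimes m},H^{\otimes n})\xrightarrow{-\circ i^{\otimes m}}\A^{L}_0(L^{\otimes m},H^{\otimes n})$, and by Remark \ref{kgropinAL} it is surjective but \emph{not} injective for $m\ge 1$ (because of ($\A^{L}$-\ref{a3})); a surjection onto the space in question cannot transport a hypothetical relation among the $C(m,n)$ to a relation in $\kgrop(m,n)$, so ``capping the $L$-inputs'' gives no contradiction. Appealing instead to the identification with $\catAss(m,n)$ would be circular, since Lemma \ref{catAssi} is deduced from Lemma \ref{AL0}, not the other way around. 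To repair the proof you would need to argue, as the paper does, that every degree-$0$ diagram (a forest of rooted trivalent trees) reduces via STU to a combination of chord diagrams, and that this reduction is well defined on the quotient, so that $C(m,n)$ is not merely a spanning set but a basis.
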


\begin{proof}
Let $J(m,n)$ denote the set of Jacobi diagrams on $X_n$ in a cube with $2m$ vertices such that $m$ univalent vertices are attached to the upper line.
Note that each connected component of a Jacobi diagram in $J(m,n)$ is a rooted trivalent tree whose leaves are attached to the upper line and whose root is attached to $X_n$.

By the definition of the degree $0$ part of the hom-space of the category $\A^{L}$, $\A^{L}_0(L^{\otimes m},H^{\otimes n})$ is the $\K$-vector space spanned by the set $J(m,n)$ modulo the STU relation.
By the STU relation, any Jacobi diagram in $J(m,n)$ can be written as a linear combination of chord diagrams in $C(m,n)$.
Therefore, $\A^{L}_0(L^{\otimes m},H^{\otimes n})$ is spanned by the set $C(m,n)$.
Moreover, for each Jacobi diagram $D$ in $J(m,n)$, the linear combination of chord diagrams in $C(m,n)$ associated to $D$ is uniquely determined.
Therefore, $C(m,n)$ is a basis for $\A^{L}_0(L^{\otimes m},H^{\otimes n})$.
\end{proof}

\begin{lemma}\label{bijectionwithCmn}
We have
\begin{gather*}
    C(m,n)=\{\mu^{[p_1,\cdots,p_n]}P_{\sigma} i^{\otimes m} \mid  p_1,\cdots,p_n\ge 0, p_1+\cdots+p_n=m, \sigma\in \gpS_m\}.
\end{gather*}
\end{lemma}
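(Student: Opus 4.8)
The plan is to compute the morphism $\mu^{[p_1,\cdots,p_n]}P_{\sigma}i^{\otimes m}$ directly as a Jacobi diagram, observe that it is literally one of the chord diagrams in $C(m,n)$, and then check that the assignment $(p_1,\dots,p_n;\sigma)\mapsto\mu^{[p_1,\cdots,p_n]}P_{\sigma}i^{\otimes m}$ defines a bijection from the set $\{(p_1,\dots,p_n;\sigma)\mid p_i\ge0,\ \sum_i p_i=m,\ \sigma\in\gpS_m\}$ onto $C(m,n)$; this gives the asserted equality of sets.

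First I would recall the diagrammatic form of the three ingredients. The morphism $i:L\to H$ is a single chord joining a point on the upper line to the arc component, so $i^{\otimes m}:L^{\otimes m}\to H^{\otimes m}$ consists of $m$ parallel chords, the $k$-th joining the $k$-th point on the upper line to the $k$-th arc component; the symmetry $P_{\sigma}:H^{\otimes m}\to H^{\otimes m}$ merely permutes the $m$ arc components and introduces no new vertices; and in $\A_0\cong\kgrop$ the product $\mu:H^{\otimes2}\to H$ merges two arc components into one, the attaching points of the left input preceding those of the right input along the orientation of the output arc. Consequently $\mu^{[p]}:H^{\otimes p}\to H$ merges $p$ arcs into one with attaching points concatenated in the given order, and $\mu^{[p_1,\cdots,p_n]}$ distributes the $m=p_1+\cdots+p_n$ arcs onto the $n$ output arcs: the first $p_1$ onto the first output arc in order, the next $p_2$ onto the second, and so on. Since none of these operations creates a trivalent vertex, $\mu^{[p_1,\cdots,p_n]}P_{\sigma}i^{\otimes m}$ is a genuine chord diagram, not a nontrivial linear combination.

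Unwinding the composition then shows that $\mu^{[p_1,\cdots,p_n]}P_{\sigma}i^{\otimes m}$ is exactly the chord diagram in $C(m,n)$ whose arc component $X_i$ carries $p_i$ attaching points, joined — in the order along $X_i$ — to the points of the upper line indexed by $\pi(p_1+\cdots+p_{i-1}+1),\dots,\pi(p_1+\cdots+p_i)$, where $\pi$ is the permutation associated with $P_\sigma$ (equal to $\sigma$ or $\sigma^{-1}$ according to the chosen convention, which is immaterial here). Conversely, every chord diagram in $C(m,n)$ is recovered uniquely from the numbers $p_i$ of attaching points on the arcs $X_i$ together with the bijection recording, arc by arc and in the order along each arc, the upper-line point met by each chord; this datum is precisely a triple $(p_1,\dots,p_n;\sigma)$. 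Hence the assignment is a bijection and the lemma follows. Alternatively, one may argue by counting: the elements $\mu^{[p_1,\cdots,p_n]}P_{\sigma}i^{\otimes m}$ belong to the basis $C(m,n)$ of Lemma \ref{AL0} and are pairwise distinct, and there are $\binom{m+n-1}{n-1}\,m!=|C(m,n)|$ of them.

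The step I expect to be delicate is the unwinding in the previous paragraph: one must fix once and for all the orientation convention on $X_n$ — recall that the $i$-th arc runs from its $2i$-th to its $(2i-1)$-st endpoint — and the direction in which $\mu$ concatenates attaching points, and verify that these are compatible, so that the order of the inputs of $\mu^{[p]}$ matches the order along the resulting arc and so that $\sigma$ is faithfully recorded. Here it is essential that $H$ is not commutative, so that $\mu^{[p]}P_{\rho}\ne\mu^{[p]}$ for $\rho\ne1$ in $\gpS_p$ and the orderings on the arcs are genuine combinatorial data. Once the conventions are pinned down, the verification is a routine application of the definition of composition in $\A^{L}$.
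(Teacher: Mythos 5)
Your proposal is correct and is essentially the paper's own argument: the paper constructs the bijection $\Phi\colon C(m,n)\xrightarrow{\cong}\gpS_m\times\{(p_1,\cdots,p_n)\mid p_i\ge 0,\ \sum_i p_i=m\}$ by reading off, from each chord diagram, the counts $p_j$ of endpoints on the arcs and the permutation matching arc points with upper-line points, which is exactly the inverse of your assignment $(p_1,\dots,p_n;\sigma)\mapsto\mu^{[p_1,\cdots,p_n]}P_{\sigma}i^{\otimes m}$. Your additional remarks on orientation conventions and the counting alternative are fine but not needed beyond this identification.
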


\begin{proof}
For an element $C$ of $C(m,n)$, let $E(C)$ be the set of $m$ chords of $C$.
Define two maps $e$ and $e'$ from $E(C)$ to $\{1,\cdots,m\}$ as follows. The map $e$ assigns $e(c)\in \{1,\cdots,m\}$ to a chord $c\in E(C)$, where an endpoint of $c$ is attached to the $e(c)$-th point on the upper line when we count the endpoints from left to right. In a similar way, the map $e'$ assigns $e'(c)$ to $c$, where an endpoint of $c$ is attached to the $e'(c)$-th point on $X_n$ when we count the endpoints on $X_n$ from left to right on the left-most arc component of $X_n$ and from left to right on the second arc and so on.
Then we obtain a permutation $\sigma_C\in \gpS_m$ which assigns $e(c)$ to $e'(c)$.
For $j\in \{1,\cdots,n\}$, let $p_j(C)$ be the number of univalent vertices of $C$ that are attached to the $j$-th component of $X_n$.
Then we have a bijection
\begin{gather*}
   \Phi: C(m,n)\xrightarrow{\cong} \gpS_m\times \{(p_1,\cdots,p_n)\mid p_1,\cdots,p_n\ge 0, p_1+\cdots+p_n=m\}
\end{gather*}
defined by $\Phi(C)=(\sigma_{C},(p_1(C),\cdots,p_n(C)))$.
In other words, we have 
\begin{gather*}
    C(m,n)=\{\mu^{[p_1,\cdots,p_n]}P_{\sigma} i^{\otimes m} \mid  p_1,\cdots,p_n\ge 0, p_1+\cdots+p_n=m, \sigma\in \gpS_m\}.
\end{gather*}
\end{proof}

\subsection{The hom-space $\A^{L}(L^{\otimes m},H^{\otimes n})$}
Here, we explicitly describe the hom-space $\A^{L}(L^{\otimes m},H^{\otimes n})$ and more generally $\A^{L}(w,H^{\otimes n})$ for $w\in \Ob(\A^{L})$.

\begin{lemma}\label{lemmaALandAL0d}
For any $d, m,n\ge 0$, we have a $\K$-linear isomorphism
\begin{gather}\label{ALandAL0d}
\A^{L}_d(L^{\otimes m},H^{\otimes n})\cong \A^{L}_0(L^{\otimes -},H^{\otimes n})\otimes_{\catLie}\catLieC(m,-)_d.
\end{gather}
\end{lemma}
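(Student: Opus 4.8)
The plan is to construct the isomorphism \eqref{ALandAL0d} by exhibiting an explicit spanning set for each side and checking that the relations match. First I would use the generation result from \cite[Proposition A.7]{Katada2}, together with the relations $(\A^{L}\text{-}1)$--$(\A^{L}\text{-}4)$ and the structure relations for the Hopf algebra $H$, the Lie algebra $L$, and the Casimir element $c$, to obtain a normal form for a $(L^{\otimes m},H^{\otimes n})$-diagram $D$ of degree $d$. The key point is that in $\A^{L}$ the morphism $i:L\to H$ allows one to "push" the entire Lie part of the diagram (the trivalent trees built from $[,]$, together with the copies of $c$) down toward the bottom line: a morphism of $\A^{L}_d(L^{\otimes m},H^{\otimes n})$ can be written as a composite
\begin{gather*}
    L^{\otimes m}\xrightarrow{\;g\;} L^{\otimes k}\xrightarrow{\;i^{\otimes k}\;} H^{\otimes k}\xrightarrow{\;h\;} H^{\otimes n},
\end{gather*}
where $g\in\catLieC(m,k)_d$ lives in the full subcategory $\catLieC\subset\A^{L}$ and $h\in\A^{L}_0(H^{\otimes k},H^{\otimes n})=\A_0(k,n)=\kgrop(k,n)$ lives in the degree $0$ Hopf part. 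Combining $i^{\otimes k}$ with $h$ gives precisely an element of $\A^{L}_0(L^{\otimes k},H^{\otimes n})$ (by Lemmas \ref{AL0} and \ref{bijectionwithCmn}, these are the chord-diagram morphisms $\mu^{[p_1,\cdots,p_n]}P_\sigma i^{\otimes k}$). This factorization is exactly what the right-hand side of \eqref{ALandAL0d} encodes: the coend $\A^{L}_0(L^{\otimes -},H^{\otimes n})\otimes_{\catLie}\catLieC(m,-)_d$ is the quotient of $\bigoplus_{k\ge 0}\A^{L}_0(L^{\otimes k},H^{\otimes n})\otimes_{\K}\catLieC(m,k)_d$ by the relation $(\alpha\phi)\otimes\beta\sim\alpha\otimes(\phi\beta)$ for $\phi\in\catLie(k',k)$.

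The steps, in order: (1) Establish the factorization above at the level of spanning sets, using the STU relation (to reduce Jacobi trees attached to the upper line to chords, as in the proof of Lemma \ref{AL0}) and the relations $(\A^{L}\text{-}1)$--$(\A^{L}\text{-}4)$ (to move the image of $i$ past the Hopf-algebra structure maps so that all occurrences of $i$ sit at a single "horizon"). (2) Define the map from the right-hand side to the left by $\alpha\otimes\beta\mapsto \alpha\circ\beta$ for $\alpha\in\A^{L}_0(L^{\otimes k},H^{\otimes n})$, $\beta\in\catLieC(m,k)_d\subset\A^{L}_d(L^{\otimes m},L^{\otimes k})$; check it is well-defined on the coend (the relation it must kill is precisely associativity of composition together with the identification $\catLie\subset\catLieC\subset\A^{L}$, which is immediate). (3) Define the inverse using the normal form from step (1): given $D\in\A^{L}_d(L^{\otimes m},H^{\otimes n})$, factor $D=\alpha\circ (i^{\otimes k})\circ g$ and send it to $(\alpha\circ i^{\otimes k})\otimes g$; independence of the chosen factorization is guaranteed by the coend relation, since any two factorizations differ by inserting a degree $0$ Lie morphism between $g$ and $i^{\otimes k}$. (4) Check the two maps are mutually inverse, which is formal once (1)--(3) are in place.

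The main obstacle is step (1): proving that every degree $d$ morphism $L^{\otimes m}\to H^{\otimes n}$ in $\A^{L}$ really does factor through $i^{\otimes k}$ with the Lie part living entirely in $\catLieC$, i.e. that no "essential" use of the mixed generators $ad_L:H\otimes L\to L$ or of $i$ applied after a Hopf operation can obstruct the factorization. One must show that $ad_L$ can always be eliminated in favor of $[,]$ and $i$ (since on the nose $i\circ ad_L$ equals an adjoint action built from $\mu,\Delta,S$ in $\A^{L}_0$, so any $ad_L$ feeding into the $H$-part is absorbed into degree $0$), and that the decomposition of $\catLieC(m,k)_d$ from Lemma \ref{CatLieC0-} — where the copies of $c$ are pushed to the far right — is compatible with this. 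This should follow by an induction on the number of generating morphisms in a presentation of $D$, analogous to the factorization lemmas \ref{decompositionofA} and \ref{CatLieC0-}, but it requires care in bookkeeping the grading (the copies of $c$ each contribute degree $1$, everything else degree $0$) and in verifying that moving $i$ past $\mu,\Delta,\varepsilon,S$ via $(\A^{L}\text{-}1)$--$(\A^{L}\text{-}4)$ does not change the degree. Once this normal form is secured, the remaining verifications are routine bookkeeping with coends.
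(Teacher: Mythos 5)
There is a genuine gap, and it sits in your step (3), not in step (1) where you locate the main obstacle. Your inverse map is defined by choosing a factorization $D=\alpha\circ i^{\otimes k}\circ g$ and you justify independence of the choice by asserting that ``any two factorizations differ by inserting a degree $0$ Lie morphism between $g$ and $i^{\otimes k}$.'' That assertion is exactly the nontrivial content of the lemma (it is equivalent to injectivity of the composition map), and it is not established; the coend relation only tells you that factorizations related by such an insertion have the same image, not that all factorizations of a given $D$ are so related. The paper's own Remark following Lemma \ref{factorizationofAL} shows the factorization is genuinely non-unique: the $4$T-type identity $\mu^{[2,1]}P_{(12)}i^{\otimes 3}(\id_L\otimes c)-\mu^{[2,1]}i^{\otimes 3}(\id_L\otimes c)=\mu^{[1,2]}P_{(12)}i^{\otimes 3}(\id_L\otimes c)-\mu^{[1,2]}P_{(132)}i^{\otimes 3}(\id_L\otimes c)$ relates distinct basis elements of $\A^{L}_0(L^{\otimes 3},H^{\otimes n})$ after composing with a fixed element of $\catLieC(1,3)_1$, and seeing that such relations are generated by $\catLie$-bilinearity combined with ($\A^{L}$-\ref{a1}) on the left factor and the Casimir relation \eqref{Casimirrelation} on the right factor requires a real argument (in effect, a presentation of $\A^{L}_d(L^{\otimes m},H^{\otimes n})$ relative to your spanning set). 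Your step (1), by contrast, is the easy part: it is precisely the spanning statement the paper dismisses with ``it is easy to see'' and records as Lemma \ref{factorizationofAL}, and your step (2) (well-definedness of the forward map) is indeed immediate.

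The paper avoids the well-definedness problem entirely by never choosing a normal form: it invokes the co-Yoneda isomorphism $\A^{L}_0(-,H^{\otimes n})\otimes_{\A^{L}_0}\A^{L}_d(L^{\otimes m},-)\xrightarrow{\cong}\A^{L}_d(L^{\otimes m},H^{\otimes n})$ over the whole degree $0$ wide subcategory, notes that this induces an injective map on the coend restricted over the full subcategory $\catLie\subset\A^{L}_0$, and then deduces surjectivity from the spanning claim and concludes via the identification \eqref{AandCatLieCasfullsubcategoriesofAL}. So in your proposal the forward map and surjectivity are fine, but injectivity (equivalently, well-definedness of your inverse) is asserted rather than proved; to repair it you would either have to carry out the relation-chasing sketched above or replace steps (3)--(4) by the categorical argument the paper uses.
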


\begin{proof}
The composition of morphisms in $\A^{L}$ induces a $\K$-linear isomorphism
\begin{gather*}
    \A^{L}_0(-,H^{\otimes n})\otimes_{\A^{L}_0}\A^{L}_d(L^{\otimes m},-)\xrightarrow[\cong]{\circ} \A^{L}_d(L^{\otimes m},H^{\otimes n}),
\end{gather*}
which induces an injective $\K$-linear map
\begin{gather*}
    \A^{L}_0(L^{\otimes -},H^{\otimes n})\otimes_{\catLie}\A^{L}_d(L^{\otimes m},L^{\otimes -})\xrightarrow{\circ}  \A^{L}_d(L^{\otimes m},H^{\otimes n}).
\end{gather*}
It is easy to see that any element of $\A^{L}_d(L^{\otimes m},H^{\otimes n})$ is a $\K$-linear sum of morphisms of the form
\begin{gather*}
    f(\id_{L^{\otimes m}}\otimes c^{\otimes d})
\end{gather*}
with $f\in \A^{L}_0(L^{\otimes m+2d},H^{\otimes n})$.
Therefore, the composition map $\circ$ is surjective.
Hence, the identification \eqref{AandCatLieCasfullsubcategoriesofAL} of $\A^{L}_d(L^{\otimes m},L^{\otimes -})$ and $\catLieC(m,-)_d$ completes the proof.
\end{proof}

In a similar way, we obtain the following factorization of $\A^{L}_d(L^{\otimes m},H^{\otimes n})$.

\begin{lemma}\label{lemmaALandAd}
For any $d, m,n\ge 0$, we have $\K$-linear isomorphisms
\begin{gather}\label{ALandAd}
\A^{L}_d(L^{\otimes m},H^{\otimes n})\cong \A_d(-, n)\otimes_{\kgrop}\A^{L}_0(L^{\otimes m},H^{\otimes -}).
\end{gather}
\end{lemma}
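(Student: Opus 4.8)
The plan is to reproduce the argument of Lemma \ref{lemmaALandAL0d}, but factoring through the full subcategory $\A_0\cong\kgrop$ of $\A^{L}_0$ on the objects $H^{\otimes k}$ instead of through $\catLie$. Since $\A^{L}_0(L^{\otimes m},-)$ is the representable left $\A^{L}_0$-module and $\A^{L}_d(-,H^{\otimes n})$ is a right $\A^{L}_0$-module via precomposition by degree-$0$ morphisms, the co-Yoneda lemma first yields a $\K$-linear isomorphism, realized by composition in $\A^{L}$,
\begin{gather*}
    \A^{L}_d(-,H^{\otimes n})\otimes_{\A^{L}_0}\A^{L}_0(L^{\otimes m},-)\xrightarrow[\cong]{\circ}\A^{L}_d(L^{\otimes m},H^{\otimes n}).
\end{gather*}
Restricting the coend on the left to the full subcategory on the objects $H^{\otimes k}$, which is $\A_0\cong\kgrop$, and invoking the identifications $\A^{L}_d(H^{\otimes k},H^{\otimes n})=\A_d(k,n)$ from \eqref{AandCatLieCasfullsubcategoriesofAL}, I get a composition map
\begin{gather*}
    \A_d(-,n)\otimes_{\kgrop}\A^{L}_0(L^{\otimes m},H^{\otimes -})\xrightarrow{\circ}\A^{L}_d(L^{\otimes m},H^{\otimes n}),
\end{gather*}
and it remains to prove this is an isomorphism.

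For surjectivity, I would reuse the description from the proof of Lemma \ref{lemmaALandAL0d}: any element of $\A^{L}_d(L^{\otimes m},H^{\otimes n})$ is a $\K$-linear combination of morphisms $f(\id_{L^{\otimes m}}\otimes c^{\otimes d})$ with $f\in\A^{L}_0(L^{\otimes m+2d},H^{\otimes n})$, and by Lemmas \ref{AL0} and \ref{bijectionwithCmn} one may take $f=\mu^{[p_1,\cdots,p_n]}P_{\sigma}i^{\otimes m+2d}$. Using $\tilde{c}^{\otimes d}=i^{\otimes 2d}c^{\otimes d}$, such a morphism equals
\begin{gather*}
    \mu^{[p_1,\cdots,p_n]}P_{\sigma}(i^{\otimes m}\otimes\tilde{c}^{\otimes d})=\bigl(\mu^{[p_1,\cdots,p_n]}P_{\sigma}(\id_{H^{\otimes m}}\otimes\tilde{c}^{\otimes d})\bigr)\circ i^{\otimes m},
\end{gather*}
whose first factor lies in $\A_d(m,n)$ and whose second factor $i^{\otimes m}$ lies in $\A^{L}_0(L^{\otimes m},H^{\otimes m})$; hence it lies in the image of the composition map.

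For injectivity, the key point — the counterpart of the injectivity step in Lemma \ref{lemmaALandAL0d} — is that, by Lemma \ref{bijectionwithCmn}, the left $\kgrop$-module $\A^{L}_0(L^{\otimes m},H^{\otimes -})$ is cyclic, generated by $i^{\otimes m}$: each basis element $\mu^{[p_1,\cdots,p_n]}P_{\sigma}i^{\otimes m}$ is $(\mu^{[p_1,\cdots,p_n]}P_{\sigma})\circ i^{\otimes m}$ with $\mu^{[p_1,\cdots,p_n]}P_{\sigma}\in\kgrop(m,-)$. Let $\mathcal{K}$ be the kernel of the resulting surjection $\kgrop(m,-)\twoheadrightarrow\A^{L}_0(L^{\otimes m},H^{\otimes -})$ of left $\kgrop$-modules. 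By right-exactness of $\A_d(-,n)\otimes_{\kgrop}-$ and the co-Yoneda isomorphism $\A_d(-,n)\otimes_{\kgrop}\kgrop(m,-)\cong\A_d(m,n)$, the source of the composition map is identified with $\A_d(m,n)/\mathcal{N}$, where $\mathcal{N}$ is spanned by the elements $f\circ\kappa$ with $f\in\A_d(k,n)$, $\kappa\in\mathcal{K}(k)$, and the map becomes $f+\mathcal{N}\mapsto f\circ i^{\otimes m}$; injectivity amounts to showing that $f\circ i^{\otimes m}=0$ in $\A^{L}$ forces $f\in\mathcal{N}$. I expect this to be the main obstacle: one must verify that no reduction of $f\circ i^{\otimes m}$ to $0$ uses relation ($\A^{L}$-1), which should hold because every univalent vertex attached by a copy of $i$ in $f\circ i^{\otimes m}$ belongs either to the bottom layer $i^{\otimes m}$ or to a copy of $\tilde{c}=i^{\otimes2}c$, hence is not fed by a Lie bracket; consequently only the relations ($\A^{L}$-2), ($\A^{L}$-3), ($\A^{L}$-4) — all of whose instances produce elements of $\mathcal{K}$ — together with the relations already valid in $\A_d(m,n)$ are available, forcing $f\in\mathcal{N}$. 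Making this rigorous requires the full presentation of $\A^{L}$ from \cite[Proposition A.7]{Katada2}. Once injectivity is established, the map is bijective, and the identification $\A^{L}_d(H^{\otimes -},H^{\otimes n})=\A_d(-,n)$ from \eqref{AandCatLieCasfullsubcategoriesofAL} gives \eqref{ALandAd}.
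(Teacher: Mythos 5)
Your skeleton agrees with the paper's: the paper proves this lemma ``in a similar way'' to Lemma \ref{lemmaALandAL0d}, i.e.\ by the co-Yoneda isomorphism $\A^{L}_d(-,H^{\otimes n})\otimes_{\A^{L}_0}\A^{L}_0(L^{\otimes m},-)\xrightarrow{\circ}\A^{L}_d(L^{\otimes m},H^{\otimes n})$, restriction of the coend to the full subcategory $\kgrop\cong\A_0$, surjectivity via the factorization $\mu^{[p_1,\cdots,p_n]}P_{\sigma}i^{\otimes(m+2d)}(\id_{L^{\otimes m}}\otimes c^{\otimes d})=\mu^{[p_1,\cdots,p_n]}P_{\sigma}(\id_{H^{\otimes m}}\otimes\tilde{c}^{\otimes d})\,i^{\otimes m}$, and the identification \eqref{AandCatLieCasfullsubcategoriesofAL}. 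Your co-Yoneda step, your surjectivity argument, and your reduction of the source to $\A_d(m,n)/\mathcal{N}$ using the cyclicity of $\A^{L}_0(L^{\otimes m},H^{\otimes -})$ on the generator $i^{\otimes m}$ (Lemmas \ref{AL0}, \ref{bijectionwithCmn} and right-exactness of $\A_d(-,n)\otimes_{\kgrop}-$) are all correct and consistent with the paper.

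The genuine gap is the injectivity half, which you yourself flag as ``the main obstacle'' and do not close. Reformulating injectivity as ``$f\circ i^{\otimes m}=0$ forces $f\in\mathcal{N}$'' is fine, but the argument you sketch for it does not work as stated: in a $\K$-linear category given by generators and relations, relations can be applied in both directions, so you cannot exclude the use of ($\A^{L}$-\ref{a1}) just because the chosen representative $f\circ i^{\otimes m}$ contains no copy of $i$ fed by a Lie bracket --- a chain of rewritings may first create brackets (e.g.\ by ($\A^{L}$-\ref{a1}) read from right to left, or by STU), interact with the Casimir relations, and only then cancel. Excluding this requires a normal-form or invariant argument, and the source you defer to, \cite[Proposition A.7]{Katada2}, is invoked in this paper only as providing generators (together with some relations), not the complete presentation your rewriting argument would need. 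The paper's own route never touches the presentation: as in Lemma \ref{lemmaALandAL0d}, injectivity is obtained from the isomorphism of the coend over the whole degree-zero category $\A^{L}_0$ (with the hom-spaces understood via their diagrammatic description), the comparison map from the $\kgrop$-coend being asserted to be injective, and only surjectivity is checked by the explicit factorization. So your proposal reproduces the paper's approach up to that point, but as written the isomorphism is established only in one direction.
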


We have the following factorization of an element of $\A^{L}_d(L^{\otimes m},H^{\otimes n})$.

\begin{lemma}\label{factorizationofAL}
Any element of $\A^{L}_d(L^{\otimes m},H^{\otimes n})$ is a linear combination of morphisms of the form
\begin{gather*}
        \mu^{[p_1,\cdots,p_n]}P_{\sigma}i^{\otimes (m+2d)}(\id_{L^{\otimes m}}\otimes c^{\otimes d})
\end{gather*}
with $p_1,\cdots,p_n\ge 0$, $p_1+\cdots+p_n=m+2d$, $\sigma\in \gpS_{m+2d}$.
\end{lemma}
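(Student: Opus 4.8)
The plan is to combine the two preceding lemmas with the explicit generating set for the degree $0$ hom-space. By Lemma \ref{lemmaALandAL0d}, any element of $\A^{L}_d(L^{\otimes m},H^{\otimes n})$ is a $\K$-linear combination of morphisms $f(\id_{L^{\otimes m}}\otimes c^{\otimes d})$ with $f\in \A^{L}_0(L^{\otimes(m+2d)},H^{\otimes n})$, which is exactly the surjectivity of the composition map established in that proof. So the problem reduces to expressing an arbitrary $f\in \A^{L}_0(L^{\otimes(m+2d)},H^{\otimes n})$ as a linear combination of morphisms of the form $\mu^{[p_1,\cdots,p_n]}P_{\sigma}i^{\otimes(m+2d)}$.

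The second step invokes Lemmas \ref{AL0} and \ref{bijectionwithCmn}: the set $C(m+2d,n)$ is a basis for $\A^{L}_0(L^{\otimes(m+2d)},H^{\otimes n})$, and by Lemma \ref{bijectionwithCmn} this basis consists precisely of the morphisms $\mu^{[p_1,\cdots,p_n]}P_{\sigma}i^{\otimes(m+2d)}$ with $p_1,\cdots,p_n\ge 0$, $p_1+\cdots+p_n=m+2d$, and $\sigma\in\gpS_{m+2d}$. Substituting such $f$ into $f(\id_{L^{\otimes m}}\otimes c^{\otimes d})$ and using that precomposition is $\K$-linear yields that every element of $\A^{L}_d(L^{\otimes m},H^{\otimes n})$ is a linear combination of $\mu^{[p_1,\cdots,p_n]}P_{\sigma}i^{\otimes(m+2d)}(\id_{L^{\otimes m}}\otimes c^{\otimes d})$, which is the claimed form.

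I do not expect any real obstacle here, since the essential work has already been done in Lemmas \ref{lemmaALandAL0d}, \ref{AL0}, and \ref{bijectionwithCmn}; the statement is essentially a repackaging. The only point to watch is bookkeeping: one should note that the number of $L$-legs feeding into $f$ is $m+2d$ (the $m$ original $L$-inputs together with the $2d$ new $L$-legs produced by $c^{\otimes d}$), so the indices $p_j$ sum to $m+2d$ and $\sigma$ ranges over $\gpS_{m+2d}$ rather than $\gpS_m$. One should also remark that, as in the degree $0$ case and in Lemma \ref{decompositionofA}, this factorization is not unique, both because of the relation $\mu(\id_1\otimes S)\Delta=\eta\varepsilon$ inherited from $\kgrop$ and because of the relations \eqref{symmetryc}, \eqref{[,]cij}, \eqref{[,]ci} governing the interaction of $c$ with the Lie and Hopf structure.
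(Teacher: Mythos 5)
Your proposal is correct and follows essentially the same route as the paper: the paper's proof likewise combines Lemma \ref{lemmaALandAL0d} with the basis $C(m+2d,n)$ of $\A^{L}_0(L^{\otimes(m+2d)},H^{\otimes n})$ from Lemmas \ref{AL0} and \ref{bijectionwithCmn}, the only cosmetic difference being that the paper phrases the reduction via the tensor-product isomorphism and the decomposition of elements of $\catLieC(m,m')_d$ as $f(\id_{L^{\otimes m}}\otimes c^{\otimes d})$ with $f\in\catLie(m+2d,m')$, whereas you invoke the surjectivity statement directly from that proof. Your bookkeeping on the index $m+2d$ and the remark on non-uniqueness match the paper's subsequent remark as well.
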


\begin{proof}
By Lemma \ref{lemmaALandAL0d}, we have
\begin{gather*}
    \A^{L}_d(L^{\otimes m},H^{\otimes n})\cong \A^{L}_0(L^{\otimes -},H^{\otimes n})\otimes_{\catLie}\catLieC(m,-)_d.
\end{gather*}
By Lemmas \ref{AL0} and \ref{bijectionwithCmn}, the set $C(m',n)=\{\mu^{[p_1,\cdots,p_n]}P_{\sigma} i^{\otimes m'} \mid  p_1,\cdots,p_n\ge 0, p_1+\cdots+p_n=m', \sigma\in \gpS_{m'}\}$
is a basis for $\A^{L}_0(L^{\otimes m'},H^{\otimes n})$.
Since any element of $\catLieC(m,m')_d$ is a linear combination of 
$f(\id_{L^{\otimes m}}\otimes c^{\otimes d})$ for some $f\in \catLie(m+2d,m')$, the statement follows.
\end{proof}

\begin{remark}
Note that the factorization of an element of $\A^{L}_d(L^{\otimes m},H^{\otimes n})$ given in Lemma \ref{factorizationofAL} is not unique.
Indeed, we have the relation corresponding to the ``$4$T-relation".
For example, we have
\begin{gather*}
\begin{split}
    &\mu^{[2,1]}P_{(12)}i^{\otimes 3}(\id_L\otimes c)-\mu^{[2,1]}i^{\otimes 3}(\id_L\otimes c)
    \\
    &=\mu^{[1,2]}P_{(12)}i^{\otimes 3}(\id_L\otimes c)-\mu^{[1,2]}P_{(132)}i^{\otimes 3}(\id_L\otimes c).
\end{split}
\end{gather*}
\end{remark}

More generally, we have the following factorization of an element of $\A^{L}_d(w,H^{\otimes m'})$ for any $w\in \Ob(\A^{L})$.
For $w=H^{\otimes m_1}\otimes L^{\otimes n_1}\otimes \cdots\otimes H^{\otimes m_r}\otimes L^{\otimes n_r}$, set $m=\sum_{j=1}^{r}m_j,\; n=\sum_{j=1}^{r}n_j$.
We have an isomorphism $P_w:H^{\otimes m}\otimes L^{\otimes n} \xrightarrow{\cong} w$ induced by the symmetry of $\A^{L}$, which induces a $\K$-linear isomorphism
$$-\circ P_w:\A^{L}_d(w,H^{\otimes m'})\xrightarrow{\cong} \A^{L}_d(H^{\otimes m}\otimes L^{\otimes n},H^{\otimes m'}).$$

\begin{lemma}
    Any element of $\A^{L}_d(H^{\otimes m}\otimes L^{\otimes n},H^{\otimes m'})$ is a linear combination of morphisms of the form
\begin{gather*}
        \mu^{[p_1,\cdots,p_{m'}]}P_{\sigma}
        (((S^{e_1}\otimes\cdots \otimes S^{e_s})\Delta^{[q_1,\cdots,q_m]})\otimes (i^{\otimes n+2d}(\id_{L^{\otimes n}}\otimes c^{\otimes d})))
\end{gather*}
with $s, p_1,\cdots,p_{m'}, q_1,\cdots,q_m\ge 0$, 
$s=p_1+\cdots+p_{m'}-(n+2d)=q_1+\cdots+q_m$, $\sigma\in \gpS_{s+n+2d}$, $e_1,\cdots,e_s\in \{0,1\}$.
\end{lemma}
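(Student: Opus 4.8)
The plan is to combine the method of proof of Lemma~\ref{factorizationofAL} with Habiro's factorization Lemma~\ref{lemmamorofkgrop}. Put $k=n+2d$. First I would strip off the Casimir elements, exactly as in the proofs of Lemmas~\ref{lemmaALandAL0d} and~\ref{factorizationofAL}: since $c$ is the unique generating morphism of positive degree and it has empty source, every element of $\A^{L}_d(H^{\otimes m}\otimes L^{\otimes n},H^{\otimes m'})$ is a $\K$-linear combination of morphisms $g\circ(\id_{H^{\otimes m}\otimes L^{\otimes n}}\otimes c^{\otimes d})$ with $g\in\A^{L}_0(H^{\otimes m}\otimes L^{\otimes k},H^{\otimes m'})$ (slide the $d$ copies of $c$ down to the source, absorbing the resulting symmetries into $g$). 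Using the interchange law $(A\otimes B)(C\otimes D)=(AC)\otimes(BD)$, a morphism of the asserted shape for $g$ in degree $0$,
$$\mu^{[p_1,\dots,p_{m'}]}P_\sigma\bigl(((S^{e_1}\otimes\cdots\otimes S^{e_s})\Delta^{[q_1,\dots,q_m]})\otimes i^{\otimes k}\bigr),$$
composed on the right with $\id_{H^{\otimes m}\otimes L^{\otimes n}}\otimes c^{\otimes d}$, turns $i^{\otimes k}$ into $i^{\otimes(n+2d)}(\id_{L^{\otimes n}}\otimes c^{\otimes d})$, which is precisely the claimed form. So it suffices to prove the degree~$0$ case: every $g\in\A^{L}_0(H^{\otimes m}\otimes L^{\otimes k},H^{\otimes m'})$ is a linear combination of such morphisms, with $s=q_1+\cdots+q_m=p_1+\cdots+p_{m'}-k$ and $\sigma\in\gpS_{s+k}$.

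For the degree~$0$ statement I would first show that the composition map
$$-\circ(\id_{H^{\otimes m}}\otimes i^{\otimes k})\colon\ \kgrop(m+k,m')=\A_0(m+k,m')\longrightarrow\A^{L}_0(H^{\otimes m}\otimes L^{\otimes k},H^{\otimes m'})$$
is surjective. This generalizes the description of $\A^{L}_0(L^{\otimes k},H^{\otimes m'})$ in Lemmas~\ref{AL0} and~\ref{bijectionwithCmn} and is proved in the same way: a degree~$0$ morphism is a linear combination of Jacobi diagrams each of whose components is a rooted trivalent tree with root on $X_{m'}$ and leaves either on one of the $k$ points of the upper line that carry the $L$'s or wrapping one of the $m$ handles; each edge emanating from an $L$-point is a copy of $i$, and collapsing these $k$ edges exhibits the diagram as a morphism of $\kgrop(m+k,m')$ precomposed with $\id_{H^{\otimes m}}\otimes i^{\otimes k}$, the STU relation on the remaining pure-$H$ part being that of $\A_0\cong\kgrop$. (Alternatively, one can argue from the presentation of $\A^{L}$ in \cite[Proposition~A.7]{Katada2}, using relations~\ref{a1}--\ref{Si} and the $ad$-equivariance of $i$ to push every occurrence of $i$ up to the source.)

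Granting this, I would next normalize the $\kgrop$-part. By Lemma~\ref{lemmamorofkgrop}, any $\bar g\in\kgrop(m+k,m')$ is a linear combination of morphisms $\mu^{[p_1,\dots,p_{m'}]}P_{\sigma'}(S^{e'_1}\otimes\cdots\otimes S^{e'_t})\Delta^{[r_1,\dots,r_{m+k}]}$ with $t=r_1+\cdots+r_{m+k}=p_1+\cdots+p_{m'}$. Composing with $\id_{H^{\otimes m}}\otimes i^{\otimes k}$ and using $\Delta^{[0]}i=\varepsilon i=0$ (relation~\ref{a3}) and $\Delta^{[r]}i=\sum_{j=1}^{r}\eta^{\otimes(j-1)}\otimes i\otimes\eta^{\otimes(r-j)}$ for $r\ge1$ (from relation~\ref{a2} by induction, with $\Delta\eta=\eta\otimes\eta$), each of the last $k$ handles forces $r_{m+j}\ge1$ and yields one $i$-output and several $\eta$-outputs. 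Applying the $S^{e'}$'s ($S\eta=\eta$ and $Si=-i$ by relation~\ref{Si}, the sign absorbed into the scalar) and then the $\mu$'s, each $\eta$-output is swallowed via $\mu(\id_H\otimes\eta)=\mu(\eta\otimes\id_H)=\id_H$, lowering the corresponding $p_i$. Writing $q_j:=r_j$ for $1\le j\le m$, the surviving inputs to the $\mu$'s are the $s:=q_1+\cdots+q_m$ outputs of $\Delta^{[q_1,\dots,q_m]}$ with exponents $e_1,\dots,e_s$ together with the $k$ copies of $i$, and after renaming $\sigma'$ and the $p_i$ we reach exactly the asserted form with $\sigma\in\gpS_{s+k}$ and $s=p_1+\cdots+p_{m'}-k=p_1+\cdots+p_{m'}-(n+2d)$.

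The main obstacle is the surjectivity of $-\circ(\id_{H^{\otimes m}}\otimes i^{\otimes k})$ in the second paragraph: this is the only place where one genuinely has to control the combinatorics of mixed $H/L$ degree-$0$ Jacobi diagrams rather than formally rearrange symmetric-monoidal generators, and all the other steps are routine once it is available. Note that only spanning statements are needed throughout, so the non-uniqueness of these factorizations (noted after Lemmas~\ref{lemmamorofkgrop} and~\ref{factorizationofAL}) causes no difficulty.
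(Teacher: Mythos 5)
Your proposal is correct and rests on the same two pillars as the paper's proof: the diagrammatic step expressing a mixed-source morphism as something precomposed with copies of $i$, and the algebraic clean-up using ($\A^{L}$-\ref{a2}), ($\A^{L}$-\ref{a3}), ($\A^{L}$-\ref{Si}) to turn $\Delta^{[r]}i$ into sums of terms $\eta^{\otimes(j-1)}\otimes i\otimes\eta^{\otimes(r-j)}$ and absorb the units into the $\mu^{[p]}$'s. The difference is organizational: you strip the Casimir elements first, reduce to the degree-$0$ statement that $-\circ(\id_{H^{\otimes m}}\otimes i^{\otimes k})\colon\kgrop(m+k,m')\to\A^{L}_0(H^{\otimes m}\otimes L^{\otimes k},H^{\otimes m'})$ is surjective, and then invoke Habiro's factorization (Lemma \ref{lemmamorofkgrop}); the paper instead performs the STU-reduction to chord diagrams and the chord-pulling homotopy directly in degree $d$, obtaining $f(\id_{H^{\otimes m}}\otimes i^{\otimes n})$ with $f\in\A_d(m+n,m')$, and then applies Lemma \ref{decompositionofA}, which already carries the $\tilde{c}^{\otimes d}$. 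Your route needs only the degree-$0$ factorization of $\gr^{\op}$ as input and isolates where the diagram combinatorics genuinely enter, at the cost of one extra reduction; the paper's route is shorter because Lemma \ref{decompositionofA} does the degree bookkeeping for you. Two points in your second paragraph should be tightened, though neither is a real gap: in degree $0$ with target $H^{\otimes m'}$ every component is a rooted trivalent tree whose root lies on $X_{m'}$ and \emph{all} of whose leaves lie at the $L$-points (univalent vertices cannot ``wrap handles''; only edges may pass through them), and a tree with two or more leaves is not disposed of by merely ``collapsing the edges at the $L$-points'' --- one must first apply STU to reduce it to single chords, exactly as in the proof of Lemma \ref{AL0}, and then homotope each chord off the handles (the paper's ``pulling'' argument) so that it becomes a copy of $i$ precomposed into a morphism of $\kgrop(m+k,m')$. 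With that understood, your degree-$0$ surjectivity is precisely the paper's key geometric step, and the remaining computation, including the count $s=p_1+\cdots+p_{m'}-(n+2d)$ and the permutation landing in $\gpS_{s+n+2d}$, agrees with the paper's.
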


\begin{proof}
    By the STU relation, any element of $\A^{L}_d(H^{\otimes m}\otimes L^{\otimes n},H^{\otimes m'})$ is a linear combination of chord diagrams with $d$ chords attached to only $X_{m'}$ and with $n$ chords attached to both $X_{m'}$ and the upper line. 
    By pulling the univalent vertex of each of the $n$ chords that is attached to $X_{m'}$ toward the upper right side of $U_{m}$ along the chord so that the chord does not go through any handles of $U_{m}$, we can transform the chord diagrams into the following form
    \begin{gather*}
        f (\id_{H^{\otimes m}}\otimes i^{\otimes n}) 
    \end{gather*}
    with $f\in \A_d(m+n,m')$.
    By Lemma \ref{decompositionofA}, the morphism $f$ is a linear combination of 
    \begin{gather*}
         \mu^{[r_1,\cdots,r_{m'}]}P_{\rho}(S^{e_1}\otimes\cdots\otimes S^{e_{t}}\otimes \id_{2d})(\Delta^{[q_1,\cdots,q_{m+n}]}\otimes (i^{\otimes 2}c)^{\otimes d})
    \end{gather*}
    where $t, r_1,\cdots,r_{m'}, q_1,\cdots,q_{m+n}\ge 0$, $t=r_1+\cdots+r_{m'}-2d=q_1+\cdots+q_{m+n}$, $\rho\in \gpS_{t+2d}$, $e_1,\cdots,e_t\in \{0,1\}$.
    We have  
    \begin{gather*}
       \begin{split}
           &\mu^{[r_1,\cdots,r_{m'}]}P_{\rho}(S^{e_1}\otimes\cdots\otimes S^{e_{t}}\otimes \id_{2d})(\Delta^{[q_1,\cdots,q_{m+n}]}\otimes (i^{\otimes 2}c)^{\otimes d}) (\id_{H^{\otimes m}}\otimes i^{\otimes n})\\
           &=\mu^{[r_1,\cdots,r_{m'}]}P_{\rho} 
           (
           ((S^{e_1}\otimes\cdots\otimes S^{e_{s}})\Delta^{[q_1,\cdots,q_{m}]})
           \otimes 
           ((S^{e_{s+1}}\otimes\cdots\otimes S^{e_{t}})\Delta^{[q_{m+1},\cdots,q_{m+n}]}i^{\otimes n})
           \otimes 
           i^{\otimes 2d}c^{\otimes d}
           ).
       \end{split}
    \end{gather*}
    By the relations ($\A^{L}$-\ref{a2}), ($\A^{L}$-\ref{a3}) and ($\A^{L}$-\ref{Si}), 
    $(S^{e_{s+1}}\otimes\cdots\otimes S^{e_{t}})\Delta^{[q_{m+1},\cdots,q_{m+n}]}i^{\otimes n}$
    is a linear combination of 
    $P_{\tau} (\eta^{\otimes q} \otimes i^{\otimes n})$
    with $\tau\in \gpS_{q+n}$ if $q=q_{m+1}+\cdots+q_{m+n}-n\ge 0$ and otherwise vanishes.
    Therefore, any element of $\A^{L}_d(H^{\otimes m}\otimes L^{\otimes n},H^{\otimes m'})$ is a linear combination of
    \begin{gather*}
        \mu^{[p_1,\cdots,p_{m'}]}P_{\sigma}
        (((S^{e_1}\otimes\cdots \otimes S^{e_s})\Delta^{[q_1,\cdots,q_m]})\otimes (i^{\otimes n+2d}(\id_{L^{\otimes n}}\otimes c^{\otimes d}))),
    \end{gather*}
    which completes the proof.
\end{proof}

If $w'\in \Ob(\A^{L})\setminus \Ob(\A)$, then we cannot factorize morphisms of $\A^{L}(w,w')$ into linear combinations of chord diagrams as in the above cases.

\section{Functors on the category of Jacobi diagrams in handlebodies}\label{sectionadjunction}

In this section, we study the adjunction given by Powell, which we recalled in Proposition \ref{adjunctionPowell}, by using the bimodule induced by the hom-spaces of the category $\A^{L}_0$.
Moreover, by generalizing this point of view, we obtain an adjunction between the category of $\A$-modules and the category of $\catLieC$-modules.

\subsection{The adjunction given by Powell}

We reconsider the adjunction in Proposition \ref{adjunctionPowell} by using the hom-spaces of the category $\A^{L}_0$.

We have a $(\kgrop, \catLie)$-bimodule $\A^{L}_0(L^{\otimes -},H^{\otimes -})$ induced by the composition in the category $\A^{L}_0$, that is, the $\kgrop$-module structure is defined by the $\K$-linear map
\begin{gather*}
\A^{L}_0(H^{\otimes n},H^{\otimes n'})\otimes \A^{L}_0(L^{\otimes -},H^{\otimes n})\xrightarrow{\circ} \A^{L}_0(L^{\otimes -},H^{\otimes n'}),
\end{gather*}
where we identify $\kgrop(n,n')$ with $\A^{L}_0(H^{\otimes n},H^{\otimes n'})$,
and the right $\catLie$-module structure is defined by the $\K$-linear map
\begin{gather*}  
\A^{L}_0(L^{\otimes m},H^{\otimes -})\otimes \A^{L}_0(L^{\otimes m'},L^{\otimes m})\xrightarrow{\circ}
\A^{L}_0(L^{\otimes m'},H^{\otimes -}),  
\end{gather*}
where we identify $\catLie(m',m)$ with $\A^{L}_0(L^{\otimes m'},L^{\otimes m})$.

The hom-space $\A^{L}_0(L^{\otimes m},H^{\otimes n})$ can be identified with $\catAss(m,n)$ as follows.
The $\K$-linear PROP $\catAss$ is embedded into the  $\K$-linear PROP $\A^{L}_0$ via the unique morphism of $\K$-linear PROPs that maps $(A,\mu_A,\eta_A)$ to $(H,\mu,\eta)$ 
since $\catAss$ is freely generated by the unital associative algebra $(A,\mu_A,\eta_A)$ and the full subcategory $\kgrop$ of $\A^{L}_0$ is freely generated by the cocommutative Hopf algebra $(H,\mu,\eta,\Delta,\varepsilon,S)$.
We have a $\catAss$-module map
\begin{gather*}
    i^*_m: \catAss(m,-)\hookrightarrow\A^{L}_0(H^{\otimes m},H^{\otimes -}) \xrightarrow{-\circ i^{\otimes m}}\A^{L}_0(L^{\otimes m},H^{\otimes -})
\end{gather*}
defined by the composition of the embedding $\catAss(m,n)\hookrightarrow\A^{L}_0(H^{\otimes m},H^{\otimes n})$ and the composition map with the morphism $i^{\otimes m}$.

\begin{lemma}\label{catAssi}
For $m\ge 0$, $i^*_m$ is an isomorphism of $\catAss$-modules.
\end{lemma}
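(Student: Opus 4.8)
The plan is to show that $i^*_m$ is both surjective and injective as a $\K$-linear map; the $\catAss$-module structure compatibility is built in by construction, since $i^*_m$ is defined using only the composition in $\A^{L}_0$ and the embedding $\catAss \hookrightarrow \A^{L}_0$, so it suffices to check bijectivity degree by degree (i.e.\ on each hom-space $\catAss(m,n)$). First I would fix bases on both sides. On the source, Lemma \ref{factorcatAss} gives the basis $\{\mu_A^{[p_1,\cdots,p_n]}P_{\sigma} \mid p_1+\cdots+p_n=m,\ \sigma\in\gpS_m\}$ for $\catAss(m,n)$. On the target, Lemmas \ref{AL0} and \ref{bijectionwithCmn} give the basis $C(m,n)=\{\mu^{[p_1,\cdots,p_n]}P_{\sigma}\, i^{\otimes m}\mid p_1+\cdots+p_n=m,\ \sigma\in\gpS_m\}$ for $\A^{L}_0(L^{\otimes m},H^{\otimes n})$. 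These two index sets are literally the same: pairs $((p_1,\dots,p_n),\sigma)$ with $\sum p_j=m$ and $\sigma\in\gpS_m$.

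Next I would compute $i^*_m$ on the source basis. Under the embedding $\catAss\hookrightarrow\A^{L}_0$, the generator $\mu_A$ maps to $\mu$ and $\eta_A$ maps to $\eta$, hence $\mu_A^{[p_1,\cdots,p_n]}$ maps to $\mu^{[p_1,\cdots,p_n]}$ and the permutation $P_\sigma$ in $\catAss$ maps to the permutation $P_\sigma$ in $\A^{L}_0$ (both are defined the same way, via $P_{1,1}$). Therefore
\begin{gather*}
i^*_m\bigl(\mu_A^{[p_1,\cdots,p_n]}P_\sigma\bigr)
= \mu^{[p_1,\cdots,p_n]}P_\sigma \circ i^{\otimes m}
= \mu^{[p_1,\cdots,p_n]}P_\sigma\, i^{\otimes m},
\end{gather*}
which is exactly the corresponding basis element of $C(m,n)$. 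So $i^*_m$ carries the chosen basis of $\catAss(m,n)$ bijectively onto the chosen basis of $\A^{L}_0(L^{\otimes m},H^{\otimes n})$, and is therefore a $\K$-linear isomorphism for every $n$. Since it is also a map of $\catAss$-modules by construction, it is an isomorphism of $\catAss$-modules, proving the lemma.

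The only genuinely nontrivial input is Lemma \ref{bijectionwithCmn}, i.e.\ the identification of the diagrammatic basis $C(m,n)$ with the algebraically-described set $\{\mu^{[p_1,\cdots,p_n]}P_\sigma\, i^{\otimes m}\}$; but that is already established. The main thing to be careful about is the bookkeeping that the $P_\sigma$ appearing in $\catAss$ and the $P_\sigma$ appearing in $\A^{L}_0$ are matched under the PROP morphism $\catAss\to\A^{L}_0$, and that $\mu_A^{[p_1,\cdots,p_n]}\mapsto \mu^{[p_1,\cdots,p_n]}$; both follow immediately from the inductive definitions and functoriality, since the morphism of $\K$-linear PROPs sends $(A,\mu_A,\eta_A)$ to $(H,\mu,\eta)$ and $P_{1,1}$ to $P_{1,1}$. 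No real obstacle is expected here — the content has been front-loaded into the basis lemmas, and the proof is a matching of index sets.
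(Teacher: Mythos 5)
Your argument is correct and matches the paper's own proof: both invoke Lemma \ref{factorcatAss} for the basis of $\catAss(m,n)$ and Lemmas \ref{AL0} and \ref{bijectionwithCmn} for the basis $C(m,n)$ of $\A^{L}_0(L^{\otimes m},H^{\otimes n})$, then observe that $i^*_m$ carries one basis bijectively onto the other. Your extra bookkeeping that $\mu_A^{[p_1,\cdots,p_n]}P_\sigma\mapsto \mu^{[p_1,\cdots,p_n]}P_\sigma\,i^{\otimes m}$ under the PROP morphism is a harmless elaboration of what the paper leaves implicit.
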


\begin{proof}
By Lemma \ref{factorcatAss}, the set $\{\mu_A^{[p_1,\cdots,p_n]}P_{\sigma}\mid p_1+\cdots+p_n=m ,\sigma\in \gpS_{m}\}$ is a basis for $\catAss(m,n)$.
By Lemmas \ref{AL0} and \ref{bijectionwithCmn}, $\A^{L}_0(L^{\otimes m},H^{\otimes n})$ has a basis $\{\mu^{[p_1,\cdots,p_n]}P_{\sigma}i^{\otimes m}\mid p_1+\cdots+p_n=m ,\sigma\in \gpS_{m}\}$.
Therefore, the $\K$-linear map $(i^*_m)_n:\catAss(m,n)\to\A^{L}_0(L^{\otimes m},H^{\otimes n})$ is an isomorphism for any $n\ge 0$ and thus the morphism $i^*_m$ is an isomorphism of $\catAss$-modules.
\end{proof}

\begin{remark}\label{kgropinAL}
For $m\ge 0$, we have a $\kgrop$-module map
\begin{gather*}
    -\circ i^{\otimes m}:\kgrop(m,-)\cong \A^{L}_0(H^{\otimes m},H^{\otimes -}) \to  \A^{L}_0(L^{\otimes m},H^{\otimes -}).
\end{gather*}
For $m=0$, the $\kgrop$-module map $-\circ i^{\otimes 0}=\id_{\K}$ is an isomorphism. For $m\ge 1$, however, the $\kgrop$-module map $-\circ i^{\otimes m}$ is epic but not monic due to the relation ($\A^{L}$-\ref{a3}).
\end{remark}

The $(\kgrop, \catLie)$-bimodule structure on $\A^{L}_0(L^{\otimes -},H^{\otimes -})$ that we defined above induces a $(\kgrop, \catLie)$-bimodule structure on $\catAss$ via the isomorphisms 
$(i^*_m)_n:\catAss(m,n)\xrightarrow{\cong}\A^{L}_0(L^{\otimes m},H^{\otimes n})$.
Then the $(\kgrop, \catLie)$-bimodule $\catAss$ is isomorphic to the $(\kgrop, \catLie)$-bimodule ${}_{\Delta}\catAss$ defined by Powell \cite{Powellanalytic}, which we recalled in Section \ref{sectionPowell}.
Indeed, the $\K$-linear isomorphisms
\begin{gather*}
    (-1)^m: \catAss(m,n)\to {}_{\Delta}\catAss(m,n)
\end{gather*}
form a natural isomorphism between the $(\kgrop, \catLie)$-bimodules $\catAss$ and ${}_{\Delta}\catAss$.

We can rewrite the statement of Proposition \ref{adjunctionPowell} as follows.

\begin{proposition}[cf. Proposition \ref{adjunctionPowell}]\label{adjunctionpowellAL0}
    We have the hom-tensor adjunction
    \begin{gather*}
        \A^{L}_0(L^{\otimes -},H^{\otimes -})\otimes_{\catLie} - : \adj{\catLieMod}{\kgropMod}{}{} : \Hom_{\kgropMod}(\A^{L}_0(L^{\otimes -},H^{\otimes -}),-).
    \end{gather*}
    Moreover, the adjunction restricts to an equivalence of categories
    \begin{gather*}
        \catLieMod\simeq \kgropMod^{\omega}.
    \end{gather*}
\end{proposition}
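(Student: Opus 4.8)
The plan is to obtain the statement as a reformulation of Proposition~\ref{adjunctionPowell}: I would show that the $(\kgrop,\catLie)$-bimodule $\A^{L}_0(L^{\otimes-},H^{\otimes-})$ is isomorphic to Powell's bimodule ${}_{\Delta}\catAss$, and then invoke Proposition~\ref{adjunctionPowell}. Granting the bimodule isomorphism, both conclusions are formal. An isomorphism of $(\kgrop,\catLie)$-bimodules $\varphi\colon M\xrightarrow{\cong}{}_{\Delta}\catAss$ induces a natural isomorphism of the associated hom--tensor adjunctions (via $\varphi\otimes_{\catLie}-$ on the left adjoints and $\varphi^{*}$ on the right adjoints, compatibly with units and counits), so the adjunction displayed in the proposition is naturally isomorphic to the one in Proposition~\ref{adjunctionPowell}; in particular its left adjoint has the same essential image, and since $\kgropMod^{\omega}$ is defined intrinsically (as the objects corresponding to analytic functors), the restriction to the equivalence $\catLieMod\simeq\kgropMod^{\omega}$ transports verbatim.

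So the real work is to produce the bimodule isomorphism, and here most of the effort is already in place: Lemma~\ref{catAssi} supplies $\K$-linear isomorphisms $(i^{*}_{m})_{n}\colon\catAss(m,n)\xrightarrow{\cong}\A^{L}_0(L^{\otimes m},H^{\otimes n})$ sending $\mu_A^{[p_1,\cdots,p_n]}P_{\sigma}$ to $\mu^{[p_1,\cdots,p_n]}P_{\sigma}i^{\otimes m}$, and these are already isomorphisms of right $\catLie$-modules through $\iota\colon\catLie\to\catAss$. Since morphisms of $\catLie^{\op}\otimes\kgrop$ are generated by those of the two factors, it suffices to check separately that, after a sign correction, the family $\{(i^{*}_{m})_{n}\}$ intertwines the left $\kgrop$-actions and the right $\catLie$-actions. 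For the left action: on $\mu,\eta$ and the $P_{\sigma}$, Powell's action is literally post-composition by morphisms lying in $\catAss\subset\A^{L}_0$, which $(i^{*}_{m})_{n}$ respects by functoriality; on the remaining generators $S,\Delta,\varepsilon$ one computes post-composition in $\A^{L}_0$ using that in the Hopf algebra $H$ the antipode $S$ is an algebra anti-homomorphism and $\Delta,\varepsilon$ are algebra homomorphisms, together with the relations $Si=-i$, $\Delta i=i\otimes\eta+\eta\otimes i$, $\varepsilon i=0$ recalled in Section~\ref{sectionAL}, and one reads off exactly Powell's formulas $S\cdot\mu_A^{[p]}=(-1)^{p}\mu_A^{[p]}P_{\mathrm{reflection}}$, $\varepsilon\cdot\mu_A^{[p]}=0$ for $p\ge1$, and the shuffle expansion of $\Delta^{[q]}\cdot\mu_A^{[p]}$. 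For the right action: precomposition in $\A^{L}_0$ by the Lie bracket and then by $i$ produces $-(\mu-\mu P_{H,H})(i\otimes i)$, because of the relation $i\,[,]=-\mu(i\otimes i)+\mu P_{H,H}(i\otimes i)$, whereas in $\catAss$ the morphism $\iota([,])$ is the commutator $\mu_A-\mu_A P_{(12)}$ with no sign; each application of the bracket raises the bimodule source by one, so this discrepancy is absorbed uniformly by the scalars $(-1)^{m}$, and the composite $(-1)^{m}\,(i^{*}_{m})_{n}^{-1}\colon\A^{L}_0(L^{\otimes m},H^{\otimes n})\to{}_{\Delta}\catAss(m,n)$ is the desired isomorphism of $(\kgrop,\catLie)$-bimodules.

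The step most prone to error, and the one I would spend care on, is matching post-composition by $\Delta^{[q]}$ in $\A^{L}_0$ with Powell's shuffle formula for $\Delta^{[q]}\cdot\mu_A^{[p]}$: one expands $\Delta^{[q]}\mu^{[p]}i^{\otimes p}$ by first pushing the $\Delta^{[q]}$'s through $\mu^{[p]}$ using that $\Delta$ is an algebra homomorphism, then using $\Delta^{[q]}i=\sum_{k=1}^{q}\eta^{\otimes(k-1)}\otimes i\otimes\eta^{\otimes(q-k)}$ to distribute each of the $p$ copies of $i$ among the $q$ output strands (with $\eta$'s filling the rest), and finally identifying the resulting bookkeeping permutation with $P_{\sigma^{-1}}$ for the appropriate $(i_1,\cdots,i_q)$-shuffle $\sigma$; the degenerate case $q=0$ is covered by $\varepsilon i=0$. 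Getting all signs to collapse into the single factor $(-1)^{m}$ — in particular checking that the $S$-action and the right $\catLie$-action do not contribute conflicting signs — is bounded combinatorics rather than a conceptual difficulty, but it is the part that genuinely requires attention before the appeal to Proposition~\ref{adjunctionPowell} is legitimate.
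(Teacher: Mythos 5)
Your proposal follows essentially the same route as the paper: the paper also identifies $\A^{L}_0(L^{\otimes -},H^{\otimes -})$ with Powell's bimodule ${}_{\Delta}\catAss$ using the isomorphisms $(i^*_m)_n$ of Lemma \ref{catAssi} together with the sign twist $(-1)^m$ (which, exactly as you argue, absorbs the sign coming from the relation ($\A^{L}$-\ref{a1}) on the right $\catLie$-action while the left $\kgrop$-action matches Powell's formulas via ($\A^{L}$-\ref{a2})--($\A^{L}$-\ref{Si})), and then deduces the proposition as a rewriting of Proposition \ref{adjunctionPowell}. Your write-up merely makes explicit the generator-by-generator verification and the formal transport of the adjunction and of the restricted equivalence, which the paper leaves implicit; this is correct.
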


\subsection{An adjunction between the categories $\catLieCMod$ and $\AMod$}\label{sectionadjunctiononA}

We generalize the work of Powell \cite{Powellanalytic} to $\A$-modules by generalizing the perspective of the last subsection.

We have an $(\A,\catLieC)$-bimodule $\A^{L}(L^{\otimes -},H^{\otimes -})$ induced by the composition in the category $\A^{L}$, that is, the $\A$-module structure is defined by the $\K$-linear map
\begin{gather*}
\A^{L}(H^{\otimes n},H^{\otimes n'})\otimes \A^{L}(L^{\otimes -},H^{\otimes n})\xrightarrow{\circ} \A^{L}(L^{\otimes -},H^{\otimes n'}),
\end{gather*}
where we identify $\A(n,n')$ with $\A^{L}(H^{\otimes n},H^{\otimes n'})$,
and the right $\catLieC$-module structure is defined by the $\K$-linear map
\begin{gather*}  
\A^{L}(L^{\otimes m},H^{\otimes -})\otimes \A^{L}(L^{\otimes m'},L^{\otimes m})\xrightarrow{\circ}
\A^{L}(L^{\otimes m'},H^{\otimes -}),  
\end{gather*}
where we identify $\catLieC(m',m)$ with $\A^{L}(L^{\otimes m'},L^{\otimes m})$.

Then the general hom-tensor adjunction gives the following adjunction, which generalizes the adjunction given by Powell.

\begin{theorem}\label{adjunctionA}
We have an adjunction
\begin{gather*}
    \A^{L}(L^{\otimes -},H^{\otimes -})\otimes_{\catLieC} - : \adj{\catLieCMod}{\AMod}{}{} : \Hom_{\AMod}(\A^{L}(L^{\otimes -},H^{\otimes -}),-).
\end{gather*}
\end{theorem}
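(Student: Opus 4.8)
The plan is to recognize that this is an instance of the standard hom-tensor adjunction attached to a bimodule over two $\K$-linear categories, so the content is mostly bookkeeping: one checks that the two functors are well-defined and that the unit/counit (or the hom-set bijection) behave as required. Concretely, let $B = \A^{L}(L^{\otimes -},H^{\otimes -})$, which by the discussion preceding the statement is a $\K$-linear functor $\A \otimes (\catLieC)^{\op} \to \KMod$, i.e.\ an $(\A,\catLieC)$-bimodule. For a left $\catLieC$-module $N$, the tensor product $B \otimes_{\catLieC} N$ is the coend $\int^{m} \A^{L}(L^{\otimes m}, H^{\otimes -}) \otimes_{\K} N(L^{\otimes m})$, which inherits a left $\A$-module structure from the left $\A$-action on $B$; for a left $\A$-module $M$, the functor $\Hom_{\AMod}(B, M)$ sends $M$ to the $\catLieC$-module $m \mapsto \Hom_{\AMod}(\A^{L}(L^{\otimes m}, H^{\otimes -}), M)$, with $\catLieC$ acting through the right $\catLieC$-action on $B$. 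Both are evidently $\K$-linear functors.

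First I would record that $B \otimes_{\catLieC} -$ is left adjoint to $\Hom_{\AMod}(B,-)$ by the general categorical fact: for $\K$-linear categories $\catC, \catD$ and a $(\catC,\catD)$-bimodule $B$, one has a natural isomorphism
\begin{gather*}
\Hom_{\catC\text{-}\mathrm{Mod}}(B \otimes_{\catD} N, M) \cong \Hom_{\catD\text{-}\mathrm{Mod}}(N, \Hom_{\catC\text{-}\mathrm{Mod}}(B, M)),
\end{gather*}
which is the "tensor-hom" adjunction in the functor-category setting (this is exactly the mechanism Powell uses in Proposition~\ref{adjunctionPowell}, applied here with $\catC = \A$, $\catD = \catLieC$). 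The proof of that isomorphism is a Yoneda/coend manipulation: reduce to the case $N$ representable, where $B \otimes_{\catLieC} \catLieC(L^{\otimes m}, -) \cong \A^{L}(L^{\otimes m}, H^{\otimes -})$ and both sides become $\Hom_{\AMod}(\A^{L}(L^{\otimes m}, H^{\otimes -}), M) \cong M(\text{eval at } m)$ by Yoneda, then extend to general $N$ by the fact that every module is a colimit of representables and both functors in question (being a left adjoint on one side and using that $\Hom$ out of a tensor commutes with the relevant colimits) respect those colimits.

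The only genuinely paper-specific input is that $B = \A^{L}(L^{\otimes -}, H^{\otimes -})$ really is a well-defined $(\A,\catLieC)$-bimodule — that the left $\A$-action (post-composition by morphisms of $\A = \A^{L}|_{H}$) and the right $\catLieC$-action (pre-composition by morphisms of $\catLieC = \A^{L}|_{L}$) are associative, unital, and mutually commuting. But this is immediate from associativity of composition in the single category $\A^{L}$ together with the identifications $\A(n,n') = \A^{L}(H^{\otimes n}, H^{\otimes n'})$ and $\catLieC(m',m) = \A^{L}(L^{\otimes m'}, L^{\otimes m})$ as full subcategories, exactly as set up just before the statement; no diagram chasing beyond that is needed. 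So I would: (1) state the general hom-tensor adjunction lemma for bimodules over $\K$-linear categories (citing it or giving the one-line Yoneda proof); (2) observe $B$ is an honest $(\A,\catLieC)$-bimodule via composition in $\A^{L}$; (3) apply the lemma. I do not expect a real obstacle here — the "hard part," such as it is, is purely organizational: making sure the coend defining $\otimes_{\catLieC}$ converges/makes sense (it does, since all hom-spaces are ordinary $\K$-vector spaces and $\catLieC$ is small) and that the left $\A$-module structure on the coend is the intended one. The substantive content of the paper is not this formal adjunction but Theorems~\ref{adjunctionAomegaintro} and the indecomposability results; the present statement is the soft part that sets the stage.
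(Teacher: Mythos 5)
Your proposal is correct and follows essentially the same route as the paper: the paper likewise equips $\A^{L}(L^{\otimes -},H^{\otimes -})$ with the $(\A,\catLieC)$-bimodule structure via composition in $\A^{L}$ (using the identifications $\A(n,n')=\A^{L}(H^{\otimes n},H^{\otimes n'})$ and $\catLieC(m',m)=\A^{L}(L^{\otimes m'},L^{\otimes m})$) and then simply invokes the general hom-tensor adjunction, exactly as in Powell's Proposition \ref{adjunctionPowell}. Your additional Yoneda/coend reduction to representables is just a fuller spelling-out of that standard fact, not a different argument.
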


We have an isomorphism of $(\kgrop,\catLieC)$-bimodules
\begin{gather}\label{ALandAL0}
\A^{L}(L^{\otimes -},H^{\otimes -})\cong \A^{L}_0(L^{\otimes -},H^{\otimes -})\otimes_{\catLie}\catLieC
\end{gather}
induced by the $\K$-linear isomorphism \eqref{ALandAL0d}, and an isomorphism of $(\A,\catLie)$-bimodules
\begin{gather}\label{ALandA}
\begin{split}
\A^{L}(L^{\otimes -},H^{\otimes -})
&\cong \A\otimes_{\kgrop}\A^{L}_0(L^{\otimes -},H^{\otimes -})
\end{split}
\end{gather}
induced by the $\K$-linear isomorphism \eqref{ALandAd}.

\begin{remark}
Since by Lemma \ref{factorizationofAL}, any element of $\A^{L}_d(L^{\otimes m},H^{\otimes n})$ is a linear combination of morphisms of the form
\begin{gather*}
        \mu^{[p_1,\cdots,p_n]}P_{\sigma}i^{\otimes (m+2d)}(\id_{L^{\otimes m}}\otimes c^{\otimes d})=\mu^{[p_1,\cdots,p_n]}P_{\sigma}(\id_{H^{\otimes m}}\otimes {\tilde{c}}^{\otimes d})i^{\otimes m},
\end{gather*}
where $\tilde{c}=i^{\otimes 2}c$, the composition map $(i^*_m)_n$ with the morphism $i^{\otimes m}$
\begin{gather*}
    (i^*_m)_n: \A(m,n)\xrightarrow{\cong} \A^{L}(H^{\otimes m},H^{\otimes n})\xrightarrow{-\circ i^{\otimes m}}\A^{L}(L^{\otimes m},H^{\otimes n})
\end{gather*}
is surjective.
However, the map $(i^*_m)_n$ is not injective for $m\ge 1$ due to the relation ($\A^{L}$-\ref{a3}).
Remark \ref{kgropinAL} follows from this remark as the case of degree $0$.
\end{remark}

\subsection{Adjunctions between $\catLieMod, \catLieCMod,\kgropMod$ and $\AMod$}\label{sec4modcats}

Here, we study adjunctions between the categories $\catLieMod, \catLieCMod,\kgropMod$ and $\AMod$.
We will write $\A^{L}$ for the $(\A,\catLieC)$-bimodule $\A^{L}(L^{\otimes -}, H^{\otimes -})$ and $\A_0^{L}$ for the $(\kgrop,\catLie)$-bimodule $\A^{L}_0(L^{\otimes -}, H^{\otimes -})$ for simplicity.

The inclusion functor
\begin{gather*}
   \kgrop\hookrightarrow \A
\end{gather*}
induces a forgetful functor 
\begin{gather}\label{UforA}
   U: \AMod\to \kgropMod.
\end{gather}
The functor $U$ is exact and has a left adjoint functor \begin{gather}\label{FforA}
    F=\A\otimes_{\kgrop}-: \kgropMod\to \AMod.
\end{gather}
Note that from \eqref{ALandA}, we have an isomorphism of $\A$-modules
\begin{gather}\label{ALFAL0}
    \A^L\cong \A\otimes_{\kgrop}\A^{L}_0=F(\A^{L}_0).
\end{gather}

In a similar way, the inclusion functor
\begin{gather*}
   \catLie\hookrightarrow \catLieC
\end{gather*}
induces a forgetful functor
\begin{gather*}
    U:\catLieCMod\to \catLieMod,
\end{gather*}
which is exact and has a left adjoint functor
\begin{gather*}\label{FforcatLieC}
     F=\catLieC\otimes_{\catLie}-: \catLieMod\to \catLieCMod.
\end{gather*}
Then the relations between the forgetful functors and the adjoint functors are as follows.

\begin{lemma}\label{tensorandU}
We have the following commutative diagram (up to isomorphisms)
      \begin{gather*}
        \xymatrix{
        \catLieCMod 
        \ar[rrr]^-{\A^{L}\otimes_{\catLieC}-} \ar[d]_{U}
        &&& 
        \AMod\ar[d]^{U}\\
        \catLieMod \ar[rrr]_-{\A^{L}_0\otimes_{\catLie}-} 
        &&& \kgropMod
        \ar@{}[lllu]|{\circlearrowleft}.
        }
 \end{gather*}
\end{lemma}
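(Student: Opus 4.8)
The plan is to verify the commutativity of the square by chasing a $\catLieC$-module $N$ around both ways and producing a natural isomorphism of $\kgrop$-modules. Going along the top then down, we obtain $U(\A^{L}(L^{\otimes-},H^{\otimes-})\otimes_{\catLieC}N)$, which as a $\kgrop$-module is computed by restricting the $\A$-module structure of $\A^{L}(L^{\otimes-},H^{\otimes-})\otimes_{\catLieC}N$ along $\kgrop\hookrightarrow\A$. Going down then along the bottom, we obtain $\A^{L}_0(L^{\otimes-},H^{\otimes-})\otimes_{\catLie}U(N)$, where $U(N)$ is the restriction of $N$ along $\catLie\hookrightarrow\catLieC$. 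So the task is to construct a natural isomorphism
\begin{gather*}
\A^{L}(L^{\otimes-},H^{\otimes-})\otimes_{\catLieC}N\;\cong\;\A^{L}_0(L^{\otimes-},H^{\otimes-})\otimes_{\catLie}U(N)
\end{gather*}
of $\kgrop$-modules, natural in $N$.

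The key input is the isomorphism of $(\kgrop,\catLieC)$-bimodules \eqref{ALandAL0}, namely $\A^{L}(L^{\otimes-},H^{\otimes-})\cong\A^{L}_0(L^{\otimes-},H^{\otimes-})\otimes_{\catLie}\catLieC$, which comes from the $\K$-linear isomorphism \eqref{ALandAL0d} of Lemma \ref{lemmaALandAL0d}. First I would substitute this into the left-hand side and use associativity of the tensor product of bimodules over $\K$-linear categories:
\begin{gather*}
\A^{L}(L^{\otimes-},H^{\otimes-})\otimes_{\catLieC}N
\;\cong\;\bigl(\A^{L}_0(L^{\otimes-},H^{\otimes-})\otimes_{\catLie}\catLieC\bigr)\otimes_{\catLieC}N
\;\cong\;\A^{L}_0(L^{\otimes-},H^{\otimes-})\otimes_{\catLie}\bigl(\catLieC\otimes_{\catLieC}N\bigr).
\end{gather*}
Then $\catLieC\otimes_{\catLieC}N\cong N$ as a left $\catLieC$-module, and when we regard the rightmost expression as tensoring over $\catLie$ we are only using the left $\catLie$-action on $N$ obtained by restriction, i.e. we are using $U(N)$. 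This yields $\A^{L}_0(L^{\otimes-},H^{\otimes-})\otimes_{\catLie}U(N)$, which is the other side of the square. Naturality in $N$ is automatic since every isomorphism used is natural, and the identification is compatible with the $\kgrop$-action because \eqref{ALandAL0} is an isomorphism of $(\kgrop,\catLieC)$-bimodules, so the $\kgrop$-action passes through the leftmost tensor factor $\A^{L}_0(L^{\otimes-},H^{\otimes-})$ on both sides.

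The main obstacle, modest as it is, will be bookkeeping: making sure the bimodule structures are tracked correctly through the associativity isomorphism, so that the residual left $\catLie$-module used on the right is genuinely the restriction $U(N)$ (and not some other action), and that the $\kgrop$-actions on both sides are identified via the \emph{same} isomorphism \eqref{ALandAL0}. Once \eqref{ALandAL0} is invoked as an isomorphism of $(\kgrop,\catLieC)$-bimodules, the rest is the standard ``extension/restriction'' compatibility $\bigl(B\otimes_{\catLie}\catLieC\bigr)\otimes_{\catLieC}N\cong B\otimes_{\catLie}U(N)$ for a $(\kgrop,\catLie)$-bimodule $B$; I would state this as a lemma or cite it as folklore, then specialize to $B=\A^{L}_0(L^{\otimes-},H^{\otimes-})$. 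This completes the proof.
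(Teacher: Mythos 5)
Your proposal is correct and follows essentially the same route as the paper's proof: both invoke the $(\kgrop,\catLieC)$-bimodule isomorphism \eqref{ALandAL0} and then use the standard base-change identification $(\A^{L}_0\otimes_{\catLie}\catLieC)\otimes_{\catLieC}K\cong \A^{L}_0\otimes_{\catLie}U(K)$. The paper simply states this two-step chain of isomorphisms, while you spell out the associativity and restriction bookkeeping that it leaves implicit.
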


\begin{proof}
It follows from the isomorphism \eqref{ALandAL0} of $(\kgrop,\catLieC)$-bimodules that for a $\catLieC$-module $K$, we have isomorphisms of $\kgrop$-modules
    \begin{gather*}
    \begin{split}
        U(\A^{L}\otimes_{\catLieC}K)
        &\cong(\A^{L}_0\otimes_{\catLie}\catLieC)\otimes_{\catLieC}K\\
        &\cong\A^{L}_0\otimes_{\catLie}U(K).      
    \end{split}
    \end{gather*}
\end{proof}

The following lemma compares the composites of right adjoints.

\begin{lemma}\label{homandU}
We have the following commutative diagram (up to isomorphisms)
      \begin{gather*}
        \xymatrix{
        \catLieCMod 
         \ar[d]_{U}
        &&& 
        \AMod\ar[d]^{U}\ar[lll]_-{\Hom_{\AMod}(\A^{L},-)}\\
        \catLieMod 
        &&& \kgropMod\ar[lll]^-{\Hom_{\kgropMod}(\A^{L}_0,-)} 
        \ar@{}[lllu]|{\circlearrowleft}.
        }
 \end{gather*}
\end{lemma}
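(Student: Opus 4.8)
The plan is to exploit the adjunctions already in play together with the standard fact that a square of right adjoints commutes (up to natural isomorphism) precisely when the corresponding square of left adjoints commutes. The four functors involved are: the forgetful functor $U:\AMod\to\kgropMod$ of \eqref{UforA}, the forgetful functor $U:\catLieCMod\to\catLieMod$, and the two hom-functors $\Hom_{\AMod}(\A^{L},-)$ and $\Hom_{\kgropMod}(\A^{L}_0,-)$ appearing in Theorem \ref{adjunctionA} and Proposition \ref{adjunctionpowellAL0}. Each of these four functors is a right adjoint: $U:\AMod\to\kgropMod$ has left adjoint $F=\A\otimes_{\kgrop}-$ by \eqref{FforA}, $U:\catLieCMod\to\catLieMod$ has left adjoint $\catLieC\otimes_{\catLie}-$, and the two hom-functors have the tensor functors $\A^{L}\otimes_{\catLieC}-$ and $\A^{L}_0\otimes_{\catLie}-$ as left adjoints. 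So it suffices to check that the square of left adjoints
\begin{gather*}
\xymatrix{
\catLieMod \ar[rrr]^-{\A^{L}_0\otimes_{\catLie}-}\ar[d]_{\catLieC\otimes_{\catLie}-}
&&& \kgropMod\ar[d]^{\A\otimes_{\kgrop}-}\\
\catLieCMod \ar[rrr]_-{\A^{L}\otimes_{\catLieC}-}
&&& \AMod
}
\end{gather*}
commutes up to isomorphism, and then invoke uniqueness of adjoints to transfer the isomorphism to the square of right adjoints (in the form stated in the lemma, with arrows reversed).

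First I would establish commutativity of the left-adjoint square. For a $\catLie$-module $N$, the clockwise composite gives $\A\otimes_{\kgrop}(\A^{L}_0\otimes_{\catLie}N)$, which by associativity of the tensor product of bimodules is $(\A\otimes_{\kgrop}\A^{L}_0)\otimes_{\catLie}N$; by the isomorphism \eqref{ALandA} of $(\A,\catLie)$-bimodules this is $\A^{L}\otimes_{\catLie}N$. The counter-clockwise composite gives $\A^{L}\otimes_{\catLieC}(\catLieC\otimes_{\catLie}N)\cong(\A^{L}\otimes_{\catLieC}\catLieC)\otimes_{\catLie}N\cong\A^{L}\otimes_{\catLie}N$, again by associativity. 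Hence both composites are naturally isomorphic to $\A^{L}(L^{\otimes-},H^{\otimes-})\otimes_{\catLie}N$, and the square of left adjoints commutes; this is essentially the content of Lemma \ref{tensorandU} read in the opposite direction, so I can either cite that lemma or repeat the short computation.

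Next I would pass to right adjoints. Write $L_1=\A^{L}_0\otimes_{\catLie}-$, $L_2=\A\otimes_{\kgrop}-$, $L_1'=\catLieC\otimes_{\catLie}-$, $L_2'=\A^{L}\otimes_{\catLieC}-$, with respective right adjoints $R_1=\Hom_{\kgropMod}(\A^{L}_0,-)$, $R_2=U$, $R_1'=U$, $R_2'=\Hom_{\AMod}(\A^{L},-)$. Having shown $L_2\circ L_1\cong L_2'\circ L_1'$ as functors $\catLieMod\to\AMod$, and noting that $L_1\circ R_2 = ?$ — actually the cleanest route is: a natural isomorphism of composites of left adjoints $L_2 L_1\cong L_2' L_1'$ induces, by taking right adjoints and using that the right adjoint of a composite is the composite of right adjoints in the reverse order, a natural isomorphism $R_1 R_2\cong R_1' R_2'$, i.e.
\begin{gather*}
\Hom_{\kgropMod}(\A^{L}_0,-)\circ U\;\cong\; U\circ\Hom_{\AMod}(\A^{L},-),
\end{gather*}
which is exactly the commutativity asserted in Lemma \ref{homandU}. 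The main (and only) obstacle is making sure the associativity isomorphisms for the iterated tensor products over the intermediate $\K$-linear categories $\kgrop$ and $\catLie$ are applied correctly as isomorphisms of bimodules, so that the resulting natural transformations are genuinely natural in the module argument; once \eqref{ALandA} is granted this is routine. Alternatively — and perhaps more transparently — I could give the isomorphism directly without invoking the left-adjoint square: for an $\A$-module $M$, use \eqref{ALandA} to write
\begin{gather*}
\Hom_{\AMod}(\A^{L},M)\cong\Hom_{\AMod}(\A\otimes_{\kgrop}\A^{L}_0,M)\cong\Hom_{\kgropMod}(\A^{L}_0,U(M)),
\end{gather*}
where the second isomorphism is the adjunction $(F,U)$ of \eqref{FforA} applied hom-wise, and this composite isomorphism is natural in $M$ and compatible with the $\catLie$-action, giving the square on the nose. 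I expect the latter direct argument to be the one to write down, with the abstract left-adjoint remark offered as motivation.
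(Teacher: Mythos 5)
Your preferred direct argument is exactly the paper's proof: identify $\A^{L}\cong F(\A^{L}_0)$ via \eqref{ALandA}/\eqref{ALFAL0} and apply the $(F,U)$ adjunction of \eqref{UforA} and \eqref{FforA} hom-wise, checking that the resulting isomorphism is natural and compatible with the $\catLie$-action. One small caution on your alternative route: the square of left adjoints you would verify is the one in Lemma \ref{tensorandF} (not Lemma \ref{tensorandU}), and the paper deduces Lemma \ref{tensorandF} from Lemma \ref{homandU}, so to avoid circularity you must use your direct tensor computation with \eqref{ALandA} rather than cite that lemma --- which you do, so the alternative is also sound.
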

\begin{proof}
    For an $\A$-module $M$, we have a natural isomorphism
    \begin{gather*}
        \Hom_{\kgropMod}(\A^{L}_0, U(M))\cong \Hom_{\AMod}(F(\A^{L}_0), M)
    \end{gather*}
    given by the adjunction between \eqref{UforA} and \eqref{FforA}.
    Therefore, by \eqref{ALFAL0}, we have 
    \begin{gather*}
        \Hom_{\kgropMod}(\A^{L}_0, U(M))\cong \Hom_{\AMod}(\A^{L}, M).
    \end{gather*}
    It is easy to see that this gives an isomorphism of $\catLie$-modules.
\end{proof}

The uniqueness of left adjoints gives the following lemma. 

\begin{lemma}\label{tensorandF}
We have the following commutative diagram (up to isomorphisms)
      \begin{gather*}
        \xymatrix{
        \catLieCMod 
        \ar[rrr]^-{\A^{L}\otimes_{\catLieC}-}
        &&& 
        \AMod\\
        \catLieMod \ar[rrr]_-{\A^{L}_0\otimes_{\catLie}-}\ar[u]^{F} 
        &&& \kgropMod\ar[u]_{F}
        \ar@{}[lllu]|{\circlearrowleft}.
        }
 \end{gather*}
\end{lemma}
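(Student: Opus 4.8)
The plan is to deduce the commutativity from the two adjunctions already in hand, using the essential uniqueness of left adjoints. First I would observe that the square we want to fill in is the square of left adjoints to the square already established in Lemma \ref{homandU}. Concretely, the forgetful functor $U\colon\AMod\to\kgropMod$ has left adjoint $F=\A\otimes_{\kgrop}-$ (by \eqref{FforA}), and $U\colon\catLieCMod\to\catLieMod$ has left adjoint $F=\catLieC\otimes_{\catLie}-$; moreover the top and bottom horizontal arrows $\A^{L}\otimes_{\catLieC}-$ and $\A^{L}_0\otimes_{\catLie}-$ are left adjoints to $\Hom_{\AMod}(\A^{L},-)$ and $\Hom_{\kgropMod}(\A^{L}_0,-)$ respectively (Theorem \ref{adjunctionA} and Proposition \ref{adjunctionpowellAL0}). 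Lemma \ref{homandU} asserts that the square of right adjoints commutes up to natural isomorphism, i.e.
\begin{gather*}
U\circ\Hom_{\AMod}(\A^{L},-)\;\cong\;\Hom_{\kgropMod}(\A^{L}_0,-)\circ U.
\end{gather*}

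Next I would take left adjoints of both sides of this natural isomorphism. The left adjoint of the composite $U\circ\Hom_{\AMod}(\A^{L},-)$ is $(\A^{L}\otimes_{\catLieC}-)\circ F$, and the left adjoint of $\Hom_{\kgropMod}(\A^{L}_0,-)\circ U$ is $F\circ(\A^{L}_0\otimes_{\catLie}-)$, where I use that the left adjoint of a composite $G\circ G'$ is (up to isomorphism) the composite of the left adjoints in the reverse order. Since two functors that are right adjoint to naturally isomorphic functors are themselves naturally isomorphic — and dually for left adjoints — we obtain a natural isomorphism
\begin{gather*}
(\A^{L}\otimes_{\catLieC}-)\circ F\;\cong\;F\circ(\A^{L}_0\otimes_{\catLie}-),
\end{gather*}
which is exactly the claimed commutativity of the square in Lemma \ref{tensorandF}.

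Alternatively — and perhaps more transparently — I could argue directly from the bimodule isomorphism \eqref{ALandA}, $\A^{L}(L^{\otimes-},H^{\otimes-})\cong\A\otimes_{\kgrop}\A^{L}_0(L^{\otimes-},H^{\otimes-})$ of $(\A,\catLie)$-bimodules. For a $\catLie$-module $N$ this gives associativity isomorphisms of $\A$-modules
\begin{gather*}
(\A^{L}\otimes_{\catLieC}-)\circ F(N)
=\A^{L}\otimes_{\catLieC}(\catLieC\otimes_{\catLie}N)
\cong\A^{L}\otimes_{\catLie}N
\cong(\A\otimes_{\kgrop}\A^{L}_0)\otimes_{\catLie}N
\cong\A\otimes_{\kgrop}(\A^{L}_0\otimes_{\catLie}N)
=F\circ(\A^{L}_0\otimes_{\catLie}-)(N),
\end{gather*}
all natural in $N$, which again yields the desired commutative square. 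I expect the main (and only real) obstacle to be bookkeeping: checking that the stated isomorphism is natural in $N$ and is compatible with the $\A$-module (equivalently $\catLieC$-module) structures on both sides, i.e. that the associativity isomorphisms for the relative tensor products are in fact morphisms of $\A$-modules and not merely of $\kgrop$-modules. This is routine once one notes that \eqref{ALandA} is an isomorphism of $(\A,\catLie)$-bimodules, so all the associativity maps above respect the left $\A$-action throughout.
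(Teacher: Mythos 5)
Your primary argument is exactly the paper's proof: the paper also deduces Lemma \ref{tensorandF} from Lemma \ref{homandU} by noting that $(\A^{L}\otimes_{\catLieC}-)\circ F$ and $F\circ(\A^{L}_0\otimes_{\catLie}-)$ are left adjoints of the two (isomorphic) composites of right adjoints, and invoking uniqueness of left adjoints. Your alternative direct computation via the bimodule isomorphism \eqref{ALandA} is also sound, but it is not needed; the adjunction argument suffices and matches the paper.
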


\begin{proof}
    Since the functor $(\A^L\otimes_{\catLieC}-)\circ F$ is the left adjoint of $U\circ \Hom_{\AMod}(\A^{L},-)$ and the functor $F\circ (\A^{L}_0\otimes_{\catLie}-)$ is the left adjoint of $\Hom_{\kgropMod}(\A^{L}_0,-)\circ U$, the statement follows from Lemma \ref{homandU} and the uniqueness of left adjoints.
\end{proof}
% \begin{proof}
%     For a $\catLie$-module $J$, we have isomorphisms of $\A$-modules
%     \begin{gather*}
%     \begin{split}
%         \A^L\otimes_{\catLieC}F(J)
%         &=\A^L\otimes_{\catLieC}(\catLieC\otimes_{\catLie}J)\\
%         &\cong \A^L\otimes_{\catLie} J\\
%         &\cong (\A\otimes_{\kgrop}\A^{L}_0)\otimes_{\catLie}J\\
%         &= F(\A^{L}_0\otimes_{\catLie}J),
%     \end{split}
%     \end{gather*}
%     where the second isomorphism follows from \eqref{ALandA}.
% \end{proof}

Let $\AMod^{\omega}$ denote the full subcategory of $\AMod$ whose set of objects is 
\begin{gather*}
    \{M\in \Ob(\AMod)\mid U(M)\in \Ob(\kgropMod^{\omega})\}.
\end{gather*}

\begin{theorem}\label{adjunctionAomega}
    We have $\A^{L}\in \Ob(\AMod^{\omega})$, and
    the adjunction in Theorem \ref{adjunctionA} restricts to an adjunction 
    \begin{gather*}
    \A^{L}\otimes_{\catLieC} - : \adj{\catLieCMod}{\AMod^{\omega}}{}{} : \Hom_{\AMod^{\omega}}(\A^{L},-).
    \end{gather*}
\end{theorem}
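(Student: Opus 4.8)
The plan is to split the statement into its two assertions and handle them in the natural order. First I would show that $\A^{L}(L^{\otimes m},H^{\otimes -})$ belongs to $\AMod^{\omega}$ for every $m\ge 0$, which by definition means verifying that its image $U(\A^{L}(L^{\otimes m},H^{\otimes -}))$ under the forgetful functor is analytic as a $\kgrop$-module. By Lemma \ref{lemmaALandAd} and the fact that $\A^{L}_d(L^{\otimes m},H^{\otimes -})\cong \A_d(-,n)\otimes_{\kgrop}\A^{L}_0(L^{\otimes m},H^{\otimes -})$, together with the isomorphism \eqref{ALandAL0d}, one reduces to analyticity of the degree-graded pieces. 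Here I would invoke Proposition \ref{adjunctionpowellAL0}: the $\kgrop$-module $\A^{L}_0(L^{\otimes m},H^{\otimes -})$ is in the image of $\A^{L}_0(L^{\otimes -},H^{\otimes -})\otimes_{\catLie}-$ applied to the representable $\catLie$-module $\catLie(m,-)$, hence lies in $\kgropMod^{\omega}$. The remaining point is that an extension (or a graded object built from) analytic functors with bounded polynomial degree in each graded piece is again analytic; since each $\A^{L}_d(L^{\otimes m},H^{\otimes -})$ is a finite colimit of polynomial subfunctors and the grading is by $\N$, the total module is the colimit of its polynomial subfunctors, i.e.\ analytic. This gives $\A^{L}\in\Ob(\AMod^{\omega})$.

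Next I would argue that the adjunction of Theorem \ref{adjunctionA} restricts. For the left adjoint $\A^{L}\otimes_{\catLieC}-$ this is the key computation: given any $\catLieC$-module $K$, I must check $\A^{L}\otimes_{\catLieC}K\in \Ob(\AMod^{\omega})$, i.e.\ $U(\A^{L}\otimes_{\catLieC}K)\in\kgropMod^{\omega}$. By Lemma \ref{tensorandU} this equals $\A^{L}_0\otimes_{\catLie}U(K)$, which by Proposition \ref{adjunctionpowellAL0} lies in $\kgropMod^{\omega}$ since the functor $\A^{L}_0\otimes_{\catLie}-$ lands in the subcategory of analytic functors (indeed it is the equivalence $\catLieMod\simeq\kgropMod^{\omega}$ on the nose). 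Hence the left adjoint corestricts to $\AMod^{\omega}$. For the right adjoint, I would use Lemma \ref{homandU}: for $M\in\AMod^{\omega}$, $U(\Hom_{\AMod}(\A^{L},M))\cong\Hom_{\kgropMod}(\A^{L}_0,U(M))$, and since $U(M)$ is analytic, Proposition \ref{adjunctionpowellAL0} shows this hom-object is a $\catLie$-module, in particular an object of $\catLieMod$; but a priori we need it to be the value of the right adjoint of the equivalence, which it is. The upshot is that $\Hom_{\AMod}(\A^{L},-)$ sends $\AMod^{\omega}$ into $\catLieCMod$ (no restriction needed on the target), and restricting the source from $\AMod$ to $\AMod^{\omega}$ is harmless.

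Finally I would assemble the adjunction. Since $\AMod^{\omega}$ is a full subcategory of $\AMod$ and both functors of the original adjunction of Theorem \ref{adjunctionA} restrict appropriately — the left adjoint $\A^{L}\otimes_{\catLieC}-$ factors through $\AMod^{\omega}$, and the right adjoint, composed with the inclusion $\AMod^{\omega}\hookrightarrow\AMod$, is the original right adjoint — the unit and counit of the original adjunction restrict to natural transformations exhibiting $\A^{L}\otimes_{\catLieC}-\dashv \Hom_{\AMod^{\omega}}(\A^{L},-)$. Concretely, for $K\in\catLieCMod$ and $M\in\AMod^{\omega}$ one has
\begin{gather*}
\Hom_{\AMod^{\omega}}(\A^{L}\otimes_{\catLieC}K,\,M)=\Hom_{\AMod}(\A^{L}\otimes_{\catLieC}K,\,M)\cong\Hom_{\catLieCMod}(K,\,\Hom_{\AMod}(\A^{L},M))
\end{gather*}
by fullness and the adjunction of Theorem \ref{adjunctionA}, and the right-hand object is by definition $\Hom_{\catLieCMod}(K,\,\Hom_{\AMod^{\omega}}(\A^{L},M))$ once we know $\Hom_{\AMod}(\A^{L},M)$ already lies in $\catLieCMod$, which it does without restriction. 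The main obstacle is the analyticity bookkeeping in the first paragraph: one must be careful that "analytic" is preserved not just termwise in the $\N$-grading but for the total $\kgrop$-module, which requires noting that polynomiality of a subfunctor can be checked degree by degree and that the polynomial subfunctors of $\A^{L}(L^{\otimes m},H^{\otimes-})$ exhaust it because each graded piece $\A^{L}_d(L^{\otimes m},H^{\otimes-})$, being a finite extension of Schur-functor-type analytic pieces coming from $\catLie(m+2d,-)$, is itself analytic — and a colimit (union) of analytic subfunctors is analytic.
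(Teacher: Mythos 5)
Your proposal is correct, and its core — showing that the left adjoint lands in $\AMod^{\omega}$ via Lemma \ref{tensorandU} together with Powell's result (Proposition \ref{adjunctionpowellAL0}/\ref{adjunctionPowell}), and then deducing the restricted adjunction from the fullness of $\AMod^{\omega}$ in $\AMod$ — is exactly the paper's argument. The only real divergence is how you establish $\A^{L}(L^{\otimes m},H^{\otimes -})\in\Ob(\AMod^{\omega})$: you do this by a direct degreewise analysis, reducing via \eqref{ALandAL0d} to the pieces $\A^{L}_0\otimes_{\catLie}\catLieC(m,-)_d$ (each polynomial, since $\catLieC(m,-)_d$ is supported on finitely many objects) and then invoking closure of analytic functors under direct sums/colimits; this works, but it is redundant given your own second paragraph, since the paper simply observes $\A^{L}(L^{\otimes m},H^{\otimes -})\cong\A^{L}\otimes_{\catLieC}\catLieC(m,-)$ and applies the already-established restriction of the left adjoint to the representable $\catLieC$-module. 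Your route buys a more explicit picture of the graded pieces at the cost of having to verify the (standard, and true) closure properties of analytic functors, whereas the paper's one-line observation avoids that bookkeeping entirely; in either case the conclusion and the rest of the proof coincide.
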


\begin{proof} 
For a $\catLieC$-module $K$, by Lemma \ref{tensorandU} and Proposition \ref{adjunctionPowell}, we have
\begin{gather*}
    U(\A^{L}\otimes_{\catLieC}K)\cong \A^{L}_0\otimes_{\catLie}U(K) \in \Ob(\kgropMod^{\omega}).
\end{gather*}
Therefore, the functor $\A^{L}\otimes_{\catLieC} -$ restricts to a functor 
\begin{gather*}
    \A^{L}\otimes_{\catLieC} -: \catLieCMod\to \AMod^{\omega}.
\end{gather*}

It follows from $\A^{L}\cong \A^{L}\otimes_{\catLieC}\catLieC$
that we have $\A^{L}\in \Ob(\AMod^{\omega})$.
The statement follows since $\AMod^{\omega}$ is a full subcategory of $\AMod$.
\end{proof}

\begin{remark}\label{remarkKim}
Minkyu Kim has independently obtained an adjunction which is equivalent to the adjunction in Theorem \ref{adjunctionA}.
Moreover, he has proved that the restriction of the adjunction, which is equivalent to that in Theorem \ref{adjunctionAomega}, induces the category equivalence between $\catLieCMod$ and $\AMod^{\omega}$.
This was presented by him at the workshop ``Algebraic approaches to mapping class groups of surfaces'', which was held at the Graduate School of Mathematical Sciences, University of Tokyo \cite{Kimtalk}.
% In fact, the category equivalence can be verified by using Theorem \ref{adjunctionAomega}, Proposition \ref{adjunctionPowell} and Lemmas \ref{tensorandU} and \ref{homandU}.
\end{remark}

\begin{proposition}\label{FUomega}
The adjunction between \eqref{UforA} and \eqref{FforA} restricts to
 \begin{gather*}
    F : \adj{\kgropMod^{\omega}}{\AMod^{\omega}}{}{} : U.
    \end{gather*}
\end{proposition}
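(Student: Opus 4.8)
The plan is to show that the two functors $F=\A\otimes_{\kgrop}-$ and $U$, already known to form an adjunction $F\dashv U$ between $\kgropMod$ and $\AMod$, send the respective $\omega$-subcategories into each other, and then invoke the general principle that an adjunction restricts to full subcategories provided both functors preserve them. The only substantive content is therefore the two ``preservation'' claims: first, $U(\AMod^{\omega})\subseteq \kgropMod^{\omega}$, and second, $F(\kgropMod^{\omega})\subseteq \AMod^{\omega}$.

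The first inclusion is immediate \emph{by definition}: $\AMod^{\omega}$ was defined to be the full subcategory of $\AMod$ consisting of those $M$ with $U(M)\in\Ob(\kgropMod^{\omega})$, so $U$ trivially lands in $\kgropMod^{\omega}$. For the second inclusion, I would take $N\in\Ob(\kgropMod^{\omega})$ and must check $U(F(N))=U(\A\otimes_{\kgrop}N)\in\Ob(\kgropMod^{\omega})$. The key computational input is that, as a $(\kgrop,\kgrop)$-bimodule, $\A$ decomposes along the grading as $\A=\bigoplus_{d\ge 0}\A_d$, and each $\A_d$, as a right $\kgrop$-module in the first variable, is built from the $\kgrop$-modules $\A_d(0,-)$ studied in \cite{Katada1, Katada2} — more precisely one uses the factorization of morphisms of $\A_d$ from Lemma \ref{decompositionofA}, which exhibits $\A_d(m,n)$ in terms of $\A_0$-morphisms together with $d$ copies of $\tilde c$, so that $U(F(N))$ is a (possibly infinite) direct sum of $\kgrop$-modules each obtained from $N$ by the degree-$0$ operations and tensoring with copies of the Casimir part. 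Since $\kgropMod^{\omega}$ is closed under colimits (analytic functors are closed under colimits: an analytic functor is the colimit of its polynomial subfunctors, and a colimit of such is again analytic), and since each graded piece is polynomial of bounded degree when $N$ is polynomial, the total is analytic; passing to colimits then handles general analytic $N$.

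The cleanest route, though, avoids re-deriving the structure of $\A$: by Lemma \ref{tensorandF} we have $F\circ(\A^{L}_0\otimes_{\catLie}-)\cong(\A^{L}\otimes_{\catLieC}-)\circ F$, and by Theorem \ref{adjunctionAomega} the functor $\A^{L}\otimes_{\catLieC}-$ lands in $\AMod^{\omega}$. Now by Proposition \ref{adjunctionpowellAL0} the functor $\A^{L}_0\otimes_{\catLie}-:\catLieMod\to\kgropMod$ is essentially surjective onto $\kgropMod^{\omega}$ (indeed it restricts to an equivalence $\catLieMod\simeq\kgropMod^{\omega}$), so every $N\in\Ob(\kgropMod^{\omega})$ is isomorphic to $\A^{L}_0\otimes_{\catLie}K$ for some $\catLie$-module $K$; hence $F(N)\cong F(\A^{L}_0\otimes_{\catLie}K)\cong\A^{L}\otimes_{\catLieC}F(K)\in\Ob(\AMod^{\omega})$. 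With both preservation statements in hand, the adjunction $F\dashv U$ between the ambient categories restricts verbatim to the full subcategories, because the unit and counit of a hom-adjunction between full subcategories are simply the restrictions of the original unit and counit, and the natural bijection $\Hom_{\AMod}(F(N),M)\cong\Hom_{\kgropMod}(N,U(M))$ is unaffected by passing to full subcategories.

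I expect the main obstacle — really the only point requiring care rather than a one-line citation — to be the verification that $\kgropMod^{\omega}$ is closed under the colimits needed above and that the graded pieces appearing in $U(F(N))$ are genuinely polynomial of bounded degree (so that their sum is analytic); this is where one must lean on the precise definition of analytic functors and on the degree behaviour of the Casimir tensor $\tilde c$. The slick argument via Lemma \ref{tensorandF} and Proposition \ref{adjunctionpowellAL0} sidesteps even this, reducing everything to the equivalence $\catLieMod\simeq\kgropMod^{\omega}$ together with Theorem \ref{adjunctionAomega}, so I would present that version as the proof.
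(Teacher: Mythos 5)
Your chosen argument (the ``slick route'') is correct and is essentially the paper's own proof: $U$ restricts by the very definition of $\AMod^{\omega}$, and $F$ restricts because every $N\in\Ob(\kgropMod^{\omega})$ is of the form $\A^{L}_0\otimes_{\catLie}K$ by the equivalence in Proposition \ref{adjunctionPowell}, whence $F(N)\cong\A^{L}\otimes_{\catLieC}F(K)\in\Ob(\AMod^{\omega})$ by Lemma \ref{tensorandF} and Theorem \ref{adjunctionAomega}, and fullness of the subcategories gives the restricted adjunction. The earlier sketch via the graded decomposition of $\A$ and closure of analytic functors under colimits is not needed and can simply be omitted.
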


\begin{proof}
    By the definition of the category $\AMod^{\omega}$, the forgetful functor $U$ restricts to $U: \AMod^{\omega}\to \kgropMod^{\omega}$.
    
    We will check that the functor $F$ restricts to a functor $F: \kgropMod^{\omega}\to \AMod^{\omega}$.
    For $N\in \Ob(\kgropMod^{\omega})$, we have
    \begin{gather*}
        F(N)\cong F(\A^{L}_0\otimes_{\catLie} \Hom_{\kgropMod^{\omega}}(\A^{L}_0,N))
    \end{gather*}
    by Proposition \ref{adjunctionPowell}.
    By Lemma \ref{tensorandF}, we have 
    \begin{gather*}
         F(\A^{L}_0\otimes_{\catLie} \Hom_{\kgropMod^{\omega}}(\A^{L}_0,N))\cong  \A^{L}\otimes_{\catLieC}F(\Hom_{\kgropMod^{\omega}}(\A^{L}_0,N)),
    \end{gather*}
    which is an object of $\AMod^{\omega}$ by Theorem \ref{adjunctionAomega}.
    The statement follows since $\kgropMod^{\omega}$ (resp. $\AMod^{\omega}$) is a full subcategory of $\kgropMod$ (resp. $\AMod^{\omega}$).
\end{proof}

\begin{corollary}\label{projectiveanalytic}
    For each $m\ge 0$, $\A^{L}(L^{\otimes m},H^{\otimes -})$ is a projective object of $\AMod^{\omega}$.
\end{corollary}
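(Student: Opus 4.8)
### Proof proposal

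The plan is to realize $\A^{L}(L^{\otimes m},H^{\otimes -})$ as the image under the left adjoint $F$ of a projective object of $\kgropMod^{\omega}$, and then invoke that left adjoints between module categories carry projectives to projectives. First I would observe that, by the equivalence $\catLieMod\simeq\kgropMod^{\omega}$ of Proposition \ref{adjunctionPowell}, the category $\kgropMod^{\omega}$ has enough projectives coming from the representable $\catLie$-modules: for each $m\ge 0$, the object $\A^{L}_0\otimes_{\catLie}\catLie(m,-)=\A^{L}_0(L^{\otimes m},H^{\otimes -})$ is the image of the projective $\catLie$-module $\catLie(m,-)$ under the equivalence, hence is projective in $\kgropMod^{\omega}$. (One can also see directly that $\A^{L}_0(L^{\otimes m},H^{\otimes -})\cong\catAss(m,-)$ as $\kgrop$-modules via Lemma \ref{catAssi}, but the conceptual route through the equivalence is cleaner.)

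Next I would apply the left adjoint $F=\A\otimes_{\kgrop}-$. By Proposition \ref{FUomega}, $F$ restricts to a functor $\kgropMod^{\omega}\to\AMod^{\omega}$, and it is left adjoint to the exact forgetful functor $U:\AMod^{\omega}\to\kgropMod^{\omega}$. A left adjoint whose right adjoint is exact preserves projectivity: if $P$ is projective in $\kgropMod^{\omega}$, then for any short exact sequence $0\to M'\to M\to M''\to 0$ in $\AMod^{\omega}$, applying $U$ keeps it exact, so $\Hom_{\kgropMod^{\omega}}(P,U(-))$ is exact, and by adjunction $\Hom_{\AMod^{\omega}}(F(P),-)$ is exact; thus $F(P)$ is projective. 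Applying this with $P=\A^{L}_0(L^{\otimes m},H^{\otimes -})$ and using the isomorphism \eqref{ALFAL0}, namely $\A^{L}(L^{\otimes m},H^{\otimes -})\cong F(\A^{L}_0(L^{\otimes m},H^{\otimes -}))$ (the degree-$m$-source instance of \eqref{ALandA}), we conclude that $\A^{L}(L^{\otimes m},H^{\otimes -})$ is projective in $\AMod^{\omega}$.

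The only genuinely non-formal input is the projectivity of $\A^{L}_0(L^{\otimes m},H^{\otimes -})$ in $\kgropMod^{\omega}$, and the main obstacle is making sure this is correctly anchored. The subtlety is that $\kgropMod^{\omega}$ is not all of $\kgropMod$, so "projective" here means projective relative to epimorphisms within the subcategory; one must check that $\kgropMod^{\omega}$ is closed under the relevant epimorphisms and kernels so that the adjunction argument makes sense, which follows since $\kgropMod^{\omega}\simeq\catLieMod$ is itself an abelian category in which $\catLie(m,-)$ is projective and the equivalence is exact. Everything else is a diagram chase through the adjunctions recorded in Lemmas \ref{tensorandU}, \ref{homandU}, \ref{tensorandF} and Proposition \ref{FUomega}, so I would keep that part brief.
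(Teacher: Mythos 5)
Your proposal is correct and follows essentially the same route as the paper: the projectivity of $\catLie(m,-)$ (Yoneda), its transport across the equivalence of Proposition \ref{adjunctionPowell} to $\A^{L}_0(L^{\otimes m},H^{\otimes -})$ in $\kgropMod^{\omega}$, and then the fact that $F$ preserves projectives because its right adjoint $U$ is exact, combined with $\A^{L}(L^{\otimes m},H^{\otimes -})\cong F(\A^{L}_0(L^{\otimes m},H^{\otimes -}))$. Your extra care about what ``projective'' means inside the full subcategory $\kgropMod^{\omega}$ is a reasonable refinement but does not change the argument.
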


\begin{proof}
    For each $m\ge 0$, the $\catLie$-module $\catLie(m,-)$ is projective by the Yoneda Lemma.
    Via the equivalence $\A^{L}_0\otimes_{\catLie}-$, $\catLie(m,-)$ is sent to the projective object $\A^{L}_0\otimes_{\catLie}\catLie(m,-)\cong \A^{L}_0(L^{\otimes m},H^{\otimes -})$ in $\kgropMod^{\omega}$.
    Since $F$ has an exact right adjoint $U$, $F$ preserves projectives, and thus
    $\A^{L}(L^{\otimes m},H^{\otimes -})\cong F(\A^{L}_0(L^{\otimes m},H^{\otimes -}))$ is a projective object of $\AMod^{\omega}$.
\end{proof}

\begin{remark}
    Since we have $\A^{L}(L^{\otimes m},H^{\otimes -})\cong \A^{L}\otimes_{\catLieC}\catLieC(m,-)$ and since $\catLieC(m,-)$ is a projective $\catLieC$-module, Corollary \ref{projectiveanalytic} can also be proved by using the fact that the functor $\A^{L}\otimes_{\catLieC}-$ yields an equivalence of categories (see Remark \ref{remarkKim}).
\end{remark}

\begin{remark}
The $\A$-module $\A(0,-)\cong \A^{L}(L^{\otimes 0},H^{\otimes -})$ is an object of $\AMod^{\omega}$, but $\A(m,-)$ is not for any $m\ge 1$.
By a polynomial $\A$-module, we mean an $\A$-module $M$ such that $U(M)$ corresponds to a polynomial $\gr^{\op}$-module. Then polynomial $\A$-modules are objects of $\AMod^{\omega}$.
Schur functors, which we recall in Section \ref{sectionSchurfunctor}, are examples of polynomial $\A$-modules.
Since the forgetful functor $U$ is exact, the stability properties of polynomial $\gr^{\op}$-modules under subobjects, quotients, and extensions (see \cite{Hartl--Pirashvili--Vespa}) hold for polynomial $\A$-modules.
\end{remark}

Since the category $\A$ is an $\N$-graded $\K$-linear category, we have a projection functor
\begin{gather*}
    \A\twoheadrightarrow \A_0\cong \kgrop,
\end{gather*}
which induces a $\K$-linear functor
\begin{gather}\label{TforA}
    T: \kgropMod\to \AMod,
\end{gather}
which is fully-faithful, exact and satisfies $U\circ T=\id_{\kgropMod}$. Note that any morphism of $\A$ of degree $\ge 1$ acts by zero on the image of $T$.
In a similar way, the projection to the degree $0$ part induces a $\K$-linear functor
\begin{gather*}
     T: \catLieMod\to \catLieCMod,
\end{gather*}
which has properties similar to those of \eqref{TforA} mentioned above.

\begin{proposition}
     We have the following commutative diagrams (up to isomorphisms)
  \begin{gather*}
        \xymatrix{
        \catLieCMod 
        \ar[rrr]^-{\A^{L}\otimes_{\catLieC}-}
        &&& 
        \AMod\\
        \catLieMod \ar[rrr]_-{\A^{L}_0\otimes_{\catLie}-}\ar[u]^{T} 
        &&& \kgropMod\ar[u]_{T}
        \ar@{}[lllu]|{\circlearrowleft}
        }
 \end{gather*}
 and
 \begin{gather*}
        \xymatrix{
        \catLieCMod 
        &&& 
        \AMod\ar[lll]_-{\Hom_{\AMod}(\A^{L},-)}\\
        \catLieMod \ar[u]^{T}
        &&& \kgropMod\ar[u]_{T} \ar[lll]^-{\Hom_{\kgropMod}(\A^{L}_0,-)} 
        \ar@{}[lllu]|{\circlearrowright}.
        }
 \end{gather*}
\end{proposition}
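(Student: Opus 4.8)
The plan is to prove both squares by a single two-step device. Given $N$, the composite going ``up, then across'' produces an $\A$-module (resp.\ a $\catLieC$-module) on which every morphism of positive degree acts by zero; since the projection $\A\twoheadrightarrow\A_{0}\cong\kgrop$ (resp.\ $\catLieC\twoheadrightarrow(\catLieC)_{0}\cong\catLie$) is the quotient by the two-sided ideal of positive-degree morphisms, such a module is canonically isomorphic to $T$ of its underlying $\kgrop$-module (resp.\ $\catLie$-module). One then identifies that underlying module using the already-proved Lemmas \ref{tensorandU} and \ref{homandU}. The rest — well-definedness of the $T$-decomposition and naturality in $N$ — is routine and I would simply record it.

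For the left-hand square, fix a $\catLie$-module $N$ and set $M=\A^{L}\otimes_{\catLieC}T(N)$. The coend defining $M(H^{\otimes n})$ is generated by elements $\phi\otimes v$ with $\phi\in\A^{L}(L^{\otimes m},H^{\otimes n})$ and $v\in T(N)(L^{\otimes m})$, subject to $(\phi\circ g)\otimes v=\phi\otimes(g\cdot v)$ for $g\in\catLieC$. I would first show that $\phi\otimes v=0$ whenever $\phi$ has positive degree: by Lemma \ref{factorizationofAL}, such a $\phi$ is a linear combination of morphisms $\psi\circ(\id_{L^{\otimes m}}\otimes c^{\otimes d})$ with $d\ge 1$ and $\psi\in\A^{L}_{0}$, and since $\id_{L^{\otimes m}}\otimes c^{\otimes d}\in\catLieC(m,m+2d)$ has positive degree it annihilates $v$ in $T(N)$, so $\phi\otimes v=\psi\otimes\bigl((\id_{L^{\otimes m}}\otimes c^{\otimes d})\cdot v\bigr)=0$. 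A morphism of $\A$ of positive degree raises degree under the left action on $\A^{L}$, hence acts by zero on $M$; therefore $M\cong T(U(M))$. By Lemma \ref{tensorandU}, $U(M)=U(\A^{L}\otimes_{\catLieC}T(N))\cong\A^{L}_{0}\otimes_{\catLie}U(T(N))=\A^{L}_{0}\otimes_{\catLie}N$, so $\A^{L}\otimes_{\catLieC}T(N)\cong T(\A^{L}_{0}\otimes_{\catLie}N)$, naturally in $N$.

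For the right-hand square, fix a $\kgrop$-module $N$ and set $P=\Hom_{\AMod}(\A^{L},T(N))$, so that $P(L^{\otimes m})=\Hom_{\AMod}(\A^{L}(L^{\otimes m},H^{\otimes -}),T(N))$. By Lemma \ref{lemmaALandAd} (equivalently the isomorphism \eqref{ALandA}), $\A^{L}(L^{\otimes m},H^{\otimes -})$ is the free $\A$-module on its degree-$0$ part $\A^{L}_{0}(L^{\otimes m},H^{\otimes -})$, with $\A^{L}_{\ge 1}(L^{\otimes m},H^{\otimes -})$ spanned by morphisms $x\circ\psi$ with $x\in\A$ of positive degree and $\psi\in\A^{L}_{0}(L^{\otimes m},H^{\otimes -})$. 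Hence any $\A$-linear map $\theta\colon\A^{L}(L^{\otimes m},H^{\otimes -})\to T(N)$ is determined by its restriction to $\A^{L}_{0}(L^{\otimes m},H^{\otimes -})$ and vanishes on $\A^{L}_{\ge 1}(L^{\otimes m},H^{\otimes -})$, being $\A$-linear into an object on which $\A_{\ge 1}$ acts by zero. Now for $g\in\catLieC$ of positive degree, its action on $P$ sends $\theta$ to $\theta\circ(-\circ g)$, and on a degree-$0$ generator $\psi$ this evaluates to $\theta(\psi\circ g)$, which is $0$ since $\psi\circ g$ has positive degree. Thus every positive-degree morphism of $\catLieC$ acts by zero on $P$, so $P\cong T(U(P))$; by Lemma \ref{homandU}, $U(P)\cong\Hom_{\kgropMod}(\A^{L}_{0},U(T(N)))=\Hom_{\kgropMod}(\A^{L}_{0},N)$, giving $\Hom_{\AMod}(\A^{L},T(N))\cong T(\Hom_{\kgropMod}(\A^{L}_{0},N))$, naturally in $N$. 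In both squares the single point requiring real work is the vanishing of the positive-degree action, which rests on the hom-space factorizations of Lemmas \ref{factorizationofAL} and \ref{lemmaALandAd}; once that is in hand, the two commutativities fall out of Lemmas \ref{tensorandU} and \ref{homandU}.
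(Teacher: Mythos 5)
Your proof is correct and follows essentially the same route as the paper: show that all positive-degree morphisms act trivially on $\A^{L}\otimes_{\catLieC}T(N)$ and on $\Hom_{\AMod}(\A^{L},T(N))$ (so each is $T$ of its underlying module), then identify the underlying modules via Lemmas \ref{tensorandU} and \ref{homandU} (equivalently the isomorphisms \eqref{ALandAL0} and \eqref{ALandA}). The only difference is that you spell out, via the factorizations of Lemmas \ref{factorizationofAL} and \ref{lemmaALandAd}, the vanishing of the positive-degree actions that the paper asserts more briefly.
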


\begin{proof}
Let $J$ be a $\catLie$-module.
By \eqref{ALandAL0}, we have isomorphisms of $\kgrop$-modules
\begin{gather*}
    \A^{L}\otimes_{\catLieC}T(J)\cong \A^{L}_0\otimes_{\catLie}\catLieC\otimes_{\catLieC}T(J)\cong \A^{L}_0\otimes_{\catLie}J.
\end{gather*}
Since the action of any morphism of $\A$ of degree $\ge 1$ on $\A^{L}\otimes_{\catLieC}T(J)$ is trivial, the above isomorphism yields an isomorphism of $\A$-modules.

Let $N$ be an $\kgrop$-module. An element of $\Hom_{\AMod}(\A^{L}, T(N))(n)$ is a natural transformation $\alpha=\{\alpha_m: \A^{L}(L^{\otimes n},H^{\otimes
 m})\to T(N)(m)\}_m$ such that $\alpha_m$ maps all elements of $\A^{L}_{\ge 1}(L^{\otimes n},H^{\otimes m})$ to zero.
Therefore, we obtain a $\K$-linear isomorphism 
$$\Hom_{\AMod}(\A^{L}, T(N))(n) \cong \Hom_{\kgropMod}(\A^{L}_0,N)(n).$$
Since the action of any morphism of $\catLieC$ of degree $\ge 1$ on $\Hom_{\AMod}(\A^{L}, T(N))$ is trivial, the above $\K$-linear isomorphism yields an isomorphism of $\catLieC$-modules.
\end{proof}

\section{Symmetric monoidal structures}\label{sectionsymmonstr}

Here we study the monoidality of the adjoint functors that we have studied in Section \ref{sectionadjunction} with respect to the symmetric monoidal structures that are defined by using Day convolution.

\subsection{Day convolution}
Let $\catP$ be a $\K$-linear PROP.
The \emph{Day convolution} $M\boxtimes_{\catP} N$ of $\catP$-modules $M$ and $N$ is the $\catP$-module defined by
\begin{gather*}
    M\boxtimes_{\catP} N = \int^{m,n\in\catP}\catP(m+ n,-)\otimes M(m)\otimes N(n).
\end{gather*}
The Day convolution of $\catP$-modules gives a $\K$-linear symmetric monoidal structure on the category of $\catP$-modules, where the monoidal unit is $\catP(0,-)$.

\subsection{The adjunctions with respect to the symmetric monoidal structures}

Here we study the monoidality of the adjoint functors introduced in Section \ref{sectionadjunction} with respect to the $\K$-linear symmetric monoidal structures
\begin{gather*}
    (\kgropMod, \boxtimes_{\kgrop}, \kgrop(0,-)), \quad(\AMod, \boxtimes_{\A}, \A(0,-)),\\ (\catLieMod, \boxtimes_{\catLie}, \catLie(0,-)), \quad (\catLieCMod, \boxtimes_{\catLieC}, \catLieC(0,-))
\end{gather*}
induced by the Day convolution.

\begin{proposition}\label{symmetricmonoidalAL0}
The left adjoint functor
\begin{gather*}
    \A^{L}_0\otimes_{\catLie}-: \catLieMod\to\kgropMod
\end{gather*}
is symmetric monoidal.    
\end{proposition}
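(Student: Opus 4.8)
The strategy is to show that the left adjoint functor $\A^{L}_0\otimes_{\catLie}-$ is (strong) symmetric monoidal by checking the two defining data: that it sends the monoidal unit $\catLie(0,-)$ to the monoidal unit $\kgrop(0,-)$, and that it is compatible with the Day convolution tensor products. For the unit, I would use the isomorphism $\A^{L}_0(L^{\otimes -},H^{\otimes -})\otimes_{\catLie}\catLie(0,-)\cong \A^{L}_0(L^{\otimes 0},H^{\otimes -})=\A^{L}_0(I,H^{\otimes -})$, which by Lemma~\ref{AL0} (the case $m=0$) is just $\A_0(0,-)\cong\kgrop(0,-)$; in fact Remark~\ref{kgropinAL} already records that $-\circ i^{\otimes 0}$ is an isomorphism on the nose. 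For the tensor compatibility, I would exhibit a natural transformation
\begin{gather*}
    (\A^{L}_0\otimes_{\catLie}J)\boxtimes_{\kgrop}(\A^{L}_0\otimes_{\catLie}J')\longrightarrow \A^{L}_0\otimes_{\catLie}(J\boxtimes_{\catLie}J')
\end{gather*}
for $\catLie$-modules $J,J'$ and prove it is an isomorphism.

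\textbf{Construction of the lax structure map.} The natural candidate comes from the fact that the assignment $(m,n)\mapsto\A^{L}_0(L^{\otimes m},H^{\otimes n})$, equivalently $\catAss(m,n)$ by Lemma~\ref{catAssi}, is a $\K$-linear bifunctor $\catLie^{\op}\otimes\kgrop\to\KMod$ that is itself monoidal in a suitable sense: the tensor product of $\kgrop$ (addition of objects) and that of $\catLie$ are respected by the block-sum map
\begin{gather*}
    \A^{L}_0(L^{\otimes m},H^{\otimes n})\otimes \A^{L}_0(L^{\otimes m'},H^{\otimes n'})\xrightarrow{\;\otimes\;}\A^{L}_0(L^{\otimes (m+m')},H^{\otimes (n+n')}),
\end{gather*}
i.e. $\A^{L}_0$ is a \emph{monoid} (more precisely a bilax/strong monoidal bimodule) over the pair of PROPs. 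Plugging this into the coend defining Day convolution and using the universal property of coends (Fubini, and the fact that $-\otimes_{\catLie}-$ commutes with colimits in each variable) produces the map above. I would then verify it is an isomorphism by a direct computation on bases: using Lemmas~\ref{factorcatAss}, \ref{AL0} and \ref{bijectionwithCmn} the space $\A^{L}_0(L^{\otimes m},H^{\otimes n})$ has basis $\{\mu^{[p_1,\dots,p_n]}P_\sigma i^{\otimes m}\}$, and the block-sum of two such morphisms together with a shuffle of the $H$-outputs realizes every basis element of the target; the coequalizer relations on both sides match because the symmetry of $\A^{L}_0$ restricted to $\kgrop$ is generated by transpositions, exactly as in $\catAss$.

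\textbf{Main obstacle and bookkeeping.} The genuinely delicate point is not the abstract formalism but checking that the proposed structure map is an \emph{iso} rather than merely lax: concretely, that no identifications are lost or created when one passes from $\big(\A^{L}_0(L^{\otimes m},H^{\otimes n})\big)\otimes\big(\A^{L}_0(L^{\otimes m'},H^{\otimes n'})\big)$, coequalized over $\catLie^{\op}\otimes\catLie^{\op}$ in the two pairs of variables separately, to $\A^{L}_0(L^{\otimes(m+m')},H^{\otimes(n+n')})$ coequalized over $\catLie^{\op}$ in the single combined variable. The key input is that $\catLie$ itself is the PROP of the Lie operad, so $\catLie(m+m',-)\cong\catLie(m,-)\boxtimes_{\catLie}\catLie(m',-)$ — i.e. the $\K$-linear PROP $\catLie$ is generated by an operad and Day convolution of representables is the representable at the sum — and the analogous statement holds for the block-sum basis of $\A^{L}_0$. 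Once this identification of generators and relations is in place the two coends are visibly isomorphic. An alternative, cleaner route, which I would likely adopt to avoid the basis bookkeeping, is to appeal to Day's theorem: a colimit-preserving functor between presheaf-type categories that is induced by a monoidal bimodule is automatically strong monoidal provided the bimodule is \emph{strong} monoidal as a bifunctor, and one then only has to observe that the block-sum map on $\A^{L}_0$ is invertible onto the ``shuffle-decomposable'' part, which is everything because every $(m+m',n+n')$-chord diagram decomposes (after applying a symmetry in the $H$-outputs) as a disjoint union of an $(m,\ast)$-piece and an $(m',\ast)$-piece. The same argument, verbatim, gives Proposition~\ref{symmetricmonoidalAL} for $\A^{L}\otimes_{\catLieC}-$, using Lemma~\ref{factorizationofAL} in place of Lemma~\ref{bijectionwithCmn} and noting that $c^{\otimes(d+d')}$ splits as $c^{\otimes d}\otimes c^{\otimes d'}$ so the grading is respected.
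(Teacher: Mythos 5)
Your overall architecture is the same as the paper's: the unit is handled by $\A^{L}_0\otimes_{\catLie}\catLie(0,-)\cong\A^{L}_0(L^{\otimes 0},H^{\otimes -})\cong\kgrop(0,-)$, and the structure map $\theta_{J,J'}$ is built by pushing the block-sum-plus-composition map through the Day coends, the crux in both arguments being that
\begin{gather*}
\int^{p,p'\in \kgrop}\kgrop(p+p',-)\otimes \A^{L}_0(L^{\otimes m},H^{\otimes p})\otimes \A^{L}_0(L^{\otimes m'},H^{\otimes p'})\;\xrightarrow{\;\circ\;}\;\A^{L}_0(L^{\otimes m+m'},H^{\otimes -})
\end{gather*}
is an isomorphism, verified against the chord-diagram basis of Lemma \ref{AL0}; the remaining steps (co-Yoneda to reinstate $\catLie(m+m',n)$, Fubini, and the monoidal-functor axioms) are routine in both.

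However, the justification you give for invertibility contains a genuine error. It is not true that every chord diagram in $C(m+m',n'')$ becomes, after a permutation of the $H$-outputs, a disjoint union of an $(m,\ast)$-piece and an $(m',\ast)$-piece: chords emanating from the first $m$ and the last $m'$ of the $L$-inputs may attach to the \emph{same} arc of $X_{n''}$ (already $\mu^{[2]}P_{\sigma}(i\otimes i)\in C(2,1)$ with $m=m'=1$ is indecomposable in this sense), and no shuffle of outputs separates them. Consequently the ``cleaner route'', whose invertibility claim rests on exactly this decomposition, does not work as stated, and the phrase ``block-sum together with a shuffle of the $H$-outputs realizes every basis element'' in your main route is wrong for the same reason. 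Surjectivity is nevertheless fine, because the Day convolution supplies the whole of $\kgrop(p+p',-)$ rather than just permutations: one writes $\mu^{[p_1,\cdots,p_{n''}]}P_{\sigma}i^{\otimes(m+m')}=\mu^{[p_1,\cdots,p_{n''}]}P_{\sigma}\circ(i^{\otimes m}\otimes i^{\otimes m'})$ with $i^{\otimes m}\in\A^{L}_0(L^{\otimes m},H^{\otimes m})$, $i^{\otimes m'}\in\A^{L}_0(L^{\otimes m'},H^{\otimes m'})$, so the merging of strands is done by the multiplication morphisms of $\kgrop$, not by symmetries. So you are forced back to the basis bookkeeping you hoped to avoid, and that bookkeeping must also establish injectivity, i.e.\ that the coend relations identify any two such presentations of the same chord diagram; neither the shuffle remark nor the observation $\catLie(m+m',-)\cong\catLie(m,-)\boxtimes_{\catLie}\catLie(m',-)$ (which is the standard fact that Day convolution of representables is representable, and is indeed used, via co-Yoneda, in the paper's proof) addresses this point.
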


\begin{proof}
    For $\catLie$-modules $J$ and $J'$, define an isomorphism of $\kgrop$-modules
    \begin{gather*}
    \begin{split}
        \theta_{J,J'}:(\A^{L}_0\otimes_{\catLie}J)\boxtimes_{\kgrop}(\A^{L}_0\otimes_{\catLie}J')\to \A^{L}_0\otimes_{\catLie}(J\boxtimes_{\catLie}J')
    \end{split}
    \end{gather*}
    by the composition of the following isomorphisms
    \begin{gather*}
    \begin{split}
        &(\A^{L}_0\otimes_{\catLie}J)\boxtimes_{\kgrop}(\A^{L}_0\otimes_{\catLie}J')\\
        &=\int^{p,p'\in \kgrop}\kgrop(p+p', -)\otimes (\A^{L}_0\otimes_{\catLie}J)(p)\otimes  (\A^{L}_0\otimes_{\catLie}J')(p')\\
        &\cong \int^{m,m'\in \catLie} \left(\int^{p,p'\in \kgrop} \A^{L}_0(H^{\otimes p+p'}, H^{\otimes -})\otimes \A^{L}_0(L^{\otimes m},H^{\otimes p})\otimes \A^{L}_0(L^{\otimes m'},H^{\otimes p'})\right) \otimes J(m)\otimes J'(m')\\
        &\xrightarrow[\cong]{\int^{m,m'\in \catLie} (\circ\otimes J(m)\otimes J'(m'))} \int^{m,m'\in \catLie} \A^{L}_0(L^{\otimes m+m'}, H^{\otimes -})\otimes J(m)\otimes J'(m') \\
        &\cong  \int^{n,m,m'\in \catLie} \A^{L}_0(L^{\otimes n}, H^{\otimes -})\otimes \catLie(m+m',n)\otimes J(m)\otimes J'(m') \\
        &\cong \int^{n\in \catLie} \A^{L}_0(L^{\otimes n}, H^{\otimes -})\otimes \int^{m,m'\in \catLie} \catLie(m+m',n)\otimes J(m)\otimes J'(m')  \\
        &= \A^{L}_0\otimes_{\catLie}(J\boxtimes_{\catLie}J'),
    \end{split}
    \end{gather*}
    where the map $\circ$ is induced by the composition of the category $\A^{L}_0$. One can check that $\circ$ is an isomorphism by using Lemma \ref{AL0}.
    Define an isomorphism of $\kgrop$-modules
     \begin{gather*}
    \begin{split}
        \theta^{0}=\id: \kgrop(0,-)\xrightarrow{\cong} \A^{L}_0\otimes_{\catLie}\catLie(0,-).
    \end{split}
    \end{gather*}
    Then it is straightforward to check that $(\A^{L}_0\otimes_{\catLie}- , \theta_{J,J'}, \theta^0)$ satisfies the axioms of symmetric monoidal functors.
\end{proof}

The symmetric monoidal structure on $\kgropMod$ restricts to analytic functors.

\begin{corollary}
We have a symmetric monoidal category
\begin{gather*}
    (\kgropMod^{\omega}, \boxtimes_{\kgrop}, \kgrop(0,-)).
\end{gather*}
\end{corollary}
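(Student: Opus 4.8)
The plan is to deduce the corollary directly from the two facts already at hand: that $(\kgropMod,\boxtimes_{\kgrop},\kgrop(0,-))$ is a symmetric monoidal category (via Day convolution), and that the left adjoint $\A^{L}_0\otimes_{\catLie}-\colon\catLieMod\to\kgropMod$ is symmetric monoidal (Proposition \ref{symmetricmonoidalAL0}), whose essential image is exactly $\kgropMod^{\omega}$ by Proposition \ref{adjunctionpowellAL0}. The only thing to check is that $\kgropMod^{\omega}$ is closed under $\boxtimes_{\kgrop}$ and contains the monoidal unit $\kgrop(0,-)$; then the symmetric monoidal structure on $\kgropMod$ restricts to the full subcategory.

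First I would note that the unit $\kgrop(0,-)\cong \A^{L}_0\otimes_{\catLie}\catLie(0,-)$ lies in $\kgropMod^{\omega}$, since it is in the image of the equivalence $\A^{L}_0\otimes_{\catLie}-$ (indeed $\kgrop(0,-)$ is the constant functor $\K$, which is visibly polynomial of degree $0$, hence analytic). Next, given $N,N'\in\Ob(\kgropMod^{\omega})$, by Proposition \ref{adjunctionpowellAL0} there are $\catLie$-modules $J,J'$ with $N\cong\A^{L}_0\otimes_{\catLie}J$ and $N'\cong\A^{L}_0\otimes_{\catLie}J'$. The monoidal structure constraint $\theta_{J,J'}$ of Proposition \ref{symmetricmonoidalAL0} gives an isomorphism of $\kgrop$-modules
\begin{gather*}
  N\boxtimes_{\kgrop}N'\cong (\A^{L}_0\otimes_{\catLie}J)\boxtimes_{\kgrop}(\A^{L}_0\otimes_{\catLie}J')\cong \A^{L}_0\otimes_{\catLie}(J\boxtimes_{\catLie}J'),
\end{gather*}
so $N\boxtimes_{\kgrop}N'$ is again in the essential image of $\A^{L}_0\otimes_{\catLie}-$, i.e.\ in $\kgropMod^{\omega}$. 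Thus $\boxtimes_{\kgrop}$ restricts to a bifunctor $\kgropMod^{\omega}\times\kgropMod^{\omega}\to\kgropMod^{\omega}$, and since $\kgropMod^{\omega}$ is a full subcategory of $\kgropMod$, all the associativity, unit, and symmetry constraints (with their coherence axioms) are inherited. This yields the symmetric monoidal category $(\kgropMod^{\omega},\boxtimes_{\kgrop},\kgrop(0,-))$.

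There is no serious obstacle here; the work has all been done in Propositions \ref{adjunctionpowellAL0} and \ref{symmetricmonoidalAL0}. The only point requiring a word of care is that ``closed under $\boxtimes_{\kgrop}$'' should be phrased at the level of objects up to isomorphism, which is exactly what being a full subcategory closed under the monoidal product means; alternatively one can appeal to the general principle that the essential image of a symmetric monoidal functor whose source is symmetric monoidal is closed under the tensor product and contains the unit, so it inherits a symmetric monoidal structure. Either formulation gives the statement immediately.
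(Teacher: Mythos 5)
Your proposal is correct and follows essentially the same route as the paper: both use the equivalence of Proposition \ref{adjunctionpowellAL0} to write $N,N'\in \Ob(\kgropMod^{\omega})$ as objects in the essential image of $\A^{L}_0\otimes_{\catLie}-$, apply the monoidality isomorphism of Proposition \ref{symmetricmonoidalAL0} to conclude $N\boxtimes_{\kgrop}N'$ is again in $\kgropMod^{\omega}$, and observe that the unit $\kgrop(0,-)$ is analytic. The only (immaterial) difference is that the paper takes the explicit witness $J=\Hom_{\kgropMod}(\A^{L}_0,N)$ rather than an abstract preimage.
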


\begin{proof}
    It follows from Propositions \ref{adjunctionPowell} and \ref{symmetricmonoidalAL0} that for $N,N'\in \Ob(\kgropMod^{\omega})$, we have
    \begin{gather*}
    \begin{split}
        &N\boxtimes_{\kgrop} N'\\
        &\cong (\A^{L}_0\otimes_{\catLie}\Hom_{\kgrop}(\A^{L}_0, N))\boxtimes_{\kgrop} (\A^{L}_0\otimes_{\catLie}\Hom_{\kgrop}(\A^{L}_0, N'))\\
        &\cong \A^{L}_0\otimes_{\catLie}(\Hom_{\kgrop}(\A^{L}_0, N)\boxtimes_{\catLie}\Hom_{\kgrop}(\A^{L}_0, N')),
    \end{split}
    \end{gather*}
    which is an object of $\kgropMod^{\omega}$.
    Since $\kgrop(0,-)$ is analytic, the symmetric monoidal structure on $\kgropMod$ restricts to $\kgropMod^{\omega}$.
\end{proof}

\begin{remark}
\cite[Theorem 11.2]{Powellanalytic} states that the functor $ {}_{\Delta}\catAss\otimes_{\catLie} -: \catLieMod\to \kgropMod^{\omega}$ is symmetric monoidal with respect to the symmetric monoidal structure of $\kgropMod^{\omega}$ that is defined by pointwise tensor product and the corresponding  symmetric monoidal structure of $\catLieMod$.
Note that the Day convolution tensor product $\boxtimes_{\kgrop}$ for $\kgropMod$ is not equivalent to the pointwise tensor product.
\end{remark}

\begin{proposition}\label{symmetricmonoidalAL}
The left adjoint functor 
\begin{gather*}
    \A^{L}\otimes_{\catLieC}-: \catLieCMod\to\AMod
\end{gather*}
is symmetric monoidal.
\end{proposition}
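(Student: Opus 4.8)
The plan is to follow the proof of Proposition~\ref{symmetricmonoidalAL0} almost verbatim, replacing the triple $(\kgrop,\catLie,\A^{L}_0)$ everywhere by $(\A,\catLieC,\A^{L})$. For $\catLieC$-modules $K$ and $K'$ I would construct the comparison isomorphism of $\A$-modules
\[
\theta_{K,K'}\colon(\A^{L}\otimes_{\catLieC}K)\boxtimes_{\A}(\A^{L}\otimes_{\catLieC}K')\xrightarrow{\cong}\A^{L}\otimes_{\catLieC}(K\boxtimes_{\catLieC}K')
\]
as the composite of the evident coend manipulations: expand the Day convolution on the left, pull the coends over $\catLieC$ to the outside, apply the composition of $\A^{L}$ to the inner coend
\[
\int^{p,p'\in\A}\A^{L}(H^{\otimes p+p'},H^{\otimes-})\otimes\A^{L}(L^{\otimes m},H^{\otimes p})\otimes\A^{L}(L^{\otimes m'},H^{\otimes p'})\xrightarrow{\circ}\A^{L}(L^{\otimes m+m'},H^{\otimes-}),
\]
then use the co-Yoneda lemma over $\catLieC$ to write $\A^{L}(L^{\otimes m+m'},H^{\otimes-})\cong\int^{n\in\catLieC}\A^{L}(L^{\otimes n},H^{\otimes-})\otimes\catLieC(m+m',n)$, and finally apply Fubini for coends to recognize the result as $\A^{L}\otimes_{\catLieC}(K\boxtimes_{\catLieC}K')$. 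The unit constraint is the isomorphism $\theta^{0}\colon\A(0,-)\xrightarrow{\cong}\A^{L}(L^{\otimes 0},H^{\otimes-})\cong\A^{L}\otimes_{\catLieC}\catLieC(0,-)$ arising from the identification $\A^{L}(I,H^{\otimes n})\cong\A(0,n)$: an $(I,H^{\otimes n})$-diagram has no univalent vertices on the upper line, so its degree equals $\frac12\#\{\text{vertices}\}$ and it is exactly a $(0,n)$-Jacobi diagram in a handlebody.

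The one genuinely new point — and the step I expect to be the main obstacle — is that the composition map $\circ$ above is an isomorphism. In Proposition~\ref{symmetricmonoidalAL0} the analogous map is the degree-$0$ statement, settled by the explicit basis of Lemma~\ref{AL0}; here the factor $\A(p+p',-)$ may carry Casimir $2$-tensors, and a Casimir is allowed to join a leg issued from the $L^{\otimes m}$-block to one issued from the $L^{\otimes m'}$-block, so one must see that every block-connecting Casimir can be absorbed into the $\A$-morphism factor. I would reduce this to the degree-$0$ case through the factorization~\eqref{ALandAd} of Lemma~\ref{lemmaALandAd}. Writing $\A^{L}(L^{\otimes m},H^{\otimes p})\cong\int^{q\in\kgrop}\A(q,p)\otimes\A^{L}_0(L^{\otimes m},H^{\otimes q})$ and similarly for $m'$, interchanging coends, and collapsing the iterated composition in $\A$ by co-Yoneda replaces $\int^{p,p'\in\A}\A(p+p',-)\otimes\A(q,p)\otimes\A(q',p')$ by $\A(q+q',-)$; splitting $\A(q+q',-)\cong\int^{r\in\kgrop}\A(r,-)\otimes\kgrop(q+q',r)$ and invoking the degree-$0$ isomorphism already established inside the proof of Proposition~\ref{symmetricmonoidalAL0} for the remaining coend over $\kgrop$ leaves $\int^{r\in\kgrop}\A(r,-)\otimes\A^{L}_0(L^{\otimes m+m'},H^{\otimes r})$, which is $\A^{L}(L^{\otimes m+m'},H^{\otimes-})$ by~\eqref{ALandAd} again. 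Surjectivity of $\circ$ can alternatively be checked directly from Lemma~\ref{factorizationofAL}: a standard-form element $\mu^{[p_1,\cdots,p_n]}P_{\sigma}\,i^{\otimes(m+m'+2d)}(\id_{L^{\otimes m+m'}}\otimes c^{\otimes d})$ factors as the composite of two standard-form elements on $L^{\otimes m}$ and on $L^{\otimes m'}$ with a morphism of $\A$ that carries the block-connecting Casimirs.

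It then remains to check the coherence axioms of a symmetric monoidal functor, i.e.\ compatibility of $\theta_{K,K'}$ and $\theta^{0}$ with the associativity, symmetry and unit constraints of the two Day-convolution structures. Since each constituent isomorphism of $\theta_{K,K'}$ is built from the composition of $\A^{L}$ and from co-Yoneda isomorphisms, this is the same routine diagram chase carried out at the end of the proof of Proposition~\ref{symmetricmonoidalAL0}, relying only on associativity of composition in $\A^{L}$ and naturality of co-Yoneda. Equivalently, the whole statement can be packaged as: the functor $m\mapsto\A^{L}(L^{\otimes m},H^{\otimes-})=\A^{L}\otimes_{\catLieC}\catLieC(m,-)$ from $\catLieC$ to $\AMod$ carries $\otimes$ to $\boxtimes_{\A}$ and the unit object $0$ to $\A(0,-)$ compatibly with the coherence data, so the induced base-change functor $\A^{L}\otimes_{\catLieC}-$ is strong symmetric monoidal — the substantive content being precisely the isomorphism $\circ$ together with the bookkeeping just indicated.
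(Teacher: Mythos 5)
Your proposal is correct and follows the same route as the paper: the paper's proof simply says to repeat the argument of Proposition \ref{symmetricmonoidalAL0} with $(\A,\catLieC,\A^{L})$ in place of $(\kgrop,\catLie,\A^{L}_0)$, the key input being exactly the isomorphism statement for the composition map $\int^{p,p'\in \A}\A^{L}(H^{\otimes p+p'}, H^{\otimes -})\otimes \A^{L}(L^{\otimes m},H^{\otimes p})\otimes \A^{L}(L^{\otimes m'},H^{\otimes p'})\to \A^{L}(L^{\otimes m+m'},H^{\otimes -})$ that you isolate. Your reduction of that key fact to the degree-$0$ case via Lemma \ref{lemmaALandAd} and \eqref{ALandA} is a sound verification of a point the paper asserts without detail, so it is a welcome addition rather than a deviation.
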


\begin{proof}
    The proof is similar to that of Proposition \ref{symmetricmonoidalAL0}.
    In this case, we use the fact that the map induced by the composition of the category $\A^{L}$
    \begin{gather*}
        \int^{p,p'\in \A}\A^{L}(H^{\otimes p+p'}, H^{\otimes -})\otimes \A^{L}(L^{\otimes m},H^{\otimes p})\otimes \A^{L}(L^{\otimes m'},H^{\otimes p'})\to \A^{L}(L^{\otimes m+m'},H^{\otimes -})
    \end{gather*}
    is an isomorphism.
\end{proof}

\begin{remark}
By using the equivalence of categories between $\catLieCMod$ and $\AMod^{\omega}$ given by Kim (see Remark \ref{remarkKim}), 
one can also check that the symmetric monoidal structure on $\AMod$ restricts to $\AMod^{\omega}$.
\end{remark}

\begin{proposition}
The left adjoint functors 
$$F: \kgropMod\to \AMod,\quad  F: \catLieMod\to \catLieCMod$$
are symmetric monoidal.
\end{proposition}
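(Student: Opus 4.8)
The plan is to deduce both assertions from a single general fact: for every morphism $\phi\colon\catC\to\catD$ of $\K$-linear PROPs, the induction functor $\catD\otimes_{\catC}-\colon\catC\textbf{-}\mathrm{Mod}\to\catD\textbf{-}\mathrm{Mod}$ is symmetric monoidal for the Day convolution monoidal structures. Indeed, $F=\A\otimes_{\kgrop}-$ is induction along the inclusion $\kgrop\hookrightarrow\A$ and $F=\catLieC\otimes_{\catLie}-$ is induction along $\catLie\hookrightarrow\catLieC$, both of which are morphisms of $\K$-linear PROPs (symmetric monoidal and the identity on objects).

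First I would recall that the Yoneda embedding $\catC\to\catC\textbf{-}\mathrm{Mod}$, $m\mapsto\catC(m,-)$, is strong symmetric monoidal for $\boxtimes_{\catC}$: unwinding the defining coend and applying the co-Yoneda lemma gives a natural isomorphism $\catC(m,-)\boxtimes_{\catC}\catC(m',-)\cong\catC(m+m',-)$ compatible with the associativity, unit ($\catC(0,-)$) and symmetry constraints. Next I would use three standard facts: every $\catC$-module $M$ is the coend $M\cong\int^{m\in\catC}\catC(m,-)\otimes M(m)$; the functor $-\boxtimes_{\catC}-$ preserves colimits, hence coends, in each variable; and $F=\catD\otimes_{\catC}-$ preserves colimits and satisfies $F(\catC(m,-))\cong\catD(m,-)$, that is, $F$ composed with the Yoneda embedding of $\catC$ agrees with the Yoneda embedding of $\catD$ precomposed with $\phi$. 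Combining these, for $\catC$-modules $M,M'$ the comparison morphism is the composite
\begin{gather*}
F(M)\boxtimes_{\catD}F(M')\cong\int^{m,m'}\bigl(\catD(m,-)\boxtimes_{\catD}\catD(m',-)\bigr)\otimes M(m)\otimes M'(m')\cong\int^{m,m'}\catD(m+m',-)\otimes M(m)\otimes M'(m'),
\end{gather*}
whereas on the other side
\begin{gather*}
F(M\boxtimes_{\catC}M')\cong F\Bigl(\int^{m,m'}\catC(m+m',-)\otimes M(m)\otimes M'(m')\Bigr)\cong\int^{m,m'}\catD(m+m',-)\otimes M(m)\otimes M'(m'),
\end{gather*}
and the two agree; the unit isomorphism is $F(\catC(0,-))\cong\catD(0,-)$. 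Equivalently, one may invoke the universal property of Day convolution, namely that $\catC\textbf{-}\mathrm{Mod}$ with $\boxtimes_{\catC}$ is the free $\K$-linear cocomplete symmetric monoidal category on $\catC$, so the essentially unique cocontinuous symmetric monoidal extension of the symmetric monoidal functor given by $\phi$ followed by the Yoneda embedding of $\catD$ is exactly $F$, which is therefore symmetric monoidal.

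Finally I would verify the coherence axioms of a symmetric monoidal functor: compatibility of the comparison isomorphisms with the associativity, unit and symmetry constraints of $\boxtimes_{\catC}$ and $\boxtimes_{\catD}$. Expressing all modules in terms of representables via the co-Yoneda coend and using naturality, these reduce to the corresponding coherences for the two Yoneda embeddings, which hold because $\phi$ is strictly symmetric monoidal on objects. Specializing $\phi$ to $\kgrop\hookrightarrow\A$ and to $\catLie\hookrightarrow\catLieC$ then yields the two statements. The only mildly delicate point is the coend bookkeeping and the check of the symmetry coherence; conceptually this is \emph{easier} than Propositions~\ref{symmetricmonoidalAL0} and~\ref{symmetricmonoidalAL}, since here the relevant comparison $F(\catC(m,-))\to\catD(m,-)$ is an honest isomorphism of representables, requiring no separate argument about the composition in $\A^{L}$.
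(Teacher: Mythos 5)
Your proposal is correct and follows essentially the same route as the paper: the comparison isomorphism $F(M)\boxtimes F(M')\cong F(M\boxtimes M')$ is obtained by exactly the coend manipulation the paper carries out for $\catLie\hookrightarrow\catLieC$ (co-Yoneda plus the isomorphism $\catD(m,-)\boxtimes_{\catD}\catD(m',-)\cong\catD(m+m',-)$), with the paper treating $\kgrop\hookrightarrow\A$ ``in a similar way.'' You merely package the computation as a general statement for an arbitrary morphism of $\K$-linear PROPs (and note the alternative via the universal property of Day convolution, where the only caveat is the usual variance bookkeeping, since the Yoneda embedding $m\mapsto\catC(m,-)$ is contravariant), which is a harmless abstraction of the paper's argument.
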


\begin{proof}
For $\catLie$-modules $J,J'$, we have the following composition of isomorphisms 
\begin{gather*}
    \begin{split}
      &F(J)\boxtimes_{\catLieC}F(J')\\
      &= \int^{m,m'\in \catLieC}\catLieC(m+m',-)\otimes F(J)(m)\otimes F(J')(m')\\
      &\cong \int^{p,p'\in \catLie} \left(\int^{m,m'\in \catLieC}\catLieC(m+m',-)\otimes \catLieC(p,m)\otimes \catLieC(p',m') \right) \otimes J(p)\otimes J'(p')\\
      &\cong \int^{p,p'\in \catLie} \catLieC(p+p',-)\otimes J(p)\otimes J'(p')\\
      &\cong \int^{n\in \catLie} \catLieC(n,-)\otimes \left(\int^{p,p'\in \catLie}\catLie(p+p',n) \otimes J(p)\otimes J'(p')\right)\\
      &\cong F(J\boxtimes_{\catLie} J')
    \end{split}
\end{gather*}
and an isomorphism
\begin{gather*}
    \catLieC(0,-)\cong F(\catLie(0,-)),
\end{gather*}
which make $F$ a symmetric monoidal functor.

In a similar way, one can check that the functor $F:\kgrop\to \AMod$ is symmetric monoidal.
\end{proof}

\begin{proposition}
The forgetful functors 
\begin{gather*}
    U: \AMod\to \kgropMod,\quad 
    U: \catLieCMod\to \catLieMod
\end{gather*}
are symmetric lax monoidal.
\end{proposition}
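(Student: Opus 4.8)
The plan is to deduce this from doctrinal adjunction rather than to build the lax structure by hand. The previous proposition shows that the left adjoints $F\colon\kgropMod\to\AMod$ and $F\colon\catLieMod\to\catLieCMod$ are strong symmetric monoidal; in particular each is oplax symmetric monoidal, by taking the inverses of its (invertible) coherence constraints. By Kelly's doctrinal adjunction, for an adjunction $F\dashv U$ in which $F$ carries an oplax symmetric monoidal structure, the right adjoint $U$ acquires a canonical lax symmetric monoidal structure, with respect to which the unit and counit become monoidal natural transformations. Applying this to the adjunctions $F\dashv U$ between $\kgropMod$ and $\AMod$, and between $\catLieMod$ and $\catLieCMod$, established in Section~\ref{sec4modcats}, yields the desired lax symmetric monoidal structures on the two forgetful functors.

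For readers who prefer an explicit description, I would also unwind what the lax structure is. Write $\iota\colon\kgrop\hookrightarrow\A$ for the inclusion of $\K$-linear PROPs, which is strict symmetric monoidal and the identity on objects; then the forgetful functor $U$ is restriction (precomposition) along $\iota$. For $\A$-modules $M,N$, Day convolution gives
$$U(M)\boxtimes_{\kgrop}U(N)=\int^{p,p'\in\kgrop}\kgrop(p+p',-)\otimes M(p)\otimes N(p'),$$
and the maps $\kgrop(p+p',n)\to\A(p+p',n)$ induced by $\iota$ assemble, using that $\iota$ is the identity on objects, into a natural transformation
$$U(M)\boxtimes_{\kgrop}U(N)\to\int^{p,p'\in\A}\A(p+p',-)\otimes M(p)\otimes N(p')=U(M\boxtimes_{\A}N);$$
the fact that this descends to the coends is precisely functoriality of $\iota$. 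The unit constraint is the inclusion $\kgrop(0,-)\cong\A_0(0,-)\hookrightarrow\A(0,-)=U(\A(0,-))$. The same discussion applies verbatim to $\iota\colon\catLie\hookrightarrow\catLieC$.

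The step I expect to be the main obstacle is the verification of the associativity, unitality and symmetry axioms for this direct construction: after expanding the coends these reduce to the functoriality and symmetric monoidality of $\iota$, but the diagrams involved are large, and this is exactly the bookkeeping that the doctrinal-adjunction argument bypasses. I would also record, as a byproduct of the doctrinal-adjunction formulation, that $F\dashv U$ is then a symmetric monoidal adjunction; this is convenient later for relating the algebra $A=U(\A(0,-))$ in $\kgropMod$ and the algebra $C=U(\catLieC(0,-))$ in $\catLieMod$ to $\A(0,-)$ and $\catLieC(0,-)$.
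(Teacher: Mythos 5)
Your proposal is correct, but it takes a genuinely different route from the paper. The paper's proof is a direct construction: it exhibits the canonical comparison map $\nu_{M,M'}\colon U(M)\boxtimes_{\kgrop}U(M')\hookrightarrow U(M\boxtimes_{\A}M')$ together with the unit inclusion $\nu^{0}\colon\kgrop(0,-)\hookrightarrow\A(0,-)$ and asserts that $(U,\nu_{M,M'},\nu^{0})$ is symmetric lax monoidal --- in effect exactly the explicit restriction-along-$\iota$ structure you spell out in your second paragraph, with the coherence verifications left implicit. Your main argument instead invokes Kelly's doctrinal adjunction: since the immediately preceding proposition shows that the left adjoints $F=\A\otimes_{\kgrop}-$ and $F=\catLieC\otimes_{\catLie}-$ are strong symmetric monoidal, and the adjunctions $F\dashv U$ are established in Section~\ref{sec4modcats}, the right adjoints acquire canonical lax symmetric monoidal structures, with no diagram chasing. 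This is legitimate and arguably cleaner: it bypasses precisely the associativity, unitality and symmetry bookkeeping that the direct construction requires (and that the paper does not write out), and it gives as a byproduct that $F\dashv U$ is a symmetric monoidal adjunction, which is convenient for passing the algebras $\A(0,-)$ and $\catLieC(0,-)$ to $A$ and $C$. What the paper's direct approach buys instead is an explicit, concrete description of the lax structure maps (and their injectivity), independent of the previous proposition; if you later want to use that specific description alongside the doctrinal one, you should note that the two structures agree, but for the statement as given this is not needed.
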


\begin{proof}
    We have a canonical injective morphism of $\kgrop$-modules
    \begin{gather*}
        \nu_{M,M'}: U(M)\boxtimes_{\kgrop}U(M')\hookrightarrow  U(M\boxtimes_{\A}M')
    \end{gather*}
    for $M,M'\in \AMod$,
   and an inclusion map
   \begin{gather*}
      \nu^0:  \kgrop(0,-)\hookrightarrow \A(0,-),
   \end{gather*}
   such that $(U, \nu_{M,M'},\nu^0)$ is a symmetric lax monoidal functor.

   In a similar way, the functor $U: \catLieCMod\to \catLieMod$ is symmetric lax monoidal.
\end{proof}

\subsection{The algebra $C$ in $\catLieMod$ and the algebra $A$ in $\kgropMod$}

In a $\K$-linear monoidal category $(\catC,\otimes,I)$, the monoidal unit $I$ forms a canonical algebra $(I,\mu=\id:I\otimes I\xrightarrow{\cong} I,\eta=\id:I\to I)$.
Therefore, we have an algebra $\A(0,-)$ in the $\K$-linear symmetric monoidal category 
$(\AMod, \boxtimes_{\A}, \A(0,-))$,
and we have an algebra $\catLieC(0,-)$ in the $\K$-linear symmetric monoidal category $(\catLieCMod, \boxtimes_{\catLieC}, \catLieC(0,-))$.

Since the forgetful functor $U: \AMod\to\kgropMod$ is lax monoidal, the algebra $\A(0,-)$ in $(\AMod, \boxtimes_{\A}, \A(0,-))$ induces an algebra 
$$A=U(\A(0,-))$$
in $(\kgropMod,\boxtimes_{\kgrop},\kgrop(0,-))$.
Moreover, the algebra $A$ has the structure of a graded algebra, where
\begin{gather*}
A=\bigoplus_{d\ge 0}A_d, \quad A_d=\A_d(0,-),
\end{gather*}
and the multiplication consists of 
\begin{gather*}
    \mu:A_d\boxtimes_{\kgrop}A_{d'}\to A_{d+d'}
\end{gather*}
and the unit is
\begin{gather*}
    \eta: \kgrop(0,-)\xrightarrow{\cong} A_0\hookrightarrow \A(0,-).
\end{gather*}

In a similar way, the algebra 
$\catLieC(0,-)$ in $(\catLieCMod, \boxtimes_{\catLieC}, \catLieC(0,-))$ induces an algebra 
$$C=U(\catLieC(0,-))$$
in $(\catLieMod, \boxtimes_{\catLie}, \catLie(0,-))$.
The algebra $C$ is also graded, where
\begin{gather*}
    C=\bigoplus_{d\ge 0}C_d, \quad C_d=\catLieC(0,-)_d.
\end{gather*}
Recall that the $\K$-vector space $C_d(n)=\catLieC(0,n)_d$ for $n\ge 0$ is explicitly described in Lemma \ref{CatLieC0-}.

\begin{remark} 
Via the category equivalence in Proposition \ref{adjunctionPowell}, the graded algebra $A$ in $\kgropMod$ corresponds to the graded algebra $C$ in $\catLieMod$.
\end{remark}

In what follows, we will study the structure of the graded algebra $C$ in $\catLieMod$, and prove that the algebra $C$ is \emph{quadratic}.

Let $C_1^{\boxtimes}$ denote the tensor algebra in $(\catLieMod, \boxtimes_{\catLie}, \catLie(0,-))$ generated by $C_1$, that is, the graded algebra whose underlying $\catLie$-module is
\begin{gather*}
    C_1^{\boxtimes}= \bigoplus_{d\ge 0} C_1^{\boxtimes d},
\end{gather*}
whose multiplication
\begin{gather*}
    \mu: C_1^{\boxtimes}\boxtimes_{\catLie}C_1^{\boxtimes}\to C_1^{\boxtimes}
\end{gather*}
is defined by
\begin{gather*}
\begin{split}
   &\mu( h\otimes (f\otimes x_1\otimes\cdots\otimes x_{d_1})\otimes (g\otimes y_1\otimes \cdots \otimes y_{d_2}))\\
   &=(h(f\otimes g))\otimes x_1\otimes\cdots\otimes  x_{d_1}\otimes y_1\otimes \cdots\otimes y_{d_2}
\end{split}
\end{gather*}
for $h\in \catLie(p+q,r),\; f\in \catLie(m_1+\cdots+m_{d_1}, p),\; g\in \catLie(n_1+\cdots+n_{d_2}, q),\; x_1\in C_1(m_1),\;\cdots,\; x_{d_1}\in C_1(m_{d_1}),\; y_1\in C_1(n_1),\;\cdots,\; y_{d_2}\in C_1(n_{d_2})$,
and
whose unit
\begin{gather*}
    \eta: \catLie(0,-)\to C_1^{\boxtimes}
\end{gather*}
is defined by the inclusion
\begin{gather*}
    \eta :\catLie(0,-)\xrightarrow{\cong} C_1^{\boxtimes 0}\hookrightarrow C_1^{\boxtimes}.
\end{gather*}
Note that the $\catLie$-module $C_1=\catLieC(0,-)_1$ is concentrated in $2\in \Ob(\catLie)$ and $C_1(2)$ is the trivial representation of $\gpS_2$ generated by the Casimir element $c\in C_1(2)$. 

\begin{theorem}\label{presentationofC}
    We have a surjective morphism of graded algebras
    \begin{gather*}
       \phi: C_1^{\boxtimes}\to C
    \end{gather*}
    in $(\catLieMod, \boxtimes_{\catLie}, \catLie(0,-))$,
    and the kernel of $\phi$ is the two-sided ideal of $C_1^{\boxtimes}$ generated by the following two elements
    \begin{gather*}
      r_s:= \id_{4}\otimes  c\otimes c- P_{(13)(24)}\otimes c\otimes c\in C_1^{\boxtimes 2}(4), 
    \end{gather*}
    and 
    \begin{gather*}
      r_c:=  (([,] \otimes \id_2)P_{(23)})\otimes c\otimes c+ (\id_1\otimes [,]\otimes \id_1)\otimes c\otimes c\in C_1^{\boxtimes 2}(3).
    \end{gather*}
\end{theorem}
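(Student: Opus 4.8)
The plan is to build the map $\phi$ by the universal property of the tensor algebra and then identify its kernel via a counting/normal-form argument. First I would define $\phi$: since $C_1^{\boxtimes}$ is the free graded algebra on the $\catLie$-module $C_1$ in $(\catLieMod, \boxtimes_{\catLie}, \catLie(0,-))$, and since $C_1 = \catLieC(0,-)_1$ sits inside $C = U(\catLieC(0,-))$ as the degree-$1$ part, the inclusion $C_1 \hookrightarrow C$ of $\catLie$-modules extends uniquely to a morphism of graded algebras $\phi\colon C_1^{\boxtimes}\to C$. Surjectivity of $\phi$ is the statement that $C$ is generated in degree $1$ as an algebra: concretely, every element of $\catLieC(0,n)_d$ is a linear combination of composites $f\,c^{\otimes d}$ with $f\in\catLie(2d,n)$ (this is exactly the decomposition underlying Lemma~\ref{CatLieC0-} and the surjection \eqref{catLieuB}), and such an element is the image under $\phi$ of $f\otimes c\otimes\cdots\otimes c\in C_1^{\boxtimes d}(n)$. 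So $\phi_d$ is surjective for every $d$.

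Next I would check that $r_s$ and $r_c$ lie in $\ker\phi$. This is a direct computation: $\phi(r_s) = (\id_4 - P_{(13)(24)})c^{\otimes 2}$, which vanishes in $\catLieC(0,4)_2$ by the relation \eqref{symmetryc} between the Casimir element and the symmetry (swapping the two copies of $c$); and $\phi(r_c)$ vanishes by the relation \eqref{[,]ci} (equivalently the second Casimir relation in \eqref{Casimirrelation}, $([,]\otimes\id)(c\otimes\id) = \cdots$), after massaging with the equivariance relation \eqref{equivariancerel}. Hence $\phi$ factors through the quotient $C_1^{\boxtimes}/(r_s, r_c)$; write $\bar\phi$ for the induced surjection. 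Let $I$ denote the two-sided ideal generated by $r_s$ and $r_c$.

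The main work — and the main obstacle — is the reverse inclusion: showing $\ker\phi \subseteq I$, i.e. that $\bar\phi$ is an isomorphism. The strategy is to show that every relation holding in $C$ between products of Casimir elements is already a consequence of $r_s$ (Casimir-symmetry) and $r_c$ (Casimir–bracket compatibility), given the defining relations of $\catLie$ (equivariance, AS, IHX). The cleanest route is to use Lemma~\ref{CatLieC0-}: it gives an explicit presentation of $\catLieC(0,n)_d$ as $\catLie(2d,n)$-combinations of $c^{\otimes d}$ modulo precisely the equivariance relation, the AS/IHX relations, the symmetry relation \eqref{symmetryc}, and the bracket relations \eqref{[,]cij}, \eqref{[,]ci}. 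In $C_1^{\boxtimes d}(n)$, the equivariance and AS/IHX relations are already built into the $\catLie$-module structure; the relation \eqref{symmetryc} for an adjacent transposition of $c$'s is exactly (a $\catLie$-image of) $r_s$, and arbitrary permutations of the $d$ copies of $c$ are generated by adjacent ones, so all of \eqref{symmetryc} lies in $I$; and the bracket relations \eqref{[,]cij}, \eqref{[,]ci} are $\catLie$-images of $r_c$ (here one uses that \eqref{[,]cij} and \eqref{[,]ci} differ only in whether the two bracketed inputs land in different leaves $i\ne j$ or the same leaf $i$, and in $C_1^{\boxtimes}$ both reduce to multiplying $r_c$ on the left by an appropriate element of $\catLie$). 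Therefore the surjection $\catLie(2d,n)\otimes_{\mathbb{S}_{2d}\wr\mathbb{S}_d}(\text{triv}) \twoheadrightarrow C_d(n)$ and the surjection onto $(C_1^{\boxtimes}/I)_d(n)$ have the same kernel, forcing $\bar\phi_d$ to be an isomorphism for every $d$ and $n$. The delicate point to get right is the bookkeeping translating the index conditions $(\ast)$, $(\ast\ast)$ of Lemma~\ref{CatLieC0-} into statements about left-multiples of $r_c$ in $C_1^{\boxtimes}$, and checking that the induction on $m_i'$ in the proof of Lemma~\ref{CatLieC0-} is faithfully mirrored inside the ideal $I$ rather than needing new generators; I expect this to be the one place where care with the $\catLie$-bimodule structure of $C_1^{\boxtimes d}$ is essential.
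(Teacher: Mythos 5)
Your proposal is correct and follows essentially the same route as the paper: define $\phi$ by sending $f\otimes c\otimes\cdots\otimes c$ to $f\,c^{\otimes d}$, get surjectivity from the factorization of $\catLieC(0,n)_d$, check $\phi(r_s)=\phi(r_c)=0$ via \eqref{Casimirrelation}, and prove $\ker\phi\subseteq(r_s,r_c)$ by using the presentation of $C_d(n)$ in Lemma \ref{CatLieC0-} and verifying that each of its defining relations (equivariance, AS/IHX, \eqref{symmetryc}, \eqref{[,]cij}, \eqref{[,]ci}) already holds in $C_1^{\boxtimes}/(r_s,r_c)$. The paper packages this last step as the construction of an explicit splitting $\phi_d^{-}:C_d(n)\to (C_1^{\boxtimes}/R)_d(n)$, which is exactly your kernel-comparison argument in slightly different words (and note the small typo: the wreath product should be $\gpS_2\wr\gpS_d$, not $\gpS_{2d}\wr\gpS_d$).
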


\begin{proof}
Define a morphism of algebras
\begin{gather*}
    \phi:C_1^{\boxtimes}\to C
\end{gather*}
by
\begin{gather*}
    \phi(f\otimes x_1\otimes\cdots\otimes x_d)=f(x_1\otimes \cdots\otimes x_d)
\end{gather*}
for $f\in \catLie(m_1+\cdots+m_d,p), x_1\in C_1(m_1),\cdots, x_d\in C_1(m_d)$.
Then $\phi$ is surjective since any element of $C_d(p)=\catLieC(0,p)_d$ is a linear combination of elements of the form $f c^{\otimes d}$ with $f\in \catLie(2d,p)$.

Let $R$ be the two-sided ideal of $C_1^{\boxtimes}$ generated by $r_s$ and $r_c$.
We have $\phi(r_s)=0$ and $\phi(r_c)=0$ by the second relation of \eqref{Casimirrelation}, which implies that $R\subset \ker \phi$.
Then we have a surjective morphism $\overline{\phi}$ of graded algebras
\begin{gather*}
   \overline{\phi}: C_1^{\boxtimes}/R\overset{\pr}{\twoheadrightarrow} C_1^{\boxtimes}/\ker \phi \overset{\phi}{\twoheadrightarrow} C.
\end{gather*}
In order to prove that $\ker\phi=R$, it suffices to construct a $\K$-linear map
$$\phi_d^{-}: C_d(n) \to (C_1^{\boxtimes}/R)_d(n)$$
such that $\phi_d^- \overline{\phi} =\id_{(C_1^{\boxtimes}/R)_d(n)}$ for each $d,n\ge 0$.

Define a $\K$-linear map $\widetilde{\phi}_d^{-}$ from the $\K$-vector space spanned by the set \eqref{generatorofcatLieC0-} to $(C_1^{\boxtimes}/R)_d(n)$ by 
\begin{gather*}
    \widetilde{\phi}_d^{-}((T_1\otimes \cdots\otimes T_n)P_{\sigma} c^{\otimes d})=\overline{(T_1\otimes \cdots\otimes T_n)P_{\sigma}\otimes c\otimes \cdots\otimes c}.
\end{gather*}
By Lemma \ref{CatLieC0-}, the $\K$-vector space $C_d(n)=\catLieC(0,n)_d$ is spanned by the set \eqref{generatorofcatLieC0-} with relations generated by the equivariance relation \eqref{equivariancerel}, the AS and IHX relations, \eqref{symmetryc}, \eqref{[,]cij} and \eqref{[,]ci}.
It is straightforward to check that the relations corresponding to these relations via $\widetilde{\phi}_d^{-}$ hold in $(C_1^{\boxtimes}/R)_d(n)$.
Therefore the $\K$-linear map $\widetilde{\phi}_d^{-}$ induces a $\K$-linear map $\phi_d^{-}:C_d(n) \to (C_1^{\boxtimes}/R)_d(n)$, which satisfies $\phi_d^- \overline{\phi} =\id_{(C_1^{\boxtimes}/R)_d(n)}$.
This completes the proof.
\end{proof}

It follows from the category equivalence in Proposition \ref{adjunctionPowell} that the algebra $A$ is quadratic, which can be stated in the following corollary.
Let $A_1^{\boxtimes}$ denote the tensor algebra in $(\kgropMod, \boxtimes_{\kgrop}, \kgrop(0,-))$ generated by $A_1=\A_1(0,-)$.
Note that the $\kgrop$-module $A_1$ is the simple $\kgrop$-module generated by the Casimir $2$-tensor $\tilde{c}$.

\begin{corollary}\label{quadraticA}
    We have a surjective morphism of graded algebras
    \begin{gather*}
        \psi: A_1^{\boxtimes}\to A
    \end{gather*}
    in $(\kgropMod, \boxtimes_{\kgrop}, \kgrop(0,-))$, and the kernel of $\psi$ is the two-sided ideal of $A_1^{\boxtimes}$ generated by the following two elements
    \begin{gather*}
        \tilde{r}_s:= \id_{4}\otimes  \tilde{c}\otimes \tilde{c}- P_{(13)(24)}\otimes \tilde{c}\otimes \tilde{c} \in A_1^{\boxtimes 2}(4)
    \end{gather*}
    and
    \begin{gather*}
        \tilde{r}_c:= (((\mu-\mu P_{(12)}) \otimes \id_2)P_{(23)})\otimes \tilde{c}\otimes \tilde{c}+ (\id_1\otimes (\mu-\mu P_{(12)})\otimes \id_1)\otimes \tilde{c}\otimes \tilde{c} \in A_1^{\boxtimes 2}(3),
    \end{gather*}
    where $\tilde{r}_c$ corresponds to the $4$T-relation.
\end{corollary}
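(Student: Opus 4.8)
The plan is to deduce the corollary from Theorem \ref{presentationofC} by transporting that presentation along the symmetric monoidal equivalence $\Phi:=\A^{L}_0\otimes_{\catLie}-\colon\catLieMod\to\kgropMod^{\omega}$ of Propositions \ref{adjunctionpowellAL0} and \ref{symmetricmonoidalAL0}. Being an exact $\K$-linear symmetric monoidal equivalence, $\Phi$ carries graded algebras to graded algebras, commutes with the formation of tensor algebras and of quotients, sends epimorphisms to epimorphisms, and carries the two-sided ideal generated by a set of elements to the two-sided ideal generated by their images. First I would invoke the remark that the graded algebras $A$ and $C$ correspond under this equivalence (which itself follows from Lemma \ref{tensorandU} applied to $K=\catLieC(0,-)$ together with $\A^{L}(L^{\otimes 0},H^{\otimes -})\cong\A(0,-)$): thus $\Phi(C)\cong A$ as graded algebras, with $\Phi(C_d)\cong A_d$ for all $d$; in particular $\Phi(C_1)\cong A_1$, and since $A_1(2)$ is one-dimensional this isomorphism matches the Casimir element $c\in C_1(2)$ with a nonzero scalar multiple of $\tilde c$. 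Consequently $\Phi$ also carries the tensor algebra $C_1^{\boxtimes}$ to $A_1^{\boxtimes}$.

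Next I would simply apply $\Phi$ to the full conclusion of Theorem \ref{presentationofC}. This produces an isomorphism of graded algebras $A_1^{\boxtimes}\cong\Phi(C_1^{\boxtimes})$, a surjective morphism of graded algebras $\psi:=\Phi(\phi)\colon A_1^{\boxtimes}\to A$ (which on degree $1$ is the inclusion $A_1\hookrightarrow A$, hence the canonical one), and an identification of $\ker\psi=\Phi(\ker\phi)$ with the two-sided ideal of $A_1^{\boxtimes}$ generated by $\Phi(r_s)$ and $\Phi(r_c)$. So the only remaining task is to compute these two images. For this I would use that under $\Phi$ the representable $\catLie(m,-)$ becomes $\A^{L}_0(L^{\otimes m},H^{\otimes -})$ and precomposition by a morphism of the full subcategory $\catLie\subseteq\A^{L}_0$ becomes the corresponding precomposition, together with the naturality of the morphism $i$ with respect to the symmetry ($i^{\otimes n}P_\sigma=P_\sigma i^{\otimes n}$) and the identity $i\,[,]=-(\mu-\mu P_{(12)})(i\otimes i)$ in $\A^{L}_0$, which is relation ($\A^{L}$-\ref{a1}). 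Since $r_s$ is built only from permutations and two copies of $c$, it is carried to a nonzero scalar multiple of $\tilde r_s$; since each of the two summands of $r_c$ contains exactly one Lie bracket, $r_c$ is carried to a nonzero scalar multiple of $\tilde r_c$, the scalar absorbing the sign from ($\A^{L}$-\ref{a1}) and the normalization of $\Phi(C_1)\cong A_1$. Therefore the two-sided ideal generated by $\Phi(r_s),\Phi(r_c)$ equals the one generated by $\tilde r_s,\tilde r_c$, which is the assertion; that $\tilde r_c$ expresses the $4$T-relation can be read off directly (compare the remark after Lemma \ref{factorizationofAL}).

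The main obstacle I anticipate is purely the bookkeeping in the last step — carefully composing the monoidal structure isomorphism of $\Phi$, the isomorphism $\Phi(C_1)\cong A_1$, and the identity ($\A^{L}$-\ref{a1}) to verify that $r_s$ and $r_c$ are sent to $\tilde r_s$ and $\tilde r_c$ up to the (harmless) nonzero scalars. Everything else is a formal consequence of $\Phi$ being an exact symmetric monoidal equivalence applied to Theorem \ref{presentationofC}.
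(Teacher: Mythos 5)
Your proposal is correct and follows essentially the same route as the paper: the paper deduces Corollary \ref{quadraticA} directly from Theorem \ref{presentationofC} by transporting the quadratic presentation of $C$ through the symmetric monoidal category equivalence of Propositions \ref{adjunctionPowell} and \ref{symmetricmonoidalAL0} (under which $C$ corresponds to $A$), exactly as you do. Your additional bookkeeping --- matching $c$ with $\tilde{c}$, and using relation ($\A^{L}$-\ref{a1}) to see that $r_s$, $r_c$ map to nonzero multiples of $\tilde{r}_s$, $\tilde{r}_c$ --- is sound and simply makes explicit what the paper leaves implicit.
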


\section{The $\A$-module $\A^L(L^{\otimes m},H^{\otimes -})$}\label{sectionAmod}

For each $m\ge 0$, we have the $\A$-module $\A^L(L^{\otimes m},H^{\otimes -})$.
For the first step, we study the $\A$-module $\A(0,-)=\A^L(L^{\otimes 0},H^{\otimes -})$. 
Then we study the $\A$-module $\A^{L}(L,H^{\otimes -})$.

\subsection{Filtration of $\A$-modules}\label{sectionfiltration}
Here we define a double filtration of $\A$-modules.

For each $d,k\ge 0$, we have the $(\A,\A)$-subbimodule $\A_{\ge d,\ge k}(-,-)$ of $\A(-,-)$ spanned by Jacobi diagrams of degree $\ge d$ with $\ge k$ trivalent vertices.
Then the $(\A,\A)$-bimodules $\A_{\ge d,\ge k}(-,-)$ form a double filtration of the $(\A,\A)$-bimodule $\A(-,-)$
\begin{gather*}
     \A_{\ge d,\ge k}(-,-)\supset \A_{\ge d+1,\ge k}(-,-),\quad 
    \A_{\ge d,\ge k}(-,-)\supset \A_{\ge d,\ge k+1}(-,-).
\end{gather*}
In the following, we will write $\A_{\ge d}(-,-)$ for $\A_{\ge d,\ge 0}(-,-)$.
The filtration $\A_{\ge d,\ge k}(-,-)$ induces a double filtration of $\A$-modules as follows.
Let $M$ be an $\A$-module.
We define a double filtration $M_{\ge d, \ge k}$ of $M$ by 
$$M_{\ge d,\ge k}=\A_{\ge d,\ge k}(-,-)\otimes_{\A}M\subset \A\otimes_{\A}M\cong M.$$
That is, for $n\ge 0$, the $\K$-vector space $M_{\ge d, \ge k}(n)$ is the subspace of $M(n)$ spanned by $\{M(f)(x)\mid f\in \A_{\ge d,\ge k}(m,n),\; x\in M(m),\; m\ge 0\}$.
Then we have 
\begin{gather*}
    M_{\ge d, \ge k}\supset M_{\ge d+1, \ge k},\quad M_{\ge d, \ge k}\supset M_{\ge d, \ge k+1}.
\end{gather*}
We have a quotient $\A$-module
$M_{\ge d,\ge k}/ M_{\ge d',\ge k'}$ for $d\le d'$ and $k\le k'$.

\begin{remark}
For an $\A$-module $M$, we have $M_{\ge 1}=0$ if and only if we have $M=TU(M)$, where $U$ is the forgetful functor defined in \eqref{UforA} and $T$ is the functor defined in \eqref{TforA}.
\end{remark}

We have the corresponding filtrations for $\catLieC$-modules.
For $d,k\ge 0$, we have the $(\catLieC,\catLieC)$-subbimodule $\catLieC(-,-)_{\ge d,\ge k}$ of $\catLieC(-,-)$ spanned by Jacobi diagrams of degree $\ge d$ with $\ge k$ trivalent vertices.
These induce a double filtration $K_{\ge d,\ge k}$ on a $\catLieC$-module $K$ by 
$$
K_{\ge d,\ge k}=\catLieC(-,-)_{\ge d,\ge k}\otimes_{\catLieC} K.
$$

The filtrations $\A^{L}\otimes_{\catLieC}(K_{\ge d,\ge k})$ and $(\A^{L}\otimes_{\catLieC}K)_{\ge d,\ge k}$ of the $\A$-module $\A^{L}\otimes_{\catLieC}K$ coincide, which can be checked as follows.
For an $(L^{\otimes m},H^{\otimes n})$-diagram $D$, let $d$ denote the degree of $D$ in $\A^{L}$, $k$ the number of trivalent vertices of $D$ and $p$ the number of univalent vertices of $D$ attached to $X_n$.
In terms of the generating morphisms of $\A^{L}$ as a $\K$-linear symmetric monoidal category, the degree $d$ is the number of copies of the morphism $c$, $k$ is the number of copies of the morphism $[,]$, $p$ is the number of copies of the morphism $i$.
Then we have
\begin{gather*}
    p=2d-k+m.
\end{gather*}
Let $\A^{L}_{\ge d,\ge k}$ denote the $(\A,\catLieC)$-subbimodule of $\A^{L}$ spanned by Jacobi diagrams of degree $\ge d$ with $\ge k$ trivalent vertices.
Then we have 
\begin{gather*}
\begin{split}
    \A^{L}\otimes_{\catLieC}(K_{\ge d,\ge k})
    &=  (\A^{L}\otimes_{\catLieC}\catLieC(-,-)_{\ge d,\ge k})\otimes_{\catLieC}K\\
    &\cong \A^{L}_{\ge d,\ge k}\otimes_{\catLieC}K\\
    &\cong (\A_{\ge d,\ge k}\otimes_{\A}\A^{L})\otimes_{\catLieC}K\\
    &= (\A^{L}\otimes_{\catLieC}K)_{\ge d,\ge k}.
\end{split}
\end{gather*}

\subsection{Schur functors}\label{sectionSchurfunctor}
Here, we recall Schur functors.
We refer the reader to \cite{Fulton--Harris} on representation theory of symmetric groups and general linear groups.

For a partition $\lambda$ of a non-negative integer, let $l(\lambda)$ denote the length of $\lambda$ and $|\lambda|$ the size of $\lambda$. We write $\lambda\vdash |\lambda|$.
Let $c_{\lambda}\in \K\gpS_{|\lambda|}$ denote the \emph{Young symmetrizer} corresponding to $\lambda$, that is, 
$$c_{\lambda}=\left(\sum_{\sigma \in R_{\lambda}}\sigma \right)\left(\sum_{\tau\in C_{\lambda}}\sgn(\tau)\tau\right),$$
where $R_{\lambda}$ is the row stabilizer and $C_{\lambda}$ is the column stabilizer of the canonical tableau of $\lambda$. (See \cite{Fulton--Harris} for details.) 
It is well known that $\{S_{\lambda}:=\K\gpS_{d}\cdot c_{\lambda}\mid \lambda\vdash d\}$ is the set of isomorphism classes of irreducible representations of $\gpS_{d}$.

\begin{remark}
Since we have $\catLie(m,n)=0$ for $m<n$, it is easy to see that any simple $\catLie$-module $J$ is concentrated in $d$ for some $d\ge 0$ and $J(d)$ is an irreducible representation of $\gpS_d$.
Therefore, the set of isomorphism classes of simple $\catLie$-modules is $\{S_{\lambda}\mid d\ge 0, \lambda\vdash d\}$.
\end{remark}

Let
$$S^{\lambda}:\KMod\to\KMod$$
denote the \emph{Schur functor} corresponding to $\lambda\vdash d$, which is defined by
$$S^{\lambda}(V)=V^{\otimes d}\otimes_{\K\gpS_{d}} S_{\lambda}.$$
Then $S^{\lambda}(V)$ is an irreducible representation of $\GL(V)$ if $\dim(V)\ge l(\lambda)$ and otherwise we have $S^{\lambda}(V)=0$.

Let 
$$\mathfrak{a}^{\#}=\Hom(-^{\ab},\K): \kgrop\to \KMod$$
denote the dual of the abelianization functor.

Regarding the $\gpS_d$-module $\K\gpS_d$ as a $\catLie$-module concentrated in $d$, we have an isomorphism of $\kgrop$-modules
\begin{gather*}
        \A^{L}_0\otimes_{\catLie}\K\gpS_{d}\cong (\mathfrak{a}^{\#})^{\otimes d}.
\end{gather*}
Therefore, for each partition $\lambda\vdash d$, we have an isomorphism of $\kgrop$-modules
\begin{gather*}
        \A^{L}_0\otimes_{\catLie}S_{\lambda}\cong S^{\lambda}\circ \mathfrak{a}^{\#}.
\end{gather*}
Since the functor $\A^{L}_0\otimes_{\catLie}-: \catLieMod\to \kgropMod^{\omega}$ is an equivalence of categories by Proposition \ref{adjunctionPowell}, the set of isomorphism classes of simple objects in $\kgropMod^{\omega}$ is $\{S^{\lambda}\circ \mathfrak{a}^{\#}\mid d\ge 0, \lambda\vdash d\}$.

By a \emph{graded} $\catLieC$-module, we mean a $\catLieC$-module $K$ which admits a grading $K=\bigoplus_{d\ge 0}K_d$ such that for any $f\in \catLieC(m,n)_d$, we have $K(f):K_{d'}(m)\to K_{d+d'}(n)$ for any $d'\ge 0$.
For example, $\catLieC(m,-)$ is a graded $\catLieC$-module for each $m\ge 0$.
A morphism $\alpha: K\to K'$ of graded $\catLieC$-modules is a morphism of $\catLieC$-modules which is compatible with some gradings of $K$ and $K'$.
Let $\mathrm{gr}\catLieCMod$ denote the category of graded $\catLieC$-modules and morphisms between them, which is a subcategory of $\catLieCMod$.

\begin{proposition}
    The set of isomorphism classes of simple objects in the category $\mathrm{gr}\catLieCMod$ is $\{T(S_{\lambda})\mid d\ge 0,\lambda\vdash d\}$. 
\end{proposition}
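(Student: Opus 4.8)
The plan is to prove the three assertions packaged in the claim: each $T(S_{\lambda})$ is a simple object of $\mathrm{gr}\catLieCMod$; every simple graded $\catLieC$-module is isomorphic to some $T(S_{\lambda})$; and $T(S_{\lambda})\cong T(S_{\mu})$ forces $\lambda=\mu$. For the first point I would observe that $T(S_{\lambda})$ is concentrated in grading degree $0$, is supported on the single object $d=|\lambda|\in\Ob(\catLieC)$, has every positive-degree morphism of $\catLieC$ acting by zero, and satisfies $\catLieC(d,d)_0=\catLie(d,d)=\K\gpS_d$. Hence a nonzero graded $\catLieC$-submodule $M\subseteq T(S_{\lambda})$ is automatically supported on $d$, and $M(d)$ is a nonzero $\K\gpS_d$-submodule of the irreducible $\gpS_d$-representation $S_{\lambda}$; therefore $M(d)=S_{\lambda}$ and $M=T(S_{\lambda})$.

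For the converse, let $K=\bigoplus_{d\ge 0}K_d$ be a simple graded $\catLieC$-module and let $d_0$ be the least $d$ with $K_d\neq 0$. Since every morphism of $\catLieC$ has non-negative degree, $\bigoplus_{d\ge d_0+1}K_d$ is a graded $\catLieC$-submodule of $K$, and it is proper because $K_{d_0}\neq 0$; by simplicity it vanishes, so $K=K_{d_0}$ is concentrated in a single grading degree. Consequently every positive-degree morphism of $\catLieC$ acts by zero on $K$, so the $\catLieC$-action on $K$ is inflated along the projection $\catLieC\twoheadrightarrow\catLie$ onto the degree-$0$ part; that is, $K\cong T(U(K))$. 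Since $T$ is exact and fully faithful, a proper nonzero $\catLie$-submodule of $U(K)$ would yield a proper nonzero graded $\catLieC$-submodule of $K$, so $U(K)$ is simple in $\catLieMod$; by the classification of simple $\catLie$-modules recalled above, $U(K)\cong S_{\lambda}$ for some partition $\lambda$, whence $K\cong T(S_{\lambda})$. Finally, applying the functor $U$ to an isomorphism $T(S_{\lambda})\xrightarrow{\cong} T(S_{\mu})$ and using $U\circ T=\id_{\catLieMod}$ gives $S_{\lambda}\cong S_{\mu}$, hence $\lambda=\mu$.

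The only delicate point is the bookkeeping around the category $\mathrm{gr}\catLieCMod$: one must check that a subobject there is precisely a $\catLieC$-submodule compatible with some grading (so that $\bigoplus_{d\ge d_0+1}K_d$ really obstructs simplicity), and that a shift of grading is an isomorphism in $\mathrm{gr}\catLieCMod$ (so that ``concentrated in a single grading degree'' coincides with ``lying in the essential image of $T$''). Once these are in place the argument is formal; the only external input is the known list $\{S_{\lambda}\mid d\ge 0,\ \lambda\vdash d\}$ of simple $\catLie$-modules, and no computation is required.
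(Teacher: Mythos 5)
Your proposal is correct and takes essentially the same route as the paper's proof: pick the minimal grading degree $d$ with $K_d\neq 0$, use simplicity to kill the graded submodule $K_{\ge d+1}$, conclude that $K$ is concentrated in one degree so that all positive-degree morphisms of $\catLieC$ act by zero, and hence $K\cong T(U(K))$ with $U(K)$ a simple $\catLie$-module. The extra verifications you supply (that each $T(S_{\lambda})$ is indeed simple and that distinct partitions give non-isomorphic objects) are left implicit in the paper and are handled correctly.
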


\begin{proof}
Let $K$ be a simple object of $\mathrm{gr}\catLieCMod$.
Since $K$ is non-trivial, there exists some some $d\ge 0$ such that $K_d\neq 0$.
We may take $d$ as the smallest one.
Since $K$ is simple, the $\catLieC$-submodule $K_{\ge d+1}=\bigoplus_{d'\ge d+1}K_{d'}$ should be $0$.
Therefore, we have $K=K_d$.
Since any morphism of $\catLieC$ of degree $\ge 1$ raises the grading of $K$, we have $K=TU(K)$, that is, $K$ is induced by a simple $\catLie$-module.
This completes the proof.
\end{proof}

\begin{remark}
The set of isomorphism classes of simple objects in $\catLieCMod$ is much bigger than that of $\mathrm{gr}\catLieCMod$.
For example, we have the following simple $\catLieC$-module.
Let 
\begin{gather*}
\begin{split}
    K(1)=S_1=\K1,\quad
    K(2)=S_{1^2}=\K a,
    \quad
    K(3)=S_{21}=\K\langle b,c\rangle,
\end{split}
\end{gather*}
where $a=1-(12), b=1+(12)-(13)-(132), c=1+(23)-(13)-(123)$,
and $K(m)=0$ for $m\neq 1,2,3$.
Then 
$$K(c\otimes \id_1)(1)=b, \quad K([,]\otimes \id_1)(b)=0,\quad K([,]\otimes \id_1)(c)=a,\quad K([,])(a)=1$$
defines a simple $\catLieC$-module.
\end{remark}

We define \emph{graded} $\A$-modules similarly.
Via the equivalence of categories between $\catLieCMod$ and $\AMod^{\omega}$ given by Kim (see Remark \ref{remarkKim}), the set of isomorphism classes of simple objects in the subcategory of $\AMod^{\omega}$ of graded $\A$-modules is $\{T(S^{\lambda}\circ \mathfrak{a}^{\#})\mid d\ge 0, \lambda \vdash d \}$.

\subsection{The $\kgrop$-module $A_d$}\label{sectionAd}

In \cite{Katada1, Katada2}, the author studied the $\kgrop$-module $A_d=\A_d(0,-)$.
As $\A$-modules, we have 
\begin{gather*}
    T(A_d)=\A_{\ge d}(0,-)/\A_{\ge d+1}(0,-).
\end{gather*}
One of the main results of \cite{Katada1, Katada2} is an indecomposable decomposition of the functor $A_d$, which is explained below.

For $d\ge 2$, let $A_d P$ and $A_d Q$ be the $\kgrop$-submodules of $A_d$ that are generated by the symmetric element $P_d$ and the anti-symmetric element $Q_d$ defined by
\begin{gather}\label{PdandQd}
    P_d=(\sum_{\sigma\in \gpS_{2d}}P_\sigma)\circ \tilde{c}^{\otimes d} ,\quad  Q_d=\tilde{c}^{\otimes d}-P_{(23)}\circ \tilde{c}^{\otimes d} \in \A_d(0,2d),
\end{gather}
respectively.
For $d=0,1$, we set $A_d P=A_d$.

\begin{theorem}[\cite{Katada1,Katada2}]\label{katadamainthm}
    Let $d\ge 2$.
    We have an indecomposable decomposition 
    $$A_d=A_d P\oplus A_d Q$$
    in $\kgropMod$.
    Moreover, $A_d P$ is a simple $\kgrop$-module isomorphic to $S^{2d}\circ\mathfrak{a}^{\#}$.
\end{theorem}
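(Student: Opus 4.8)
The plan is to transport the statement along the equivalence $\kgropMod^{\omega}\simeq\catLieMod$ of Proposition \ref{adjunctionPowell}, where the combinatorial description of Lemma \ref{CatLieC0-} is available, to split off the ``symmetric'' simple summand by an $\operatorname{Ext}$-vanishing argument, and then to argue that the complement is indecomposable. Since that equivalence is exact and carries $A_d=\A_d(0,-)$ to $C_d=\catLieC(0,-)_d$ (Section \ref{sectionsymmonstr}), it suffices to prove the analogous decomposition of the $\catLie$-module $C_d$. By Lemma \ref{CatLieC0-}, $C_d$ is supported in degrees $n\le 2d$ (a degree-$d$ diagram has $2d$ legs and each of the $n$ roots carries at least one), and $C_d$ is generated as a $\catLie$-module by its top piece $C_d(2d)\cong\K[\gpS_{2d}/(\gpS_2\wr\gpS_d)]$, on which the Lie-tree structure is trivial and only the symmetry and commutativity of the copies of $c$ survive. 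As a $\gpS_{2d}$-representation $C_d(2d)$ is the multiplicity-free sum $\bigoplus_{\mu\vdash d}S_{2\mu}$, where $2\mu$ doubles every part of $\mu$.

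\textbf{The symmetric summand.} Let $v\in C_d(2d)$ be the (unique up to scalar) $\gpS_{2d}$-invariant vector; it spans the trivial summand $S_{(2d)}$ and corresponds to $P_d$. Let $C_dP$ be the $\catLie$-submodule it generates. For the bracket $b=([\,,\,]\otimes\id_{2d-2})\in\catLie(2d,2d-1)$ and $\tau=(1\,2)$ one has $b(v)=b(P_{\tau}v)=(b\circ P_{\tau})(v)=-b(v)$, using $P_{\tau}v=v$ and the antisymmetry of the Lie bracket; hence $b(v)=0$ since $\operatorname{char}\K=0$. As $v$ is fixed by all of $\gpS_{2d}$, the same holds for the bracket applied to any two of the $2d$ legs, and since by Lemma \ref{decompositionofcatLie} any $f\in\catLie(2d,n)$ with $n<2d$ factors through such a bracket, $C_dP(n)=0$ for $n<2d$. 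Hence $C_dP\cong S_{(2d)}$ is simple; transporting back, $A_dP\cong S^{2d}\circ\mathfrak{a}^{\#}$, which is simple by the classification of simple objects of $\kgropMod^{\omega}$ recalled in Section \ref{sectionSchurfunctor}. This proves the last assertion of the theorem.

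\textbf{Splitting off $C_dP$ and identifying the complement.} Every composition factor $S_{\nu}$ of $\overline{C}:=C_d/C_dP$ has $|\nu|\le 2d$. Since $C_dP\cong S_{(2d)}$ is concentrated in the top degree $2d$, a non-split extension of $S_{\nu}$ by $S_{(2d)}$ would be generated in degree $|\nu|$ yet contain a copy of $S_{(2d)}$ in degree $2d$, forcing $|\nu|=2d$; but then it is a single-degree extension of $\gpS_{2d}$-representations and splits. Thus $\operatorname{Ext}^1_{\catLie}(S_{\nu},S_{(2d)})=0$ for every composition factor of $\overline{C}$, hence $\operatorname{Ext}^1_{\catLie}(\overline{C},S_{(2d)})=0$ and the sequence $0\to C_dP\to C_d\to\overline{C}\to 0$ splits. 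One then checks that the complement is exactly the submodule $C_dQ$ generated by $Q_d$: the $S_{(2d)}$-component of $Q_d$ vanishes (it is a difference of two $\gpS_{2d}$-translates of $v$ with coefficients summing to zero), so $C_dP\cap C_dQ=0$; and $Q_d$ has nonzero image in every summand $S_{2\mu}$ with $\mu\ne(d)$, so that $\K\gpS_{2d}\cdot Q_d=\bigoplus_{\mu\ne(d)}S_{2\mu}=\overline{C}(2d)$. Since $C_d$, and hence $\overline{C}$, is generated in degree $2d$, this yields $C_d=C_dP\oplus C_dQ$ with $C_dQ\cong\overline{C}$.

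\textbf{Indecomposability of $C_dQ$ — the main obstacle.} What remains, and what I expect to be the genuinely hard step, is the indecomposability of $C_dQ\cong\overline{C}$. The route I would take is to show that it has a simple socle — equivalently, that the common kernel of the Lie-bracket operators on $C_d$ in degrees $<2d$ contributes a single simple $\catLie$-module — since any finite-dimensional module with simple socle is indecomposable. Establishing this requires a genuine analysis of the $\gpS_n$-representations $\catLieC(0,n)_d$ and of the AS, IHX and Casimir relations of Lemma \ref{CatLieC0-} (equivalently, of the chord diagrams on $X_n$ modulo $4$T underlying $A_d(n)$), as carried out in \cite{Katada1, Katada2}; alternatively one could compute $\End_{\catLie}(C_dQ)$ directly and verify that it is local. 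By contrast, the earlier steps are formal consequences of the presentation of $\catLieC(0,-)_d$, the classification of simples in $\kgropMod^{\omega}$, and the degree bound on $C_d$.
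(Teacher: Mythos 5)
The paper itself does not prove Theorem \ref{katadamainthm}: it is quoted from \cite{Katada1,Katada2}, so there is no internal proof to compare your argument with, and I assess the proposal on its own terms. Your reduction to $\catLie$-modules is legitimate: the equivalence of Proposition \ref{adjunctionPowell} is exact and additive, and by Lemma \ref{lemmaALandAL0d} (with $m=0$) one has $A_d\cong\A^{L}_0(L^{\otimes -},H^{\otimes -})\otimes_{\catLie}C_d$, so the decomposition can be checked for $C_d=\catLieC(0,-)_d$. The observations that $C_d$ is supported in degrees $\le 2d$ and generated by $C_d(2d)\cong\K[\gpS_{2d}/(\gpS_2\wr\gpS_d)]\cong\bigoplus_{\mu\vdash d}S_{2\mu}$, that the $\gpS_{2d}$-invariant vector is killed by every bracket (so $C_dP\cong S_{(2d)}$ and hence $A_dP\cong S^{2d}\circ\mathfrak{a}^{\#}$ is simple), and the $\Ext$-vanishing splitting argument are all sound in characteristic $0$. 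One assertion is left unproved: that $Q_d$ has nonzero projection onto every summand $S_{2\mu}$ with $\mu\ne(d)$. It is true and fixable (the projection of the base coset onto $S_{2\mu}$ is the spherical vector; if it were also fixed by $(23)$ it would be invariant under the subgroup generated by $\gpS_2\wr\gpS_d$ and $(23)$, which is all of $\gpS_{2d}$, forcing $\mu=(d)$), but as written it is a gap in the identification of the complement with $C_dQ$.

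The genuine gap is the word \emph{indecomposable}: you never prove that $A_dQ$ (equivalently $C_dQ$) is indecomposable, and that is the substantive content of the cited theorem --- the splitting $A_d=A_dP\oplus A_dQ$ and the simplicity of $A_dP$ are, as you yourself note, comparatively formal consequences of the presentation of $\catLieC(0,-)_d$ and the degree bound. Your proposed route (exhibit a simple socle, or show $\End_{\catLie}(C_dQ)$ is local) is only named, not carried out, and nothing in the lemmas of the present paper supplies it; moreover it is not a priori clear that the socle of $C_dQ$ is simple, and no multiplicity-free shortcut is available in general, since already $A_3Q$ contains $S^{2^2}\circ\mathfrak{a}^{\#}$ with multiplicity two and has infinitely many $\kgrop$-submodules (as recalled in Section \ref{sectionAmod}). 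Deferring this step to \cite{Katada1,Katada2} means the proposal establishes the direct sum decomposition and the simplicity of $A_dP$, but not the full statement of the theorem.
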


We have the following descending filtration of $\kgrop$-modules
\begin{gather*}
    A_d\supset A_d Q\supset A_{d,1}\supset A_{d,2}\supset \cdots\supset A_{d,2d-1}=0,
\end{gather*}
where $A_{d,k}=U(\A_{\ge d,\ge k}(0,-)/\A_{\ge d+1,\ge k}(0,-))$.

\begin{lemma}[\cite{Katada2}]\label{decompositionAdQ}
We have an isomorphism of $\kgrop$-modules
$$A_d Q/ A_{d,1}\cong \bigoplus_{\lambda\vdash d, \; \lambda\neq d} S^{2\lambda}\circ \mathfrak{a}^{\#},$$
where $2\lambda=(2\lambda_1,\cdots,2\lambda_{l(\lambda)})$ for a partition $\lambda=(\lambda_1,\cdots,\lambda_{l(\lambda)})$ of $d$. 
\end{lemma}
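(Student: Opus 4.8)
The plan is to compute the full quotient $A_d/A_{d,1}$ and then peel off the summand $A_dP$ using Theorem~\ref{katadamainthm}. The guiding point is that killing $A_{d,1}$ kills all trivalent vertices, so $A_d/A_{d,1}$ should be the ``chord part'' of $A_d$ with the order of the endpoints along each arc forgotten, i.e.\ the polynomial functor $\Sym^d\circ\Sym^2\circ\mathfrak{a}^{\#}$; the lemma then follows from the classical (Littlewood) plethysm $\Sym^d(\Sym^2 V)\cong\bigoplus_{\lambda\vdash d}S^{2\lambda}(V)$ (see e.g.\ \cite{Fulton--Harris}). For the reduction: the filtration displayed just above gives $A_{d,1}\subseteq A_dQ$, so quotienting $A_d=A_dP\oplus A_dQ$ by $A_{d,1}$ only touches the second summand and $A_d/A_{d,1}\cong A_dP\oplus(A_dQ/A_{d,1})$. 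Hence it suffices to prove $A_d/A_{d,1}\cong\bigoplus_{\lambda\vdash d}S^{2\lambda}\circ\mathfrak{a}^{\#}$ and that $A_dP$ sits inside as the $\lambda=(d)$ summand $S^{2d}\circ\mathfrak{a}^{\#}$; then $A_dQ/A_{d,1}$, being the complement of that summand in a semisimple module, is $\bigoplus_{\lambda\vdash d,\ \lambda\neq(d)}S^{2\lambda}\circ\mathfrak{a}^{\#}$, which is the assertion.

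To identify $A_d/A_{d,1}$, recall from Lemma~\ref{decompositionofA} (with $m=0$) that $A_d(n)$ is spanned by chord diagrams $\mu^{[p_1,\cdots,p_n]}P_{\sigma}\tilde{c}^{\otimes d}$: $d$ chords whose $2d$ endpoints sit, in some recorded order, on the $n$ arcs of $X_n$. I would send such a chord diagram to the monomial $\prod_{k=1}^{d}e_{i_k}e_{j_k}\in\Sym^d(\Sym^2(\K^n))$, where the $k$-th chord meets the arcs $i_k$ and $j_k$. This respects the $4$T relation (whose four terms have the same multiset of arc-pairs) and, since transposing two consecutive endpoints along one arc does not change which arcs the chords meet and the corresponding transposition difference is the STU-resolution of a one-trivalent-vertex diagram, it factors through $A_{d,1}$; thus it defines a $\kgrop$-module map $\varphi\colon A_d/A_{d,1}\to\Sym^d\circ\Sym^2\circ\mathfrak{a}^{\#}$, naturality reducing, via the factorization of morphisms of $\kgrop$, to the generators $\mu,\Delta,S$, for which cabling a chord end along a word acts on that arc precisely as $\mathfrak{a}^{\#}$ acts on the corresponding basis vector (with the sign $-1$ for $S$, matching the abelianization of $x\mapsto x^{-1}$). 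Finally $\varphi$ is an isomorphism: it is surjective, and modulo $A_{d,1}$ every chord diagram is congruent to a ``sorted'' one, so the sorted chord diagrams span $A_d/A_{d,1}(n)$ while $\varphi$ carries them bijectively onto the monomial basis of $\Sym^d(\Sym^2(\K^n))$, forcing these spanning sets to be bases and $\varphi$ to be bijective.

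It then remains only to locate $A_dP$: by Theorem~\ref{katadamainthm} it is simple and $\cong S^{2d}\circ\mathfrak{a}^{\#}$, and its image in $A_d/A_{d,1}$ is nonzero because $P_d\notin A_{d,1}(2d)$---at $n=2d$ every degree-$d$ Jacobi diagram has all $2d$ of its vertices univalent, so $A_{d,1}(2d)=0$---hence, by multiplicity-freeness of $\bigoplus_{\lambda\vdash d}S^{2\lambda}\circ\mathfrak{a}^{\#}$, that image is exactly the $\lambda=(d)$ summand. The step I expect to demand the most care is the control of $A_{d,1}$ in chord-diagram terms---that modulo $A_{d,1}$ the only relations among sorted chord diagrams are the evident ones---which rests on the STU/$4$T calculus of Bar-Natan (an induction on the number of trivalent vertices, reducing any diagram with trivalent vertices to $\K$-linear combinations of endpoint-transposition differences, each of which visibly lies in $A_{d,1}$); the naturality verification, the vanishing $A_{d,1}(2d)=0$, and the plethysm computation are all routine.
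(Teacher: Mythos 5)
This lemma is not proved in the paper at all --- it is quoted from \cite{Katada2} --- so there is no in-text argument to compare against; I can only assess your reconstruction on its own terms. Its overall shape is sound and is consistent with the paper's own data: reducing to an identification $A_d/A_{d,1}\cong \Sym^d\circ\Sym^2\circ\mathfrak{a}^{\#}\cong\bigoplus_{\lambda\vdash d}S^{2\lambda}\circ\mathfrak{a}^{\#}$ (compare the $m=1$ analogue \eqref{decompositionofAdL0}) and then splitting off $A_dP$ using Theorem~\ref{katadamainthm} together with $A_{d,1}\subseteq A_dQ$ is exactly the right strategy, and your final paragraph correctly isolates where the real work lies (that modulo diagrams with a trivalent vertex the only relations among chord diagrams are endpoint reorderings on the arcs). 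One streamlining: rather than defining $\varphi$ only on the spanning set of chord diagrams and then having to know that all relations among them are generated by $4$T, define it on all degree-$d$ Jacobi diagrams on $X_n$ by sending every diagram containing a trivalent vertex to $0$ and every chord diagram to its monomial of arc-pairs; compatibility with STU is then immediate (if the resolved diagram has at least two trivalent vertices all three terms map to $0$, and if it has exactly one, the two resolutions carry the same multiset of arc-pairs), and the map kills $A_{d,1}$ by construction.

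There is, however, one concretely false claim: $A_{d,1}(2d)=0$ does not hold in general, and the justification (``at $n=2d$ every degree-$d$ Jacobi diagram has all $2d$ of its vertices univalent'') conflates the number of arcs with the number of univalent vertices --- a diagram need not meet every arc of $X_n$. For instance $a_{21}\in\A_2(0,3)$, viewed on $X_4$ with an empty fourth arc, gives a nonzero element of $A_{2,1}(4)$; indeed $A_{2,1}$ has composition factors $S^{1^3}\circ\mathfrak{a}^{\#}$ and $S^{2}\circ\mathfrak{a}^{\#}$, both nonzero at $n=4$. The slip is harmless for your argument, because the only use you make of it is $P_d\notin A_{d,1}(2d)$, and that already follows from $A_{d,1}\subseteq A_dQ$ together with $0\neq P_d\in A_dP$ and $A_dP\cap A_dQ=0$; simplicity of $A_dP$ and the multiplicity-freeness of $\bigoplus_{\lambda\vdash d}S^{2\lambda}\circ\mathfrak{a}^{\#}$ then locate its image as the $\lambda=(d)$ summand, exactly as you conclude. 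With that justification replaced, and the naturality check for $\mu,\Delta,S$ (plus the trivial $\eta,\varepsilon$ and symmetries) written out, the proposal gives a correct proof.
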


\subsection{The quotient $\A$-module $\A(0,-)/\A_{\ge d'}(0,-)$}

Here, we study the quotient $\A$-module $\A(0,-)/\A_{\ge d'}(0,-)$.
We prove that $\A(0,-)/\A_{\ge d'}(0,-)$ is indecomposable for $d'\ge 2$.

We have the following descending filtration of $\A(0,-)/\A_{\ge d'}(0,-)$ in $\AMod$:
\begin{gather*}
    \A(0,-)/\A_{\ge d'}(0,-)\supset \A_{\ge 1}(0,-)/\A_{\ge d'}(0,-)\supset\cdots \supset \A_{\ge d'}(0,-)/\A_{\ge d'}(0,-)= 0.
\end{gather*}

As we observed in Section \ref{sectionAd}, the $\A$-module
$\A_{\ge d}(0,-)/\A_{\ge d+1}(0,-)=T(A_d)$ factors through $\kgrop$.
However, in general, for $d\ge 0, d'\ge d+2$, the $\A$-module $\A_{\ge d}(0,-)/\A_{\ge d'}(0,-)$ does not factor through $\kgrop$, and we have the following.

\begin{theorem}\label{indecomposable}
For $d\ge 0,\; d'\ge d+2$,
the $\A$-module $\A_{\ge d}(0,-)/\A_{\ge d'}(0,-)$ is indecomposable. 
\end{theorem}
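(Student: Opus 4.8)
The plan is to show that $\End_{\AMod}(M)$ has no nontrivial idempotents, where $M:=\A_{\ge d}(0,-)/\A_{\ge d'}(0,-)$. Observe first that $M$ is an $\N$-graded $\A$-module concentrated in degrees $d,\dots,d'-1$, with $M_e\cong A_e$, and that $M$ is cyclic, generated by the class $\bar{v}$ of $\tilde{c}^{\otimes d}$: by Lemma~\ref{decompositionofA} every element of $\A_e(0,n)$ is a $\kgrop$-multiple of $\tilde{c}^{\otimes e}=(\tilde{c}^{\otimes(e-d)}\otimes\id_{2d})\circ\tilde{c}^{\otimes d}$. I would then record the vanishing $\Hom_{\kgropMod}(A_e,A_{e'})=0$ for $e>e'$: by \eqref{ALandAL0d} and Proposition~\ref{adjunctionpowellAL0}, $A_e$ corresponds under the equivalence $\catLieMod\simeq\kgropMod^{\omega}$ to the $\catLie$-module $\catLieC(0,-)_e$, which by Lemma~\ref{CatLieC0-} is generated in degree $2e$ while $\catLieC(0,-)_{e'}$ is supported in degrees $\le 2e'<2e$. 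Hence any $\phi\in\End_{\AMod}(M)$ is upper triangular for the grading (the composite $M_e\hookrightarrow M\xrightarrow{\phi}M\twoheadrightarrow M_{e'}$ is a $\kgrop$-module map $A_e\to A_{e'}$, so vanishes when $e>e'$), its diagonal part $\phi_0$ is again an $\A$-module endomorphism, and $\phi\mapsto\phi_0$ is a ring map onto the graded endomorphism ring with nilpotent kernel; consequently a nontrivial idempotent of $\End_{\AMod}(M)$ gives one of the graded endomorphism ring, so it suffices to prove that $M$ admits no nontrivial decomposition $M=N_1\oplus N_2$ into graded $\A$-submodules.

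Suppose such a decomposition exists with $N_1,N_2\neq 0$. Restricting to the lowest degree gives $A_d=(N_1)_d\oplus(N_2)_d$. If one summand is $0$, the other contains $\bar{v}$ and hence equals $M$; so both are nonzero, which already excludes $d\le 1$ (where $A_d$ is simple). For $d\ge 2$, Theorem~\ref{katadamainthm} gives $A_d=A_dP\oplus A_dQ$ with $A_dP\cong S^{2d}\circ\mathfrak{a}^{\#}$ simple and $A_dQ$ indecomposable, and $A_dP$ is not a composition factor of $A_dQ$: by Lemma~\ref{decompositionAdQ} the factors of $A_dQ/A_{d,1}$ are the $S^{2\lambda}\circ\mathfrak{a}^{\#}$ with $\lambda\vdash d$, $\lambda\neq(d)$, corresponding to simple $\catLie$-modules $S_{2\lambda}\neq S_{(2d)}$, while the factors of $A_{d,1}$ have polynomial degree $<2d$. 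Thus $\Hom_{\kgropMod}(A_dP,A_dQ)=\Hom_{\kgropMod}(A_dQ,A_dP)=0$, so any decomposition of $A_d$ into two nonzero summands is, up to order, $A_dP\oplus A_dQ$; say $(N_1)_d=A_dP$, $(N_2)_d=A_dQ$. Writing $\bar{v}=\bar{v}_P+\bar{v}_Q$ for the corresponding decomposition of the generator (with $\bar{v}_P,\bar{v}_Q\neq 0$, again by Theorem~\ref{katadamainthm}) and applying the projections $M\to N_i$, one finds $N_1=\A\cdot\bar{v}_P$ and $N_2=\A\cdot\bar{v}_Q$.

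It remains to contradict $M=\A\cdot\bar{v}_P\oplus\A\cdot\bar{v}_Q$, and this is where the hypothesis $d'\ge d+2$ is used, so that degree $d+1$ is present. Set $W_P:=(\A\cdot\bar{v}_P)_{d+1}$ and $W_Q:=(\A\cdot\bar{v}_Q)_{d+1}$, which are $\kgrop$-submodules of $A_{d+1}$; a direct decomposition of $M$ forces $A_{d+1}=W_P\oplus W_Q$. Since $\A_1(2d,-)\circ\tilde{c}^{\otimes d}=A_{d+1}$ by Lemma~\ref{decompositionofA}, and $\tilde{c}^{\otimes d}=\bar{v}_P+\bar{v}_Q$, we get $W_P+W_Q=A_{d+1}$; the content is that this sum cannot be direct, i.e. $W_P\cap W_Q\neq 0$. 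I expect to prove this by an explicit computation in $\A_{d+1}(0,-)$: the "$4$T-relations" — the prototype being the relation $\tilde{r}_c$ of Corollary~\ref{quadraticA}, inserted while the remaining $d-1$ Casimir chords sit idle — give identities, obtained by attaching the last Casimir chord to $\tilde{c}^{\otimes d}$, whose summands have genuinely mixed $A_dP$/$A_dQ$ origin, so no such identity can hold separately in $W_P$ and in $W_Q$; concretely one produces a nonzero element of $W_P\cap W_Q$, for instance by showing that the simple summand $A_{d+1}P\cong S^{2(d+1)}\circ\mathfrak{a}^{\#}$ (or a suitable composition factor of $A_{d+1}$) is reached both from $A_dP$ and from $A_dQ$. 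This last step is the main obstacle: it is exactly where the detailed $\kgrop$-module structure of $A_d$ and $A_{d+1}$ from \cite{Katada1,Katada2}, together with careful bookkeeping of the STU and $4$T relations, is indispensable.
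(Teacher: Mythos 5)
Your reduction steps are essentially sound: the vanishing $\Hom_{\kgropMod}(A_e,A_{e'})=0$ for $e>e'$ (via the equivalence with $\catLieMod$ and the fact that $\catLieC(0,-)_e$ is generated in arity $2e$ while $\catLieC(0,-)_{e'}$ vanishes above arity $2e'$) does give triangularity of endomorphisms, the passage to graded decompositions via the nilpotent kernel of $\phi\mapsto\phi_0$ is legitimate because $M$ lives in finitely many degrees, and the identification of a putative decomposition with $\A\cdot\bar{v}_P\oplus\A\cdot\bar{v}_Q$ in lowest degree is correct (the paper instead works at the \emph{top} degree $d'-1$, placing $T(A_{d'-1}Q)$ inside one summand and pushing arbitrary elements of the other summand up by $\tilde{c}$-multiplication, but your lowest-degree organization is a reasonable variant). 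However, the proof stops exactly at its mathematical core. The statement you label ``the main obstacle'' --- that $A_{d+1}=W_P\oplus W_Q$ is impossible, i.e.\ that the $\A$-action genuinely mixes the two summands of $A_d$ when passing to degree $d+1$ --- is not established; you only announce that you ``expect to prove'' it by a $4$T-computation. This is precisely the nontrivial input the paper supplies (in the case $t<s$ of its proof, where it asserts that $\tilde{c}^{\otimes k}\otimes x_t$ for $0\neq x_t\in A_t P$ has nonzero components in \emph{both} $A_{d'-1}P$ and $A_{d'-1}Q$), so the proposal as written has a genuine gap rather than a complete alternative argument.

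Moreover, one of the two concrete tactics you suggest for closing the gap cannot work. Since $\bar{v}_Q\in A_d Q\subset \A Q$ and $\A Q$ is an $\A$-submodule whose degree-$(d+1)$ part is $A_{d+1}Q$ (as $\A Q_{\ge d+1}/\A Q_{\ge d+2}\cong T(A_{d+1}Q)$ and $A_{d+1}Q\subseteq (\A Q)_{d+1}$), you have $W_Q\subseteq A_{d+1}Q$, whose composition factors exclude $S^{2(d+1)}\circ\mathfrak{a}^{\#}$ by Theorem \ref{katadamainthm} and Lemma \ref{decompositionAdQ}; so the simple summand $A_{d+1}P$ is \emph{never} ``reached from $A_d Q$''. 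Consequently the only possible source of a contradiction is the other direction: one must show that $(\tilde{c}\otimes\id_{2d})\cdot\bar{v}_P$ (more generally $\A_1\cdot\bar{v}_P$) has a nonzero component in $A_{d+1}Q$, i.e.\ that $W_P\neq A_{d+1}P$; a Krull--Schmidt argument then rules out $A_{d+1}=W_P\oplus W_Q$. That single assertion, which requires the explicit $\kgrop$-module structure of $A_d$ and $A_{d+1}$ from \cite{Katada1,Katada2} together with the STU/$4$T bookkeeping, is exactly what is missing from your proposal and what the paper's proof actually proves and uses.
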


\begin{proof}
If $d=0, d'=2$, then it is easy to see that $\A(0,-)/\A_{\ge 2}(0,-)$ is a non-trivial extension of $\A(0,-)/\A_{\ge 1}(0,-)\cong \K$ by $\A_{\ge 1}(0,-)/\A_{\ge 2}(0,-)\cong T(S^{2}\circ \mathfrak{a}^{\#})$. Therefore, $\A(0,-)/\A_{\ge 2}(0,-)$ is indecomposable.

Otherwise, we have $d'\ge 3$.
Suppose that there exist $\A$-submodules $F$ and $G$ such that 
\begin{gather*}\label{supposedecompA}
    \A_{\ge d}(0,-)/\A_{\ge d'}(0,-)=F\oplus G \tag{$\ast$}.
\end{gather*} 
Then we have
\begin{gather*}
     \A_{\ge d'-1}(0,-)/\A_{\ge d'}(0,-)=F_{\ge d'-d-1,\ge 0}\oplus G_{\ge d'-d-1,\ge 0},
\end{gather*}
where $F_{\ge d'-d-1,\ge 0}$ (resp. $ G_{\ge d'-d-1,\ge 0}$) is the $\A$-submodule of $F$ (resp. $G$) defined in Section \ref{sectionfiltration}.
It follows from Theorem \ref{katadamainthm} and Lemma \ref{decompositionAdQ} that the $\A$-module $\A_{\ge d'-1}(0,-)/\A_{\ge d'}(0,-)=T(A_{d'-1})$ is decomposed into the direct sum of indecomposable $\A$-modules $T(A_{d'-1}P)$ and $T(A_{d'-1}Q)$, where $T(A_{d'-1}P)\cong T(S^{2(d'-1)}\circ\mathfrak{a}^{\#})$ and where any composition factor of $T(A_{d'-1}Q)$ is not isomorphic to $T(S^{2(d'-1)}\circ\mathfrak{a}^{\#})$.
Therefore, we may assume that $T(A_{d'-1}Q)\subset F_{\ge d'-d-1,\ge 0}\subset F$.

For any $n\ge 0$, we have a $\K$-linear isomorphism 
\begin{gather*}
    \A_{\ge d}(0,n)/\A_{\ge d'}(0,n)\cong A_d P(n)\oplus\cdots\oplus A_{d'-1}P(n)\oplus A_d Q(n)\oplus\cdots\oplus A_{d'-1}Q(n).
\end{gather*}
For a non-trivial element $z=x_d+\cdots+x_{d'-1}+y_d+\cdots+y_{d'-1}$, where $x_j\in A_j P(n)$, $y_j\in A_j Q(n)$, let
\begin{gather*}
t=
    \begin{cases}
     d' & \text{if }   x_d=\cdots=x_{d'-1}=0\\
     \min\{j\mid x_j\neq 0\} & \text{otherwise},
    \end{cases}
\end{gather*}
and 
\begin{gather*}
s=
    \begin{cases}
     d' & \text{if }   y_d=\cdots=y_{d'-1}=0\\
     \min\{j\mid y_j\neq 0\} & \text{otherwise}.
    \end{cases}
\end{gather*}
Suppose that $G(n)$ includes a non-trivial element $z$.
If $t>s$, then we have
\begin{gather*}
    G(\tilde{c}^{\otimes (d'-1-s)}\otimes \id_n)(z)=G(\tilde{c}^{\otimes (d'-1-s)}\otimes \id_n)(y_s),
\end{gather*}
which is a non-trivial element of $T(A_{d'-1}Q)(n+2(d'-1-s))$ and thus a non-trivial element of $F(n+2(d'-1-s))$.
If $t=s$, then there exists a morphism $f:n\to n$ in $\kgrop\cong \A_0$ such that 
\begin{gather*}
    G(f)(z)=x'_{t+1}+\cdots+x'_{d'-1}+y'_{t}+\cdots+y'_{d'-1},
\end{gather*}
where $x'_j\in A_j P(n)$ and $y'_j\in A_j Q(n)$, which means that we have $x'_{t}=0$, since $A_t P\cong S^{2t}\circ \mathfrak{a}^{\#}$ does not appear as a composition factor of $A_t Q$ by Lemma \ref{decompositionAdQ}.
Therefore, this case reduces to the case of $t>s$.
If $t<s$, then we have
\begin{gather*}
     G(\tilde{c}^{\otimes (d'-1-t)}\otimes \id_n)(z)=G(\tilde{c}^{\otimes (d'-1-t)}\otimes \id_n)(x_t)=\tilde{c}^{\otimes (d'-1-t)}\otimes x_t=u+v,
\end{gather*}
where $u$ is a non-trivial element of $A_{d'-1} P(n+2(d'-1-t))$ and $v$ is a non-trivial element of $A_{d'-1} Q(n+2(d'-1-t))$.
By an argument similar to the case of $t=s$, there exists a morphism $g\in \A_0(n+2(d'-1-t), n+2(d'-1-t))$ such that $G(g)(u+v)$ is a non-trivial element of $A_{d'-1} Q(n+2(d'-1-t))$ and thus a non-trivial element of $F(n+2(d'-1-t))$.
Therefore, in any cases, $G$ includes a non-trivial element of $F$, which contradicts \eqref{supposedecompA}.
Hence, we have $G=0$, which implies that $\A_{\ge d}(0,-)/\A_{\ge d'}(0,-)$ is indecomposable.
\end{proof}

In what follows, we study a submodule of $\A(0,-)$ which contains all $A_d Q$ with $d\ge 2$ as subquotient $\A$-modules.

Let $\A Q$ denote the $\A$-submodule of $\A_{\ge 2}(0,-)$ generated by the anti-symmetric element $Q_2\in \A_2(0,4),$
which is a generator of $A_2 Q$ as a $\kgrop$-module defined in \eqref{PdandQd}.
Then we have the following descending filtration of $\A Q$ in $\AMod$:
\begin{gather*}
    \A Q=\A Q_{\ge 2}\supset \A Q_{\ge 3}\supset\cdots \supset \A Q_{\ge d}\supset \cdots,
\end{gather*}
where 
$$\A Q_{\ge d}=\A Q\cap \A_{\ge d}(0,-).$$
The $\kgrop$-modules $A_d P$ and $A_d Q$ satisfy
\begin{gather*}
    T(A_d P)\cong \A_{\ge d}(0,-)/(\A_{\ge d+1}(0,-)+\A Q_{\ge d}), \quad  
    T(A_d Q)\cong \A Q_{\ge d}/\A Q_{\ge d+1}.
\end{gather*}
The cokernel $\A(0,-)/\A Q$ is a graded $\A$-module whose underlying $\kgrop$-module is
\begin{gather*}
    U(\A(0,-)/\A Q)\cong \bigoplus_{d\ge 0} A_d P\cong \bigoplus_{d\ge 0} S^{2d}\circ \mathfrak{a}^{\#}
\end{gather*}
and the action of the Casimir $2$-tensor is given by
\begin{gather*}
    (\tilde{c}\otimes \id_{2d})\cdot P_d=\frac{(2d)!}{(2d+2)!}P_{d+1},
\end{gather*}
where $P_d\in A_d P(2d)$ is the generator of $\A_d P$ defined in \eqref{PdandQd}.

We have the following descending filtration of the $\A$-module $\A Q/\A Q_{\ge d'}$
\begin{gather*}
    \A Q/\A Q_{\ge d'}\supset \A Q_{\ge 3}/\A Q_{\ge d'}\supset\cdots \supset \A Q_{\ge d'-1}/\A Q_{\ge d'}=T(A_{d'-1} Q).
\end{gather*}
The following proposition is a partial generalization of Theorem \ref{katadamainthm}.

\begin{proposition}\label{indecomposableQ}
For $d\ge 2,\; d'\ge d+1$, the $\A$-module $\A Q_{\ge d}/\A Q_{\ge d'}$ is indecomposable.
\end{proposition}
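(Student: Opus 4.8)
The plan is to mimic the proof of Theorem \ref{indecomposable}, but tracking only the anti-symmetric pieces $A_j Q$ instead of the full $A_j$. The key structural input is the $\K$-linear identification, for each $n\ge 0$,
\begin{gather*}
    (\A Q_{\ge d}/\A Q_{\ge d'})(n)\cong A_d Q(n)\oplus A_{d+1}Q(n)\oplus\cdots\oplus A_{d'-1}Q(n),
\end{gather*}
coming from the filtration $\A Q_{\ge d}\supset\A Q_{\ge d+1}\supset\cdots\supset\A Q_{\ge d'}$ together with the subquotient identification $\A Q_{\ge j}/\A Q_{\ge j+1}\cong T(A_j Q)$ recorded above. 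So the first step is to set up this direct-sum picture and fix, for a nonzero element $z=y_d+\cdots+y_{d'-1}$ with $y_j\in A_j Q(n)$, the bottom index $s=\min\{j\mid y_j\ne 0\}$ (or $s=d'$ if $z=0$).

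Next I would argue by contradiction: suppose $\A Q_{\ge d}/\A Q_{\ge d'}=F\oplus G$ as $\A$-modules. Restricting this decomposition to the bottom layer of the degree filtration and using that $\A Q_{\ge d'-1}/\A Q_{\ge d'}\cong T(A_{d'-1}Q)$ is \emph{indecomposable} as a $\kgrop$-module (hence as an $\A$-module, since it factors through $\kgrop$) by Theorem \ref{katadamainthm}, one of the summands — say $F$ — must contain all of $T(A_{d'-1}Q)$ in its bottom degree. Concretely $F_{\ge d'-1-d,\ge 0}\supseteq T(A_{d'-1}Q)$. Now suppose $G$ contains a nonzero element $z=y_d+\cdots+y_{d'-1}$ with bottom index $s<d'$. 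Applying the $\A$-module action of $\tilde{c}^{\otimes(d'-1-s)}\otimes\id_n$ to $z$ annihilates all layers of degree $<d'-1$ and sends $y_s$ into degree $d'-1$; I need to check that this image is a nonzero element of $T(A_{d'-1}Q)$. This follows because the composite $A_s Q\hookrightarrow A_s\xrightarrow{(\tilde{c}^{\otimes(d'-1-s)}\otimes\id)}A_{d'-1}$ lands in $A_{d'-1}Q$ (the anti-symmetry of $Q_s$ is preserved under tensoring with copies of $\tilde c$) and is injective on the $\kgrop$-submodule $A_s Q$ — the latter being precisely the content of $\A Q_{\ge s}$ being generated by $Q_s$ with the subquotients $A_j Q$ appearing without collapse, which I would extract from the filtration of $\A Q$. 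Hence $G$ and $F$ share a nonzero element, contradicting $F\cap G=0$; therefore $G=0$ and $\A Q_{\ge d}/\A Q_{\ge d'}$ is indecomposable.

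Two points need care. First, unlike in Theorem \ref{indecomposable}, here there is no ``$P$ versus $Q$'' dichotomy to exploit within a single degree, so I cannot reduce the $t=s$ case by killing a $P$-component; but this is not needed, because every element of $\A Q_{\ge d}/\A Q_{\ge d'}$ is already purely ``$Q$-type'', so the argument above applies verbatim to an arbitrary nonzero $z$ with a single choice of $s$. Second, and this is the main obstacle, I must justify that $\tilde{c}^{\otimes(d'-1-s)}\otimes\id_n$ acts injectively on $A_s Q(n)\subset(\A Q_{\ge d}/\A Q_{\ge d'})(n)$ landing in $A_{d'-1}Q$: i.e. that no nonzero anti-symmetric element in degree $s$ becomes zero or becomes ``$P$-type'' after multiplying by Casimir $2$-tensors. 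For this I would invoke the explicit description of the generator $Q_s=\tilde c^{\otimes s}-P_{(23)}\tilde c^{\otimes s}$ from \eqref{PdandQd} and the fact that $\A Q_{\ge s}$ is by definition the $\A$-submodule generated by $Q_s$, so that $(\tilde c^{\otimes(d'-1-s)}\otimes\id)\cdot Q_s$ is, up to the nonzero scalar bookkeeping of Theorem \ref{katadamainthm}/Lemma \ref{decompositionAdQ}, a generator of $A_{d'-1}Q$; combined with the direct-sum decomposition this forces the map to be the ``inclusion of a summand'' at the level of these generating submodules and hence injective on the relevant subspace. Once this is in place, the contradiction goes through exactly as above, completing the proof.
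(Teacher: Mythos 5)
Your argument is essentially the paper's own proof: the same $\K$-linear identification $(\A Q_{\ge d}/\A Q_{\ge d'})(n)\cong A_d Q(n)\oplus\cdots\oplus A_{d'-1}Q(n)$, the same use of the indecomposability of $T(A_{d'-1}Q)$ (Theorem \ref{katadamainthm}) to place the bottom layer of the degree filtration inside one summand, and the same contradiction obtained by acting with $\tilde{c}^{\otimes(d'-1-s)}\otimes\id_n$ on a nonzero element of the other summand. The point you single out as the main obstacle --- that $\tilde{c}^{\otimes(d'-1-s)}\otimes x_s$ is a nonzero element of $A_{d'-1}Q$ --- is exactly the step the paper also relies on (and states without further elaboration), so your proposal follows the paper's route.
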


\begin{proof}
For $d'-d=1$, the $\A$-module $\A Q_{\ge d}/\A Q_{\ge d'}\cong T(A_d Q)$ is indecomposable by Theorem \ref{katadamainthm}.
For $d'-d=k\ge 2$, suppose that there exist $\A$-submodules $F$ and $G$ such that 
\begin{gather*}\label{supposedecompAQ}
    \A Q_{\ge d}/\A Q_{\ge d'}=F\oplus G \tag{$\ast$}.
\end{gather*} 
Then we have
\begin{gather*}
     \A Q_{\ge d'-1}/\A Q_{\ge d'}=F_{\ge d'-d-1,\ge 0}\oplus G_{\ge d'-d-1,\ge 0},
\end{gather*}
where $F_{\ge d'-d-1,\ge 0}$ (resp. $ G_{\ge d'-d-1,\ge 0}$) is the $\A$-submodule of $F$ (resp. $G$) defined in Section \ref{sectionfiltration}.
Since $\A Q_{\ge d'-1}/\A Q_{\ge d'}$ is indecomposable, we may assume that $\A Q_{\ge d'-1}/\A Q_{\ge d'}$ is an $\A$-submodule of $F$.
For any $n\ge 0$, we have a $\K$-linear isomorphism 
\begin{gather*}
    (\A Q_{\ge d}/\A Q_{\ge d'})(n)\cong A_d Q(n)\oplus A_{d+1} Q(n)\oplus \cdots\oplus A_{d'-1} Q(n).
\end{gather*}
For a non-trivial element $x=x_d+\cdots+x_{d'-1}$ of $(\A Q_{\ge d}/\A Q_{\ge d'})(n)$, where $x_j\in A_j Q(n)$, let $t=\min\{j\mid x_j\neq 0\}$.
If $G(n)$ includes a non-trivial element $x$, then we have
\begin{gather*}
    G(\tilde{c}^{\otimes (d'-1-t)}\otimes \id_n)(x)=G(\tilde{c}^{\otimes (d'-1-t)}\otimes \id_n)(x_t)=\tilde{c}^{\otimes (d'-1-t)}\otimes x_t,
\end{gather*}
which is a non-trivial element of $(\A Q_{\ge d'-1}/\A Q_{\ge d'})(n+2(d'-1-t))$ and thus a non-trivial element of $F(n+2(d'-1-t))$.
This contradicts \eqref{supposedecompAQ}.
Therefore, we have $G=0$, which implies that $\A Q_{\ge d'-1}/\A Q_{\ge d'}$ is indecomposable.
\end{proof}

\subsection{The $\A$-module $\A(0,-)$}

Here, we will study the $\A$-module $\A(0,-)$.

By Theorem \ref{adjunctionAomega}, $\A(0,-)$ is analytic.
By the Yoneda Lemma, $\A(0,-)$ is a projective $\A$-module and we have 
\begin{gather*}
    \End_{\AMod}(\A(0,-))\cong \End_{\A}(0)\cong \K.
\end{gather*}
It follows that idempotents of the $\A$-module $\A(0,-)$ are $0$ and $1$, which implies that the $\A$-module $\A(0,-)$ is indecomposable.
In what follows, we will give another proof of the indecomposability of $\A(0,-)$, which can also be applied to the $\A$-module $\A Q$.

\begin{theorem}\label{indecomposableA0}
    The $\A$-modules $\A(0,-)$ and $\A Q$ are indecomposable.
\end{theorem}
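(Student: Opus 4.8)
The plan is to give a single argument covering both $M=\A(0,-)$ and $M=\A Q$, built on the descending filtrations already in play and on the indecomposability of the finite-length quotients proved above. Write $M_{\ge d'}$ for $\A_{\ge d'}(0,-)$ when $M=\A(0,-)$ and for $\A Q_{\ge d'}$ when $M=\A Q$; in either case $M_{\ge d'}$ is the $\A$-submodule of $M$ consisting of those elements whose homogeneous components all have degree $\ge d'$, the $M_{\ge d'}$ form a descending chain with $\bigcap_{d'\ge 0}M_{\ge d'}=0$ (each $M(n)$ being the direct sum of its homogeneous pieces), and $M/M_{\ge d'}$ is indecomposable for $d'\ge 2$ in the first case (Theorem \ref{indecomposable} with $d=0$) and for $d'\ge 3$ in the second (Proposition \ref{indecomposableQ} with $d=2$, using $\A Q=\A Q_{\ge 2}$).

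First I would record that $M$ is generated as an $\A$-module by a single homogeneous element $g$ in its minimal degree $d_0$ --- namely $g=\id_0$ of degree $0$ for $\A(0,-)$, and $g=Q_2$ of degree $2$ for $\A Q$ --- so that every homogeneous element of $M$ of degree $f$ is a $\K$-linear combination of elements $M(\phi)(g)$ with $\phi\in\A$ of degree $f-d_0$. From this it follows that any $\A$-module endomorphism $e$ of $M$ does not lower degree: $e(g)$ lies in $M$, hence has all components in degree $\ge d_0$, and since $e$ is $\A$-linear while composition in $\A$ adds degrees, $e$ sends each homogeneous element of degree $f$ to an element all of whose components have degree $\ge f$. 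Now suppose $M=F\oplus G$ with $F,G\ne 0$, and let $e\colon M\to M$ be the idempotent with image $F$. By the previous remark $e(M_{\ge d'})\subseteq M_{\ge d'}$, so $e$ restricts to an idempotent of $M_{\ge d'}$ and $M_{\ge d'}=(F\cap M_{\ge d'})\oplus(G\cap M_{\ge d'})$; consequently $M/M_{\ge d'}\cong F/(F\cap M_{\ge d'})\oplus G/(G\cap M_{\ge d'})$.

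Taking $d'$ large enough that $M/M_{\ge d'}$ is indecomposable, one of the two summands vanishes, i.e. $F\subseteq M_{\ge d'}$ or $G\subseteq M_{\ge d'}$. Since the filtration is descending, setting $a=\sup\{d'\mid F\subseteq M_{\ge d'}\}$ and $b=\sup\{d'\mid G\subseteq M_{\ge d'}\}$ gives $\max(a,b)\ge d'$ for every sufficiently large $d'$, hence $a=\infty$ or $b=\infty$; if $a=\infty$ then $F\subseteq\bigcap_{d'}M_{\ge d'}=0$, contradicting $F\ne 0$, and symmetrically for $b$. Therefore $M$ is indecomposable.

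I expect the only point requiring care to be the compatibility of the decomposition $M=F\oplus G$ with the degree filtration --- a direct summand need not be a graded submodule a priori --- which the degree-non-decreasing property of endomorphisms is designed to force; this in turn rests on $\A$ being $\N$-graded with $\A_0\cong\kgrop$ and on $M$ being generated in its bottom degree. After that, the reduction to the finite-length quotients $M/M_{\ge d'}$ and the passage $d'\to\infty$ are formal. For $M=\A(0,-)$ the degree-non-decreasing property is of course immediate from $\End_{\AMod}(\A(0,-))\cong\K$, but the point of phrasing it as above is that the same argument then applies verbatim to $\A Q$, which is not representable.
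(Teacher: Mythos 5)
Your proof is correct and follows essentially the same route as the paper: assume a splitting $M=F\oplus G$, pass to the quotients $M/M_{\ge d'}$, invoke their indecomposability (Theorem \ref{indecomposable} for $\A(0,-)$, Proposition \ref{indecomposableQ} for $\A Q$), and conclude from $\bigcap_{d'}M_{\ge d'}=0$. Your explicit check that the splitting is compatible with the degree filtration---via the degree-non-decreasing property of endomorphisms, which uses that $M$ is generated by a single homogeneous element in its bottom degree---is a careful justification of a step the paper's proof treats as immediate when asserting that the projection induces a direct sum decomposition of $M/M_{\ge d'}$.
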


\begin{proof}
Suppose that we have non-trivial $\A$-submodules $F$ and $G$ of $\A(0,-)$ such that
$$
\A(0,-)=F\oplus G.
$$
Then for each $d'\ge 2$, the projection $\A(0,-)\twoheadrightarrow\A(0,-)/\A_{\ge d'}(0,-)$ induces
$$\A(0,-)/\A_{\ge d'}(0,-)\cong (F/(F\cap \A_{\ge d'}(0,-)))\oplus (G/(G\cap \A_{\ge d'}(0,-))).$$
Since $\A(0,-)/\A_{\ge d'}(0,-)$ is indecomposable by Theorem \ref{indecomposable}, we may assume that $G/(G\cap \A_{\ge d'}(0,-))=0$, which implies that $G\subset \A_{\ge d'}(0,-)$ and $F\not\subset \A_{\ge d'}(0,-)$.
Therefore, we have $G\subset \bigcap_{d'\ge 2}\A_{\ge d'}(0,-)=0$, which implies that $\A(0,-)$ is indecomposable.

By Proposition \ref{indecomposableQ}, the above argument holds for $\A Q$ and the projection $\A Q\twoheadrightarrow \A Q/\A Q_{\ge d'}$ for $d'\ge 3$.
\end{proof}

\subsection{The $\A$-module $\A(0,-)/\A_{\ge 4}(0,-)$}
In this subsection, we study the subquotient $\A$-modules of $\A(0,-)/\A_{\ge 4}(0,-)$.

First, we briefly recall from \cite{Katada1} the $\kgrop$-module structures of $A_d$ for $d\le 3$.
The $\kgrop$-module $A_0$ is spanned by the empty Jacobi diagram, and we have 
$$A_0\cong \K=S^0\circ \mathfrak{a}^{\#}.$$
The $\kgrop$-module $A_1$ is generated by $\tilde{c}$,
and we have 
$$A_1\cong S^{2}\circ \mathfrak{a}^{\#}.$$
The $\kgrop$-module $A_2$ is generated by $\tilde{c}^{\otimes 2}$ and we have the indecomposable decomposition
$$A_2=A_2 P\oplus A_2 Q.$$
Recall that $A_2 P$ is generated by the symmetric element $P_2$ and thus we have $A_2 P\cong S^{4}\circ \mathfrak{a}^{\#}$.
The $\kgrop$-module $A_2 Q$ is generated by the anti-symmetric element $Q_2$ and has a unique composition series
$$A_2 Q\supset A_{2,1}\supset A_{2,2}\supset 0$$
with composition factors $S^{2^2}\circ \mathfrak{a}^{\#}$, $S^{1^3}\circ \mathfrak{a}^{\#}$ and $S^{2}\circ \mathfrak{a}^{\#}$. 
Here, $A_{2,1}$ is generated by the element $a_{21}=\scalebox{0.8}{$\centre{%% Creator: Inkscape 1.4.2 (ebf0e940, 2025-05-08), www.inkscape.org
%% PDF/EPS/PS + LaTeX output extension by Johan Engelen, 2010
%% Accompanies image file '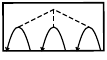' (pdf, eps, ps)
%%
%% To include the image in your LaTeX document, write
%%   \input{<filename>.pdf_tex}
%%  instead of
%%   \includegraphics{<filename>.pdf}
%% To scale the image, write
%%   \def\svgwidth{<desired width>}
%%   \input{<filename>.pdf_tex}
%%  instead of
%%   \includegraphics[width=<desired width>]{<filename>.pdf}
%%
%% Images with a different path to the parent latex file can
%% be accessed with the `import' package (which may need to be
%% installed) using
%%   \usepackage{import}
%% in the preamble, and then including the image with
%%   \import{<path to file>}{<filename>.pdf_tex}
%% Alternatively, one can specify
%%   \graphicspath{{<path to file>/}}
%% 
%% For more information, please see info/svg-inkscape on CTAN:
%%   http://tug.ctan.org/tex-archive/info/svg-inkscape
%%
\begingroup%
  \makeatletter%
  \providecommand\color[2][]{%
    \errmessage{(Inkscape) Color is used for the text in Inkscape, but the package 'color.sty' is not loaded}%
    \renewcommand\color[2][]{}%
  }%
  \providecommand\transparent[1]{%
    \errmessage{(Inkscape) Transparency is used (non-zero) for the text in Inkscape, but the package 'transparent.sty' is not loaded}%
    \renewcommand\transparent[1]{}%
  }%
  \providecommand\rotatebox[2]{#2}%
  \newcommand*\fsize{\dimexpr\f@size pt\relax}%
  \newcommand*\lineheight[1]{\fontsize{\fsize}{#1\fsize}\selectfont}%
  \ifx\svgwidth\undefined%
    \setlength{\unitlength}{51.02362205bp}%
    \ifx\svgscale\undefined%
      \relax%
    \else%
      \setlength{\unitlength}{\unitlength * \real{\svgscale}}%
    \fi%
  \else%
    \setlength{\unitlength}{\svgwidth}%
  \fi%
  \global\let\svgwidth\undefined%
  \global\let\svgscale\undefined%
  \makeatother%
  \begin{picture}(1,0.55555556)%
    \lineheight{1}%
    \setlength\tabcolsep{0pt}%
    \put(0,0){\includegraphics[width=\unitlength,page=1]{a21.pdf}}%
  \end{picture}%
\endgroup%
}$}\in \A_2(0,3)$ and $A_{2, 2}$ is generated by the element $a_{22}=\scalebox{0.6}{$\centre{%% Creator: Inkscape 1.4.2 (ebf0e940, 2025-05-08), www.inkscape.org
%% PDF/EPS/PS + LaTeX output extension by Johan Engelen, 2010
%% Accompanies image file '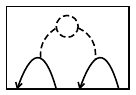' (pdf, eps, ps)
%%
%% To include the image in your LaTeX document, write
%%   \input{<filename>.pdf_tex}
%%  instead of
%%   \includegraphics{<filename>.pdf}
%% To scale the image, write
%%   \def\svgwidth{<desired width>}
%%   \input{<filename>.pdf_tex}
%%  instead of
%%   \includegraphics[width=<desired width>]{<filename>.pdf}
%%
%% Images with a different path to the parent latex file can
%% be accessed with the `import' package (which may need to be
%% installed) using
%%   \usepackage{import}
%% in the preamble, and then including the image with
%%   \import{<path to file>}{<filename>.pdf_tex}
%% Alternatively, one can specify
%%   \graphicspath{{<path to file>/}}
%% 
%% For more information, please see info/svg-inkscape on CTAN:
%%   http://tug.ctan.org/tex-archive/info/svg-inkscape
%%
\begingroup%
  \makeatletter%
  \providecommand\color[2][]{%
    \errmessage{(Inkscape) Color is used for the text in Inkscape, but the package 'color.sty' is not loaded}%
    \renewcommand\color[2][]{}%
  }%
  \providecommand\transparent[1]{%
    \errmessage{(Inkscape) Transparency is used (non-zero) for the text in Inkscape, but the package 'transparent.sty' is not loaded}%
    \renewcommand\transparent[1]{}%
  }%
  \providecommand\rotatebox[2]{#2}%
  \newcommand*\fsize{\dimexpr\f@size pt\relax}%
  \newcommand*\lineheight[1]{\fontsize{\fsize}{#1\fsize}\selectfont}%
  \ifx\svgwidth\undefined%
    \setlength{\unitlength}{65.19685039bp}%
    \ifx\svgscale\undefined%
      \relax%
    \else%
      \setlength{\unitlength}{\unitlength * \real{\svgscale}}%
    \fi%
  \else%
    \setlength{\unitlength}{\svgwidth}%
  \fi%
  \global\let\svgwidth\undefined%
  \global\let\svgscale\undefined%
  \makeatother%
  \begin{picture}(1,0.69565217)%
    \lineheight{1}%
    \setlength\tabcolsep{0pt}%
    \put(0,0){\includegraphics[width=\unitlength,page=1]{a22.pdf}}%
  \end{picture}%
\endgroup%
}$}\in \A_2(0,2)$.
The set of submodules of $A_2$ consists of $(A_2 P)^{\oplus \epsilon}\oplus M$ for $M\subset A_2 Q$, $\epsilon\in \{0,1\}$, and the number of it is $8$ since composition factors of $A_2$ are multiplicity free.
See \cite[Theorem 7.9]{Katada1} for details.
The $\kgrop$-module $A_3$ is generated by 
$\tilde{c}^{\otimes 3}$ and we have the indecomposable decomposition
$$A_3=A_3 P\oplus A_3 Q.$$
Recall that $A_3 P$ is generated by the symmetric element $P_3$ and thus we have $A_3 P\cong S^{6}\circ \mathfrak{a}^{\#}$.
The $\kgrop$-module $A_3 Q$ is generated by the anti-symmetric element $Q_3$ and has the filtration
\begin{gather*}
    A_3 Q\supset A_{3, 1}\supset A_{3,2}\supset A_{3,3}\supset A_{3, 4}\supset 0,
\end{gather*}
where $A_{3,k}$ is generated by $a_{31}=a_{21}\otimes \tilde{c}\in \A_3(0,5)$ if $k=1$, 
by $a_{22}\otimes \tilde{c}\in \A_3(0,4)$ and $a_{32}=\scalebox{0.8}{$\centre{%% Creator: Inkscape 1.4.2 (ebf0e940, 2025-05-08), www.inkscape.org
%% PDF/EPS/PS + LaTeX output extension by Johan Engelen, 2010
%% Accompanies image file '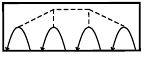' (pdf, eps, ps)
%%
%% To include the image in your LaTeX document, write
%%   \input{<filename>.pdf_tex}
%%  instead of
%%   \includegraphics{<filename>.pdf}
%% To scale the image, write
%%   \def\svgwidth{<desired width>}
%%   \input{<filename>.pdf_tex}
%%  instead of
%%   \includegraphics[width=<desired width>]{<filename>.pdf}
%%
%% Images with a different path to the parent latex file can
%% be accessed with the `import' package (which may need to be
%% installed) using
%%   \usepackage{import}
%% in the preamble, and then including the image with
%%   \import{<path to file>}{<filename>.pdf_tex}
%% Alternatively, one can specify
%%   \graphicspath{{<path to file>/}}
%% 
%% For more information, please see info/svg-inkscape on CTAN:
%%   http://tug.ctan.org/tex-archive/info/svg-inkscape
%%
\begingroup%
  \makeatletter%
  \providecommand\color[2][]{%
    \errmessage{(Inkscape) Color is used for the text in Inkscape, but the package 'color.sty' is not loaded}%
    \renewcommand\color[2][]{}%
  }%
  \providecommand\transparent[1]{%
    \errmessage{(Inkscape) Transparency is used (non-zero) for the text in Inkscape, but the package 'transparent.sty' is not loaded}%
    \renewcommand\transparent[1]{}%
  }%
  \providecommand\rotatebox[2]{#2}%
  \newcommand*\fsize{\dimexpr\f@size pt\relax}%
  \newcommand*\lineheight[1]{\fontsize{\fsize}{#1\fsize}\selectfont}%
  \ifx\svgwidth\undefined%
    \setlength{\unitlength}{70.86614173bp}%
    \ifx\svgscale\undefined%
      \relax%
    \else%
      \setlength{\unitlength}{\unitlength * \real{\svgscale}}%
    \fi%
  \else%
    \setlength{\unitlength}{\svgwidth}%
  \fi%
  \global\let\svgwidth\undefined%
  \global\let\svgscale\undefined%
  \makeatother%
  \begin{picture}(1,0.4)%
    \lineheight{1}%
    \setlength\tabcolsep{0pt}%
    \put(0,0){\includegraphics[width=\unitlength,page=1]{a32.pdf}}%
  \end{picture}%
\endgroup%
}$}\in \A_3(0,4)$ if $k=2$, 
by $a_{33}=\scalebox{0.8}{$\centre{%% Creator: Inkscape 1.4.2 (ebf0e940, 2025-05-08), www.inkscape.org
%% PDF/EPS/PS + LaTeX output extension by Johan Engelen, 2010
%% Accompanies image file '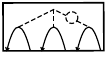' (pdf, eps, ps)
%%
%% To include the image in your LaTeX document, write
%%   \input{<filename>.pdf_tex}
%%  instead of
%%   \includegraphics{<filename>.pdf}
%% To scale the image, write
%%   \def\svgwidth{<desired width>}
%%   \input{<filename>.pdf_tex}
%%  instead of
%%   \includegraphics[width=<desired width>]{<filename>.pdf}
%%
%% Images with a different path to the parent latex file can
%% be accessed with the `import' package (which may need to be
%% installed) using
%%   \usepackage{import}
%% in the preamble, and then including the image with
%%   \import{<path to file>}{<filename>.pdf_tex}
%% Alternatively, one can specify
%%   \graphicspath{{<path to file>/}}
%% 
%% For more information, please see info/svg-inkscape on CTAN:
%%   http://tug.ctan.org/tex-archive/info/svg-inkscape
%%
\begingroup%
  \makeatletter%
  \providecommand\color[2][]{%
    \errmessage{(Inkscape) Color is used for the text in Inkscape, but the package 'color.sty' is not loaded}%
    \renewcommand\color[2][]{}%
  }%
  \providecommand\transparent[1]{%
    \errmessage{(Inkscape) Transparency is used (non-zero) for the text in Inkscape, but the package 'transparent.sty' is not loaded}%
    \renewcommand\transparent[1]{}%
  }%
  \providecommand\rotatebox[2]{#2}%
  \newcommand*\fsize{\dimexpr\f@size pt\relax}%
  \newcommand*\lineheight[1]{\fontsize{\fsize}{#1\fsize}\selectfont}%
  \ifx\svgwidth\undefined%
    \setlength{\unitlength}{51.02362205bp}%
    \ifx\svgscale\undefined%
      \relax%
    \else%
      \setlength{\unitlength}{\unitlength * \real{\svgscale}}%
    \fi%
  \else%
    \setlength{\unitlength}{\svgwidth}%
  \fi%
  \global\let\svgwidth\undefined%
  \global\let\svgscale\undefined%
  \makeatother%
  \begin{picture}(1,0.55555556)%
    \lineheight{1}%
    \setlength\tabcolsep{0pt}%
    \put(0,0){\includegraphics[width=\unitlength,page=1]{a33.pdf}}%
  \end{picture}%
\endgroup%
}$}\in \A_3(0,3)$ if $k=3$ and 
by $a_{34}=\scalebox{0.8}{$\centre{%% Creator: Inkscape 1.4.2 (ebf0e940, 2025-05-08), www.inkscape.org
%% PDF/EPS/PS + LaTeX output extension by Johan Engelen, 2010
%% Accompanies image file '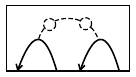' (pdf, eps, ps)
%%
%% To include the image in your LaTeX document, write
%%   \input{<filename>.pdf_tex}
%%  instead of
%%   \includegraphics{<filename>.pdf}
%% To scale the image, write
%%   \def\svgwidth{<desired width>}
%%   \input{<filename>.pdf_tex}
%%  instead of
%%   \includegraphics[width=<desired width>]{<filename>.pdf}
%%
%% Images with a different path to the parent latex file can
%% be accessed with the `import' package (which may need to be
%% installed) using
%%   \usepackage{import}
%% in the preamble, and then including the image with
%%   \import{<path to file>}{<filename>.pdf_tex}
%% Alternatively, one can specify
%%   \graphicspath{{<path to file>/}}
%% 
%% For more information, please see info/svg-inkscape on CTAN:
%%   http://tug.ctan.org/tex-archive/info/svg-inkscape
%%
\begingroup%
  \makeatletter%
  \providecommand\color[2][]{%
    \errmessage{(Inkscape) Color is used for the text in Inkscape, but the package 'color.sty' is not loaded}%
    \renewcommand\color[2][]{}%
  }%
  \providecommand\transparent[1]{%
    \errmessage{(Inkscape) Transparency is used (non-zero) for the text in Inkscape, but the package 'transparent.sty' is not loaded}%
    \renewcommand\transparent[1]{}%
  }%
  \providecommand\rotatebox[2]{#2}%
  \newcommand*\fsize{\dimexpr\f@size pt\relax}%
  \newcommand*\lineheight[1]{\fontsize{\fsize}{#1\fsize}\selectfont}%
  \ifx\svgwidth\undefined%
    \setlength{\unitlength}{65.19685039bp}%
    \ifx\svgscale\undefined%
      \relax%
    \else%
      \setlength{\unitlength}{\unitlength * \real{\svgscale}}%
    \fi%
  \else%
    \setlength{\unitlength}{\svgwidth}%
  \fi%
  \global\let\svgwidth\undefined%
  \global\let\svgscale\undefined%
  \makeatother%
  \begin{picture}(1,0.56521739)%
    \lineheight{1}%
    \setlength\tabcolsep{0pt}%
    \put(0,0){\includegraphics[width=\unitlength,page=1]{a34.pdf}}%
  \end{picture}%
\endgroup%
}$}\in \A_3(0,2)$ if $k=4$.
As opposed to $A_2 Q$, the $\kgrop$-module $A_3 Q$ has $S^{2^2}\circ \mathfrak{a}^{\#}$ with multiplicity $2$ as composition factors, and thus has infinitely many $\kgrop$-submodules.
(See \cite[Section 8.4]{Katada2} for details.)

Now we study the $\A$-module structure of $\A(0,-)/\A_{\ge d}(0,-)$ for $d=2,3,4$.
For $d=2$, it is easy to see that the $\A$-module $\A(0,-)/\A_{\ge 2}(0,-)$ is a non-trivial extension of $T(S^2\circ \mathfrak{a}^{\#})$ by $T(S^0\circ \mathfrak{a}^{\#})$, or in other words, $\A(0,-)/\A_{\ge 2}(0,-)$ has a unique composition series
\begin{gather*}
    \A(0,-)/\A_{\ge 2}(0,-)\supset \A_{\ge 1}(0,-)/\A_{\ge 2}(0,-)\supset 0
\end{gather*}
with composition factors $T(S^0\circ \mathfrak{a}^{\#})$ and $T(S^2\circ \mathfrak{a}^{\#})$.

For $d=3$, since the $\A$-submodule of $\A(0,-)/\A_{\ge 3}(0,-)$ that is generated by the empty Jacobi diagram is $\A(0,-)/\A_{\ge 3}(0,-)$ itself, the set of $\A$-submodules of $\A(0,-)/\A_{\ge 3}(0,-)$ consists of $\A(0,-)/\A_{\ge 3}(0,-)$ and submodules of $\A_{\ge 1}(0,-)/\A_{\ge 3}(0,-)$.
The $\A$-module $\A_{\ge 1}(0,-)/\A_{\ge 3}(0,-)$ has the following composition series
\begin{gather*}
    \A_{\ge 1}(0,-)/\A_{\ge 3}(0,-)\supset \A_{\ge 2}(0,-)/\A_{\ge 3}(0,-) \supset \A Q/\A Q_{\ge 3}\supset \A_{21}\supset \A_{22}\supset 0
\end{gather*}
with composition factors
$T(S^2\circ \mathfrak{a}^{\#})$, $T(S^{4}\circ \mathfrak{a}^{\#})$, $T(S^{2^2}\circ \mathfrak{a}^{\#})$, $T(S^{1^3}\circ \mathfrak{a}^{\#})$ and $T(S^{2}\circ \mathfrak{a}^{\#})$, where $\A_{21}$ and $\A_{22}$ are the $\A$-submodules of $\A_{\ge 1}(0,-)/\A_{\ge 3}(0,-)$ generated by the classes represented by $a_{21}$ and $a_{22}$, respectively.
Any non-trivial $\A$-submodule of $\A_{\ge 1}(0,-)/\A_{\ge 3}(0,-)$ is a submodule of $\A_{\ge 2}(0,-)/\A_{\ge 3}(0,-)=T(A_2)$, and thus the number of $\A$-submodules of $\A_{\ge 1}(0,-)/\A_{\ge 3}(0,-)$ is $9$.

For $d=4$, any non-trivial $\A$-submodule of $\A(0,-)/\A_{\ge 4}(0,-)$ is a submodule of $\A_{\ge 1}(0,-)/\A_{\ge 4}(0,-)$, and any non-trivial $\A$-submodule of $\A_{\ge 1}(0,-)/\A_{\ge 4}(0,-)$ is a submodule of $\A_{\ge 2}(0,-)/\A_{\ge 4}(0,-)$.
In order to describe $\A$-submodules of $\A_{\ge 2}(0,-)/\A_{\ge 4}(0,-)$, we will study the $\A$-submodules of $\A Q/\A Q_{\ge 4}$.
For a submodule $M$ of $\A Q/\A Q_{\ge 4}$, let $\overline{M}=U(M/(M\cap \A Q_{\ge 3}))$. 
Then the $\kgrop$-module $\overline{M}$ is one of the $A_2 Q, A_{2,1},A_{2,2}$ and $0$, and the $\A$-module $M$ is classified as follows:
\begin{enumerate}[(i)]
\item if $\overline{M}=0$, then $M$ is a submodule of $T(A_3 Q)=\A Q_{\ge 3}/\A Q_{\ge 4}$,
\item if $\overline{M}=A_{2,2}$, then $M$ is one of the following nine submodules
\begin{itemize}
    \item the $\A$-submodule generated by $a_{22}$, 
    \item the $\A$-submodule generated by $a_{22}$ and $a_{32}$, which coincides with the $\A$-module $\A_{\ge 2,\ge 2}(0,-)/\A_{\ge 4,\ge 2}(0,-)$,
    \item the $\A$-submodule generated by $a_{22}$ and $c_{(31^2)}\cdot a_{31}$,
    \item the $\A$-submodule generated by $a_{22}$ and $c_{(21^3)}\cdot a_{31}$,
    \item the $\A$-submodule generated by $a_{22}$ and $a_{31}$,
    \item the $\A$-submodule generated by $a_{22}$ and $c_{(42)}\cdot Q_3$,
    \item the $\A$-submodule generated by $a_{22}$, $c_{(42)}\cdot Q_3$ and $a_{31}$,
    \item the $\A$-submodule generated by $a_{22}$ and $c_{(2^3)}\cdot Q_3$,
    \item the $\A$-submodule generated by $a_{22}$ and $Q_3$,
\end{itemize}
where $c_{\lambda}$ denotes the Young symmetrizer corresponding to a partition $\lambda$,
\item if $\overline{M}=A_{2, 1}$, then $M$ is one of the following four submodules
\begin{itemize}
    \item the $\A$-submodule generated by $a_{21}$, which coincides with the $\A$-module $\A_{\ge 2,\ge 1}(0,-)/\A_{\ge 4,\ge 1}(0,-)$,
    \item the $\A$-submodule generated by $a_{21}$ and $c_{(42)}\cdot Q_3$, 
    \item the $\A$-submodule generated by $a_{21}$ and $c_{(2^3)}\cdot Q_3$, 
    \item the $\A$-submodule generated by $a_{21}$ and $Q_3$,
\end{itemize} 
\item if $\overline{M}=A_2 Q$, then we have $M=\A Q/\A Q_{\ge 4}$.
\end{enumerate}
The set of $\A$-submodules of $\A_{\ge 2}(0,-)/\A_{\ge 4}(0,-)$ consists of 
\begin{itemize}
    \item $(T(A_3 P))^{\oplus \epsilon}\oplus M$ for $\epsilon\in \{0,1\}$ and an $\A$-submodule $M$ of $\A Q/\A Q_{\ge 4}$,
    \item the $\A$-submodule generated by $P_2$,
    \item the $\A$-submodule generated by $P_2$ and $a_{31}$,
    \item the $\A$-submodule generated by $P_2$ and $Q_3$,
    \item the $\A$-submodule generated by $P_2$ and $a_{22}$,
    \item the $\A$-submodule generated by $P_2$ and $a_{22}$ and $a_{31}$,
    \item the $\A$-submodule generated by $P_2$ and $Q_3$,
    \item the $\A$-submodule generated by $P_2$ and $a_{21}$,
    \item the $\A$-submodule generated by $P_2$ and $a_{21}$ and $Q_3$,
    \item the $\A$-submodule generated by $P_2$ and $Q_2$, that is, $\A_{\ge 2}(0,-)/\A_{\ge 4}(0,-)$.
\end{itemize}

\subsection{The $\A$-module $\A^L(L,H^{\otimes -})$}
Here, we study the $\A$-module structure of $\A^L(L,H^{\otimes -})$.
We write $A^L_d$ for the $\kgrop$-module $\A^L_{d}(L,H^{\otimes -})$ and $A^{L}_{d,\ge k}$ for $\A^L_{d,\ge k}(L,H^{\otimes -})$.
Then we have a filtration of $\kgrop$-modules
\begin{gather*}
    A^L_d\supset A^{L}_{d,\ge 1}\supset A^{L}_{d,\ge 2}\supset\cdots\supset A^{L}_{d,\ge 2d}\supset 0,
\end{gather*}
and the $\kgrop$-module structure of the graded quotient $A^L_d/A^{L}_{d,\ge 1}$ is 
\begin{gather}\label{decompositionofAdL0}
    A^L_d/A^{L}_{d,\ge 1}\cong (S^{1}\otimes (S^{d}\circ S^{2}))\circ \mathfrak{a}^{\#}\cong (S^{1}\otimes \bigoplus_{\lambda\vdash d}S^{2\lambda})\circ \mathfrak{a}^{\#}.
\end{gather}

Let $A^{L}_dP$ be the $\kgrop$-submodule of $A^L_d$ that is generated by the symmetric element
\begin{gather*}
    P^L_d=(\sum_{\sigma\in \gpS_{2d+1}}P_{\sigma})\circ (i \otimes \tilde{c}^{\otimes d}) \in \A^{L}_d(L,H^{\otimes 2d+1}).
\end{gather*}
Let $A^{L}_d Q$ be the $\kgrop$-submodule of $A^L_d$ that is generated by the two anti-symmetric elements
\begin{gather*}
   Q'_d=i \otimes \tilde{c}^{\otimes d}- P_{(12)}\circ (i \otimes \tilde{c}^{\otimes d}),\;
   Q''_d=i \otimes \tilde{c}^{\otimes d}- P_{(34)}\circ (i \otimes \tilde{c}^{\otimes d})  \in \A^{L}_d(L,H^{\otimes 2d+1}).
\end{gather*}

We have 
$$A^{L}_0=A^{L}_0 P\cong S^1\circ \mathfrak{a}^{\#}.$$
For $d\ge 1$, we obtain an analogue of a partial result of Theorem \ref{katadamainthm}.

\begin{proposition}\label{decompositionALd}
    Let $d\ge 1$. We have a direct sum decomposition 
    \begin{gather*}
        A^{L}_d=A^{L}_d P\oplus A^{L}_d Q
    \end{gather*}
    in $\kgropMod$ and $A^{L}_d P$ is a simple $\kgrop$-module isomorphic to $S^{2d+1}\circ \mathfrak{a}^{\#}$.
\end{proposition}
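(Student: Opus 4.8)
The plan is to mimic the structure of the proof of Theorem~\ref{katadamainthm} (i.e.\ the $m=0$ case from \cite{Katada1, Katada2}) while tracking the extra tensor factor $S^1\circ\mathfrak{a}^{\#}$ coming from the leg labelled $L$. First I would show that $A^L_d$ is spanned as a $\kgrop$-module by $P^L_d$, $Q'_d$ and $Q''_d$, so that $A^L_d = A^L_d P + A^L_d Q$; this uses the factorization in Lemma~\ref{factorizationofAL} together with the observation that, modulo the image of morphisms that symmetrize the last two of the $2d$ Casimir legs against each other, every generator $\mu^{[p_1,\dots,p_n]}P_\sigma i^{\otimes(1+2d)}(\id_L\otimes c^{\otimes d})$ can be written using $i\otimes \tilde c^{\otimes d}$ acted on by $\kgrop$. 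Then I would prove the sum is direct by producing a $\kgrop$-module projection $A^L_d \twoheadrightarrow A^L_d P$ whose kernel is $A^L_d Q$; equivalently, I would use the equivalence $\A^L_0\otimes_{\catLie}-:\catLieMod\xrightarrow{\simeq}\kgropMod^{\omega}$ of Proposition~\ref{adjunctionpowellAL0} (every $A^L_d$ is analytic, being a quotient of the projective $\A^L(L,H^{\otimes-})$ restricted to degree $d$) to transport the question to $\catLieMod$, where $\gpS$-representation-theoretic arguments are cleaner.

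The key computation is the $\gpS_{2d+1}$-module structure of $A^L_d/A^L_{d,\ge1}$, which by \eqref{decompositionofAdL0} is $\bigl(S^1\otimes\bigl(\bigoplus_{\lambda\vdash d}S^{2\lambda}\bigr)\bigr)\circ\mathfrak{a}^{\#}$, i.e.\ under the equivalence corresponds to the $\gpS_{2d+1}$-representation $\mathrm{Ind}_{\gpS_1\times\gpS_{2d}}^{\gpS_{2d+1}}\bigl(\mathbf 1\boxtimes \bigoplus_{\lambda\vdash d}S^{2\lambda}\bigr)$ (the induction product $S_1\cdot S_{2\lambda}$ in the sense of symmetric functions). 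The partition $2d+1$ appears in $S_1\cdot S_{2\lambda}$ only for $\lambda=(d)$, and there with multiplicity one (Pieri's rule: adding a single box to the one-row shape $2d$ gives only $2d+1$ or $(2d,1)$). So $S^{2d+1}\circ\mathfrak{a}^{\#}$ occurs with multiplicity exactly one in the top graded quotient, hence with multiplicity at most one as a composition factor of all of $A^L_d$ — the lower filtration quotients $A^L_{d,\ge k}/A^L_{d,\ge k+1}$ for $k\ge1$ involve only Schur functors $S^\mu$ with $l(\mu)\ge2$. I would then check that $P^L_d$ generates a copy of $S^{2d+1}\circ\mathfrak{a}^{\#}$: it is a symmetric element (invariant under $\gpS_{2d+1}$ acting by $P_\sigma$), it is nonzero (e.g.\ it survives the projection to the top quotient $A^L_d/A^L_{d,\ge1}$, which follows because $P^L_d$ is a sum over all of $\gpS_{2d+1}$ and the chord-diagram basis of Lemma~\ref{AL0}--\ref{bijectionwithCmn} shows this sum is nonzero in $A^L_0(L,H^{\otimes 2d+1})$), and the $\kgrop$-submodule it generates, being a quotient of $S^{2d+1}\circ\mathfrak{a}^{\#}$ — a simple object of $\kgropMod^{\omega}$ — is either zero or isomorphic to it; since $P^L_d\ne0$ it is $S^{2d+1}\circ\mathfrak{a}^{\#}$. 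Simplicity of $S^{2d+1}\circ\mathfrak{a}^{\#}$ is already recorded in Section~\ref{sectionSchurfunctor}.

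Finally, for directness: since $A^L_d P\cong S^{2d+1}\circ\mathfrak{a}^{\#}$ is simple and occurs with multiplicity one as a composition factor of $A^L_d$, it is a direct summand — the corresponding idempotent exists because, transporting to $\catLieMod$ and then to $\gpS$-representations, a simple module occurring with multiplicity one in a finite-length module splits off (the endomorphism algebra acts as scalars on it, so Fitting's lemma applies degree-by-degree). I would then define $A^L_d Q$ to be a complement; it remains to identify this complement with the submodule generated by $Q'_d$ and $Q''_d$. For that I would verify (a) $Q'_d, Q''_d\in\ker(A^L_d\twoheadrightarrow A^L_d P)$ — immediate since $P^L_d$ is $\gpS_{2d+1}$-invariant while $Q'_d,Q''_d$ are differences $x-P_\tau x$, so they die in any trivial-isotypic quotient; and (b) that $Q'_d,Q''_d$ together with the $\kgrop$-action generate all of $\ker$, i.e.\ that $A^L_d = A^L_dP + \langle Q'_d,Q''_d\rangle$, which is exactly the spanning statement from the first step. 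The main obstacle I anticipate is this spanning claim: one must show carefully that every chord diagram in the basis $C(1+2d,n)$, after applying the Casimir-leg relations of $\catLieC$, reduces modulo $\langle P^L_d, Q'_d, Q''_d\rangle_{\kgrop}$ to zero — this is the analogue of the combinatorial heart of \cite{Katada1} (there the two generators were $P_d$ and $Q_d$; here the extra free leg forces a second anti-symmetric generator $Q''_d$, obtained by anti-symmetrizing over a pair of Casimir legs rather than involving the $L$-leg). I would handle this by induction on $d$ using the filtration $A^L_{d,\ge k}$ and reducing each step to the known structure of $A_d$ via the factorization $A^L_d \cong A_d(-,n)\otimes_{\kgrop}\A^L_0(L,H^{\otimes-})$ of Lemma~\ref{lemmaALandAd}.
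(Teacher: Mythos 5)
Your skeleton (span by $P^L_d$, $Q'_d$, $Q''_d$; split off the simple $S^{2d+1}\circ\mathfrak{a}^{\#}$) parallels the paper's, but the step you yourself flag as the ``main obstacle'' is precisely the combinatorial heart of the proof, and your proposed substitute does not supply it. Generation of $A^L_d$ by the single element $i\otimes\tilde c^{\otimes d}$ is immediate from Lemma \ref{factorizationofAL} (since $\tilde c=i^{\otimes2}c$, each generator is $f\circ(i\otimes\tilde c^{\otimes d})$ with $f\in\A_0(2d+1,n)$; no ``modulo symmetrization'' caveat is needed). The real work is to show that every difference $(i\otimes\tilde c^{\otimes d})-P_\sigma(i\otimes\tilde c^{\otimes d})$, $\sigma\in\gpS_{2d+1}$, lies in the $\K\gpS_{2d+1}$-span of $Q'_d$ and $Q''_d$; only then does $A^L_d=A^L_dP+A^L_dQ$ follow and the complement get identified with $A^L_dQ$. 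Your plan of ``induction on $d$ using the filtration and Lemma \ref{lemmaALandAd}'' is not a workable replacement: $Q'_d$ antisymmetrizes the $L$-leg against a Casimir leg and has no counterpart in $A_d$, and Lemma \ref{lemmaALandAd} only exhibits $A^L_d$ as a coend mixing $\A_d$ with degree-$0$ data; it does not transport the decomposition of Theorem \ref{katadamainthm}. The paper's argument is short and direct: define the averaging map $e(f\circ(i\otimes\tilde c^{\otimes d}))=\frac{1}{(2d+1)!}\,f\circ P^L_d$ (well defined because the $4$T-relation is killed), so $A^L_d=\im e\oplus\im(1-e)$ with $\im e=A^L_dP$, and then check, by induction on word length and for each adjacent transposition $\tau$, that $(i\otimes\tilde c^{\otimes d})-P_\tau(i\otimes\tilde c^{\otimes d})$ is $0$ (when $\tau$ swaps the two legs of one $\tilde c$), is $Q'_d$ (when $\tau=(12)$), or is an explicit relabelled translate of $Q''_d$ (when $\tau$ swaps legs of two different $\tilde c$'s). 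Without this computation your proof gives only $A^L_dQ\subseteq\ker$ of the projection, not the stated decomposition.

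There is also a flaw in your splitting argument as written: in $\kgropMod$ (which is not semisimple) a simple composition factor of multiplicity one that occurs as a submodule need \emph{not} be a direct summand --- the non-split objects $A_dQ$ and $A^L_1Q$ in this very paper are of exactly that shape, so ``multiplicity one plus Fitting'' is not enough. The argument can be repaired: $S^{2d+1}\circ\mathfrak{a}^{\#}$ is also a \emph{quotient} of $A^L_d$, being a direct summand of the semisimple top graded quotient \eqref{decompositionofAdL0}; by multiplicity one the composite $A^L_dP\hookrightarrow A^L_d\twoheadrightarrow S^{2d+1}\circ\mathfrak{a}^{\#}$ is then nonzero, hence an isomorphism by Schur's lemma, and the splitting follows. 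Two smaller points: your multiplicity-one count is correct but for the wrong reason --- the quotients $A^L_{d,\ge k}/A^L_{d,\ge k+1}$ with $k\ge1$ do contain factors $S^\mu\circ\mathfrak{a}^{\#}$ with $l(\mu)=1$ (e.g.\ $S^1\circ\mathfrak{a}^{\#}$ inside $A^L_{1,\ge2}$); what excludes $S^{2d+1}$ is that a degree-$d$ diagram with $k\ge1$ trivalent vertices has only $2d+1-k<2d+1$ legs on $X_n$, so those factors satisfy $|\mu|\le 2d+1-k$. And the nonvanishing of $P^L_d$ should be checked in the top quotient $A^L_d/A^L_{d,\ge1}$ via \eqref{decompositionofAdL0}, not ``in $\A^L_0(L,H^{\otimes 2d+1})$'', which is not where $P^L_d$ lives for $d\ge1$.
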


\begin{proof}
    The proof is analogous to that of \cite[Theorem 8.2]{Katada1}.
    Any element of $A^{L}_d(n)$ is a linear combination of 
    $f\circ (i\otimes \tilde{c}^{\otimes d})$ for $f\in \A_0(2d+1, n)$
    by Lemma \ref{factorizationofAL}.
    Define a $\kgrop$-module map 
    \begin{gather*}
        e: A^{L}_d\to A^{L}_d
    \end{gather*}
    by $e_n(f\circ (i\otimes \tilde{c}^{\otimes d}))=\frac{1}{(2d+1)!}f\circ P^L_d$ for $f\in \A_0(2d+1, n)$.
    This is well defined since the $4$T-relation is sent to $0$.
    Since $A^{L}_d P$ is generated by $P^L_d$, we have $\im e=A^{L}_d P$.
    Moreover, we have $e(A^{L}_dP)=A^{L}_dP$, which implies that $e$ is an idempotent (i.e., $e^2= e$).
    Therefore, we have 
    \begin{gather*}
        A^{L}_d=\im e\oplus \ker e, \quad \im e=A^{L}_d P,\quad \ker e=\im (1-e).
    \end{gather*}
    
    It follows from
    $$(\id_{2d+1}-P_{12})(\sum_{\sigma\in \gpS_{2d+1}}P_{\sigma})=0=(\id_{2d+1}-P_{34})(\sum_{\sigma\in \gpS_{2d+1}}P_{\sigma})$$
    that we have $A^{L}_d Q\subset \ker e$.
    For $f\in \A_0(2d+1, n)$, we have
    \begin{gather*}
        (1-e) (f\circ  (i\otimes \tilde{c}^{\otimes d}))=\frac{1}{(2d+1)!}\sum_{\sigma\in \gpS_{2d+1}}f\circ ((i\otimes \tilde{c}^{\otimes d})- P_{\sigma}(i\otimes \tilde{c}^{\otimes d})).
    \end{gather*}
    In order to prove that $\im (1-e)\subset A^{L}_d Q$, it suffices to prove that for any $\sigma\in \gpS_{2d+1}$, there exist $x,y\in \K\gpS_{2d+1}$ such that  
    $$(i\otimes \tilde{c}^{\otimes d})-P_{\sigma}(i\otimes \tilde{c}^{\otimes d})=x Q'_d+ y Q''_d.$$
    Since we have
    \begin{gather*}
        (i\otimes \tilde{c}^{\otimes d})-P_{\sigma\rho}(i\otimes \tilde{c}^{\otimes d})=(i\otimes \tilde{c}^{\otimes d})-P_{\sigma}(i\otimes \tilde{c}^{\otimes d})+P_{\sigma}((i\otimes \tilde{c}^{\otimes d})-P_{\rho}(i\otimes \tilde{c}^{\otimes d})),
    \end{gather*}
    by induction on the length of the permutation $\sigma$, 
    it suffices to prove the existence of $x,y$ if $\sigma$ is an adjacent transposition.
    If $\sigma=(12)$, then we set $x=\id_{2d+1}, y=0$. If $\sigma=(2j,2j+1)$ for $1\le j\le d$, then we set $x=y=0$.
    If $\sigma=(2j-1,2j)$ for $2\le j\le d$, then we set $x=0$,
    $$y=\begin{pmatrix}
        1&2&3&4&5&&\cdots&&2d+1\\
        1&2j-2&2j-1&2j&2j+1&\cdots&\widehat{2j-2}\cdots \widehat{2j+1} &\cdots&2d+1
    \end{pmatrix}.$$
    Therefore, we have the direct sum decomposition of $A^{L}_d$.

    It follows from the decomposition \eqref{decompositionofAdL0} of $A^L_d/A^{L}_{d,\ge 1}$ that the $\kgrop$-module $A^{L}_d P$ is isomorphic to $S^{2d+1}\circ \mathfrak{a}^{\#}$ since we have 
    $f\circ P^L_d=0$ for any $f\in \A^{L}_{0,\ge 1}(2d+1,-)$ and for any $f=(\id_{2d+1}-P_{\tau}), \;\tau \in \gpS_{2d+1}$.
    This completes the proof.
\end{proof}

By an argument similar to \cite[Theorem 7.9]{Katada1}, we obtain the following $\kgrop$-module structure of $A^{L}_1 Q$.

\begin{theorem}
The $\kgrop$-module $A^{L}_1 Q$ has a unique composition series
$$A^{L}_1 Q\supset A^{L}_{1,\ge 1}\supset A^{L}_{1,\ge 2}\supset 0$$
with composition factors $S^{21}\circ\mathfrak{a}^{\#}, S^{1^2}\circ\mathfrak{a}^{\#}$ and $S^{1}\circ \mathfrak{a}^{\#}$. 
In particular, $A^{L}_1 Q$ is indecomposable.
\end{theorem}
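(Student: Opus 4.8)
The plan is to imitate the proof of \cite[Theorem 7.9]{Katada1}, which proves the analogous statement for the $\kgrop$-module $A_2 Q$. Throughout I work with the filtration of $\kgrop$-modules
\begin{gather*}
    A^{L}_1 Q\supset A^{L}_{1,\ge 1}\supset A^{L}_{1,\ge 2}\supset 0
\end{gather*}
by the number of trivalent vertices introduced in Section~\ref{sectionfiltration} (the filtration stabilizes to $0$ at the second step since $A^{L}_{1,\ge 3}=0$, which will be apparent from the diagram analysis below), and with the identification of the simple objects of $\kgropMod^{\omega}$ with the functors $S^{\lambda}\circ\mathfrak{a}^{\#}$ recalled in Section~\ref{sectionSchurfunctor}. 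The first step is to compute the three graded quotients
\begin{gather*}
G_0=A^{L}_1 Q/A^{L}_{1,\ge 1},\qquad G_1=A^{L}_{1,\ge 1}/A^{L}_{1,\ge 2},\qquad G_2=A^{L}_{1,\ge 2}
\end{gather*}
and to show that, as $\kgrop$-modules, $G_0\cong S^{21}\circ\mathfrak{a}^{\#}$, $G_1\cong S^{1^2}\circ\mathfrak{a}^{\#}$ and $G_2\cong S^{1}\circ\mathfrak{a}^{\#}$. Since these three simple modules are pairwise non-isomorphic, this already shows that the displayed chain is a composition series with the stated composition factors.

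For $G_0$, specializing \eqref{decompositionofAdL0} to $d=1$ gives $A^{L}_1/A^{L}_{1,\ge 1}\cong (S^{1}\otimes S^{2})\circ\mathfrak{a}^{\#}\cong(S^{3}\oplus S^{21})\circ\mathfrak{a}^{\#}$ by the Pieri rule. By Proposition~\ref{decompositionALd} the summand $A^{L}_1 P$ is the simple module $S^{3}\circ\mathfrak{a}^{\#}$, and it is spanned by diagrams with no trivalent vertices, the generator $P^{L}_1$ being a linear combination of permutations of $i\otimes\tilde c$; hence $A^{L}_1 P\cap A^{L}_{1,\ge 1}=0$, so $A^{L}_1 P$ accounts for the $S^{3}$-part of $A^{L}_1/A^{L}_{1,\ge 1}$ and $G_0$ is identified with the complementary $S^{21}\circ\mathfrak{a}^{\#}$. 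For $G_1$ and $G_2$ I would argue at the level of diagrams, exactly as for $A_2 Q$ in \cite{Katada1}: the STU relation reduces $A^{L}_1(L,H^{\otimes n})$ to a spanning set organized by the number of trivalent vertices, and from the diagrams with exactly one, resp.\ exactly two, trivalent vertices---modulo AS, IHX and the action of $\kgrop$---one reads off the $\gpS_n$-representations $G_1(n)$ and $G_2(n)$ for small $n$, checks that they are simple, and identifies them with the restrictions of $S^{1^2}\circ\mathfrak{a}^{\#}$ and $S^{1}\circ\mathfrak{a}^{\#}$. (In particular this last computation exhibits the bound $A^{L}_{1,\ge 3}=0$.)

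It remains to show the composition series is the unique one, equivalently that $A^{L}_1 Q$ is uniserial, with submodule lattice $0\subset A^{L}_{1,\ge 2}\subset A^{L}_{1,\ge 1}\subset A^{L}_1 Q$; indecomposability then follows at once. Because the three composition factors are pairwise non-isomorphic, this reduces to showing that the two short exact sequences
\begin{gather*}
0\to S^{1}\circ\mathfrak{a}^{\#}\to A^{L}_{1,\ge 1}\to S^{1^2}\circ\mathfrak{a}^{\#}\to 0,\qquad
0\to A^{L}_{1,\ge 1}\to A^{L}_1 Q\to S^{21}\circ\mathfrak{a}^{\#}\to 0
\end{gather*}
are both non-split: together they force $A^{L}_1 Q$ to have simple socle $A^{L}_{1,\ge 2}$ and simple head $G_0$, and a module of length three with pairwise distinct factors and simple socle and head is uniserial. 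Following \cite{Katada1}, I would prove non-splitness by a descent argument on explicit generators: exhibit elements $v_1\in A^{L}_{1,\ge 1}$ and $v_2\in A^{L}_{1,\ge 2}$ generating $A^{L}_{1,\ge 1}$ and $A^{L}_{1,\ge 2}$ as $\kgrop$-modules (analogues of the elements $a_{21}$ and $a_{22}$), and verify that applying suitable degree-$0$ morphisms of $\A$ to any element whose image in $G_0$ (resp.\ in $G_1$) is non-zero produces $v_1$ (resp.\ $v_2$).

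The main obstacle will be exactly the two hands-on ingredients above: the explicit determination of the $\kgrop$-module structures of $G_1$ and $G_2$ from the diagrammatics, and the descent computations establishing non-splitness. Both are routine in the same sense as the corresponding computations for $A_2 Q$ in \cite[Theorem 7.9]{Katada1}; once they are in place, the passage from the graded data to uniseriality, and hence to indecomposability, is purely formal.
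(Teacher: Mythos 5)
Your overall strategy is the one the paper intends (the paper itself proves this theorem only by appealing to ``an argument similar to \cite[Theorem 7.9]{Katada1}''), and your identification of the graded quotients and of the three composition factors is correct. However, one step in your formal reduction is genuinely wrong: non-splitness of the two sequences
$0\to S^{1}\circ\mathfrak{a}^{\#}\to A^{L}_{1,\ge 1}\to S^{1^2}\circ\mathfrak{a}^{\#}\to 0$ and
$0\to A^{L}_{1,\ge 1}\to A^{L}_1 Q\to S^{21}\circ\mathfrak{a}^{\#}\to 0$
does \emph{not} force the head of $A^{L}_1 Q$ to be simple, hence does not give uniqueness of the composition series. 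A length-three module $M$ with pairwise distinct factors $S_a,S_b,S_c$, simple socle $S_c$ and head $S_a\oplus S_b$ (for instance the representation of the quiver $a\to c\leftarrow b$ with $\K$ at each vertex and identity maps) has a uniserial submodule $N$ with factors $S_c,S_b$ such that both $0\to S_c\to N\to S_b\to 0$ and $0\to N\to M\to S_a\to 0$ are non-split, yet $M$ has two maximal submodules and two composition series. To exclude this configuration for $A^{L}_1 Q$ you must in addition rule out a length-two submodule with factors $S^{1}\circ\mathfrak{a}^{\#}$ and $S^{21}\circ\mathfrak{a}^{\#}$; equivalently, you need that the quotient $A^{L}_1 Q/A^{L}_{1,\ge 2}$ is itself a non-split extension of $S^{21}\circ\mathfrak{a}^{\#}$ by $S^{1^2}\circ\mathfrak{a}^{\#}$, which is strictly more than the non-splitness of the second displayed sequence.

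The descent statement you actually propose to verify is stronger than non-splitness and, once proved, repairs this gap without any socle/head lemma: if every element of $A^{L}_1 Q$ with non-zero image in $A^{L}_1 Q/A^{L}_{1,\ge 1}$ can be carried by degree-$0$ morphisms onto the generator $v_1$ of $A^{L}_{1,\ge 1}$, then any submodule not contained in $A^{L}_{1,\ge 1}$ contains $A^{L}_{1,\ge 1}$ and hence equals $A^{L}_1 Q$; the analogous statement for $A^{L}_{1,\ge 1}/A^{L}_{1,\ge 2}$ and $v_2$, together with the simplicity of $A^{L}_{1,\ge 2}\cong S^{1}\circ\mathfrak{a}^{\#}$, then shows that the only submodules are $0\subset A^{L}_{1,\ge 2}\subset A^{L}_{1,\ge 1}\subset A^{L}_1 Q$. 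So state and use the descent in that form, as in \cite{Katada1}, rather than as a mere non-splitness check. A smaller point: ``spanned by diagrams with no trivalent vertices'' does not by itself give $A^{L}_1 P\cap A^{L}_{1,\ge 1}=0$, since the STU relation writes diagrams with a trivalent vertex as differences of chord diagrams; the clean argument is that $A^{L}_1 P$ is simple and $P^{L}_1$ has non-zero image in $A^{L}_1/A^{L}_{1,\ge 1}$, as used at the end of the proof of Proposition \ref{decompositionALd}.
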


\begin{remark}
The $\kgrop$-module $A^{L}_d$ has a composition series of finite length whose set of composition factors consists of $S^{\lambda}\circ \mathfrak{a}^{\#}$ for each $\lambda$ that is obtained by deleting one box from the Young diagram of $\mu$ for each composition factor $S^{\mu}\circ \mathfrak{a}^{\#}$ of $A_{d+1}$.
\end{remark}

\begin{conjecture}\label{conjectureindecomposableALQ}
    For $d\ge 2$, the $\kgrop$-module $A^{L}_d Q$ is indecomposable.
\end{conjecture}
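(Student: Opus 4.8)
The plan is to imitate the proof that $A_dQ$ is indecomposable from \cite{Katada1, Katada2}, now working with the filtration $A^L_d\supset A^L_{d,\ge1}\supset A^L_{d,\ge2}\supset\cdots\supset A^L_{d,\ge2d}\supset0$. Note first that the degree-shifting arguments of the previous subsections (as in Theorem \ref{indecomposable} and Proposition \ref{indecomposableQ}) do not help here: the $\A$-module $T(A^L_dQ)$ is concentrated in a single degree, so its indecomposability is equivalent, via the fully faithful functor $T$, to that of the $\kgrop$-module $A^L_dQ$, and the latter must be extracted by representation-theoretic means. The first step is to record the composition factors of $A^L_dQ$. Using the description of the composition factors of $A^L_d$ recalled above (each is an $S^{\lambda}\circ\mathfrak{a}^{\#}$ obtained by removing one box from some $\mu$ with $S^{\mu}\circ\mathfrak{a}^{\#}$ a composition factor of $A_{d+1}$), removing the box from $\mu=(2d+2)$ --- the $P$-part of $A_{d+1}$ by Theorem \ref{katadamainthm} --- yields $A^L_dP\cong S^{2d+1}\circ\mathfrak{a}^{\#}$, and the factors coming from the composition factors of $A_{d+1}Q$ (described in Lemma \ref{decompositionAdQ} and in \cite{Katada2}) are those of $A^L_dQ$. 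One checks in particular that $S^{2d+1}\circ\mathfrak{a}^{\#}$ does not occur among the latter, so $A^L_dP$ and $A^L_dQ$ share no composition factor; hence $\End_{\kgrop}(A^L_d)\cong\K\times\End_{\kgrop}(A^L_dQ)$ as rings, and the conjecture is equivalent to the statement that $\End_{\kgrop}(A^L_dQ)$ has no idempotent besides $0$ and $1$.

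For the analysis of idempotents I would pass to $\catLie$-modules. Since $A^L_d$ is polynomial, hence analytic, Proposition \ref{adjunctionPowell} applies, and by Lemma \ref{lemmaALandAL0d} the equivalence $\A^L_0(L^{\otimes-},H^{\otimes-})\otimes_{\catLie}-$ carries the $\catLie$-module $\catLieC(1,-)_d$ to $A^L_d$, compatibly with the decomposition of Proposition \ref{decompositionALd}; so the conjecture becomes the indecomposability of the direct summand of the $\catLie$-module $\catLieC(1,-)_d$ corresponding to $A^L_dQ$. Working in $\catLieMod$, where objects are more rigid because $\catLie(m,n)=0$ for $m<n$, one filters this summand by the submodules induced from the $A^L_{d,\ge k}$ --- these should be intrinsically characterized by their composition factors, hence preserved by every endomorphism --- and studies an idempotent $e$ through its effect on the successive graded pieces: on the top piece it is a sum of projections onto the summands of $A^L_d/A^L_{d,\ge1}\cong (S^1\otimes\bigoplus_{\lambda\vdash d}S^{2\lambda})\circ\mathfrak{a}^{\#}$ of \eqref{decompositionofAdL0}, and the nonzero "mixing" morphisms between successive pieces --- which can be written down explicitly, as is done for $A_3Q$ in \cite[Section 8.4]{Katada2} --- should force those projections to be either all absent or all present, so that $e\in\{0,1\}$. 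A natural way to organize the bookkeeping is by induction on $d$, reducing to the (conjectural) indecomposability of $A^L_{d-1}Q$ via a suitable degree-lowering map relating $A^L_dQ$ and $A^L_{d-1}Q$; setting up such a reduction compatibly with the $Q$-summands is itself a nontrivial point.

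The main obstacle is the last step. For $A_2Q$ the composition factors are multiplicity-free, so the submodule lattice is finite and indecomposability is immediate; but for $d\ge2$ the module $A^L_dQ$ inherits repeated composition factors from $A_{d+1}$ --- already $A_3Q$ contains $S^{2^2}\circ\mathfrak{a}^{\#}$ twice --- so there are infinitely many submodules and no argument by enumeration is available. Ruling out all "diagonal" endomorphisms between the repeated factors amounts to a uniform structure theorem for $A^L_dQ$ that controls every map $A^L_{d,\ge k}/A^L_{d,\ge k+1}\to A^L_{d,\ge\ell}/A^L_{d,\ge\ell+1}$ across the whole filtration, something whose analogue for $A_dQ$ already required the delicate computations of \cite{Katada2}. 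It is this difficulty that keeps the present statement at the level of a conjecture; the explicit description of $A^L_1Q$ above, together with the structure of the small $A_dQ$, is meant as the first evidence in its favour.
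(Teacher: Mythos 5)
The statement you were asked to prove is stated in the paper only as Conjecture \ref{conjectureindecomposableALQ}: the paper contains no proof of it, offering as evidence just the $d=1$ case (the theorem that $A^{L}_1 Q$ has a unique composition series, hence is indecomposable) and the analogy with Theorem \ref{katadamainthm}. Your text is honest about this, but it is a research plan, not a proof, and the gap you leave open is exactly the decisive step: you never rule out a nontrivial idempotent of $\End_{\kgrop}(A^{L}_d Q)$. The reduction you propose (pass to $\catLieMod$ via Proposition \ref{adjunctionPowell} and Lemma \ref{lemmaALandAL0d}, identifying $A^{L}_d$ with $\A^{L}_0(L^{\otimes -},H^{\otimes -})\otimes_{\catLie}\catLieC(1,-)_d$, then analyse an idempotent through the graded pieces of the filtration by $A^{L}_{d,\ge k}$) is sensible and parallels the strategy of \cite{Katada1,Katada2} for $A_d Q$, but the ``mixing'' argument on successive graded pieces is precisely the uniform structure theorem that is not available, and you say so yourself. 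So there is a genuine gap, by your own admission, and it coincides with the reason the paper leaves the statement as a conjecture.

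Two intermediate assertions also need more care than you give them. First, the claim that $S^{2d+1}\circ\mathfrak{a}^{\#}$ does not occur in $A^{L}_d Q$, and hence that $\End_{\kgrop}(A^{L}_d)\cong\K\times\End_{\kgrop}(A^{L}_d Q)$, requires control of multiplicities: the paper's remark only describes the \emph{set} of composition factors of $A^{L}_d$ as box-removals from those of $A_{d+1}$, and your argument that $(2d+1)$ can only arise from $\mu=(2d+2)$ must be supplemented by the observation that all size-$(2d+2)$ factors of $A_{d+1}$ sit in the top graded quotient $\bigoplus_{\lambda\vdash d+1}S^{2\lambda}\circ\mathfrak{a}^{\#}$, and by a multiplicity count for $(2d+2)$ itself. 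Second, the assertion that the submodules induced from $A^{L}_{d,\ge k}$ are ``intrinsically characterized by their composition factors, hence preserved by every endomorphism'' is unjustified in exactly the situation at issue, namely when composition factors repeat; this is the same phenomenon (already visible in $A_3 Q$, which contains $S^{2^2}\circ\mathfrak{a}^{\#}$ twice and has infinitely many submodules) that blocks any enumeration argument. Finally, the suggested induction on $d$ via a degree-lowering map compatible with the $Q$-summands is not constructed. None of this is to say the strategy is wrong --- it is the natural one --- but as it stands it does not prove the conjecture.
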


We have $\A$-submodules $\A^{L} Q', \A^{L} Q''$ and $\A^{L} Q$ of $\A^{L}(L,H^{\otimes -})$ that are generated by the anti-symmetric elements
\begin{gather*}
  Q'= i \otimes \tilde{c}- P_{(12)}\circ (i \otimes \tilde{c})\in \A^{L}_1(L,H^{\otimes 3}),\\
  Q''= i \otimes \tilde{c}^{\otimes 2}- P_{(34)}\circ (i \otimes \tilde{c}^{\otimes 2})\in \A^{L}_2(L,H^{\otimes 5}),\\
  Q^L=\{Q',Q''\},
\end{gather*}
respectively.
Then we also have the following descending filtration of $\A^{L} Q$ in $\AMod$:
\begin{gather*}
    \A^{L} Q=\A^{L} Q_{\ge 1}\supset \A^{L} Q_{\ge 2}\supset\cdots \supset \A^{L} Q_{\ge d}\supset \cdots,
\end{gather*}
where 
$$\A^{L} Q_{\ge d}=\A^{L} Q\cap \A^{L}_{\ge d}(L,H^{\otimes -}).$$

\begin{conjecture}\label{conjectureindecomposableAL}
    The $\A$-modules $\A^{L} Q', \A^{L} Q'', \A^{L} Q$ and $\A^{L}(L,H^{\otimes -})$ are indecomposable.
\end{conjecture}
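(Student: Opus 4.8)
The statement being a conjecture, what follows is a research plan, and it naturally splits into two parts: $\A^{L}(L,H^{\otimes -})$ on its own, and the three submodules $\A^{L} Q'$, $\A^{L} Q''$, $\A^{L} Q$. For $\A^{L}(L,H^{\otimes -})$ the plan is to argue directly on its endomorphism ring, mimicking the first proof of the indecomposability of $\A(0,-)$ in Theorem \ref{indecomposableA0}. By \eqref{ALFAL0} we have $\A^{L}(L,H^{\otimes -})\cong F(\A^{L}_0(L,H^{\otimes -}))$ with $F=\A\otimes_{\kgrop}-$, and $\A^{L}_0(L,H^{\otimes -})=A^{L}_0\cong S^{1}\circ\mathfrak{a}^{\#}$, which is a simple $\kgrop$-module. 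The adjunction $F\dashv U$ of \eqref{UforA}--\eqref{FforA} then gives
\[
\End_{\AMod}(\A^{L}(L,H^{\otimes -}))\cong\Hom_{\kgropMod}\bigl(S^{1}\circ\mathfrak{a}^{\#},\,U(\A^{L}(L,H^{\otimes -}))\bigr)=\bigoplus_{d\ge 0}\Hom_{\kgropMod}(S^{1}\circ\mathfrak{a}^{\#},A^{L}_d).
\]
The point to check is that the right-hand side is an $\N$-graded ring under composition whose degree-$0$ part is $\End_{\kgropMod}(S^{1}\circ\mathfrak{a}^{\#})=\K$ by Schur's lemma; compatibility of the product with the grading follows from Lemma \ref{factorizationofAL}, since an endomorphism coming from $\Hom_{\kgropMod}(S^{1}\circ\mathfrak{a}^{\#},A^{L}_d)$ raises the Casimir-degree by exactly $d$. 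A connected $\N$-graded ring has only $0$ and $1$ as idempotents, so $\A^{L}(L,H^{\otimes -})$ is indecomposable. (Via Kim's equivalence $\AMod^{\omega}\simeq\catLieCMod$ of Remark \ref{remarkKim} one can alternatively identify $\End_{\AMod}(\A^{L}(L,H^{\otimes -}))\cong\catLieC(1,1)$, which is $\N$-graded with degree-$0$ part $\catLie(1,1)=\K$; this route also recovers the projectivity of $\A^{L}(L,H^{\otimes -})$ in $\AMod^{\omega}$ from Corollary \ref{projectiveanalytic}.)

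For $\A^{L} Q$, $\A^{L} Q'$ and $\A^{L} Q''$ the plan is to run the filtration argument of Theorem \ref{indecomposable}, Proposition \ref{indecomposableQ} and the second proof in Theorem \ref{indecomposableA0}, with $\A(0,-)$, $A_d$, $\A Q$ replaced by $\A^{L}(L,H^{\otimes -})$, $A^{L}_d$, $\A^{L} Q$. The steps, in order, are: (i) use Proposition \ref{decompositionALd} (so $A^{L}_d=A^{L}_d P\oplus A^{L}_d Q$ with $A^{L}_d P\cong S^{2d+1}\circ\mathfrak{a}^{\#}$ simple) together with the fact that $A^{L}_d P$ is not a composition factor of $A^{L}_d Q$ — which should follow from \eqref{decompositionofAdL0}, the Pieri rule (the one-row partition $(2d+1)$ occurs in $S^{1}\otimes S^{2\lambda}$ only for $\lambda=(d)$), and the description of the composition factors of $A^{L}_d$ as box-removals from those of $A_{d+1}$ — and Lemma \ref{decompositionAdQ}; (ii) prove, exactly as in Proposition \ref{indecomposableQ}, that each graded quotient $\A^{L} Q_{\ge d}/\A^{L} Q_{\ge d'}$ is indecomposable, using that its bottom graded piece is $T(A^{L}_{d'-1}Q)$ and that the Casimir $2$-tensor (with the $L$-leg as a spectator) pushes any nonzero element of a putative complement into that bottom piece; (iii) pass from the finite quotients to $\A^{L} Q$ itself via the intersection argument $\bigcap_{d'}\A^{L} Q_{\ge d'}=0$, as in Theorem \ref{indecomposableA0}; and (iv) treat $\A^{L} Q'$ and $\A^{L} Q''$ in the same way after identifying their filtration quotients inside the $A^{L}_d Q$ (they are generated in degrees $1$ and $2$ respectively, and $\A^{L} Q=\A^{L} Q'+\A^{L} Q''$).

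The main obstacle is the base case of step (ii): it requires the $\kgrop$-module $A^{L}_d Q$ to be indecomposable for $d\ge 2$, which is precisely Conjecture \ref{conjectureindecomposableALQ} and is itself open — only the cases $d\le 1$ are known, and the bracket-structure analysis of $A_d Q$ from \cite{Katada1,Katada2} has not been carried out for $A^{L}_d Q$ with its two anti-symmetric generators $Q'_d,Q''_d$. Thus, unconditionally the plan yields indecomposability of $\A^{L}(L,H^{\otimes -})$, while indecomposability of $\A^{L} Q$, $\A^{L} Q'$ and $\A^{L} Q''$ follows once Conjecture \ref{conjectureindecomposableALQ} is established; proving that conjecture — say by pinning down the radical/socle filtration of $A^{L}_d Q$, or by realizing $A^{L}_d Q$ as an explicit subquotient of $A_{d+1}Q$ and transporting the known indecomposability — is the hard part.
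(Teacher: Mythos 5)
This statement is a conjecture in the paper, so there is no proof of it for me to compare yours against; the relevant benchmarks are the paper's proofs of Theorem \ref{indecomposable}, Proposition \ref{indecomposableQ} and Theorem \ref{indecomposableA0}, and your proposal tracks them faithfully. Your unconditional argument for $\A^{L}(L,H^{\otimes -})$ looks sound and is in the spirit of the paper's first (endomorphism-ring) proof of Theorem \ref{indecomposableA0}: by \eqref{ALFAL0} (equivalently \eqref{ALandAd} summed over $d$) the module is $F(A^{L}_0)$ with $A^{L}_0\cong S^{1}\circ\mathfrak{a}^{\#}$ simple, and it is cyclic, generated in degree $0$ by $i$, since by Lemma \ref{factorizationofAL} and the identity $i^{\otimes(1+2d)}(\id_{L}\otimes c^{\otimes d})=(\id_{H}\otimes\tilde{c}^{\otimes d})\,i$ every element lies in the $\A$-span of $i$; this cyclicity is exactly what makes every endomorphism a finite sum of homogeneous components of non-negative degree, so that $\End_{\AMod}(\A^{L}(L,H^{\otimes -}))$ is $\N$-graded with degree-$0$ part $\End_{\kgropMod}(S^{1}\circ\mathfrak{a}^{\#})$, a division algebra by Schur's lemma (you should phrase it this way rather than ``$=\K$'', since $\K$ is not assumed algebraically closed; absence of nontrivial idempotents in degree $0$ is all the connectedness argument needs). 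This gives indecomposability of $\A^{L}(L,H^{\otimes -})$ without invoking Kim's unpublished equivalence (Remark \ref{remarkKim}), which only enters your parenthetical identification with $\catLieC(1,1)$; so you settle one of the four modules, which goes slightly beyond what the paper records.

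For the three $Q$-modules you correctly identify Conjecture \ref{conjectureindecomposableALQ} as the missing base case of the filtration argument, which matches the logical structure of the paper (Proposition \ref{indecomposableQ} plus the second proof in Theorem \ref{indecomposableA0}); but two further points in your steps (i)--(iv) need care. First, the push-down step in Proposition \ref{indecomposableQ} silently uses that $x\mapsto\tilde{c}^{\otimes k}\otimes x$ is injective (at least nonzero on the relevant elements) from one graded piece to a higher one; the analogous nonvanishing for $A^{L}_{\bullet}$, with the $L$-leg as spectator, has to be re-verified and is not contained in Proposition \ref{decompositionALd}. Second, for $\A^{L}Q'$ and $\A^{L}Q''$ separately, the graded quotients of their degree filtrations are a priori proper $\kgrop$-submodules of $A^{L}_{d}Q$ (each is generated by only one of the two anti-symmetric generators), so Conjecture \ref{conjectureindecomposableALQ} as stated need not suffice verbatim; you would need indecomposability of those specific subquotients or an argument that they exhaust $A^{L}_{d}Q$. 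Finally, in step (i) the Pieri computation via \eqref{decompositionofAdL0} only controls the top quotient $A^{L}_{d}/A^{L}_{d,\ge 1}$; to exclude $S^{2d+1}\circ\mathfrak{a}^{\#}$ from $A^{L}_{d}Q$ you should either observe that the deeper filtration pieces $A^{L}_{d,\ge k}/A^{L}_{d,\ge k+1}$, $k\ge 1$, have polynomial degree at most $2d+1-k<2d+1$, or check that $(2d+1,1)$ is not a composition factor of $A_{d+1}$; the box-removal remark alone does not give multiplicity one.
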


\section{Perspectives}\label{sectionhandlebodygp}

Here, we consider modules over the handlebody groups induced by $\A$-modules.
Let $\mathcal{B}$ denote the category of bottom tangles in handlebodies.
Then $\mathcal{B}$ identifies with the opposite $\mathcal{H}^{\op}$ of the category $\mathcal{H}$ of isotopy classes
of embeddings of handlebodies relative to the bottom square.
The automorphism group $\Aut_{\mathcal{H}}(n)=\mathcal{H}_{n,1}$ is known as the handlebody group.
Let $\mathcal{B}_q$ denote the non-strictification of the category $\mathcal{B}$.
Let $\hat{\A}=\varprojlim_{d\ge 0}\A/{\A_{\ge d}}\cong \prod_{d\ge 0}\A_d$ denote the degree completion of the category $\A$.
Habiro and Massuyeau \cite{Habiro--Massuyeau} constructed a functor
$$Z: \mathcal{B}_q\to \hat{\A},$$
which is an extension of the Kontsevich integral for bottom tangles.

Let $M$ be an $\A$-module which factors through $\A/\A_{\ge d}$ for some $d\ge 1$.
Then we have a $\mathcal{B}_q$-module
\begin{gather*}
  \tilde{M}:  \mathcal{B}_q\xrightarrow{Z} \hat{\A}\xrightarrow{\pi_{d}} \A/\A_{\ge d}\xrightarrow{M} \KMod, 
\end{gather*}
where $\pi_{d}: \hat{\A}=\varprojlim_{d\ge 0}\A/{\A_{\ge d}} \to \A/\A_{\ge d}$ is the projection.
The functor $\tilde{M}$ induces an $\mathcal{H}_{n,1}$-module structure on the $\K$-vector space $M(n)$ for each $n\ge 0$.
For example, for each $m\ge 0$, the $\A$-module $\A^{L}(L^{\otimes m},H^{\otimes -})/\A^{L}_{\ge d}(L^{\otimes m},H^{\otimes -})$ induces an $\mathcal{H}_{n,1}$-module $\A^{L}(L^{\otimes m},H^{\otimes n})/\A^{L}_{\ge d}(L^{\otimes m},H^{\otimes n})$ for each $n\ge 0$.
Moreover, for $d\ge 2$, the $\mathcal{H}_{n,1}$-module restricts to a non-trivial module over the \emph{twist group}, which is the kernel of the canonical map from $\mathcal{H}_{n,1}$ to $\Aut(F_n)$.

Recently, the stable cohomology of $\mathcal{H}_{n,1}$ with some twisted coefficients has been studied \cite{Ishida--Sato, Randal-Williams--Wahl, Soulie}.
It would be interesting to study the stability of the cohomology of the handlebody group and the twist group with coefficients induced by $\A$-modules.

\bibliographystyle{plain}
\bibliography{reference}

\end{document}